\documentclass[12pt]{amsart}
\usepackage{amssymb}
\usepackage{graphicx}
\usepackage{enumerate}
\usepackage[all]{xy}
\usepackage{mathtools} 
\usepackage[hidelinks]{hyperref}
\usepackage[margin=1in]{geometry}

\newtheorem{theorem}{Theorem}[section]
\newtheorem{corollary}[theorem]{Corollary}
\newtheorem{lemma}[theorem]{Lemma}
\newtheorem{proposition}[theorem]{Proposition}

\theoremstyle{definition}
\newtheorem{definition}[theorem]{Definition}
\newtheorem{warning}[theorem]{Warning}
\newtheorem{remark}[theorem]{Remark}
\newtheorem{example}[theorem]{Example}
\newtheorem{convention}[theorem]{Convention}

\numberwithin{equation}{section}

\DeclareMathOperator{\Alex}{Alex}
\DeclareMathOperator{\Ar}{Ar}
\DeclareMathOperator{\const}{Const}
\DeclareMathOperator{\id}{id}
\DeclareMathOperator{\im}{im}
\DeclareMathOperator{\inv}{inv}
\DeclareMathOperator{\Int}{Int}
\DeclareMathOperator{\op}{op}
\DeclareMathOperator{\Path}{Path}
\DeclareMathOperator{\Quiv}{Quiv}
\DeclareMathOperator{\Spin}{Spin}
\DeclareMathOperator{\un}{un}

\newcommand{\A}{\mathcal{A}}
\newcommand{\B}{\mathcal{B}}
\newcommand{\CL}[1]{\mathcal{CL}_{#1}}
\newcommand{\CLmonomial}{\prod_{i\in\CL{\x,\y}} U_i^{r_i} }
\newcommand{\de}{\partial}
\newcommand{\dumline}{j}
\newcommand{\F}{\mathbb{F}}
\newcommand{\gloneone}{\mathcal{U}_q(\mathfrak{gl}(1|1))}
\newcommand{\I}{[0,1]}
\newcommand{\ib}{\mathbf{i}}
\newcommand{\Ib}{\mathbf{I}}
\newcommand{\IdemRing}{\mathbb{I}}
\newcommand{\jb}{\mathbf{j}}
\newcommand{\Jb}{\mathbf{J}}
\newcommand{\kb}{\mathbf{k}}
\newcommand{\lda}{\lambda}
\newcommand{\m}{\mathfrak{m}}
\newcommand{\mb}{\mathbb}
\newcommand{\mc}{\mathcal}
\newcommand{\pvec}[1]{\vec{#1}\mkern2mu\vphantom{#1}}
\newcommand{\ring}{\Bbbk}
\newcommand{\ru}{{\underline{r}}}
\newcommand{\sa}[2]{\mathcal{A}(#1, #2)}
\newcommand{\sac}[3]{\mathcal{A}(#1, #2, #3)}
\newcommand{\Sb}{\mathbf{S}}
\newcommand{\Sc}{\mathcal{S}}
\newcommand{\set}[1]{\left\{#1\right\}}
\newcommand{\sm}{\setminus}
\newcommand{\Spinc}{\Spin^c}
\newcommand{\Tb}{\mathbf{T}}
\newcommand{\td}{\widetilde}
\newcommand{\To}{\longrightarrow}
\newcommand{\tsa}[2]{\widetilde{\mathcal{A}}(#1, #2)}
\newcommand{\tsac}[3]{\widetilde{\mathcal{A}}(#1, #2, #3)}
\newcommand{\vsimeq}{\rotatebox[origin=c]{-90}{\footnotesize $\backsimeq$}}
\newcommand{\vv}[3]{\left(\begin{matrix} #1\\#2\end{matrix}\right)_{#3}}
\newcommand{\x}{\mathbf{x}}
\newcommand{\Xb}{\mathbf{X}}
\newcommand{\y}{\mathbf{y}}
\newcommand{\z}{\mathbf{z}}
\newcommand{\Z}{\mathbb{Z}}
\newcommand{\Zc}{\mathcal{Z}}

\renewcommand{\emptyset}{\varnothing}

\renewcommand{\subseteq}{\subset}

\renewcommand{\theta}{\vartheta}

\DeclarePairedDelimiter{\floor}{\lfloor}{\rfloor}
\DeclarePairedDelimiter{\ceil}{\lceil}{\rceil}

\newcommand{\secQuiverAlgs}{Section 2.1}

\newcommand{\propQuiverAlgUniversalProp}{Proposition 2.6}

\newcommand{\defOSzStyleDef}{Definition 2.11}

\newcommand{\remNinetyDegRot}{Remark 2.13}

\newcommand{\secGraphicalInterp}{Section 2.3}

\newcommand{\secEqvofdescriptions}{Section 2.4}

\newcommand{\defRecursive}{Definition 2.28}

\newcommand{\secOSzgradings}{Section 3.3}

\newcommand{\corOSzQuiverEquivDG}{Corollary 3.14}

\newcommand{\secOSzTruncatedAlgs}{Section 3.4}

\newcommand{\defTruncatedOSzAlgs}{Definition 3.16}

\newcommand{\propQuartumNonDatur}{Proposition 4.9}


\newcommand{\corExplicitQuiverGensForMonomials}{Corollary 4.12}

\newcommand{\secOSzSplittingTheorem}{Section 4.3}

\newcommand{\defCrossedLinesAlg}{Definition 4.15}

\newcommand{\corIBItoTensorProduct}{Corollary 4.16}

\newcommand{\rmkSplittingBwun}{Remark 4.17}

\newcommand{\secOSzSymmetries}{Section 4.5}

\newcommand{\lemOSzGenIntHomology}{Lemma 5.3}

\newcommand{\thmOSzHomology}{Theorem 5.4}

\newcommand{\secFormality}{Section 5.2}

\newcommand{\thmUntruncatedFormality}{5.10}
\newcommand{\thmRightTruncationFormality}{5.13}
\newcommand{\thmLeftTruncationFormality}{5.14}
\newcommand{\thmDoubleTruncationFormality}{5.17}

\newcommand{\AppendixA}{Appendix A}

\newcommand{\secAlgsAndCats}{Section A.3 in \AppendixA}

\newcommand{\lemOrthogonalIdempotents}{Lemma A.17 in \AppendixA}

\newcommand{\defDGCatQI}{Definition A.20 in \AppendixA}

\begin{document}

\author[Andrew Manion]{Andrew Manion}
\thanks {AM was supported by an NSF MSPRF fellowship, grant number DMS-1502686.}
\address{Department of Mathematics, USC, 3620 S. Vermont Ave., Los Angeles, CA 90089}
\email{amanion@usc.edu}

\author[Marco Marengon]{Marco Marengon}
\address {Department of Mathematics, UCLA, 520 Portola Plaza, Los Angeles, CA 90095}
\email {marengon@math.ucla.edu}

\author[Michael Willis]{Michael Willis}
\thanks {MW was supported by the NSF grant DMS-1563615.}
\address {Department of Mathematics, UCLA, 520 Portola Plaza, Los Angeles, CA 90095}
\email {mike.willis@math.ucla.edu}

\title[Strands algebras and Ozsv{\'a}th--Szab{\'o}'s Kauffman-states functor]{Strands algebras and Ozsv{\'a}th--Szab{\'o}'s Kauffman-states functor}

\date{}

\begin{abstract} 
We define new differential graded algebras $\sac nk\Sc$ in the framework of Lipshitz--Ozsv{\'a}th--Thurston's and Zarev's strands algebras from bordered Floer homology. The algebras $\A(n,k,\Sc)$ are meant to be strands models for Ozsv{\'a}th--Szab{\'o}'s algebras $\B(n,k,\Sc)$; indeed, we exhibit a quasi-isomorphism from $\B(n,k,\Sc)$ to $\sac nk\Sc$. We also show how Ozsv{\'a}th--Szab{\'o}'s gradings on $\B(n,k,\Sc)$ arise naturally from the general framework of group-valued gradings on strands algebras.
\end{abstract}
\maketitle

\section{Introduction}
Heegaard Floer homology is a package of invariants for 3-manifolds and 4-manifolds introduced by Ozsv\'ath and Szab\'o \cite{HFOrig, PropsApps} that has proven to be particularly powerful in the last two decades. A variation \cite{OSzHFK,RasmussenThesis} of their construction, called knot Floer homology and abbreviated $HFK$, assigns a graded abelian group to a knot or link, and the Euler characteristic of this group recovers the Alexander polynomial. Knot Floer homology has many applications in knot theory; for example, it exactly characterizes elusive knot information like Seifert genus and fiberedness, for which the knot polynomials provide only incomplete bounds, and it leads to the definition of many interesting knot concordance invariants.

In the past ten years, there has been considerable interest in assigning Heegaard Floer invariants to surfaces and $3$-dimensional cobordisms between them. Lipshitz--Ozsv{\'a}th--Thurston's bordered Floer homology \cite{LOT} initiated this project; Zarev \cite{BSFH} introduced a generalization known as bordered sutured Floer homology. If one views Heegaard Floer homology from the perspective of topological quantum field theories (TQFTs), then bordered Floer homology begins the investigation of Heegaard Floer homology as an ``extended'' TQFT. Extensions of TQFTs have been of particular interest since Lurie's proof \cite{Lurie} of the Baez--Dolan cobordism hypothesis classifying fully extended TQFTs.

Bordered sutured Floer homology assigns an invariant to a surface $F$ by first choosing a combinatorial representation of $F$, called an ``arc diagram'' by Zarev. Arc diagrams are a special case of what are known as ``chord diagrams'' in e.g.~\cite{CD} (see Definition \ref{def:chord diagram}). Chord diagrams may have linear and/or circular ``backbones'' (see Figure \ref{fig:MotivatingHD}); arc diagrams are the same as chord diagrams with no circular backbones. To an arc diagram $\Zc$ representing a surface $F(\Zc)$, bordered sutured Floer homology associates a differential graded (dg) algebra $\A(\Zc)$, called the bordered strands algebra of $\Zc$ because it can be visualized by pictures of strands intersecting in $[0,1] \times \Zc$. Auroux~\cite{Auroux} has shown that $\A(\Zc)$ is closely related to Fukaya categories of symmetric powers of $F(\Zc)$, in line with the original definition of Heegaard Floer homology.

More recently, Ozsv{\'a}th--Szab{\'o} \cite{OSzNew,OSzNewer,OSzHolo,OSzPong} have used the ideas of bordered Floer homology to define a new algorithmic method for computing $HFK$ by decomposing a knot into tangles. Their theory has striking computational properties \cite{HFKCalc}, categorifies aspects of the representation theory of $\gloneone$ \cite{ManionDecat}, and has surprising connections with other such categorifications \cite{ManionKS}. We will refer to their theory as the \emph{Kauffman-states functor}, since Kauffman states for a knot or tangle projection (equivalently, spanning trees of the Tait graph) play a prominent role.

\begin{figure}
\includegraphics{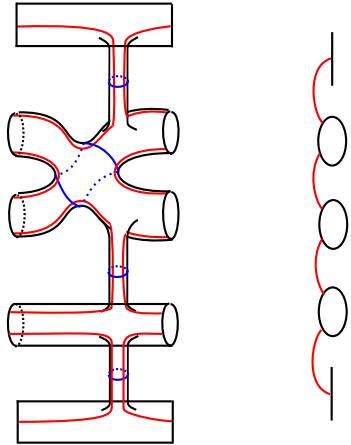}
\caption{The Heegaard diagram motivating the Kauffman-states functor, and the chord diagram implied by it. Such a chord diagram has $2$ linear backbones and $3$ circular backbones (all drawn in black).}
\label{fig:MotivatingHD}
\end{figure}

To a tangle diagram, the Kauffman-states functor assigns a bimodule whose definition is motivated by holomorphic curve counting as in bordered Floer homology. However, the dg algebras $\B(n,\Sc)$ over which the bimodule is defined are not among Zarev's bordered strands algebras. Indeed, for a single crossing, Ozsv{\'a}th--Szab{\'o} count curves in a particular Heegaard diagram from which a chord diagram $\Zc(n)$ can be inferred (see Figure~\ref{fig:MotivatingHD}), but some of the backbones of $\Zc(n)$ are circular rather than linear, so $\Zc(n)$ is not an arc diagram and Zarev's construction does not apply. 

We begin by defining a reasonable candidate $\A(n,\Sc)$ for the bordered strands algebra of the chord diagram $\Zc(n)$ in question, with a diagrammatic interpretation in terms of intersecting strands as usual (the data $\Sc$ encodes orientations on tangle endpoints and will be described below in Section~\ref{sec:MMW1Review}). The algebra $\A(n,\Sc)$ is larger than $\B(n,\Sc)$, with a more elaborate differential. See e.g.~Figure~\ref{fig:kstrand examples} for an illustration.
The dg algebras $\A(n,\Sc)$ and $\B(n,\Sc)$ are both direct sums of dg algebras $\A(n,k,\Sc)$ and $\B(n,k,\Sc)$ for $0 \leq k \leq n$.
Like $\B(n,k,\Sc)$, the strands algebra $\A(n,k,\Sc)$ comes with a Maslov grading and various Alexander multi-gradings.

The bordered strands algebra $\A(n,k,\Sc)$ and Ozsv\'ath--Szab\'o's algebra $\B(n,k,\Sc)$ are in fact closely related to each other. Using the generators-and-relations description of $\B(n,k,\Sc)$ from \cite{MMW1}, we define a dg algebra homomorphism $\Phi: \B(n,k,\Sc) \to \A(n,k,\Sc)$ and prove the following result.
\begin{theorem}\label{thm:IntroQuasiIso}
The map $\Phi: \B(n,k,\Sc) \to \A(n,k,\Sc)$ is a quasi-isomorphism.
\end{theorem}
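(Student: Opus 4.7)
The plan is to reduce the quasi-isomorphism assertion to an explicit homology computation for $\A(n,k,\Sc)$, and then match the resulting basis of $H_*(\A(n,k,\Sc))$ with the distinguished basis of $\B(n,k,\Sc)$ coming from the generators-and-relations description of \cite{MMW1}. I would begin by fixing the basis of $\B(n,k,\Sc)$ exhibited in \propOSzBasis\ and rewriting each basis element as an explicit monomial in quiver generators via \corExplicitQuiverGensForMonomials. Composing with $\Phi$ produces a distinguished family of cycles in $\A(n,k,\Sc)$; the aim is then to show that this family is linearly independent in homology and that its span exhausts $H_*(\A(n,k,\Sc))$.

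The second step is to invoke the splitting theorem of \secOSzSplittingTheorem, in particular \corIBItoTensorProduct, to decompose $\A(n,k,\Sc)$ up to quasi-isomorphism as a tensor product of smaller dg algebras indexed by the backbones of the chord diagram $\Zc(n)$. The linear backbones contribute Zarev-type factors whose homology is computed by \lemOSzGenIntHomology, while the circular backbones coming from crossings contribute the crossed lines algebras of \defCrossedLinesAlg, whose homology is the main content of \thmOSzHomology. By \rmkSplittingBwun, the target algebra $\B(n,k,\Sc)$ enjoys a parallel tensor decomposition, and the map $\Phi$ respects it, so it suffices to verify the quasi-isomorphism factor by factor, combining at the end by a standard K\"unneth argument.

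The main obstacle will be the crossed lines factor. Unlike in the arc-diagram setting covered by Zarev, the differential on $\A(n,k,\Sc)$ contains strand-cancellation terms running along the circular backbones, so isolating the surviving homology classes requires a genuine cancellation-by-pairs argument; I would most likely introduce a filtration by the number of strands occupying the circular backbone and analyze the resulting spectral sequence to identify a preferred cycle representative for each generator predicted by \thmOSzHomology. Once the rank of the homology of each crossed lines factor is matched against the rank of the corresponding factor of $\B(n,k,\Sc)$ supplied by \propOSzBasis, the explicit combinatorial description of $\Phi$ on monomials from \corExplicitQuiverGensForMonomials\ shows that the distinguished $\Phi$-images produced in the first step project to linearly independent classes, hence form a basis of $H_*$, completing the argument.
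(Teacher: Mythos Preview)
Your proposal has the right overall shape---reduce to a tensor-factor computation and match bases---but there are two genuine gaps, and one of them reverses where the actual work lies.

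First, every splitting result you cite (\corIBItoTensorProduct, \defCrossedLinesAlg, \rmkSplittingBwun) is a statement about $\B(n,k,\Sc)$, not about $\A(n,k,\Sc)$. You cannot ``invoke the splitting theorem of \secOSzSplittingTheorem\ \ldots\ to decompose $\A(n,k,\Sc)$''; that theorem decomposes $\Ib_\x\B(n,k,\Sc)\Ib_\y$. An analogous tensor decomposition of $\Jb_\x\A(n,k,\Sc)\Jb_\y$ has to be proved from scratch, and that is a substantial part of the paper (Theorem~\ref{thm:JAJtoTensorProduct}). Moreover, you have the roles of the factors inverted. The crossed-lines factor is the \emph{easy} one: on a backbone $S^1_i$ with $i\in\CL{\x,\y}$ there is a single strand, hence no crossings and no differential; that factor is literally a polynomial ring $\F_2[U_i]$ with $\partial=0$. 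The nontrivial homology computation is for the generating and edge intervals, where two strands can live on the same circular backbone and the crossing-resolution differential is genuinely complicated. The spectral-sequence/filtration argument you sketch is needed there, not for the crossed lines.

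Second, your first step does not produce cycles. The basis of \propOSzBasis\ is a basis of $\B(n,k,\Sc)$ as a vector space; when $\Sc\neq\varnothing$ these elements are not closed (e.g.\ any $C_i$ factor differentiates to $U_i$), so their $\Phi$-images are not cycles either. What you actually need is the homology basis of \thmOSzHomology. This points to the key structural simplification you are missing: for $\Sc=\varnothing$ the algebra $\B(n,k)$ has zero differential, so $\B(n,k)=H_*(\B(n,k))$ and it suffices to check that $\Phi$ carries the monomial basis bijectively onto the explicit homology generators of $\A(n,k)$. The paper does exactly this (Theorem~\ref{thm:Phi is a quasi-iso}), and then handles general $\Sc$ by induction on $|\Sc|$: both $\B(n,k,\Sc)$ and $\A(n,k,\Sc)$ are mapping cones on multiplication by $U_{i_l}$ (since $\partial C_{i_l}=U_{i_l}$), $\Phi$ intertwines these cone structures, and the five-lemma finishes the argument. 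Trying to run the tensor-factor computation directly for nonempty $\Sc$ would force you to redo the homology computation of \thmOSzHomology\ inside $\A$, which is unnecessary once the $\Sc=\varnothing$ case and the cone structure are in hand.
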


Since we computed the homology of $\B(n,k,\Sc)$ in \cite{MMW1}, we deduce the homology of $\A(n,k,\Sc)$ from Theorem \ref{thm:IntroQuasiIso}.
\begin{corollary}
Applying $\Phi$ to the basis for $H_*(\B(n,k,\Sc))$ given in Theorem \ref{thm:ReviewOSzHomology} yields a basis for $H_*(\A(n,k,\Sc))$.
\end{corollary}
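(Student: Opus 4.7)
The plan is to deduce the corollary as an immediate formal consequence of Theorem~\ref{thm:IntroQuasiIso}. By definition, a quasi-isomorphism is a chain map inducing an isomorphism on homology, so $\Phi$ descends to a graded module isomorphism
\[
\Phi_* \colon H_*(\B(n,k,\Sc)) \xrightarrow{\ \cong\ } H_*(\A(n,k,\Sc)).
\]
Any module isomorphism carries a basis to a basis, so the image under $\Phi_*$ of the basis supplied by Theorem~\ref{thm:ReviewOSzHomology} is a basis for $H_*(\A(n,k,\Sc))$.

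To match this with the stated formulation, which refers to applying $\Phi$ rather than $\Phi_*$, note that each basis element of $H_*(\B(n,k,\Sc))$ in Theorem~\ref{thm:ReviewOSzHomology} is represented by an explicit cycle $b \in \B(n,k,\Sc)$. Since $\Phi$ is a chain map, $\Phi(b)$ is a cycle in $\A(n,k,\Sc)$, and its homology class equals $\Phi_*([b])$ by definition of the induced map. Thus the classes $[\Phi(b)]$, indexed by the chosen cycle representatives, form precisely the image of the original basis under $\Phi_*$, and hence a basis of $H_*(\A(n,k,\Sc))$. There is no substantive obstacle beyond Theorem~\ref{thm:IntroQuasiIso} itself; once that theorem is established, the corollary follows from the purely formal fact that isomorphisms preserve bases. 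The real content of the corollary is not the existence of a basis but that explicit cycle representatives in $\A(n,k,\Sc)$ are produced by the simple recipe $b \mapsto \Phi(b)$, which is useful for later computations.
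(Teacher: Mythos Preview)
Your proposal is correct and matches the paper's approach: the corollary is stated immediately after Theorem~\ref{thm:IntroQuasiIso} with no separate proof, as it follows formally from the fact that a quasi-isomorphism induces an isomorphism on homology, and isomorphisms carry bases to bases.
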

We can also transfer the formality properties for $\B(n,k,\Sc)$ proved in \cite{MMW1} to the quasi-isomorphic algebras $\A(n,k,\Sc)$.
\begin{corollary}\label{cor:IntroFormality}
The dg algebra $\A(n,k,\Sc)$ is formal if and only if $\Sc = \varnothing$ or $k \in \{0,n,n+1\}$.
\end{corollary}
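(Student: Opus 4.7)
The plan is to deduce Corollary~\ref{cor:IntroFormality} directly from Theorem~\ref{thm:IntroQuasiIso} together with the formality results for $\B(n,k,\Sc)$ established in \cite{MMW1}. The argument should be short, since nearly all the work is already packaged into the existence of the quasi-isomorphism $\Phi$.

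The first step is to invoke the general principle that formality of a dg algebra is a quasi-isomorphism invariant: if $A$ and $B$ are dg algebras connected by a zig-zag of dg algebra quasi-isomorphisms, then $A$ is formal if and only if $B$ is. In our setting this is immediate, since Theorem~\ref{thm:IntroQuasiIso} provides a \emph{direct} dg algebra map $\Phi : \B(n,k,\Sc) \to \A(n,k,\Sc)$, which induces an isomorphism of graded algebras $H_*(\Phi) : H_*(\B(n,k,\Sc)) \to H_*(\A(n,k,\Sc))$. Thus any quasi-isomorphism from $\B(n,k,\Sc)$ to its homology (with trivial differential) composes or zig-zags with $\Phi$ and $H_*(\Phi)$ to produce a quasi-isomorphism from $\A(n,k,\Sc)$ to $H_*(\A(n,k,\Sc))$, and conversely.

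The second step is to quote the formality characterization for $\B(n,k,\Sc)$ proved in \cite{MMW1}: the combined content of Theorems~\thmUntruncatedFormality, \thmRightTruncationFormality, \thmLeftTruncationFormality, and~\thmDoubleTruncationFormality{} of that paper is precisely that $\B(n,k,\Sc)$ is formal if and only if $\Sc = \varnothing$ or $k \in \{0,n,n+1\}$. Combining this with the invariance principle of the previous step immediately yields the analogous statement for $\A(n,k,\Sc)$.

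There is no real obstacle here; the only point deserving minor care is ensuring that the notion of formality in force (quasi-isomorphic to its homology as a dg algebra) is indeed quasi-isomorphism invariant in the framework used throughout the paper. This is standard and can be made rigorous within the homotopical setup of \secAlgsAndCats, in particular using \defDGCatQI, so no additional machinery beyond that already in place is required.
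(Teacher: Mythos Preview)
Your proposal is correct and matches the paper's own argument: the paper simply remarks that formality is preserved by quasi-isomorphisms and then cites the formality characterization of $\B(n,k,\Sc)$ from \cite{MMW1} together with Theorem~\ref{thm:FinalQIThm}. One small over-citation: for the untruncated algebra $\A(n,k,\Sc)$ you only need \cite[Theorem~\thmUntruncatedFormality]{MMW1}; the other three theorems you list concern the truncated algebras $\B_r$, $\B_l$, $\B'$ and are only relevant to the analogous statements for $\A_r$, $\A_l$, $\A'$.
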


We describe the gradings on $\A(n,k,\Sc)$ combinatorially in Definition~\ref{def:gradings}; their definition depends on $\Sc$. However, bordered strands algebras $\A(\Zc)$ typically have gradings by nonabelian groups $G'(\Zc)$ and $G(\Zc)$ which do not see the dependence on $\Sc$. We define these gradings in our setting too (both groups end up being abelian) and show how they are related to the combinatorial gradings.
\begin{theorem}\label{thm:IntroGradings}
Given $\Sc$, we have an isomorphism
\[
\Theta_{\Sc}: G'(\Zc(n)) \xrightarrow{\cong} \Z \oplus \Z^{2n}
\]
such that for a homogeneous element $a$ of $\A(n,k,\Sc)$, the first component of $\Theta_{\Sc}(\deg'(a))$ is the Maslov degree of $a$ and the rest of the components form the unrefined Alexander multi-degree of $a$. Similarly, we have an isomorphism
\[
\Theta_{\Sc}: G(\Zc(n)) \xrightarrow{\cong} \Z \oplus \left(\textstyle{\frac12}\Z\right)^n
\]
whose first component recovers the Maslov grading and whose second component recovers the refined Alexander multi-grading.
\end{theorem}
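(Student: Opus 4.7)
The plan is to unwind the definitions of $G'(\Zc(n))$ and $G(\Zc(n))$ for this particular chord diagram, check that both turn out to be abelian free of the expected rank, and then match their components against the combinatorial Maslov and Alexander gradings of Definition~\ref{def:gradings}. The isomorphism $\Theta_\Sc$ depends on $\Sc$ even though the grading groups themselves do not, because the combinatorial Alexander gradings are only pinned down once the orientation data $\Sc$ has been fixed.

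First I would recall the general construction of $G'(\Zc)$ from bordered strands theory: as a set it is $\tfrac12 \Z$ (the Maslov direction) times a free abelian group generated by the intervals of $\Zc$, with a cocycle-twisted multiplication coming from an intersection-form pairing on weight vectors. For $\Zc(n)$ I would compute this pairing explicitly from the combinatorics of the two linear backbones and the three circular backbones shown in Figure~\ref{fig:MotivatingHD}, and verify that its antisymmetric part vanishes so that the central extension splits and $G'(\Zc(n)) \cong \Z \oplus \Z^{2n}$. The extension of Zarev's framework to chord diagrams with circular backbones needs some care, but once this is set up, triviality of the cocycle should follow from the way the $2n$ tangle-endpoint positions are arranged relative to the backbones.

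With the abstract identification $G'(\Zc(n)) \cong \Z \oplus \Z^{2n}$ in hand, I would define $\Theta_\Sc$ on generators by sending the Maslov generator to the first $\Z$ factor and each of the $2n$ unit weight generators to the corresponding basis vector of $\Z^{2n}$, up to a sign prescribed by the orientation at that endpoint as recorded in $\Sc$. Verifying that $\Theta_\Sc$ matches the Maslov degree in the first component is a direct check against the definition of $\deg'$, and verifying that the remaining $2n$ components of $\Theta_\Sc(\deg'(a))$ agree with the unrefined Alexander multi-degree of Definition~\ref{def:gradings} reduces, after the sign convention above, to comparing two multiplicity-counts of $a$ on the $2n$ positions and confirming they are the same integer data. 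Bijectivity is then immediate since both sides are free abelian of rank $2n+1$ and $\Theta_\Sc$ carries generators to generators.

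For the refined grading, $G(\Zc(n))$ is obtained from $G'(\Zc(n))$ by quotienting by $n$ central elements, one for each matched pair of chord endpoints, which identifies the two weight coordinates of a pair up to a possible half-integer shift into the Maslov direction. Pushing $\Theta_\Sc$ through this quotient yields the claimed isomorphism onto $\Z \oplus (\tfrac12\Z)^n$, with the half-integer factors arising precisely from the pairings in this quotient, and the first component is preserved by construction. I expect the main technical obstacle to be the very first step: extending Zarev's construction of $G'$ and $G$ to chord diagrams with circular backbones in a way compatible with the diagrammatic multiplication in $\A(n,k,\Sc)$, and then checking that the resulting cocycle for $\Zc(n)$ is trivial so that the grading group is honestly a direct sum rather than merely an abelian extension. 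Once this extension is in place, the rest of the argument is bookkeeping that matches the general framework against Definition~\ref{def:gradings} and the sign data in $\Sc$.
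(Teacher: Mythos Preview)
Your outline follows the right overall arc, but there are two genuine misconceptions that would derail the argument.

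First, the role of $\Sc$ in defining $\Theta_\Sc$. You say you would send each of the $2n$ unit weight generators to the corresponding basis vector of $\Z^{2n}$ ``up to a sign prescribed by the orientation.'' That is not how $\Sc$ enters. The group $G'(\Zc(n))$ is not simply $\tfrac12\Z \times \Z^{2n}$ but the subset cut out by the parity condition $j \equiv \varepsilon(\alpha) \pmod 1$ of Definition~\ref{def:general unrefined group}. A basis for this subgroup is obtained by choosing, for each generator $\tau_i$ or $\beta_i$ of $H_1(\Zc(n),B)$, a half-integer lift $j_i^\tau, j_i^\beta \in \tfrac12\Z\setminus\Z$ (Lemma~\ref{lem:NonCanonicalGradingIso}). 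The paper takes $j_i = +\tfrac12$ if $i\in\Sc$ and $-\tfrac12$ if $i\notin\Sc$; this choice feeds into the \emph{Maslov} component of $\Theta_\Sc(\deg'(a))$, not into signs on the Alexander components. The Alexander projection $\pi_{\Z^{2n}} \circ \Theta_\Sc$ is just the identity on the $H_1$ factor regardless of $\Sc$; the nontrivial content is that the first component picks up the correction term $\sum_i (-1)^{\chi_\Sc(i)}\bigl(\vec c(i) + \tfrac{p_i+q_i}{2}\bigr)$ and hence matches the Maslov formula in Definition~\ref{def:gradings} (Proposition~\ref{prop:UnrefinedGradingsCorrespond}). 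Without engaging with the $\varepsilon$ condition you cannot explain why $\Theta_\Sc$ depends on $\Sc$ at all.

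Second, your description of $G(\Zc(n))$ as a quotient of $G'(\Zc(n))$ is backwards. In the Lipshitz--Ozsv\'ath--Thurston framework, $G(\Zc)$ is the \emph{subgroup} of $G'(\Zc)$ on which $q_*\circ\partial$ vanishes; for $\Zc(n)$ this is generated by $\lambda$ and the elements $\tau_i+\beta_i$. What the paper then does (Section~\ref{sec:RefinedGradings}) is observe that in this particular case there is a retraction $\Psi_\Sc\colon G'(\Zc(n))\to G(\Zc(n))$ after a mild scalar extension, and $\deg$ is defined as $\Psi_\Sc\circ\deg'$. So a ``quotient-style'' map does eventually appear, but its existence is a special feature of $\Zc(n)$ (see \cite[Remark~3.47]{LOT}), not the definition of $G(\Zc)$, and you would need to exhibit it rather than assume it.
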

Theorem~\ref{thm:IntroGradings} helps to explain the appearance of the data $\Sc$ in the algebras $\A(n,k,\Sc)$ and $\B(n,k,\Sc)$, since the orientation data for tangle endpoints is not visible from the chord diagram $\Zc(n)$. While the gradings by $G'(\Zc(n))$ and $G(\Zc(n))$ are independent of this orientation data, their interpretation as standard Maslov and Alexander gradings is noncanonical and its choice forces a choice of $\Sc$. 

We note that this noncanonicity stems from the condition ``$j \equiv \varepsilon(\alpha)$ mod $1$'' in Lipshitz--Ozsv{\'a}th--Thurston's definition of their nonabelian gradings (see \cite[Definition 3.33]{LOT}). Thus, we have a new motivation for this somewhat mysterious-seeming condition, since in the end we do want $\A(n,k,\Sc)$ and $\B(n,k,\Sc)$ to depend on orientations. Note, however, that $\A(n,k,\Sc)$ and $\B(n,k,\Sc)$ depend on $\Sc$ for more than just their gradings; $\Sc$ also determines whether certain additional generators are allowed in the algebras.

Along with $\B(n,k,\Sc)$, it is natural to consider certain idempotent-truncated algebras $\B_r(n,k,\Sc)$, $\B_l(n,k,\Sc)$, and $\B'(n,k,\Sc)$. Each has an associated chord diagram $\Zc_r(n)$, $\Zc_l(n)$, or $\Zc'(n)$, and we define the corresponding strands algebras $\A_r(n,k,\Sc)$, $\A_l(n,k,\Sc)$, and $\A'(n,k,\Sc)$ (they are idempotent truncations of $\A(n,k,\Sc)$). We prove that $\Phi$ gives a quasi-isomorphism between the truncated algebras as well, deducing results about homology and formality for the truncated algebras from the analogous results in \cite{MMW1}. 

Finally, we define symmetries on the strands algebras $\A(n,k,\Sc)$ analogous to Ozsv{\'a}th--Szab{\'o}'s symmetries $\mc R$ and $o$ on the algebras $\B(n,k,\Sc)$, and we show that $\Phi$ preserves these symmetries. The symmetries on $\A(n,k,\Sc)$ have an appealing visual interpretation as symmetries of the surface $[0,1] \times \Zc(n)$ on which the strands pictures are drawn.

\subsection*{Context and motivation}

This paper is a sequel to \cite{MMW1}, which lays much of the necessary groundwork for our main results here. A third paper \cite{MMW3} in the series is planned, in which we define bimodules over $\A(n,k,\Sc)$ for crossings and prove that they are compatible with Ozsv{\'a}th--Szab{\'o}'s bimodules in an appropriate sense.

We view our constructions as evidence for the existence of a generalized theory of bordered sutured Floer homology, allowing chord diagrams with circular backbones and correspondingly generalized Heegaard diagrams. Defining Heegaard Floer homology analytically in this level of generality has not been attempted, and appears to be quite difficult. However, various recent constructions should be special cases of such a generalized theory, including Lipshitz--Ozsv{\'a}th--Thurston's work in progress on a bordered $HF^-$ theory for $3$-manifolds with torus boundary \cite{LOTMinus} as well as Zibrowius' constructions in \cite{Claudius}. Our work should enable Ozsv{\'a}th--Szab{\'o}'s Kauffman-states functor to be directly compared with such a generalized theory once it exists, unifying the Kauffman-states functor with the rest of bordered Floer homology.

In \cite{ManionRouquier}, Rapha{\"e}l Rouquier and the first named author will define generalized strands algebras $\A(\Zc)$, including $\A(n,k,\Sc)$ as a special case. These algebras are candidates for the algebras appearing in a generalized bordered sutured theory, possibly after deformation as in \cite{LOTMinus}. The constructions of \cite{ManionRouquier} will also give $\A(\Zc)$ the structure of a $2$-representation of Khovanov's categorified $\mathcal{U}_q^+(\mathfrak{gl}(1|1))$. Thus, together with \cite{ManionRouquier}, this paper fills in (the positive half of) a missing piece from the discussion of \cite{ManionDecat}. While \cite{ManionDecat} shows that the bimodules from the Kauffman-states functor categorify $\gloneone$-intertwining maps between representations, no candidate was offered for the categorification of the $\gloneone$ actions on the representations. This paper allows us to replace Ozsv{\'a}th--Szab{\'o}'s algebra $\B(n,k,\Sc)$ (when desired) with a strands algebra $\A(n,k,\Sc)$ on which a categorified quantum-group action is given in \cite{ManionRouquier}. Alternatively, one could directly define a $2$-action on $\B(n,k,\Sc)$, and show that it is compatible with the $2$-action on $\A(n,k,\Sc)$ via $\Phi$; the first named author plans to do this once the general framework of \cite{ManionRouquier} is available.

This paper, along with \cite{MMW1}, only discusses the algebras coming from Ozsv{\'a}th--Szab{\'o}'s first paper \cite{OSzNew} on the Kauffman-states functor. A variant of these algebras was introduced in \cite{OSzNewer}, and further variants will be defined in \cite{OSzHolo,OSzPong}. It would be very interesting to find analogues of the results of this paper for any of these algebras, especially the ``Pong algebra'' from \cite{OSzPong}. As with Lipshitz--Ozsv{\'a}th--Thurston's constructions in \cite{LOTMinus}, the Pong algebra may give further insight into the algebraic structure required for a generalized bordered sutured theory as mentioned above.

For the reasons discussed in \cite{MMW1}, we will follow the standard conventions in bordered Floer homology and work over $\F_2$. While the bordered strands algebras have not been defined over $\Z$ in general, to the authors' knowledge, it is plausible that the constructions in this paper could be done over $\Z$. However, it is likely that an analytic generalization of bordered sutured Floer homology would be considerably more difficult over $\Z$ than over $\F_2$.

\subsection*{Organization}

We start with a brief review of some essential definitions and results from \cite{MMW1} in Section~\ref{sec:MMW1Review}. For motivation, we discuss chord diagrams and sutured surfaces in Section~\ref{sec:Chord diagrams}, giving generalized versions of Zarev's definitions.

In Section~\ref{sec:StrandsAlgDef}, we define the strands algebras $\A(n,k,\Sc)$ and give illustrations. Section~\ref{sec:StrandStructure} proves some properties that will be useful both here and in \cite{MMW3}; in particular, we give an explicit calculus for products and differentials of certain basis elements of $\A(n,k,\Sc)$. In Section~\ref{sec:gradings} we discuss gradings and prove Theorem~\ref{thm:IntroGradings}; in Section~\ref{sec:StrandSymmetries}, we define symmetries on $\A(n,k,\Sc)$.

In Section~\ref{sec:Homology} we compute the homology of $\A(n,k,\varnothing)$. In Section~\ref{sec:qi} we define the map $\Phi$ from $\B(n,k,\Sc)$ to $\A(n,k,\Sc)$ and prove Theorem~\ref{thm:IntroQuasiIso} by induction on $|\Sc|$; the base case of the induction ($|\Sc| = 0$) follows from the computation of $H_*(\A(n,k,\emptyset))$ in Section~\ref{sec:Homology}.
Finally, in Section~\ref{sec:PhiSymmetries} we show that $\Phi$ preserves the algebra symmetries.

\subsection*{Acknowledgments}

The authors would like to thank Francis Bonahon, Ko Honda, Aaron Lauda, Robert Lipshitz, Ciprian Manolescu, Peter Ozsv{\'a}th, Rapha{\"e}l Rouquier, and Zolt{\'a}n Szab{\'o} for many useful conversations. The first named author would especially like to thank Zolt{\'a}n Szab{\'o} for teaching him about the Kauffman-states functor.
\section{Background on Ozsv{\'a}th--Szab{\'o}'s algebras}\label{sec:MMW1Review}

We begin with a brief review of some important terminology and results from \cite{OSzNew, MMW1}.
As in \cite[\AppendixA]{MMW1}, given a commutative ring $\ring$, we define a $\ring$-algebra to be a ring $\mc A$ equipped with a ring homomorphism $\ring \to \mc A$.
Given a quiver $\Gamma$ (i.e.~a finite directed graph, allowed to have loops and multi-edges), one has a path algebra $\Path(\Gamma)$ formally spanned over $\ring$ by paths in $\Gamma$, with multiplication given by concatenation. If $V$ is the vertex set of $\Gamma$, one can view $\Path(\Gamma)$ as an algebra over $\IdemRing = \ring^V$, the ring of functions from $V$ into $\ring$.
The homomorphism $\IdemRing \to \Path(\Gamma)$ sends the indicator function of a vertex $\x$ to the empty path $\Ib_\x$ based at $\x$. The composition $\ring \to \IdemRing \to \Path(\Gamma)$ has image in the center of $\Path(\Gamma)$.

Equivalently, one may work in terms of a $\ring$-linear category $\ring\Gamma$ whose set of objects is $V$; see \cite[\secQuiverAlgs{} and \AppendixA]{MMW1} for a detailed review of this algebraic framework. Hom-spaces in this category are given by $\Ib_\x \Path(\Gamma) \Ib_\y$ for $\x,\y \in V$, and we have a decomposition
\[
\Path(\Gamma) \cong \bigoplus_{\x, \y \in V} \Ib_\x \Path(\Gamma) \Ib_\y.
\]

If $\mc R$ is a subset of $\Path(\Gamma)$, we can also consider the quotient of $\Path(\Gamma)$ by the two-sided ideal generated by $\mc R$. We will call this quotient $\Quiv(\Gamma, \mc R)$; it is still an algebra over $\IdemRing$, and we can still view it as a $\ring$-linear category. Gradings and differentials on $\Path(\Gamma)$ and $\Quiv(\Gamma, \mc R)$ can be specified by defining them on the edges of $\Gamma$, as long as the relations are homogeneous cycles, so that we can consider dg algebras defined by quiver generators and relations.

\begin{convention}
In this paper, as in \cite{MMW1}, the interval $[a,b]$ will denote the set of integers $i$ with $a \leq i \leq b$.
\end{convention}

Given a subset $\Sc\subset[1,n]$, we now recall the definition of Ozsv{\'a}th--Szab{\'o}'s algebra $\B(n,k,\Sc)$ in the language of \cite{MMW1}.

\begin{definition}\label{def:ReviewVnk}
For $n \geq 0$ and $0 \leq k \leq n$, let $V(n,k)$ denote the set of $k$-element subsets $\x \subset [0,n]$. Elements of $V(n,k)$ will sometimes be called \emph{I-states}, following \cite[Section 3.1]{OSzNew}. Taking $\ring = \F_2$, let $\Ib(n,k) = \F_2^{V(n,k)}$. 

\end{definition}

Elements of $V(n,k)$ are visualized as in Figure \ref{fig:IStates}. Elements of $[0,n]$ are thought of as regions between $n$ parallel horizontal lines, including the two unbounded regions above and below the lines. An I-state $\x$ is drawn by placing a dot in each region corresponding to an element of $\x$. Ozsv\'ath and Szab\'o use a $90^\circ$-rotated visualization; see Remark \ref{rem:Orientation}.

\begin{figure}
\includegraphics[scale=0.5]{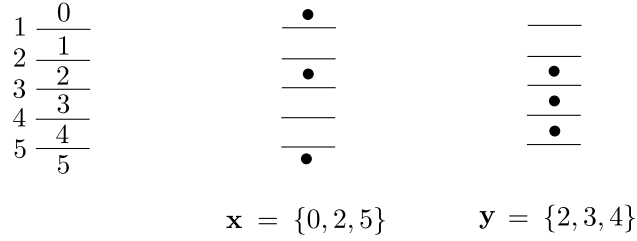}
\caption{Elements $\x$ and $\y$ of $V(5,3)$ viewed as dots occupying regions.  The left-most figure indicates the numbered labeling of the regions and the lines between them.}
\label{fig:IStates}
\end{figure}

\begin{definition}\label{def:ReviewGammanks}
The directed graph $\Gamma(n,k,\Sc)$ has set of vertices $V(n,k)$. It has the following edges:
\begin{itemize}
\item if $1 \leq i \leq n$ and $\x \cap \{i-1,i\} = \{i-1\}$, then $\Gamma(n,k,\Sc)$ has an edge from $\x$ to $(\x \sm\set{i-1})\cup\set{i}$ labeled $R_i$;
\item if $1 \leq i \leq n$ and $\x \cap \{i-1,i\} = \{i\}$, then $\Gamma(n,k,\Sc)$ has an edge from $\x$ to $(\x \sm\set{i})\cup\set{i-1}$ labeled $L_i$;
\item if $1 \leq i \leq n$ and $\x \in V(n,k)$, then $\Gamma(n,k,\Sc)$ has an edge from $\x$ to itself labeled $U_i$;
\item if $i \in \Sc$ and $\x \in V(n,k)$, then $\Gamma(n,k,\Sc)$ has an edge from $\x$ to itself labeled $C_i$.
\end{itemize}
\end{definition}

For each path $\gamma$ in $\Gamma(n,k,\Sc)$, we associate a noncommutative monomial $\mu(\gamma)$ in the letters $\{R_i, L_i, U_i, C_i\}$ by taking the labels of the edges of $\gamma$ in order. We extend $\mu$ additively to the path algebra of $\Gamma(n,k,\Sc)$.
\begin{definition}\label{def:ReviewGammaQuiverRels}
For $\x, \y \in V(n,k)$, let $\td{\mc R}_{\x, \y, \Sc}$ be the set of elements $a \in \Ib_\x \Path(\Gamma(n,k,\Sc)) \Ib_\y$ such that $\mu(a)$ is one of the following:
\begin{itemize}
\item $R_i U_j - U_j R_i$,\, $L_i U_j - U_j L_i$, or $U_i U_j - U_j U_i$ (the ``$U$ central relations'')
\item $R_i L_i - U_i$ or $L_i R_i - U_i$ (the ``loop relations'')
\item $R_i R_j - R_j R_i$,\, $L_i L_j - L_j L_i$, or $R_i L_j - L_j R_i$ for $|i-j|>1$ (the ``distant commutation relations'')
\item $R_i R_{i+1}$ or $L_{i+1} L_i$ (the ``two-line pass relations'')
\item $U_i$ when $a$ is represented by a loop $\gamma$ at a vertex $\x$ of $V(n,k)$ such that $\x \cap \{i-1,i\} = \varnothing$ (the ``$U$ vanishing relations")
\item $C_i^2$ (the ``$C$ vanishing relations")
\item $C_i A - A C_i$ for any label $A \in \{R_j, L_j, U_j, C_j\}$ (the ``$C$ central relations'').
\end{itemize}
\end{definition}

Let $\td{\mc R}_{\Sc} = \bigcup_{\x, \y \in V(n,k)} \td{\mc R}_{\x, \y, \Sc}$.
Let $\Quiv(\Gamma(n,k,\Sc), \td{\mc R}_{\Sc})$ denote the quotient of the path algebra of $\Gamma(n,k,\Sc)$ by the two-sided ideal generated by elements of $\td{\mc R}_{\Sc}$. Define a differential on $\Quiv(\Gamma(n,k,\Sc), \td{\mc R}_{\Sc})$ by declaring that $\partial(C_i) = U_i$.

We have a homological grading on $\Quiv(\Gamma(n,k,\Sc), \td{\mc R}_{\Sc})$ called the Maslov grading, as well as three related types of intrinsic gradings called Alexander gradings. We recall their definitions now.

\begin{definition}[{\cite[\secOSzgradings]{MMW1}}]\label{def:ReviewOSzGradings}
The gradings on $\Quiv(\Gamma(n,k,\Sc), \td{\mc R}_{\Sc})$ are defined as follows:
\begin{itemize}
\item Let $\{\tau_1, \ldots, \tau_n, \beta_1, \ldots, \beta_n\}$ denote the standard basis of $\Z^{2n}$. For an edge $\gamma$ of $\Gamma(n,k,\Sc)$, define the \emph{unrefined Alexander multi-degree} $w^{\un}(\gamma) \in (\Z)^{2n}$ to be
\begin{itemize}
\item[\textasteriskcentered] $w^{\un}(\gamma) = \tau_i$ if $\gamma$ has label $R_i$
\item[\textasteriskcentered] $w^{\un}(\gamma) = \beta_i$ if $\gamma$ has label $L_i$
\item[\textasteriskcentered] $w^{\un}(\gamma) = \tau_i + \beta_i$ if $\gamma$ has label $U_i$ or $C_i$.
\end{itemize}
Extend $w^{\un}$ additively to any path $\gamma \in \Path(\Gamma(n,k,\Sc))$.

\item Let $\{e_1,\ldots, e_n\}$ denote the standard basis of $\Z^n$. Define the \emph{refined Alexander multi-grading} on $\Quiv(\Gamma(n,k,\Sc), \td{\mc R}_{\Sc})$, a grading by $\left(\frac{1}{2}\Z\right)^n$, by applying the homomorphism $\Z^{2n} \to \left(\frac{1}{2}\Z\right)^n$ sending $\tau_i$ and $\beta_i$ to $\frac{e_i}{2}$ to the unrefined Alexander multi-degrees. For $a \in \Quiv(\Gamma(n,k,\Sc), \td{\mc R}_{\Sc})$ homogeneous, let $w(a)$ denote the refined Alexander multi-degree of $a$.
Explicitly, for an edge $\gamma$ of $\Gamma(n,k,\Sc)$, we have
\begin{itemize}
\item[\textasteriskcentered] $w(\gamma) = \frac{1}{2}e_i$ if $\gamma$ has label $R_i$ or $L_i$
\item[\textasteriskcentered] $w(\gamma) = e_i$ if $\gamma$ has label $U_i$ or $C_i$.
\end{itemize}
Let $w_i(a)$ denote the coefficient of $w(a)$ on the basis element $e_i$.

\item Define the \emph{single Alexander grading} on $\Quiv(\Gamma(n,k,\Sc), \td{\mc R}_{\Sc})$, a grading by $\frac{1}{2}\Z$, by applying the homomorphism $\left(\frac{1}{2}\Z\right)^n \to \frac{1}{2}\Z$ sending
\[
e_i \mapsto \begin{cases} 1 & \textrm{ if } i \notin \Sc \\ -1 & \textrm{ if } i \in \Sc \end{cases}
\]
to the refined Alexander multi-degrees. Let $\Alex(a)$ denote the single Alexander degree of $a$. We have
\[
\Alex(a) = \sum_{i \notin \Sc} w_i(a) - \sum_{i \in \Sc} w_i(a).
\]
Explicitly, for a single edge $\gamma$, we have
\begin{itemize}
\item[\textasteriskcentered] $\Alex(\gamma) = 1/2$ if $\gamma$ has label $R_i$ or $L_i$ and $i \notin \Sc$
\item[\textasteriskcentered] $\Alex(\gamma) = -1/2$ if $\gamma$ has label $R_i$ or $L_i$ and $i \in \Sc$
\item[\textasteriskcentered] $\Alex(\gamma) = 1$ if $\gamma$ has label $U_i$ and $i \notin \Sc$
\item[\textasteriskcentered] $\Alex(\gamma) = -1$ if $\gamma$ has label $U_i$ or $C_i$ and $i \in \Sc$.
\end{itemize}

\item Define the \emph{Maslov grading} on $\Quiv(\Gamma(n,k,\Sc), \td{\mc R}_{\Sc})$, a grading by $\Z$, by declaring
\[
\m(\gamma) = \#_C(\gamma) - 2 \sum_{i \in \Sc} w_i(\gamma)
\]
for a path $\gamma$ in $\Gamma(n,k,\Sc)$, where $\#_C(\gamma)$ is the number of edges in $\gamma$ labeled $C_i$ for some $i$. Explicitly, for a single edge $\gamma$, we have
\begin{itemize}
\item[\textasteriskcentered] $\m(\gamma) = 0$ if $\gamma$ has label $R_i$, $L_i$, or $U_i$ and $i \notin \Sc$
\item[\textasteriskcentered] $\m(\gamma) = -1$ if $\gamma$ has label $R_i$, $L_i$, or $C_i$ and $i \in \Sc$
\item[\textasteriskcentered] $\m(\gamma) = -2$ if $\gamma$ has label $U_i$ and $i \in \Sc$.
\end{itemize}
\end{itemize}
\end{definition}

\begin{remark}
Our use of the words ``refined'' and ``unrefined'' follows the standard usage in bordered Floer homology, in contrast with \cite{ManionKS} (see Section~\ref{sec:gradings} below).
\end{remark}

\begin{definition}
The dg algebra $\B(n,k,\Sc)$ is defined to be $\Quiv(\Gamma(n,k,\Sc), \td{\mc R}_{\Sc})$, with any of the above three Alexander gradings as an intrinsic grading (preserved by $\partial$) and the Maslov grading as a homological grading (decreased by $1$ by $\partial$).
\end{definition}

The above definition is justified by the following theorem.
\begin{theorem}[{\cite[\corOSzQuiverEquivDG]{MMW1}}]\label{thm:ReviewQuiverEquivalence}
The dg algebra $\B(n,k,\Sc)$ defined in \cite{OSzNew} is isomorphic to $\Quiv(\Gamma(n,k,\Sc), \td{\mc R}_{\Sc})$.
\end{theorem}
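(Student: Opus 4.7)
The plan is to construct an explicit dg $\Ib(n,k)$-algebra homomorphism $\Phi: \Quiv(\Gamma(n,k,\Sc), \td{\mc R}_{\Sc}) \to \B(n,k,\Sc)$, where the codomain is the algebra defined in \cite{OSzNew}, and then prove it is a bijection by comparing normal forms on the source with a known basis on the target. First I would define $\Phi$ on the free path algebra $\Path(\Gamma(n,k,\Sc))$ by sending the empty path $\Ib_\x$ at a vertex $\x \in V(n,k)$ to the OSz idempotent indexed by the I-state $\x$, and by sending each edge labeled $R_i$, $L_i$, $U_i$, or $C_i$ to the generator of the same name in $\B(n,k,\Sc)$. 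The incidence data of $\Gamma(n,k,\Sc)$ is set up precisely so that sources and targets match the idempotent actions on OSz's generators, so this yields a well-defined graded algebra map, and compatibility with the differential is immediate from OSz's formula $\de(C_i) = U_i$.

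Next I would verify that $\Phi$ kills every relation in $\td{\mc R}_{\Sc}$, making it descend to the quotient. Each family is a short local check inside $\B(n,k,\Sc)$: the loop relations $R_iL_i = U_i = L_iR_i$ and two-line-pass relations $R_iR_{i+1} = 0 = L_{i+1}L_i$ follow from the behavior of strands in OSz's picture; the $U$-central, distant commutation, $C$-central, and $C_i^2 = 0$ relations are either built into OSz's multiplication or a one-line calculation; the $U$-vanishing relations hold because OSz's $U_i$ acts as zero on any idempotent not meeting the lines bordering region $i$. Surjectivity of the resulting $\Phi$ is then immediate, since Definition \ref{def:ReviewGammanks} uses exactly OSz's generating set.

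The main obstacle is injectivity. My approach would be to use the relations in $\td{\mc R}_{\Sc}$ to reduce every path to a canonical form $\Ib_\x \cdot C^{\varepsilon} \cdot U^{\ru} \cdot M \cdot \Ib_\y$, where $\varepsilon \subseteq \Sc$ records which $C_i$'s appear (each at most once, by $C_i^2 = 0$), $\ru \in \Z_{\geq 0}^n$ records $U$-multiplicities, and $M$ is a word in $R_i$'s and $L_i$'s containing no subword of the form $R_iL_i$, $L_iR_i$, $R_iR_{i+1}$, or $L_{i+1}L_i$. The central and distant-commutation relations let me pull all $C_i$ and $U_j$ factors to the left and sort them; the loop relations absorb illegal $R_iL_i$ or $L_iR_i$ subwords into extra $U_i$ factors; the two-line-pass and $U$-vanishing relations then constrain the admissible triples $(M, \ru, \x)$. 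I would then match these normal forms bijectively with the basis of $\B(n,k,\Sc)$ given in \cite[Section 3.3]{OSzNew}, indexed by the same ``shape plus multiplicity'' data, concluding that $\Phi$ sends a spanning set of the source to a basis of the target.

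The delicate step will be showing that the normal-form procedure is well defined, i.e.\ independent of the order in which relations are applied. The cleanest route is a Bergman diamond-lemma argument, reducing to a finite list of overlapping critical pairs and verifying confluence in each case; alternatively one can intrinsically define the normal form from the combinatorial ``strand picture'' underlying a path and then check by hand that each relation either preserves this picture or sends the path to zero. Either way, once confluence is established, dimension-counting against the OSz basis in each $\Ib_\x \cdot (-) \cdot \Ib_\y$ summand completes the proof.
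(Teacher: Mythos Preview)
This paper does not prove Theorem~\ref{thm:ReviewQuiverEquivalence}; it is quoted verbatim from the companion paper \cite[Corollary~3.14]{MMW1} as a background result, with no argument given here. There is therefore nothing in the present paper to compare your proposal against.

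That said, your outline is a sound and standard strategy. One hint about how \cite{MMW1} actually argues, visible from this paper: the remark following Definition~\ref{def:ReviewGammaXY} says that the explicit canonical paths $\gamma_{\x,\y}$ are used in \cite[Section~2.4]{MMW1} to establish the quiver description. So rather than running a Bergman diamond-lemma confluence check, the published proof more likely builds the inverse map explicitly---sending an Ozsv\'ath--Szab\'o basis element (as in \cite[Proposition~3.7]{OSzNew}) to $\gamma_{\x,\y}$ times the appropriate $U_i$ and $C_i$ loops---and then verifies the two composites are identities. Your normal-form approach would land in essentially the same place, and the ``delicate'' confluence step you flag can be sidestepped entirely by constructing this explicit inverse instead of proving confluence abstractly.
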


One can also consider idempotent truncations of the algebras $\B(n,k,\Sc)$, which we review below.
\begin{definition}
For $0 \leq k \leq n$, define $\B_r(n,k,\Sc)$ to be
\[
\bigg( \sum_{\x: 0 \notin \x} \Ib_{\x} \bigg) \B(n,k,\Sc) \bigg( \sum_{\x: 0 \notin \x} \Ib_{\x} \bigg).
\]
Similarly, define $\B_l(n,k,\Sc)$ to be
\[
\bigg( \sum_{\x: n \notin \x} \Ib_{\x} \bigg) \B(n,k,\Sc) \bigg( \sum_{\x: n \notin \x} \Ib_{\x} \bigg).
\]
For $0 \leq k \leq n-1$, define $\B'(n,k,\Sc)$ to be
\[
\bigg( \sum_{\x: 0,n \notin \x} \Ib_{\x} \bigg) \B(n,k,\Sc) \bigg( \sum_{\x: 0,n \notin \x} \Ib_{\x} \bigg).
\]
One can also describe these algebras in terms of full subcategories of the dg category corresponding to $\B(n,k,\Sc)$; see \cite[\defTruncatedOSzAlgs]{MMW1}.
\end{definition}

\begin{remark}
As defined, $\Quiv(\Gamma(n,k,\Sc), \td{\mc R}_{\Sc})$ is a dg algebra over $\Ib(n,k)$. However, we can view it as an algebra over $\F_2[U_1,\ldots,U_n]^{V(n,k)}$ via the ring homomorphism 
\[
\F_2[U_1,\ldots,U_n]^{V(n,k)} \to \Quiv(\Gamma(n,k,\Sc), \td{\mc R}_{\Sc})
\]
sending $p \Ib_{\x}$, where $p$ is a monomial in the $U_i$ variables, to a path at $\x$ consisting of a $U_i$ loop for each factor of $p$ (in any order). The $U$ central relations in $\td{\mc R}_{\Sc}$ ensure that this homomorphism is well-defined and that the natural map 
\[
\F_2[U_1,\ldots,U_n] \to \F_2[U_1,\ldots,U_n]^{V(n,k)} \to \Quiv(\Gamma(n,k,\Sc), \td{\mc R}_{\Sc})
\]
has image in in the center of $\Quiv(\Gamma(n,k,\Sc), \td{\mc R}_{\Sc})$, so that we may view $\Quiv(\Gamma(n,k,\Sc), \td{\mc R}_{\Sc})$ as an $\F_2[U_1,\ldots,U_n]$-linear category. With this algebra structure understood, Theorem~\ref{thm:ReviewQuiverEquivalence} gives us an isomorphism of $\F_2[U_1,\ldots,U_n]^{V(n,k)}$-algebras.
\end{remark}

\begin{remark}
\label{rem:Orientation}
In Ozsv{\'a}th--Szab{\'o}'s conventions, the algebra $\B(n,k,\Sc)$ arises when one has an oriented tangle diagram with $n$ bottom (or top) endpoints, such that endpoint $i$ is oriented upward if and only if $i \in \Sc$. In our conventions, these diagrams will be rotated $90^{\circ}$ clockwise, and endpoint $i$ will be oriented rightward if and only if $i \in \Sc$ (see \cite[\remNinetyDegRot]{MMW1}).
\end{remark}

Next, we recall some structural definitions for Ozsv{\'a}th--Szab{\'o}'s algebras that were first introduced in \cite[Section 3.2]{OSzNew}. 

For $\x \in V(n,k)$ and $a \in [1,k]$, we let $x_a$ denote the $a^{th}$ element of $\x$ in increasing order. For $\x, \y \in V(n,k)$, define 
\[
v_i(\x,\y) := |\y \cap [i,n]| - |\x \cap [i,n]|.
\]
Let $|v|_i(\x,\y) := |v_i(\x,\y)|$.
\begin{definition}[{\cite[Definition 3.5]{OSzNew}}]\label{def:ReviewNotFarCrossed}
For $\x,\y \in V(n,k)$, we say that $\x$ and $\y$ are \emph{far} if there is some $a \in [1,k]$ with $|x_a - y_a| > 1$. Otherwise, we say that $\x$ and $\y$ are \emph{not far}.
\end{definition}
It follows from {\cite[Proposition 3.7]{OSzNew}} that if $\x$ and $\y$ are far then $\Ib_\x \B(n,k,\Sc) \Ib_\y=0$.

\begin{definition}
\label{def:CL}
 If $\x$ and $\y$ are not far, we say that $i \in [1,n]$ is a \emph{crossed line} if $v_i(\x,\y) \neq 0$.  We denote the set of crossed lines from $\x$ to $\y$ by $\CL{\x,\y}$.
\end{definition}

\begin{definition}
Given $\x, \y \in V(n,k)$, we say that a coordinate $i \in [0,n]$ is \emph{fully used} if $i \in \x \cap \y$. Otherwise, we say that $i$ is \emph{not fully used}.
\end{definition}

\begin{definition}[{\cite[Definition 3.6]{OSzNew}}]\label{def:ReviewGenInts}
Let $\x, \y \in V(n,k)$ be not far. We say that $[j+1,j+l]$ is a \emph{generating interval} for $\x$ and $\y$ if:
\begin{itemize}
\item $j$ and $j+l$ are not fully used coordinates,
\item for all $i \in [j+1,j+l-1]$, $i$ is a fully used coordinate, and
\item for all $i \in [j+1,j+l]$, $i$ is not a crossed line.
\end{itemize}
We say that $[[1,l]$ is a \emph{left edge interval} for $\x$ and $\y$ if coordinate $l$ is not fully used, but coordinate $i$ is fully used for all $i \in [0,l-1]$. Similarly, we say that $[n-l+1,n]]$ is a \emph{right edge interval} for $\x$ and $\y$ if coordinate $n-l$ is not fully used, but coordinate $i$ is fully used for all $i \in [n-l+1,n]$. In all of the above cases, we say that the \emph{length} of the generating or edge interval is $l$. Finally, if $\x = \y = [0,n]$, we say that $[[1,n]]$ is a \emph{two-faced edge interval} for $\x$ and $\y$ of length $n$.
\end{definition}

We have the following proposition from \cite{MMW1}.

\begin{proposition}[{\cite[\propQuartumNonDatur]{MMW1}}]
\label{prop:QuartumNonDatur}
Given $\x, \y \in V(n,k)$ not far, for each $i \in [1,n]$ exactly one of the following is true:
\begin{enumerate}
\item \label{it:QND1} $i\in\CL{\x,\y}$ (line $i$ is crossed);
\item \label{it:QND2} there exists a unique generating interval $G$ such that $i \in G$;
\item \label{it:QND3} there exists a unique (left, right, or two-faced) edge interval $G$ such that $i \in G$.
\end{enumerate}
\end{proposition}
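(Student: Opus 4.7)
The plan is to prove existence and uniqueness separately by constructing, for any non-crossed line $i$, a canonical interval around it and then checking this construction matches exactly one of the three stated possibilities.

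\textbf{Existence.} Fix $i \in [1,n]$. If $v_i(\x,\y) \neq 0$, case (1) holds. Otherwise $v_i = 0$, and I construct an interval containing $i$ by walking outward as far as possible through fully used coordinates. Concretely, I set
\[
j := \max\bigl(\{-1\} \cup \{c \in [0, i-1] : c \notin \x \cap \y\}\bigr), \quad j' := \min\bigl(\{n+1\} \cup \{c \in [i, n] : c \notin \x \cap \y\}\bigr),
\]
so that coordinates $j+1, \ldots, j'-1$ are all fully used, and, when $j \geq 0$ or $j' \leq n$, those endpoints are not fully used. Next I verify that no line in $[j+1, j']$ is crossed. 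The key tool is the telescoping identity
\[
v_c(\x,\y) - v_{c+1}(\x,\y) = \mathbb{1}[c \in \y] - \mathbb{1}[c \in \x],
\]
which vanishes whenever coordinate $c$ is fully used, together with the boundary values $v_0 = 0$ and $v_{n+1} = 0$. These together force $v_{j+1} = v_{j+2} = \cdots = v_{j'}$ to equal a common value, which must be $0$ because either $i$ lies in this range or (in the edge cases) one of the boundary values $v_0, v_{n+1}$ does.

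\textbf{Case split.} A case analysis on $(j, j')$ produces the three possibilities: (a) $j \geq 0$ and $j' \leq n$ gives a generating interval $[j+1, j']$; (b) $j = -1$ and $j' \leq n$ gives a left edge interval $[[1, j']$; (c) $j \geq 0$ and $j' = n+1$ gives a right edge interval $[j+1, n]]$; (d) $j = -1$ and $j' = n+1$ forces $\x \cap \y \supseteq [0,n]$ and hence $\x = \y = [0,n]$, giving the two-faced edge interval. In each case the interval contains $i$ and satisfies the axioms of \defRecursive\ (well, Definition \ref{def:ReviewGenInts}), the crossed-line condition following from the telescoping calculation.

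\textbf{Uniqueness.} The three cases are mutually exclusive: (1) requires $v_i \neq 0$, while (2) and (3) both require $v_c = 0$ for every line $c$ in the interval. Generating intervals are distinguished from edge intervals by whether both endpoints are ``interior'' not-fully-used coordinates in $[0,n]$ or whether at least one is the boundary $0$ or $n$ (at which the adjacent coordinate is fully used, as recorded by the $[[$ or $]]$ notation). For uniqueness within a given type, the leftward and rightward walks from $i$ through fully used coordinates terminate deterministically at the nearest not-fully-used coordinate (or boundary), so the interval is uniquely determined. The main technical step is the telescoping argument; ``not far'' is used only mildly, to ensure that the structure of $v_i$ is as expected (and for the degenerate case (d) to be consistent with $\x = \y$).
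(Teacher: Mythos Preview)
The paper does not prove this proposition; it is quoted from the companion paper \cite[\propQuartumNonDatur]{MMW1} as background, so there is no in-paper proof to compare against.

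Your argument is correct and is the natural direct proof. The telescoping identity $v_c - v_{c+1} = \mathbb{1}[c\in\y] - \mathbb{1}[c\in\x]$, together with the deterministic construction of $j$ and $j'$ as the nearest not-fully-used coordinates on either side of line $i$, gives existence cleanly. Two small points you might make more explicit. First, for mutual exclusivity of (2) and (3) (and of the different edge types within (3)), the cleanest justification is that \emph{any} generating or edge interval containing $i$ must, by its defining conditions, have the same $j,j'$ as your construction: its interior fully-used coordinates prevent any closer not-fully-used coordinate, and its boundary coordinate (or the fully-used $0$ or $n$ in the edge case) forces $j$ or $j'$ to be exactly where you put it. Hence only one of your subcases (a)--(d) can arise, and the interval is determined. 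Second, your closing remark is accurate: the ``not far'' hypothesis is not used in the argument itself; it is present only because Definitions~\ref{def:CL} and \ref{def:ReviewGenInts} are stated under that standing assumption.
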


For generating intervals, we use the following shorthand notation.
\begin{definition}
If $G = [j+1,j+l]$ is a generating interval for $\x$ and $\y$, then we let $p_G$ denote the monomial $U_{j+1} \cdots U_{j+l}$, an element of $\F_2[U_1,\ldots,U_n]$.
\end{definition}

Given $\x, \y \in V(n,k)$ that are not far, \cite[Proposition 3.7]{OSzNew} implies that $\Ib_{\x} \B(n,k,\Sc) \Ib_{\y}$ decomposes as a tensor product of chain complexes, with factors for the generating and edge intervals for $\x$ and $\y$ (see \cite[\corIBItoTensorProduct]{MMW1}). The factors are themselves certain special cases of the algebras $\B(n,k,\Sc)$ which we called generating and edge algebras in \cite{MMW1}, although the tensor product decomposition does not respect the multiplicative structure. See \cite[\secOSzSplittingTheorem]{MMW1} for more details.

In \cite{MMW1} we used this tensor product decomposition to compute the homology of $\B(n,k,\Sc)$; we review the result of this computation. First, we recall the definition of certain paths $\gamma_{\x,\y}$ in $\Gamma(n,k,\Sc)$.

\begin{definition}[{\cite[\defRecursive]{MMW1}}]\label{def:ReviewGammaXY}
Let $\x,\y \in V(n,k)$ be not far. Define a path $\gamma_{\x,\y}$ from $\x$ to $\y$ in $\Gamma(n,k,\Sc)$ by recursion on $k - |\x \cap \y|$ as follows.
\begin{itemize}
\item If $k - |\x \cap \y| = 0$, then $\x = \y$; define $\gamma_{\x,\y}$ to be the empty path based at $\x = \y$.
\item If $x_a < y_a$ for some $a \in [1,k]$, let $a$ be the largest such index. We have an edge $\gamma$ from $\x$ to $\x' = (\x \setminus \set{x_a}) \cup \{x_a + 1\}$ with label $R_{x_a + 1}$. Since $\x$ and $\y$ are not far, we have $y_a = x'_a$, so $k - |\x' \cap \y| = k - |\x \cap \y| - 1$. It follows that $\gamma_{\x',\y}$ is defined. Let $\gamma_{\x,\y} = \gamma \cdot \gamma_{\x',\y}$.
\item If $x_a \geq y_a$ for all $a \in [1,k]$ and $x_a > y_a$ for some $a$, let $a$ be the smallest such index. We have an edge $\gamma$ from $\x$ to $\x' = (\x \setminus \set{x_a}) \cup \{x_a - 1\}$ with label $L_{x_a}$. As before, we have $y_a = x'_a$. Thus, $k - |\x' \cap \y| = k - |\x \cap \y| - 1$ and $\gamma_{\x',\y}$ is defined. Let $\gamma_{\x,\y} = \gamma \cdot \gamma_{\x',\y}$ as above.
\end{itemize}
\end{definition}

\begin{remark}
In fact, the paths $\gamma_{\x,\y}$ can be defined even when $\x$ and $\y$ are far; in \cite[\secEqvofdescriptions]{MMW1}, we use them to prove the validity of a quiver description of Ozsv{\'a}th--Szab{\'o}'s algebra $\B_0(n,k)$.
\end{remark}

\begin{theorem}[{\cite[\thmOSzHomology]{MMW1}}]\label{thm:ReviewOSzHomology}
For $\x,\y \in V(n,k)$ that are not far, let $[j_1+1,j_1+l_1],\ldots,[j_b+1,j_b+l_b]$ be the generating intervals from $\x$ to $\y$, and let $p_1,\ldots,p_b$ be their monomials $p_G$. Choose an element $i_a \in [j_a+1,j_a+l_a] \cap \Sc$ for all $a$ such that this intersection is nonempty. We have a basis for $\Ib_{\x} H_*(\B(n,k,\Sc)) \Ib_{\y}$ in bijection with elements
\[
p \prod_{a=1}^b \left( \frac{C_{i_a}p_a}{U_{i_a}} \right)^{\varepsilon_a},
\]
where $p$ is a monomial in $\{U_i \,|\, i \in [1,n] \setminus \Sc\}$ not divisible by $p_a$ for any $a$ and $\varepsilon_a \in \{0,1\}$ is zero for $a$ such that $[j_a+1,j_a+l_a] \cap \Sc = \varnothing$. The bijection sends the element specified by ($p = 1$, $\varepsilon_a = 0$ for all $a$) to the element 
\[
[\gamma_{\x,\y}] \in \Ib_{\x} H_*(\B(n,k,\Sc)) \Ib_{\y}
\]
where $\gamma_{\x,\y}$ is the path of Definition~\ref{def:ReviewGammaXY}. It sends a more general element to the corresponding product of $[\gamma_{\x,\y}]$ with $U_i$ and $C_i$ loops, in any order.
\end{theorem}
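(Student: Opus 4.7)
The plan is to exploit the tensor product decomposition of \cite[\corIBItoTensorProduct]{MMW1}: for $\x,\y \in V(n,k)$ not far, the chain complex $\Ib_{\x} \B(n,k,\Sc) \Ib_{\y}$ splits as a tensor product of small chain complexes, one factor for each generating interval $G_a = [j_a+1, j_a+l_a]$ and one for each (left, right, or two-faced) edge interval. By the K{\"u}nneth formula it then suffices to compute the homology of each such factor separately and multiply the resulting bases.

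For each interval factor I would first use the loop relations together with the $U$-central, distant commutation, two-line pass, and $U$-vanishing relations to write down an explicit basis: a ``structural'' element (the local portion of $\gamma_{\x,\y}$) multiplied by elements of the form $q \cdot \prod_{i \in T} C_i$, where $q$ is a monomial in the $U_i$'s with $i$ in the interval and $T \subseteq G_a \cap \Sc$ (using $C_i^2 = 0$ and the centrality of the $C_i$'s). Since $\partial C_i = U_i$, each factor looks like a Koszul-style complex: a polynomial algebra in $\{U_i : i \in G_a \sm \Sc\}$ (all of whose generators survive, as no null-homotopy is available) tensored with a Koszul-type piece built from $\{U_i, C_i : i \in G_a \cap \Sc\}$.

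The main obstacle is analyzing this Koszul piece carefully, interval by interval. For an edge interval the two-line pass and $U$-vanishing relations collapse the factor until only the ``structural'' class survives in homology, modulo the polynomial freedom in $\{U_i : i \notin \Sc\}$. For a generating interval $G_a$, the surviving classes in $C$-degree $0$ turn out to be exactly those whose monomial part is not divisible by $p_a$ (monomials divisible by $p_a$ become boundaries of elements with a $C$), while in $C$-degree $1$ a single class $(C_{i_a} p_a)/U_{i_a}$ survives for each choice $i_a \in G_a \cap \Sc$, with different choices of $i_a$ giving cohomologous classes via an explicit chain homotopy. Higher $C$-degree classes are all boundaries.

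Assembling the per-interval homologies via K{\"u}nneth then produces the basis described in the statement: the class $[\gamma_{\x,\y}]$ corresponds to $(p, \varepsilon_a) = (1, 0)$ for all $a$, and general basis elements arise by multiplying $[\gamma_{\x,\y}]$ by $U_i$-loops for $i \notin \Sc$ (subject to the global ``not divisible by $p_a$'' constraint coming from the per-interval calculations) and by $(C_{i_a} p_a)/U_{i_a}$ decorations for the generating intervals meeting $\Sc$. The most delicate step is the per-interval Koszul bookkeeping that converts the local nonvanishing criteria into the concise global condition appearing in the theorem.
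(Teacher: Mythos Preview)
Your overall strategy --- the tensor decomposition of \cite[\corIBItoTensorProduct]{MMW1}, K{\"u}nneth, then a per-interval computation --- is exactly what \cite{MMW1} does. The gap is in your per-interval analysis for a generating interval $G_a$ with $G_a \cap \Sc \neq \varnothing$. You propose to factor that complex as $\F_2[U_i : i \in G_a \sm \Sc]$ tensored with a standard Koszul piece built from $\{U_i, C_i : i \in G_a \cap \Sc\}$, but this factorization fails: the relation $p_{G_a} = \prod_{i \in G_a} U_i = 0$ involves \emph{all} the $U_i$'s in the interval and cannot live on either tensor factor. Worse, a standard Koszul complex on a regular sequence is acyclic in positive degrees, so your proposed factorization would produce no $H_1$ at all --- the class $C_{i_a} p_a / U_{i_a}$ would have nowhere to come from. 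Relatedly, your description of $H_0$ as ``monomials not divisible by $p_a$'' is wrong when $G_a \cap \Sc \neq \varnothing$: with $G_a = [1,2]$ and $1 \in \Sc$, the monomial $U_1$ is not divisible by $U_1 U_2$ yet is the boundary of $C_1$, so it dies in homology.

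The fix is to view the per-interval complex as the Koszul complex on $(U_i : i \in G_a \cap \Sc)$ over the ring $R = \F_2[U_i : i \in G_a]/(p_{G_a})$ rather than over a polynomial ring. Its homology can then be read off from the long exact sequence induced by $0 \to S \xrightarrow{p_{G_a}} S \to R \to 0$ with $S = \F_2[U_i : i \in G_a]$, using that the Koszul complex over $S$ resolves $\F_2[U_i : i \in G_a \sm \Sc]$: one finds $H_0 \cong H_1 \cong \F_2[U_i : i \in G_a \sm \Sc]$ and $H_j = 0$ for $j \geq 2$, with $H_1$ generated by $C_{i_a} p_{G_a}/U_{i_a}$. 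The argument in \cite[\lemOSzGenIntHomology]{MMW1} (compare also the proof of Theorem~\ref{thm:FinalQIThm} in the present paper) instead does this inductively, peeling off one $C_i$ at a time via the mapping-cone description; the two approaches are equivalent, but the inductive one sidesteps the need to identify the connecting homomorphism.
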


We recall that the values of $n$, $k$, and $\Sc$ for which $\B(n,k,\Sc)$ is formal (when given the refined or unrefined Alexander multi-grading) were determined in \cite[\secFormality]{MMW1}. Given the results of this paper, the algebra $\A(n,k,\Sc)$ will be formal for the same values of $n,k,\Sc$, as stated in more detail in Corollary~\ref{cor:StrandsFormality}.

Finally, the algebras $\B(n,k,\Sc)$ have certain symmetries as described in \cite[Section 3.6]{OSzNew}. In our notation, these symmetries are called $\rho$ and $o$ (our $\rho$ is Ozsv{\'a}th--Szab{\'o}'s $\mc R$).

\begin{definition}\label{def:ReviewSymmetries}
On the vertex set $V(n,k)$ of $\Gamma(n,k,\Sc)$, define $\rho(\x) = \{n-i \,|\, i \in \x\}$ and $o(\x) = \x$. For $\Sc \subset [1,n]$, define $\rho(\Sc) = \{n+1-i \,|\, i \in \Sc\}$. Define
\[
\rho: \B(n,k,\Sc) \to \B(n,k,\rho(\Sc))
\]
by sending $\Ib_{\x}$ to $\Ib_{\rho(\x)}$ and sending edges labeled $R_i$, $L_i$, $U_i$, and $C_i$ to edges labeled $L_{n+1-i}$, $R_{n+1-i}$, $U_{n+1-i}$, and $C_{n+1-i}$ respectively. Define
\[
o: \B(n,k,\Sc) \to \B(n,k,\Sc)^{\op}
\]
by sending $\Ib_{\x}$ to $\Ib_{\x}$ and sending edges labeled $R_i$, $L_i$, $U_i$, and $C_i$ to edges labeled $L_i$, $R_i$, $U_i$ and $C_i$ respectively. We have both $\rho^2 = \id$ and $o^2 = \id$, properly interpreted. Restricting to the truncated algebras we get
\[
\rho: \B_r(n,k,\Sc) \xrightarrow{\cong} \B_l(n,k,\rho(\Sc)),
\]
\[
\rho: \B_l(n,k,\Sc) \xrightarrow{\cong} \B_r(n,k,\rho(\Sc)),
\]
and
\[
\rho: \B'(n,k,\Sc) \xrightarrow{\cong} \B'(n,k,\rho(\Sc))
\]
as well as 
\[
o: \B_r(n,k,\Sc) \xrightarrow{\cong} \B_r(n,k,\Sc)^{\op},
\]
\[
o: \B_l(n,k,\Sc) \xrightarrow{\cong} \B_l(n,k,\Sc)^{\op},
\]
and
\[
o: \B'(n,k,\Sc) \xrightarrow{\cong} \B'(n,k,\Sc)^{\op}.
\]
\end{definition}

We will relate the symmetries $\rho$ and $o$ to symmetries on the strands algebras $\A(n,k,\Sc)$ and their truncations, with visual interpretations, in Section~\ref{sec:PhiSymmetries} (see also Section~\ref{sec:StrandSymmetries}). 

\section{Chord diagrams and sutured surfaces}
\label{sec:Chord diagrams}

We now introduce a common generalization of Zarev's arc diagrams and of our example of interest (see Section \ref{sec:ExampleofInterest}).

\subsection{Definitions}

\begin{definition}\label{def:chord diagram}
A \emph{chord diagram} $\mc Z = (\mc Z, B, M)$ is a triple consisting of:
\begin{itemize}
\item a compact oriented $1$-manifold $\mc Z$;
\item a finite subset $B \subset \mc Z$ of basepoints, consisting of $2m$ points;
\item an involution $M$ on $B$ with no fixed points, called a \emph{matching}, which matches the basepoints in pairs.
\end{itemize}
The connected components of $\mc Z$ are called \emph{backbones}, and more specifically \emph{circular backbones} if they are closed and \emph{linear backbones} if they are not.

\begin{figure}
\includegraphics[scale=0.5]{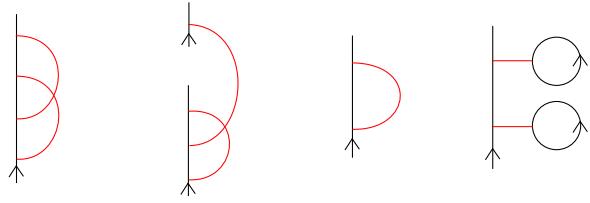}
\caption{Examples of chord diagrams.}
\label{fig:ChordDiagExamples}
\end{figure}

\begin{example}
Four examples of chord diagrams are represented visually in Figure~\ref{fig:ChordDiagExamples}. The backbones are shown in black; pairs of points matched by $M$ are connected by red arcs. The set $B$ of basepoints is the set of endpoints of the red arcs. The first three diagrams only have linear backbones; the fourth diagram has a linear backbone and two circular backbones. By convention, we will assume that all linear backbones drawn vertically in the plane are oriented upwards.
\end{example}

\begin{remark}
Chord diagrams, in several variants, appear in many places in mathematics. Perhaps the most common meaning of ``chord diagram'' is the special case of Definition~\ref{def:chord diagram} in which $\mc Z$ consists of a single circle; such chord diagrams appear (for example) in the study of Vassiliev knot invariants (see \cite{Kontsevich}). 

Like fatgraphs (a related notion), chord diagrams are often used to represent surfaces. Penner has a detailed language for referring to features of these diagrams, and we follow his terminology. The ``backbones'' terminology is part of this language; see e.g. \cite{ACPRS} for a discussion of chord diagrams with multiple backbones appearing in Teichm{\"u}ller theory and the combinatorics of RNA in biology.

In Heegaard Floer homology, chord diagrams are often called arc diagrams, following Zarev \cite{BSFH}. The connection between surface representations in bordered Floer homology and chord diagrams as studied e.g.~by Penner has already been noted in \cite[Remark 3.1]{LOTBimod}.
\end{remark}

Zarev considers only chord diagrams with no circular backbones; he interprets the basepoints on Lipshitz--Ozsv{\'a}th--Thurston's pointed matched circles as places to cut the circle open, obtaining linear backbones. Recent work of Ozsv{\'a}th--Szab{\'o} and Lipshitz--Ozsv{\'a}th--Thurston defining ``minus versions'' of bordered Heegaard Floer homology in various cases, including the constructions of \cite{OSzNew,OSzNewer} forming the subject of our study, have made use of diagrams with circular backbones (and without basepoints). 

\begin{figure}
\includegraphics[scale=0.5]{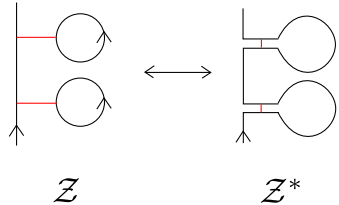}
\caption{The dual of a chord diagram.}
\label{fig:DualOfAChordDiag}
\end{figure}

Following \cite[Construction 8.18]{LOTMorphism}, a chord diagram $\mc Z$ has a \emph{dual} $\mc Z^* = (\mc Z^*,B^*,M^*)$ where $\mc Z^*$ is obtained by performing $1$-dimensional $0$-surgery on $\mc Z$ along $B$ according to $M$, $B^*$ is the union of the boundaries of the co-cores $\{1/2\} \times [0,1]$ of the surgery handles (each surgery replaces $S^0 \times D^1$ in $\mc Z$ with $D^1 \times S^0$ in $\mc Z^*$ and the surgery handle is $D^1 \times D^1$), and $M^*$ matches the two points of $B^*$ coming from each surgery handle. An example is shown in Figure~\ref{fig:DualOfAChordDiag}.

A chord diagram $\mc Z$ is called \emph{non-degenerate} if $\mc Z^*$ has no circular backbones. In Heegaard Floer homology, this condition has been most studied in the case where $\mc Z$ also has no circular backbones. A nice feature of the class of all chord diagrams, with both linear and circular backbones, is that the duality $\mc Z \leftrightarrow \mc Z^*$ gives an involution on this set of diagrams with no non-degeneracy conditions required.
\end{definition}

\subsection{Sutured surfaces}
Chord diagrams, viewed up to a natural equivalence relation given by chord-slides (sometimes called arc-slides), are a diagrammatic way of representing what Zarev calls sutured surfaces, in analogy with Gabai's sutured $3$-manifolds. Sutured surfaces share many similarities with bordered surfaces as in \cite{PennerDTT} as well as with open-closed cobordisms as in \cite{LaudaPfeiffer}. For topological motivation, we review how to get a sutured surface from a chord diagram in this section.

The following definition of sutured surface is slightly different from that of \cite{BSFH} in that we allow $S_+$ and $S_-$ to have closed components.

\begin{definition}
A \emph{sutured surface} is a triple $(F, \Lambda, S_+)$ consisting of the following data:
\begin{itemize}
\item a compact oriented surface $F$;
\item a finite collection $\Lambda \subset \de F$ of disjoint open intervals, each of which contains a point called a \emph{suture};
\item a splitting of $\de F \setminus \Lambda$ into compact submanifolds $S_+ \sqcup S_-$, such that for each component $C$ of $\Lambda$, $\de C$ intersects both $S_+$ and $S_-$.
\end{itemize}
\end{definition}

If $\mc F =(F,\Lambda,S_+)$ is a sutured surface, its dual is the sutured surface $\mc F^* = (F,\Lambda,S_-)$ in which the roles of $S_+$ and $S_-$ have been interchanged.

\begin{definition}\label{def:sutured surface for chord diagram}
Given a chord diagram $\mc Z = (\mc Z, B, M)$, we can build a sutured surface $\mc F(\mc Z) = (F(\mc Z), \Lambda)$ associated to it as follows:
\begin{itemize}
\item $F(\mc Z)$ is obtained from $\mc Z \times [0,1]$ by attaching a $1$-handle between $(z_1, 1)$ and $(z_2, 1)$ for every pair of matched basepoints $z_1$ and $z_2$ in an orientation-preserving manner;
\item $\Lambda = \de \mc Z \times (0,1)$, with sutures given by $\de \mc Z \times \{1/2\}$;
\item $S_+ = \mc Z \times \set{0}$.
\end{itemize}
\end{definition}

A sutured surface $F$ can be represented by a chord diagram if and only if each component of $F$ (not $\de F$) intersects $S_+$ and $S_-$ nontrivially.

We have $\mc F(\mc Z^*) = (\mc F(\mc Z))^*$. Thus, the non-degeneracy condition that $\mc Z^*$ has no circular backbones is equivalent to requiring that $S_-$ has no closed components in the sutured surface $(F,\Lambda,S_+)$ associated to $\mc Z$.

\subsection{The example of interest}
\label{sec:ExampleofInterest}

\begin{definition}\label{def:Z(n)}
We define the chord diagram $\mc Z(n) = (\mc Z(n), B, M)$ as follows.
\[
\mc Z(n) := [0,1] \sqcup S^1 \sqcup \cdots \sqcup S^1 \sqcup [0,1],
\]
where we take $n$ copies of $S^1:=[0,2]/(0 \sim 2)$. We can label the copies of $S^1$ from $1$ to $n$, and we denote the $i$-th copy of $S^1$ (i.e., the $i$-th circular backbone) by $S^1_i$. By analogy, we denote the two linear backbones by $[0,1]_0$ and $[0,1]_{n+1}$. For each $i=1, \ldots, n$, let $z_i^-:=[0] \in S^1_i$ and $z_i^+:=[1]\in S^1_i$ be two distinct basepoints in $S^1_i$. We also fix points $z_0^+ \in \Int([0,1]_0)$ and $z_{n+1}^- \in \Int([0,1]_{n+1})$. We define a matching $M$ on the set of basepoints $B = \set{z_0^+, z_1^\pm, \ldots, z_n^\pm, z_{n+1}^-}$ by matching $z_i^+$ with $z_{i+1}^-$, i.e.,
\[
M(z_i^+) = z_{i+1}^-.
\]
\end{definition}
Notice that we write our circles $S^1$ as $[0,2]/\sim$, rather than $[0,1]/\sim$; this allows each basepoint on each $S^1_i$ to occupy an integer value, easing various notations throughout the paper. Note that in particular the length of $S^1$ is $2$.

The sutured surface $\mc F(\mc Z(n))$ is a connected genus-zero surface with $n+1$ boundary components. One boundary component has four sutures; the rest have no sutures and are contained in $S_+$. The chord diagram $\mc Z(3)$ and the sutured surface $\mc F(\mc Z(3))$ are shown in Figure~\ref{fig:OurSuturedSurface}.

\begin{figure}
\includegraphics[scale=0.7]{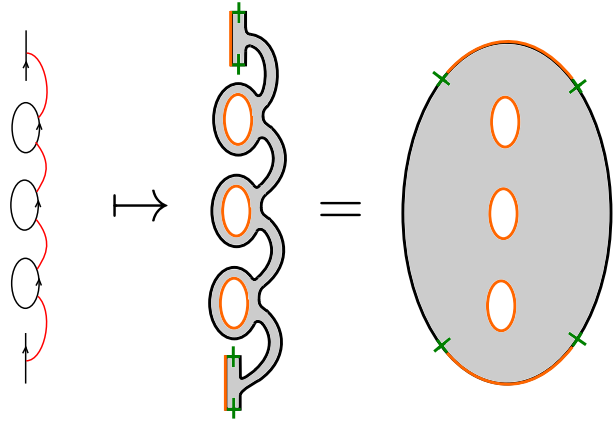}
\caption{The chord diagram $\mc Z(3)$ and the sutured surface $\mc F(\mc Z(3))$. Following Zarev's conventions from \cite{BSFH}, $S_+$ is colored orange and $S_-$ is colored black. The intervals $\Lambda$ are colored green, with sutures indicated by green marks. }
\label{fig:OurSuturedSurface}
\end{figure}

\section{The strands algebras}\label{sec:StrandsAlgDef}

Given the chord diagram $\mc Z(n)$ of Definition \ref{def:Z(n)} together with a subset $\Sc\subset[1,n]$, we would like to define a dg algebra $\sac nk\Sc$ (or equivalently a dg category, see \cite[\secAlgsAndCats]{MMW1}) called a strands algebra.  Intuitively, a general strands algebra $\A(\mc Z)$ assigned to a chord diagram $\mc Z$ should be generated by collections of homotopy classes of oriented continuous paths in $\Zc$ that both start and end at distinct basepoints.  The graphs of such paths are visualized as ``strands'' drawn on $\I \times \mc Z$, with multiplication defined via concatenation and the differential defined via resolutions of strand crossings.

In the upcoming paper \cite{ManionRouquier}, Rouquier and the first named author will define strands categories for singular curves functorially. For a chord diagram $\Zc$, this construction yields an algebra $\A(\Zc)$ defined along the above lines.  Here, though, we will follow \cite{LOT} and \cite{BSFH}, using a more combinatorial description that avoids some of the complications present in the general setting. The key point is that, in our chord diagram $\mc Z(n)$, any homotopy class of paths has a preferred representative, namely the constant-speed representative.  Such paths can be manipulated combinatorially, as we will see below.

\subsection{\texorpdfstring{$k$}{k}-strands and the pre-strands algebra}

\begin{definition}\label{def:kstrand}
A \emph{$k$-strand $s = \set{s_1, \ldots, s_k}$ on $\I \times \mc Z(n)$} is a collection of $k$ smooth functions
\[
s_a \colon \I \to \mc Z(n),
\]
called \emph{strands}, satisfying the following conditions:
\begin{itemize}
\item $s(0) := \set{s_1(0), \ldots, s_k(0)}$ consists of $k$ distinct points in $B$,
\item $s(1) := \set{s_1(1), \ldots, s_k(1)}$ consists of $k$ distinct points in $B$, and
\item for all $t \in \I$ and $1\leq a\leq k$, $\de_t s_a(t) = \alpha_a\geq 0$ for some constant speed $\alpha_a$.
\end{itemize}
We also say that $s$ is a $k$-strand \emph{from $s(0)$ to $s(1)$}. By a slight abuse of notation, we will use the notation $s_a$ for the graph of the strand $s_a$.
\end{definition}
Note that each strand is entirely determined by its starting point and its speed.
Also note that, since $S^1 = [0,2]/\sim$ has length $2$, the speed $\alpha_a$ is always a non-negative integer.

\begin{definition}\label{def:prestrand alg}
Given a subset $\Sc\subset[1,n]$, we define the \emph{pre-strands algebra} $\tsac nk\Sc$ as the algebra generated over $\F_2$ by all pairs $(s,\vec{c})$ where $s$ is a $k$-strand on $\I \times \Zc(n)$ and $\vec{c}$ is an element of $\set{0,1}^\Sc$. The multiplication on the generators of the algebra is defined via concatenation and addition as follows.
\begin{itemize}
\item If $s(1)\neq t(0)$, then $(s,\vec{c})\cdot(t,\vec{d})=0$ (we say the strands $s$ and $t$ were \emph{not concatenable}).
\item If $\vec{c}(i)+\vec{d}(i) = 2$ for any $i\in\Sc$, then $(s,\vec{c})\cdot(t,\vec{d})=0$ (we say that the multiplication produced a \emph{degenerate annulus}).  
\item Suppose $s$ contains two strands $s_a,s_b$ with speeds $\alpha_a,\alpha_b$ on one component of $\Zc(n)$, and $t$ also contains two strands $t_c,t_d$ with speeds $\beta_c,\beta_d$ such that $s_a(1)=t_c(0)$ and $s_b(1)=t_d(0)$.  If $(\alpha_a-\alpha_b)(\beta_c-\beta_d)<0$, then $(s,\vec{c})\cdot(t,\vec{d})=0$ (we say that the multiplication produced a \emph{degenerate bigon}).
\end{itemize}
If none of the three conditions above hold, we define $(s,\vec{c})\cdot(t,\vec{d})$ to be the pair $(s\cdot t,\vec{c}+\vec{d})$ where, for all $a\in[1,k]$, if $b$ is such that $s_a(1)=t_b(0)$, we define the speed of $(s\cdot t)_a$ to be $\alpha_a+\beta_b$.  Multiplication is then extended to all of $\tsac nk\Sc$ linearly.

In the case when $\Sc$ is the empty set, we often drop it from the notation and write the algebra as $\tsa nk$.
\end{definition}

\begin{remark} In \cite[Section 3.1.3]{LOT}, $k$-strands are defined algebraically as a bijection of sets $\phi:s(0)\rightarrow s(1)$ such that $\phi(b_a)\geq b_a$ for all basepoints $b_a\in s(0)$.  This definition is equivalent to ours when all backbones are linear because, once both endpoints are chosen for a strand on a linear backbone, the constant speed is also determined.  However for our circular backbones, the extra data of the speed is necessary to account for strands with nonzero wrapping number.  In this sense, our definition for $\tsa nk$ is a direct generalization of that of \cite{LOT}.
\end{remark}

\begin{remark}
In \cite[Section 3.1.3]{LOT}, Lipshitz--Ozsv{\'a}th--Thurston give another interpretation of the pre-strands algebras in terms of Reeb chords in contact $1$-manifolds, with the set of endpoints viewed as a Legendrian submanifold. This perspective is related to the interaction between strands algebras and holomorphic curve counts in bordered Floer homology. From this point of view, one can think of nonzero components $\vec{c}(i)$ of $\vec{c}$ as closed Reeb orbits; we thank Ko Honda for pointing out this connection, as well as the use of closed loops in the visual interpretation below.
\end{remark}

\subsection{Visual interpretation of the pre-strands algebra}

We visualize $k$-strands by their graph on $\I \times \mc Z(n)$, drawn ``horizontally'' as in the examples in Figure \ref{fig:kstrand examples}. The definition implies that intersections between two strands $s_a$ and $s_b$ in $s$ are transverse.  Furthermore, there are no points of triple (or more) intersection between strands in a $k$-strand, since there can be no more than two strands on any component of $\I \times \mc Z(n)$.  Meanwhile, we draw a single closed loop on the cylinder $S^1_i$ if and only if $\vec{c}(i)=1$.

We multiply by first concatenating the various $s_a,t_b$ if possible.  As long as we have not created an annulus or bigon in this way, we then homotope the result into a diagram of constant speed strands. See Figure~\ref{fig:NonDegenerateMult} for an example of a nonzero product and Figure~\ref{fig:Degeneracies} for examples of degenerate annuli and bigons.

\begin{figure}
\includegraphics[scale=0.5]{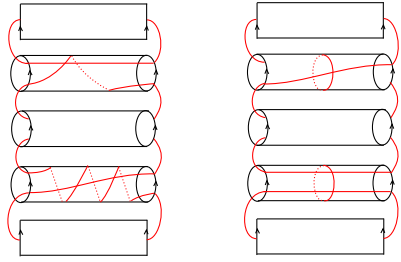}
\caption{Some examples of $k$-strands, with and without closed loops. For visual appeal, speeds are not drawn as entirely constant.}
\label{fig:kstrand examples}
\end{figure}

\begin{figure}
\includegraphics[scale=0.5]{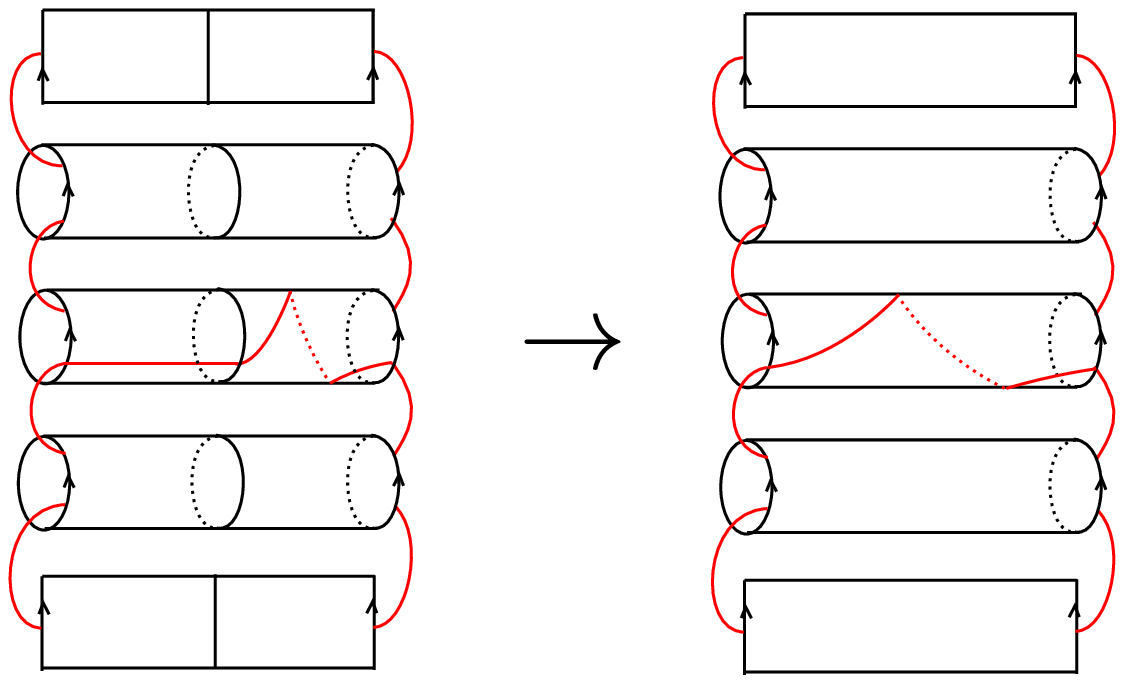}
\caption{Multiplication in $\td\A(n,k,\Sc)$.}
\label{fig:NonDegenerateMult}
\end{figure}

\begin{figure}
\includegraphics[scale=0.5]{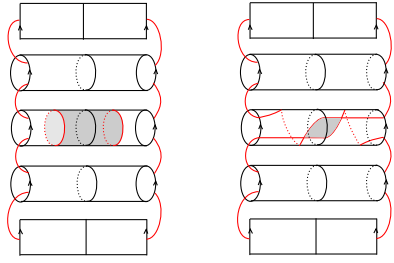}
\caption{Degenerate annuli and bigons}
\label{fig:Degeneracies}
\end{figure}

\subsection{A differential on the pre-strands algebra}
\label{sec:Differential}

\begin{definition}\label{def:differential}
Suppose $(s,\vec{c})\in \tsac nk\Sc$.  For all $i\in[1,n]$, we define the element $\partial_i^c (s,\vec{c})\in\tsac nk\Sc$ as follows.  If either $i\notin\Sc$ or $\vec{c}(i)=0$, then $\partial_i^c(s,\vec{c})=0$.  Otherwise we have $\vec{c}(i)=1$ and then we define $\partial_i^c(s,\vec{c})$ to be the sum over the following contributions.
\begin{itemize}
\item For any strand $s_a$ of $s$ on the backbone $S^1_i$ such that $s$ has no strands of strictly greater speed than $s_a$ on $S^1_i$, $\partial_i^c(s,\vec{c})$ has a contribution $(s',\pvec{c}')$ where $s'$ is obtained from $s$ by increasing the speed of $s_a$ by two and $\pvec{c}'$ is obtained from $\vec{c}$ by setting $\pvec{c}'(i) = 0$.
\end{itemize}
In particular, $\partial_i^c(s,\vec{c})$ is a sum of two distinct terms if and only if $s$ has two strands of equal speed on $S^1_i$.

We also define the element $\partial_i^0 (s,\vec{c})$ as follows.  If $s$ contains 0, 1, or 2 strands of equal speed on $S^1_i$, then $\partial^0_i(s,\vec{c})= 0$.  Otherwise $s$ contains two strands of differing speeds $p>q$ on the backbone $S^1_i$ and we have two cases.
\begin{itemize}
\item If $p-q=2$, then $\partial_i^0 (s,\vec{c})=(s',\vec{c})$ where $s'$ is the $k$-strand obtained from $s$ by replacing the two strands on $S^1_i$ by two new strands having (equal) speeds $p-1, q+1$.
\item If $p-q\geq 4$, then $\partial_i^0 (s,\vec{c})$ is a sum of two terms involving $k$-strands obtained from $s$ by replacing the two strands on $S^1_i$ by two new strands having (unequal) speeds $p-1$ and $q+1$.  There are two ways to do this, hence a sum of two terms.
\end{itemize}

We then define the \emph{$i^{th}$ differential} of $(s,\vec{c})$ to be
\[\partial_i(s,\vec{c}):= \partial_i^0(s,\vec{c}) + \partial_i^c(s,\vec{c})\]
and define the \emph{differential} of $(s,\vec{c})$ to be
\[\partial (s,\vec{c}) := \sum_{i\in[1,n]} \partial_i(s,\vec{c}).\]
\end{definition}

Visually, the case of nonzero $\partial_i (s,\vec{c})$ is precisely the case where we have strands and/or closed loops along $S^1_i$ that intersect.  We compute the differential by resolving crossings in the usual way.  The operator $\partial_i^0$ considers crossings between two strands of $s$ on $S^1_i$.  If $p-q=2$, there is only one crossing to resolve.  If $p-q\geq 4$ there are many crossings, but resolving any one other than the first or last will create a bigon, and such terms are set to zero so that we are left with two terms (which correspond to the two orderings of the new speeds $p-1$ and $q+1$).

The operator $\partial_i^c$ considers crossings between strands of $s$ and a closed loop on $S^1_i$; resolving a crossing between $s_a$ and a loop is equivalent to having the strand $s_a$ wrap once more around $S^1_i$ (corresponding to adding two to the overall speed of the strand).  If there are no other strands, this resolution cannot create any degeneracies.  If there is another strand $s_b$, the newly added ``wrapping'' of $s_a$ must intersect $s_b$ at infinite speed (before any homotopies); this resolution creates a degenerate bigon if and only if there are other crossings between $s_a$ and $s_b$ where the speed of $s_b$ is the greater of the two.  Thus with two strands of differing speeds on $S^1_i$, we keep only the resolution of the crossing between the loop and the faster strand.

In all of these nonzero cases, a simple homotopy takes the result of the resolution to a set of constant speed functions as desired. See Figure~\ref{fig:PreStrandsDifferential} for an illustration of $\partial^0$ and Figures~\ref{fig:PreStrandsDifferentialCPart1} and \ref{fig:PreStrandsDifferentialCPart2} for illustrations of $\partial^c$. Figures \ref{fig:PreStrandsDifferentialCPart1} and \ref{fig:PreStrandsDifferentialCPart2} in particular demonstrate that, although the result of the crossing resolutions defining $\partial^c$ could \emph{a priori} depend on the position of the closed loop, in fact the result is always given by our combinatorial formula.

\begin{figure}
\includegraphics[scale=0.6]{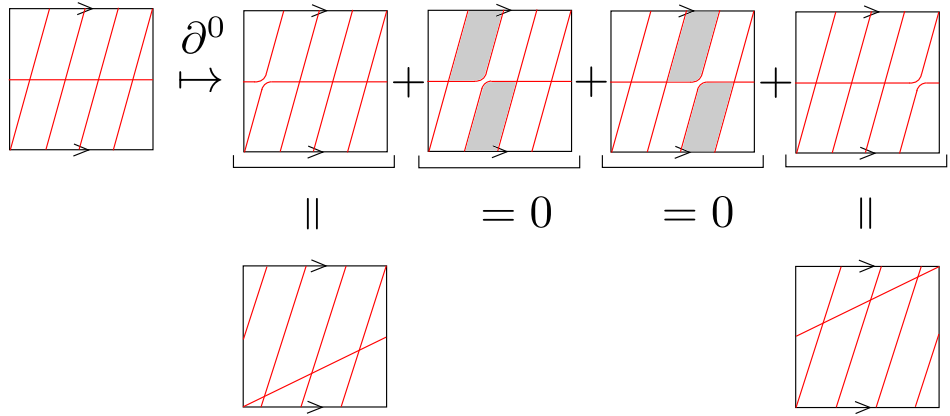}
\caption{The component $\partial^0$ of the differential on the pre-strands algebra.}
\label{fig:PreStrandsDifferential}
\end{figure}

\begin{figure}
\includegraphics[scale=0.5]{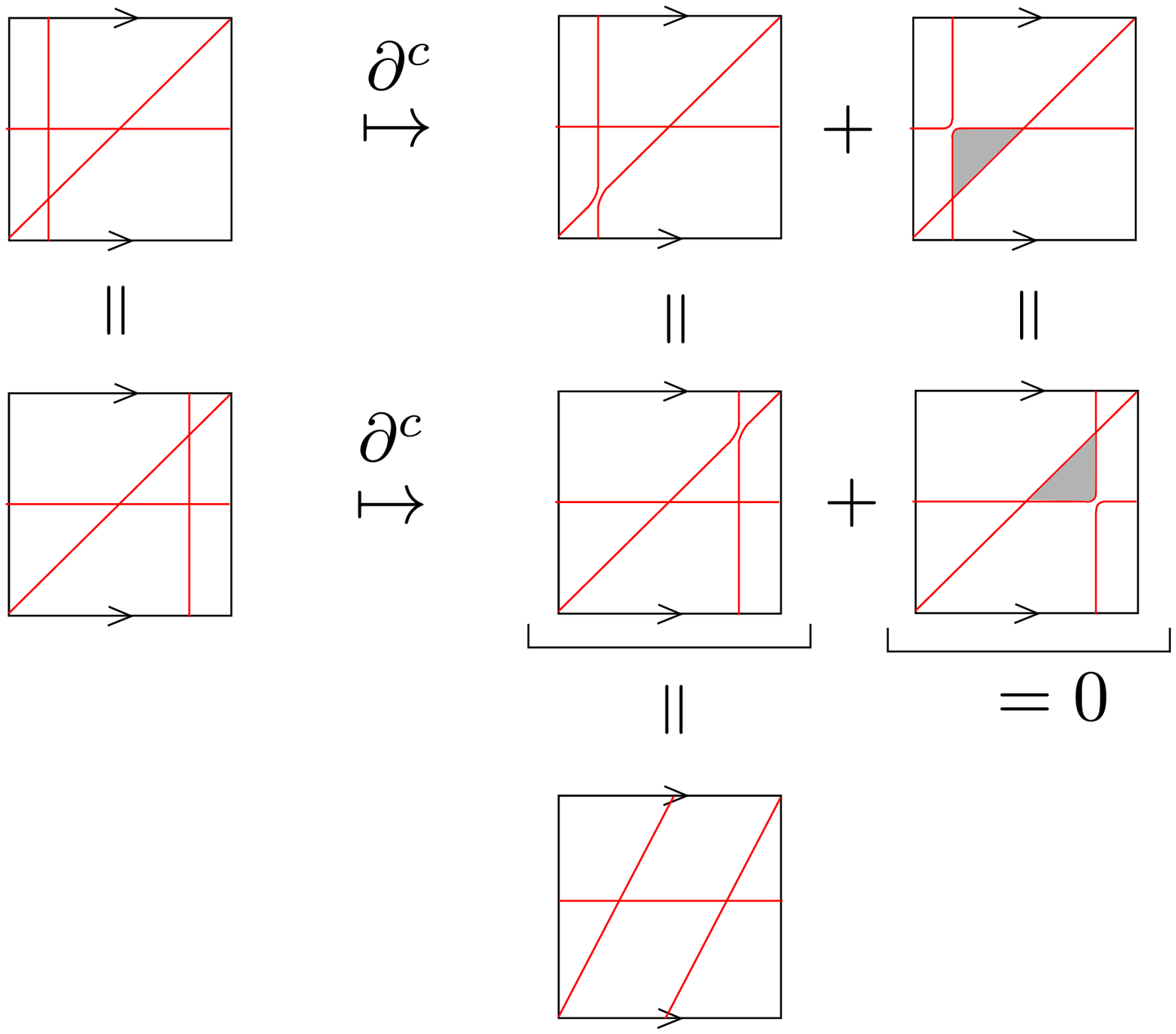}
\caption{The component $\partial^c$ of the differential on the pre-strands algebra.}
\label{fig:PreStrandsDifferentialCPart1}
\end{figure}

\begin{figure}
\includegraphics[scale=0.5]{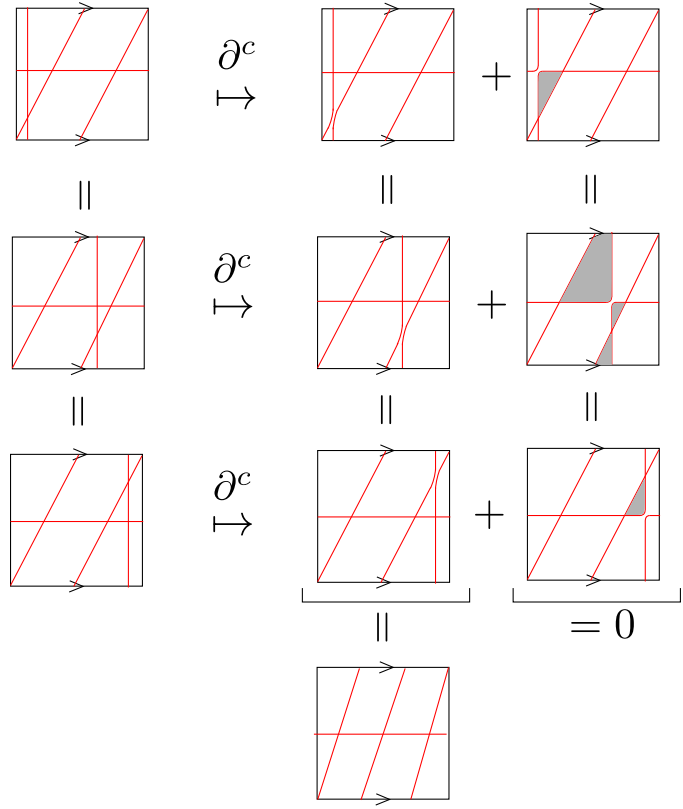}
\caption{The component $\partial^c$ in another example.}
\label{fig:PreStrandsDifferentialCPart2}
\end{figure}

\begin{lemma}
We have $\partial^2 =0$ for the differential on the pre-strands algebra.
\end{lemma}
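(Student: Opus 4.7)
The plan is to decompose $\partial = \sum_{i=1}^n \partial_i$ and reduce to showing $\partial_i^2 = 0$ for each fixed $i$. Whenever $i \neq j$, the operators $\partial_i$ and $\partial_j$ modify data supported on the disjoint backbones $S^1_i$ and $S^1_j$ (together with the independent coordinates $\vec{c}(i)$ and $\vec{c}(j)$), and they act trivially on one another's targets, so they commute as operators on $\tsac{n}{k}{\Sc}$. Over $\F_2$ this gives $\partial_i \partial_j + \partial_j \partial_i = 0$, leaving only the diagonal terms.

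For each $i$, I would write $\partial_i = \partial_i^0 + \partial_i^c$ and expand
\[
\partial_i^2 = (\partial_i^0)^2 + (\partial_i^c)^2 + \partial_i^0 \partial_i^c + \partial_i^c \partial_i^0.
\]
The term $(\partial_i^c)^2$ vanishes on the nose because $\partial_i^c$ only fires when $\vec{c}(i) = 1$ and produces an output with $\vec{c}(i) = 0$. The identity $(\partial_i^0)^2 = 0$ is the standard Lipshitz--Ozsv{\'a}th--Thurston resolution argument from \cite{LOT}: if the two strands on $S^1_i$ have speeds $p > q$ with $p - q = 2$, the unique resolution produces two equal-speed strands that $\partial_i^0$ then annihilates; if $p - q \geq 4$, the two possible orderings of two successive resolutions produce the same pair of $k$-strands twice, cancelling in characteristic two.

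The heart of the argument is the cross-term identity $\partial_i^0 \partial_i^c + \partial_i^c \partial_i^0 = 0$, which I would establish by a case analysis on the number and relative speeds of the strands on $S^1_i$. When there are fewer than two strands on $S^1_i$, both compositions vanish immediately. When there are two strands of equal speed on $S^1_i$ and $\vec{c}(i) = 1$, only $\partial_i^0 \partial_i^c$ is potentially nonzero, and its two symmetric terms (one from each choice of strand to speed up) become identical after $\partial_i^0$ re-equalises the speeds, so they cancel. When the two strands have distinct speeds $p > q$ and $\vec{c}(i) = 1$, both compositions yield the same pair of $k$-strands with resulting speeds $\{p+1, q+1\}$ distributed over the two starting basepoints in both possible ways: $\partial_i^0 \partial_i^c$ first passes through $(p+2, q)$ and then splits into two orderings of $(p+1, q+1)$, while $\partial_i^c \partial_i^0$ first splits into two orderings of $(p-1, q+1)$ and then adds two to the faster strand of each; the four terms match in pairs and cancel.

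The main obstacle is the bookkeeping in the last case: I need to verify strand-by-strand that the bijections between starting and ending basepoints produced by $\partial_i^0 \partial_i^c$ agree with those produced by $\partial_i^c \partial_i^0$, which amounts to tracking the parity of each strand's speed through the sequence of operations so that the underlying $k$-strands genuinely coincide as basis elements of $\tsac{n}{k}{\Sc}$. The geometric intuition — that moving a closed-loop resolution past a strand-crossing resolution can be carried out in two orders — makes the cancellation transparent in pictures, but the combinatorial verification is the step that demands the most care.
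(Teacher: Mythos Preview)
Your proposal is correct and follows essentially the same route as the paper's proof: reduce to a single backbone via commutativity of $\partial_i$ and $\partial_j$, then handle $(\partial_i^c)^2$, $(\partial_i^0)^2$, and the cross-terms by the case analysis you describe (the paper leaves these checks to the reader, whereas you spell them out). One small imprecision: in the distinct-speed case with $p-q=2$, $\partial_i^0$ produces a single equal-speed term rather than ``two orderings of $(p-1,q+1)$,'' and it is the subsequent $\partial_i^c$ that then splits into two; the end result and cancellation are exactly as you claim.
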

\begin{proof}
It is clear that $\partial_i$ and $\partial_j$ commute, so that it is enough (over $\F_2$) to check that any $\partial_i^2=0$. The term $(\partial_i^c)^2$ is trivially zero due to the condition on $\vec{c}(i)$.  The reader may check that $(\partial_i^0)^2=0$ in both of the non-trivial cases (notice that, in the case of having two terms for $\partial_i^0(s,\vec{c})$, these two have equal image under $\partial^0_i$).  Finally, the fact that $\partial_i^c$ and $\partial_i^0$ commute is also a case-by-case check for which it is helpful to note that neither $\partial_i^c$ nor $\partial_i^0$ can change the number of strands of $s$ that are present on $S_i^1$.
\end{proof}

\subsection{The strands algebra}

We now begin to incorporate the matching $M$ for our chord diagram (see Section \ref{sec:Chord diagrams}) into our definitions.  We begin with some notation.  For any subset of basepoints $\Xb\subset B$, let $M(\Xb)$ denote the transformation of this set under the matching $M$.  That is, if $\Xb=\set{x_1,\ldots,x_k}\subset B$, then $M(\Xb)=\set{M(x_1),\ldots,M(x_k)}$.

Now let $s$ be a $k$-strand, and label the basepoints of $s(0) = \set{w_1, \ldots, w_k}$ in such a way that $w_a$ is the starting point of the strand $s_a$.
\begin{definition}\label{def:Ib notations}
Let $\Ib\subset B$ denote the set of elements $w_a \in s(0)$ such that $s_a$ is a constant strand.  For any subset $\ib \subset \Ib$, define the further notations $s(0)_{\ib} = (s(0) \sm \ib) \cup M(\ib)$ and $s(1)_{\ib} = (s(1) \sm \ib) \cup M(\ib)$ (note that $\ib \subset \Ib \subset s(0) \Longrightarrow \ib \subset s(1)$ as well). 
\end{definition}

\begin{lemma}\label{lem:matched const strands element}
If $s$ is a $k$-strand as above with $s(0)\cap M(s(0)) = s(1)\cap M(s(1))=\emptyset$, then for any subset $\ib\subset\Ib$, there is a well-defined $k$-strand $s_{\ib}$ from $s(0)_\ib$ to $s(1)_\ib$ defined by
\[
(s_{\ib})_a =
\begin{cases}
s_{a} & w_a \notin \ib \\
\const_{M(w_a)} & w_a \in \ib
\end{cases}
\]
for all $w_a \in s(0)$, where $\const_{M(w_a)}$ is the constant strand at $M(w_a)$.
\end{lemma}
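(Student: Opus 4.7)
The plan is to verify the three conditions of Definition~\ref{def:kstrand} for the collection of functions $s_\ib = \set{(s_\ib)_a}$. The smoothness and constant-speed conditions are immediate: for $w_a \notin \ib$, $(s_\ib)_a = s_a$ inherits these properties from the original $k$-strand $s$, while for $w_a \in \ib$, $\const_{M(w_a)}$ has constant speed $0 \geq 0$. The remaining task is to show that the multisets $\set{(s_\ib)_a(0)}$ and $\set{(s_\ib)_a(1)}$ are each actually subsets of $B$ of size $k$, and that they coincide with $s(0)_\ib$ and $s(1)_\ib$ respectively.

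For the starting points, by construction $\set{(s_\ib)_a(0)} = (s(0)\sm\ib) \cup M(\ib)$. Since $\ib \subset s(0)$ we have $|s(0)\sm\ib| = k - |\ib|$, and since $M$ is a fixed-point-free involution (hence injective) we have $|M(\ib)| = |\ib|$. The two subsets are disjoint: $s(0)\sm\ib \subset s(0)$ and $M(\ib) \subset M(s(0))$, and these ambient sets are disjoint by the hypothesis $s(0)\cap M(s(0)) = \varnothing$. Thus $|\set{(s_\ib)_a(0)}| = k$ and this set equals $s(0)_\ib$ as required.

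The ending points are handled analogously, but one should first verify $\ib\subset s(1)$ in order to make sense of $s(1)_\ib$: this holds because $w_a \in \ib \subset \Ib$ means $s_a$ is constant, so $s_a(1) = w_a \in s(1)$. The main (mild) subtlety is then to check that for $w_a \notin \ib$ the endpoint $(s_\ib)_a(1) = s_a(1)$ lies in $s(1) \sm \ib$, i.e.\ does not collide with a point of $\ib$. If instead $s_a(1) = w_b$ for some $w_b \in \ib \subset \Ib$, then the constancy of $s_b$ gives $s_b(1) = w_b = s_a(1)$; since $s(1)$ has $k$ distinct elements this forces $a = b$, contradicting $w_a \notin \ib$ while $w_b \in \ib$. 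The same disjointness argument as before, now using $s(1) \cap M(s(1)) = \varnothing$ in place of the analogous hypothesis at $0$, then shows $\set{(s_\ib)_a(1)} = s(1)_\ib$ has exactly $k$ elements, finishing the verification. The argument is essentially bookkeeping; the only place the two disjointness hypotheses are genuinely used is in separating the ``old'' endpoints $s(t)\sm\ib$ from the ``new'' endpoints $M(\ib)$ for $t=0,1$.
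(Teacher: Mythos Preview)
Your proof is correct and follows essentially the same approach as the paper's (very brief) proof, using the hypothesis $s(0)\cap M(s(0))=\emptyset$ to get $k$ distinct starting points and $s(1)\cap M(s(1))=\emptyset$ to get $k$ distinct ending points. Your treatment is in fact more careful: you explicitly verify that $\set{(s_\ib)_a(1)} = s(1)_\ib$ (including the subtlety that $s_a(1)\notin\ib$ when $w_a\notin\ib$), whereas the paper simply asserts distinctness of the endpoints and leaves the identification implicit.
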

\begin{proof}
Clearly the functions $(s_{\ib})_a$ are all still constant speed.  The fact that $s(0)\cap M(s(0))=\emptyset$ ensures that $s(0)_{\ib}$ still has $k$ elements, while $s(1)\cap M(s(1))=\emptyset$ ensures that $(s_{\ib})_a(1)\neq(s_{\ib})_b(1)$ for any $a\neq b$ in $[1,k]$.
\end{proof}
Note that, with this notation, we have $s_\ib(0)=s(0)_\ib$ and $s_\ib(1)=s(1)_\ib$ for any $\ib\subset\Ib$ by definition.

Now, given a generator $(s,\vec{c})\in\tsac nk\Sc$, we can use Lemma \ref{lem:matched const strands element} to introduce the further notation
\begin{equation}
\label{eq:E(S,T,s)}
E(s,\vec{c}) =
\begin{cases}
\displaystyle\sum_{\ib \subset \Ib} (s_{\ib},\vec{c}) & \text{if $s(0)\cap M(s(0)) = s(1)\cap M(s(1))=\emptyset$}\\
0 & \text{otherwise}
\end{cases}.
\end{equation}
We then extend $E$ linearly to a map $E:\tsac nk\Sc\rightarrow \tsac nk\Sc$.

\begin{lemma} \label{lem:E(S,T,s)}
Let $s,s'$ be $k$-strands such that 
\[s(0)\cap M(s(0)) = s(1)\cap M(s(1)) = s'(0)\cap M(s'(0)) = s'(1)\cap M(s'(1)) = \emptyset,\]
and let $\vec{c},\pvec{c}'\in\set{0,1}^\Sc$ be arbitrary.  Then we have $E(s,\vec{c})=E(s',\pvec{c}')$ in $\tsac nk\Sc$ if and only if $\vec{c}=\pvec{c}'$ and $s'=s_\jb$ for some subset $\jb\subset\Ib\subset s(0)$ of basepoints that are starting points of constant strands in $s$.
\end{lemma}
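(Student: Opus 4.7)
The plan is to exploit the fact that the pre-strands algebra $\tsac nk\Sc$ is freely generated as an $\F_2$-vector space by the pairs $(t,\vec{d})$ (the relations in Definition \ref{def:prestrand alg} only affect the multiplication, not the underlying vector-space structure), so that equality of two sums of generators is equivalent to equality of their underlying sets of terms. First I would observe that the operation $s \mapsto s_\ib$ never touches the second coordinate, so every term appearing in $E(s,\vec{c})$ has second coordinate equal to $\vec{c}$. Hence $E(s,\vec{c}) = E(s',\pvec{c}')$ immediately forces $\vec{c} = \pvec{c}'$, and we may write both simply as $\vec{c}$ from now on. I would next show that, under the nondegeneracy hypothesis $s(0) \cap M(s(0)) = s(1)\cap M(s(1)) = \emptyset$, the $2^{|\Ib|}$ generators $(s_\ib,\vec{c})$ appearing in $E(s,\vec{c})$ are pairwise distinct: if $w \in \ib_1 \sm \ib_2$, then $w \in s_{\ib_2}(0)$, while $s_{\ib_1}(0)$ contains $M(w)$ in place of $w$ and, by the hypothesis, cannot also contain $w$. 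Hence the starting-point sets of $s_{\ib_1}$ and $s_{\ib_2}$ already disagree.

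For the ``if'' direction, assume $s' = s_\jb$ for some $\jb \subset \Ib$. The key conceptual point is that $E(s,\vec{c})$ admits a natural reinterpretation: each constant strand of $s$ at a basepoint $w \in \Ib$ determines a matched pair $\{w,M(w)\}$, and the hypothesis $s(0) \cap M(s(0)) = \emptyset$ guarantees that these pairs are pairwise disjoint. The sum $E(s,\vec{c})$ then ranges over all $2^{|\Ib|}$ independent placements of each constant strand at one of the two endpoints of its matched pair, the non-constant strands being held fixed. Passing from $s$ to $s' = s_\jb$ simply selects a different preferred placement within each of the pairs indexed by $\jb$, but the set of matched pairs and the collection of all placements are unchanged, so $E(s',\vec{c}) = E(s,\vec{c})$.

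For the ``only if'' direction, let $\Ib'$ denote the set of constant-strand starting points of $s'$, and suppose $E(s,\vec{c}) = E(s',\vec{c})$. The term $(s,\vec{c})$ appears in $E(s,\vec{c})$ as the $\ib = \emptyset$ summand, so by the equality of sets of terms it must coincide with some summand $(s'_{\ib'},\vec{c})$ on the right-hand side, giving $s = s'_{\ib'}$ for some $\ib' \subset \Ib'$. Setting $\jb := M(\ib')$, I would then verify, using $M^2 = \id$ and the defining formula for $s'_{\ib'}$, that $\jb \subset \Ib$ (the strands of $s$ at the basepoints in $M(\ib')$ are precisely the constant strands introduced when flipping from $s'$ to $s'_{\ib'} = s$) and that $s_\jb = s'$ (the two flips cancel). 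The main obstacle is the careful bookkeeping in the ``if'' direction, needed to confirm the term-by-term bijection between $E(s,\vec{c})$ and $E(s_\jb,\vec{c})$; this hinges essentially on the disjointness of the matched pairs of constant-strand basepoints, which is exactly what the nondegeneracy hypothesis provides.
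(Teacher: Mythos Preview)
Your proposal is correct and follows essentially the same approach as the paper. The main differences are cosmetic: for the ``if'' direction the paper writes down an explicit bijection $\ib \mapsto (\ib\sm\jb)\cup M(\jb\sm\ib)$ between subsets of $\Ib$ and subsets of $\Ib_\jb$ and checks $(s_\ib,\vec{c}) = ((s_\jb)_\kb,\vec{c})$, whereas you argue conceptually that both sums range over the same set of placements; and for the ``only if'' direction the paper observes directly that $(s',\vec{c})$ itself must be a summand of $E(s,\vec{c})$, giving $s'=s_\jb$ immediately, rather than going via $s=s'_{\ib'}$ and inverting. Your explicit verification that the $2^{|\Ib|}$ summands are pairwise distinct (so no cancellation occurs over $\F_2$) is a point the paper leaves implicit.
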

\begin{proof}
The necessity of $\vec{c}=\pvec{c}'$ is clear from the definition.  Note also that if $E(s',\pvec{c}')=E(s,\vec{c})$, then $(s',\pvec{c}')$ must be equal to one of the summands of $E(s,\vec{c})$, implying that $s'=s_\jb$ for some $\jb\subset\Ib$ as desired.

For the other direction, we write $\Ib_{\jb}$ for the set of starting points of constant strands in $s'=s_{\jb}$. Given $\ib \subset \Ib$, we can define $\kb = (\ib\sm\jb) \cup M(\jb \sm \ib)$, a subset of $\Ib_{\jb}$.  The map sending $\ib$ to $\kb$ is a bijection 
\[
\{\textrm{subsets of } \Ib\} \xrightarrow{\cong} \{\textrm{subsets of } \Ib_{\jb}\}
\]
with inverse sending $\kb \subset \Ib_{\jb}$ to $\ib = (\kb \sm M(\jb)) \cup (\jb\sm M(\kb)) \subset \Ib$.  Thus, there is a bijective correspondence between the summands of $E(s,\vec{c})$ and those of $E(s_{\jb},\vec{c})$.
One can then check that
\[
(s_{\ib},\vec{c}) = ((s_{\jb})_{\kb},\vec{c}),
\]
finishing the proof.
\end{proof}

\begin{lemma}\label{lem:EscLinIndep}
The set
\[
\set{E(s,\vec{c}) \,\middle|\, s(0) \cap M(s(0)) = s(1) \cap M(s(1)) = \varnothing}
\]
is a linearly independent subset of $\tsac nk\Sc$.
\end{lemma}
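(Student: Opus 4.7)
The plan is to extract linear independence from the fact that each $E(s,\vec{c})$, when expanded in the basis of pairs $(s,\vec{c})$, has a very rigid support that depends only on the equivalence class described implicitly by Lemma~\ref{lem:E(S,T,s)}.

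First I would record that $\tsac{n}{k}{\Sc}$ is, by Definition~\ref{def:prestrand alg}, the free $\F_2$-module on the generating pairs $(s,\vec{c})$ (with multiplication prescribed on this basis), so distinct pairs are $\F_2$-linearly independent. Next, for any valid pair $(s,\vec{c})$ (meaning $s(0)\cap M(s(0))= s(1)\cap M(s(1))=\varnothing$), the definition in equation~\eqref{eq:E(S,T,s)} gives
\[
E(s,\vec{c}) \;=\; \sum_{\ib\subset\Ib} (s_{\ib},\vec{c}),
\]
and the pairs $(s_{\ib},\vec{c})$ are pairwise distinct as $\ib$ ranges over subsets of $\Ib$, because different $\ib$ produce different sets of constant-strand starting points. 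Thus the support of $E(s,\vec{c})$ in the basis is precisely the set of pairs $\{(s_{\ib},\vec{c}) : \ib\subset \Ib\}$, of size $2^{|\Ib|}$.

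I would then define an equivalence relation on valid pairs by declaring $(s,\vec{c})\sim(s',\pvec{c}')$ iff $\vec{c}=\pvec{c}'$ and $s'=s_{\jb}$ for some $\jb\subset\Ib$; Lemma~\ref{lem:E(S,T,s)} provides exactly what is needed to check that this is symmetric and transitive (the bijection $\ib\mapsto\kb$ exhibited in its proof gives transitivity), and it says directly that $E(s,\vec{c})=E(s',\pvec{c}')$ iff $(s,\vec{c})\sim(s',\pvec{c}')$. In particular, the support of $E(s,\vec{c})$ is exactly the equivalence class of $(s,\vec{c})$.

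The main observation to deduce linear independence is then that distinct equivalence classes have disjoint supports. Indeed, if a basis pair $(t,\vec{d})$ lies in the supports of both $E(s_1,\vec{c}_1)$ and $E(s_2,\vec{c}_2)$, then $\vec{d}=\vec{c}_1=\vec{c}_2$ and $t=(s_1)_{\ib_1}=(s_2)_{\ib_2}$ for some $\ib_1,\ib_2$; applying the involutions $s_j\mapsto (s_j)_{\ib_j}$ shows $(s_1,\vec{c}_1)\sim(t,\vec{d})\sim(s_2,\vec{c}_2)$, so the two classes coincide. Consequently the set $\{E(s,\vec{c})\}$ (one element per class) is a sum of pairwise disjoint nonempty subsets of a basis, hence linearly independent over $\F_2$. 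No step here looks delicate: the only real obstacle would be a sign or multiplicity issue in identifying $E(s,\vec{c})$ with its support, and working over $\F_2$ removes that obstacle.
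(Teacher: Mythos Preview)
Your proposal is correct and follows essentially the same approach as the paper: both arguments expand each $E(s,\vec{c})$ as a sum of basis pairs and use Lemma~\ref{lem:E(S,T,s)} to conclude that distinct $E(s,\vec{c})$'s have disjoint supports in the standard basis of $\tsac nk\Sc$. Your version is more explicit about the equivalence-relation structure and the verification that the summands $(s_{\ib},\vec{c})$ are pairwise distinct, but the underlying idea is identical to the paper's brief proof.
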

\begin{proof}
It is sufficient to note that any element of the form $E(s,\vec{c})$ can be expanded, in view of equation \eqref{eq:E(S,T,s)}, as the sum of basis vectors of $\tsac nk\Sc$, and, by Lemma \ref{lem:E(S,T,s)}, if $E(s,\vec{c}) \neq E(s',\pvec{c}')$, then their expansions do not contain any common basis vector $(t,\vec{d}) \in \tsac nk\Sc$.
\end{proof}

\begin{definition}\label{def:StrandsAlgBasis} As a vector space over $\F_2$, we define $\sac nk\Sc$ to be the subspace $\im(E)\subset\tsac nk\Sc$ spanned by the elements $E(s,\vec{c})$ (in the case when $\Sc$ is the empty set, we again drop it from the notation and write $\sa nk$ for $\sac nk\varnothing$).  By Lemma~\ref{lem:EscLinIndep}, the set
\[
\set{E(s,\vec{c}) \,\middle|\, \vec{c} \in \{0,1\}^{\Sc}, s(0) \cap M(s(0)) = s(1) \cap M(s(1)) = \varnothing}
\]
is an additive basis for $\sac nk\Sc$ over $\F_2$. We call it the \emph{standard basis}.  Propositions \ref{prop:StrandsAlg closed under mult}, \ref{prop:AnkSubcomplex}, and \ref{prop:sac identity} below will endow $\sac nk\Sc$ with a dg algebra structure over $\F_2$, with gradings described in Section \ref{sec:gradings}.  We will refer to $\sac nk\Sc$ as the \emph{strands algebra}.
\end{definition}

\begin{proposition}\label{prop:StrandsAlg closed under mult}
The vector space $\sac nk\Sc$ is closed under the multiplication inherited from $\tsac nk\Sc$.
\end{proposition}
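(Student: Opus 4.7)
The plan is to analyze the expansion
\[
E(s,\vec c) \cdot E(t,\vec d) = \sum_{\ib \subset \Ib(s),\ \jb \subset \Ib(t)} (s_\ib, \vec c) \cdot (t_\jb, \vec d)
\]
and reorganize the nonzero terms into a single standard basis element of $\sac n k \Sc$. First, some easy reductions: if $\vec c + \vec d$ has a coordinate equal to $2$ in $\Sc$, every term vanishes by the annulus rule; if every term vanishes for other reasons, the product is $0 \in \sac n k \Sc$. Otherwise some $(s_{\ib_0}, \vec c) \cdot (t_{\jb_0}, \vec d)$ is nonzero, and by Lemma~\ref{lem:E(S,T,s)} we may replace $s$ by $s_{\ib_0}$ and $t$ by $t_{\jb_0}$ (which does not change $E(s, \vec c)$ or $E(t, \vec d)$) to reduce to the case $s(1) = t(0)$, with $(s, \vec c) \cdot (t, \vec d)$ itself nonzero.

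The main combinatorial step under these reduced hypotheses is that $s_\ib(1) = t_\jb(0)$ if and only if $\ib = \jb$. Indeed $s_\ib(1) = (s(1) \setminus \ib) \sqcup M(\ib)$ and $t_\jb(0) = (s(1) \setminus \jb) \sqcup M(\jb)$ are disjoint unions, since the standing hypotheses $s(1) \cap M(s(1)) = t(0) \cap M(t(0)) = \varnothing$ (necessary for $E(s,\vec c)$ and $E(t,\vec d)$ to be nonzero) force $M(\ib), M(\jb) \subset B \setminus s(1)$; intersecting the equation with $s(1)$ then yields $\ib = \jb$, and the common value $\kb := \ib = \jb$ must lie in $\Ib(s) \cap \Ib(t)$. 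Writing $u := s \cdot t$, a position-by-position check (using non-negativity of speeds, so a concatenation is constant iff both factors are) shows $\Ib(u) = \Ib(s) \cap \Ib(t)$ and $s_\kb \cdot t_\kb = u_\kb$ for every $\kb \subset \Ib(u)$.

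It remains to check no term $(s_\kb, \vec c) \cdot (t_\kb, \vec d)$ produces a new degenerate bigon. In any hypothetical bigon configuration involving two $s$-strands at positions $a, b$ and two $t$-strands at positions $c, d$: if neither $s$-strand is swapped, the configuration reproduces a potential bigon of $(s, \vec c) \cdot (t, \vec d)$, ruled out by assumption; if both are swapped, both have speed $0$ and the discriminant $(\alpha_a - \alpha_b)(\beta_c - \beta_d)$ vanishes; if exactly one (say at $a$) is swapped, then $(s_\kb)_a(1) = M(w_a) \notin t(0)$ forces the $t$-strand at $c$ to also be a swapped constant (otherwise its starting point would lie in both $t(0)$ and $M(t(0))$), so $\beta_c = 0$ and the discriminant equals $\alpha_b \beta_d \geq 0$. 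Summing over $\kb$ then gives
\[
E(s, \vec c) \cdot E(t, \vec d) = \sum_{\kb \subset \Ib(u)} (u_\kb, \vec c + \vec d) = E(u, \vec c + \vec d) \in \sac n k \Sc,
\]
and extending $\F_2$-linearly finishes the proof. The main obstacle I anticipate is the bigon case analysis; the remaining steps are elementary set theory.
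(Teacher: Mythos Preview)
Your proof is correct and follows essentially the same approach as the paper's: reduce to $s(1)=t(0)$ via Lemma~\ref{lem:E(S,T,s)}, show only the diagonal terms $\ib=\jb\subset\Ib(s)\cap\Ib(t)$ survive, and identify the sum with $E(s\cdot t,\vec c+\vec d)$. The only real difference is the bigon step: the paper compresses your case analysis into the single observation that neither \emph{edge} of a degenerate bigon (meaning the full concatenated strand $s_a\cdot t_c$ or $s_b\cdot t_d$) can be constant, and that the non-constant concatenated strands of $s_\kb\cdot t_\kb$ coincide with those of $s\cdot t$. Your third case is exactly what makes that observation work: a swapped $s$-half forces the matching $t$-half to be swapped too, so the whole edge is constant. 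One small point worth making explicit in your first case is that the $t$-strands are also unswapped there (same disjointness argument you use in the third case, run in reverse), so the configuration really is identical to one in $s\cdot t$.
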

\begin{proof}
Consider two basis elements $E(s,\vec{c})$ and $E(t,\vec{d})$ in $\sac nk\Sc$.  If there is some index $i\in\Sc$ with $\vec{c}(i)+\vec{d}(i)=2$, then $E(s,\vec{c})\cdot E(t,\vec{d})=0$ regardless of $s$ and $t$ (every concatenable term in the sum forms a degenerate annulus).  Thus we only need to check the case where $\vec{c}+\vec{d}\in\set{0,1}^\Sc$.

Let $\Ib\subset s(0)$ and $\Jb\subset t(0)$ denote the sets of basepoints that are starting points for constant strands in $s$ and $t$ respectively, so that
\begin{align*}
E(s,\vec{c})\cdot E(t,\vec{d}) &= \left(\sum_{\ib\subset\Ib} (s_\ib,\vec{c}) \right) \cdot \left( \sum_{\jb\subset\Jb}(t_\jb,\vec{d}) \right)\\
&= \sum_{\ib\subset\Ib}\sum_{\jb\subset\Jb} (s_\ib,\vec{c}) \cdot (t_\jb,\vec{d}).
\end{align*}
We see that, if $s_\ib(1) \neq t_\jb(0)$ for all $\ib\subset\Ib,\jb\subset\Jb$, then all of the strands $s_\ib,t_\jb$ are non-concatenable and the entire sum is zero.  Otherwise there are some $\ib,\jb$ such that $s_\ib(1) = t_\jb(0)$.  After using Lemma \ref{lem:E(S,T,s)} to replace $s$ with $s_{\ib}$ and $t$ with $t_{\jb}$, we can assume that $s(1)=t(0)$.

For the summand $(s_\ib,\vec{c}) \cdot (t_\jb,\vec{d})$ to be concatenable, we must have $s(1)_\ib = s_\ib(1) = t_\jb(0) = t(0)_\jb$, and since $s(1)=t(0)$, it follows that $\ib=\jb$.  Thus we have
\begin{equation}\label{eq:SumForProduct}
\begin{aligned}[b]
E(s,\vec{c})\cdot E(t,\vec{d}) &= \sum_{\ib\subset\Ib}\sum_{\jb\subset\Jb} (s_\ib,\vec{c}) \cdot (t_\jb,\vec{d})\\
&= \sum_{\ib\subset(\Ib\cap\Jb)} (s_\ib,\vec{c})\cdot(t_\ib,\vec{d})\\
&= \sum_{\substack{\ib\subset(\Ib\cap\Jb) \\ s_\ib \cdot t_\ib \text{ non-degenerate}}} (s_\ib\cdot t_\ib,\vec{c}+\vec{d})
\end{aligned}
\end{equation}
(we have already ruled out the possibility of a degenerate annulus due to $\vec{c}$ and $\vec{d}$). We claim that the concatenation $s_\ib \cdot t_\ib$ has a degenerate bigon if and only if $s \cdot t$ does.  Indeed, neither edge of a degenerate bigon can be a constant strand.  Thus, a degenerate bigon in $s\cdot t$ is bounded by two strands that are also included in $s_\ib\cdot t_\ib$ and vice versa.  It follows that this sum is zero if and only if $s \cdot t$ is degenerate.  

For non-degenerate $s\cdot t$, we have $s_\ib \cdot t_\ib = (s\cdot t)_\ib$ for all $\ib \subset \Ib \cap \Jb$, and $\Ib\cap\Jb$ is the set of starting basepoints of constant speed strands for $s\cdot t$ (speeds add, and $a+b\geq 0$ with equality only when $a=b=0$).  Thus the sum in \eqref{eq:SumForProduct} is the basis element for $s\cdot t$ and $\vec{c}+\vec{d}$, i.e.~we have proven that
\begin{equation}
\label{eq:sa nk multiplication}
E(s,\vec{c})\cdot E(t,\vec{d}) = E((s,\vec{c})\cdot(t,\vec{d})) = E(s\cdot t,\vec{c}+\vec{d})
\end{equation}
for non-degenerate $s\cdot t$.
\end{proof}

We envision the basis elements $E(s,\vec{c})$ as single diagrams, called \emph{basis diagrams}, comprised of non-constant solid strands in $\I \times \Zc(n)$ (corresponding to the non-constant strands of $s$) together with pairs of constant dashed strands in $\I \times \Zc(n)$ whose endpoints are matched by $M$ (corresponding to the constant strands of $s$ which lead to choices for $\ib$ in the sum for $E$).  In this way, a single pair of matched dashed strands indicates a sum of two strands diagrams. In each diagram we remove one of the two dashed strands and replace the other one with a solid strand.  See Figure~\ref{fig:ExpandingDottedLines} for an illustration.

\begin{figure}
\includegraphics[scale=0.5]{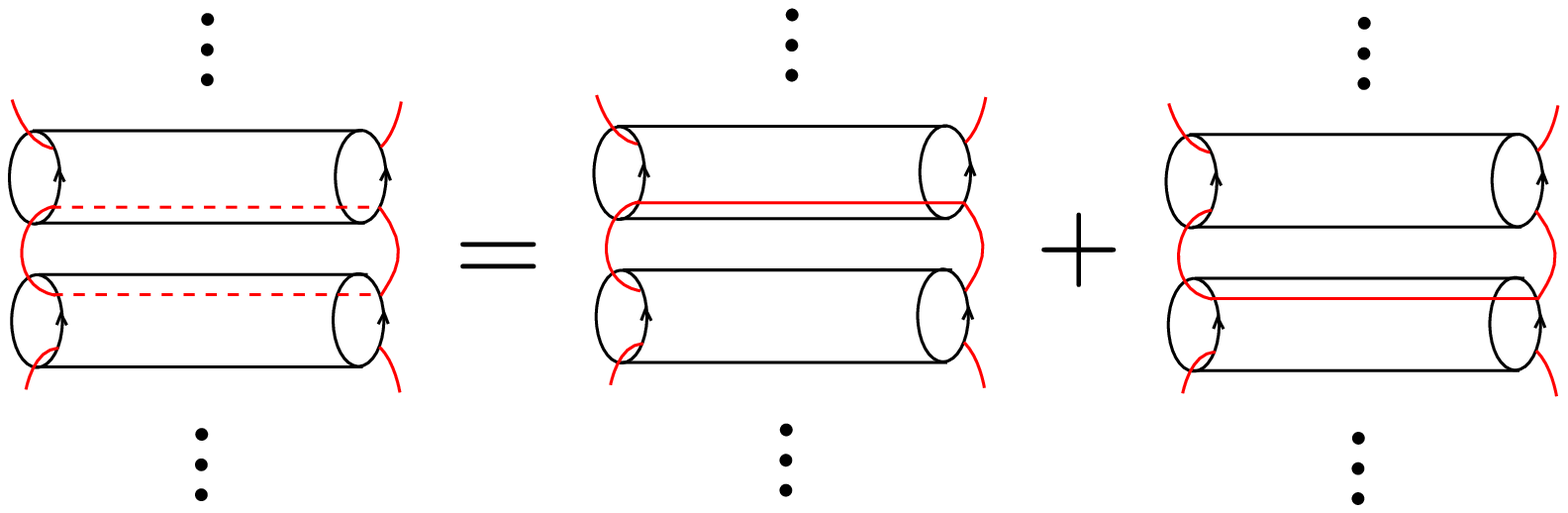}
\caption{Dotted-line pictures of basis elements of $\sac nk\Sc$.}
\label{fig:ExpandingDottedLines}
\end{figure}

The concatenation of basis elements $b_1$ and $b_2 \in \sac nk\Sc$ can be described pictorially in terms of basis diagrams. When a pair of dashed strands matches another pair of dashed strands, then they appear in the basis diagram of $b_1b_2$ as well. When a pair of dashed strands matches a single solid strand, then the dashed strand matched with the solid strand is treated as solid, whereas the other one disappears (see Figure~\ref{fig:DottedTimesSolid}). Finally, whenever a solid strand (or a pair of matched strands) of $b_1$ does not match any (solid or pair of dashed) strands of $b_2$, the product $b_1 b_2$ vanishes.

\begin{figure}
\includegraphics[scale=0.5]{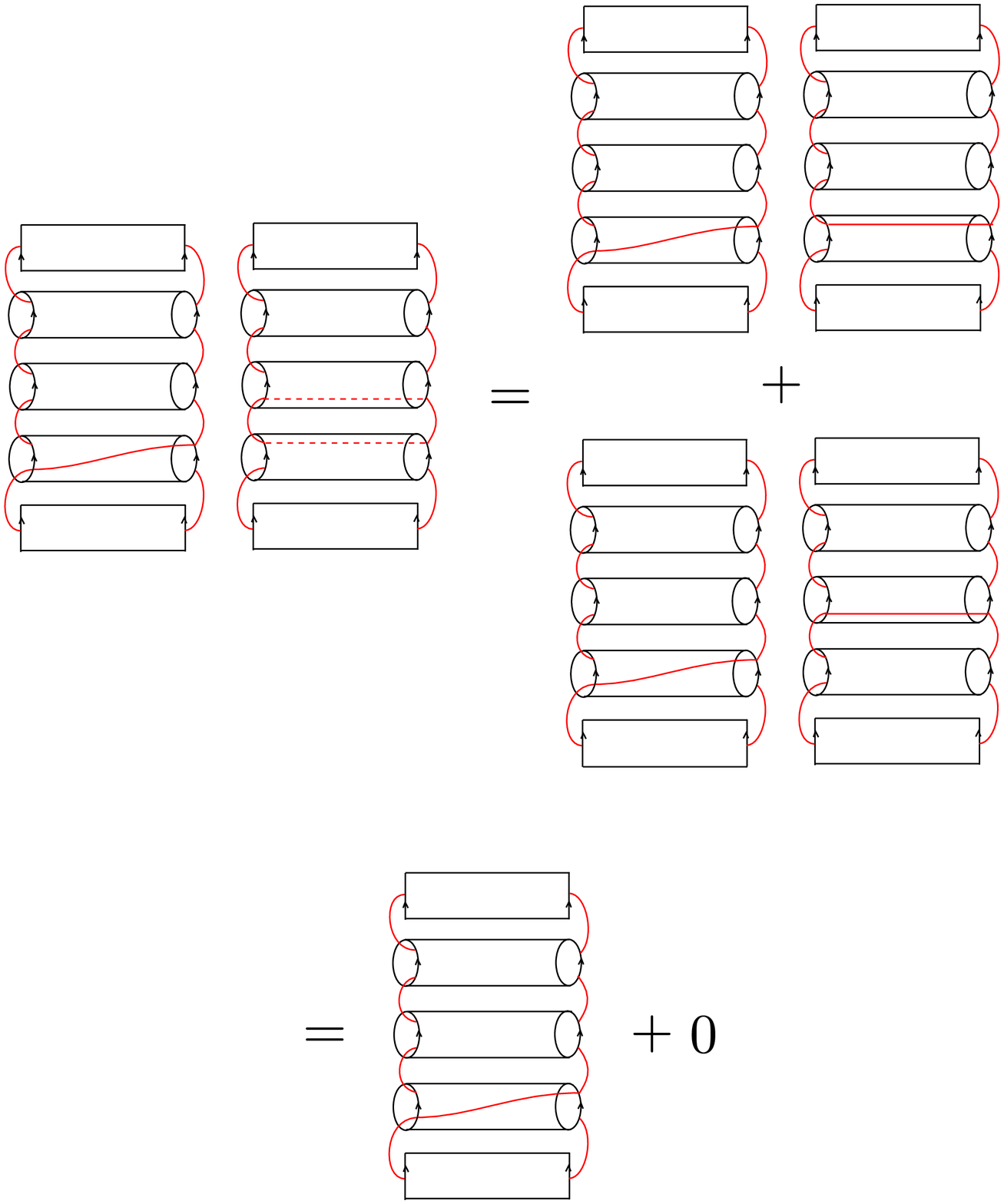}
\caption{Multiplying basis elements of $\sac nk\Sc$.}
\label{fig:DottedTimesSolid}
\end{figure}

Considering basis diagrams makes the visual interpretation of the differential clear as well. Viewing $a$ as a single basis diagram of solid and dashed strands as above, we express $b=\de a$ as a sum of crossing resolutions of $a$. Terms coming from resolving a crossing between solid strands clearly give further basis diagrams (the orientation-preserving property of strands ensures that a non-constant strand cannot suddenly become constant after a crossing resolution).  Meanwhile, crossings between solid and dashed strands can only contribute terms when the intersecting dashed strand is considered solid, and its matched partner is missing, giving a basis diagram after resolution with one fewer pair of dashed lines (note that after resolution, the formerly constant solid strand is no longer constant, and neither is the other solid strand).  See Figure~\ref{fig:DifferentialOfDotted} for an illustration.  This argument indicates that $\sac nk\Sc$ should inherit the differential of $\tsac nk\Sc$, which we prove using the following sequence of lemmas whose proofs are structurally very similar to each other. We will continue to use the notations of Definition \ref{def:Ib notations} throughout.

\begin{figure}
\includegraphics[scale=0.5]{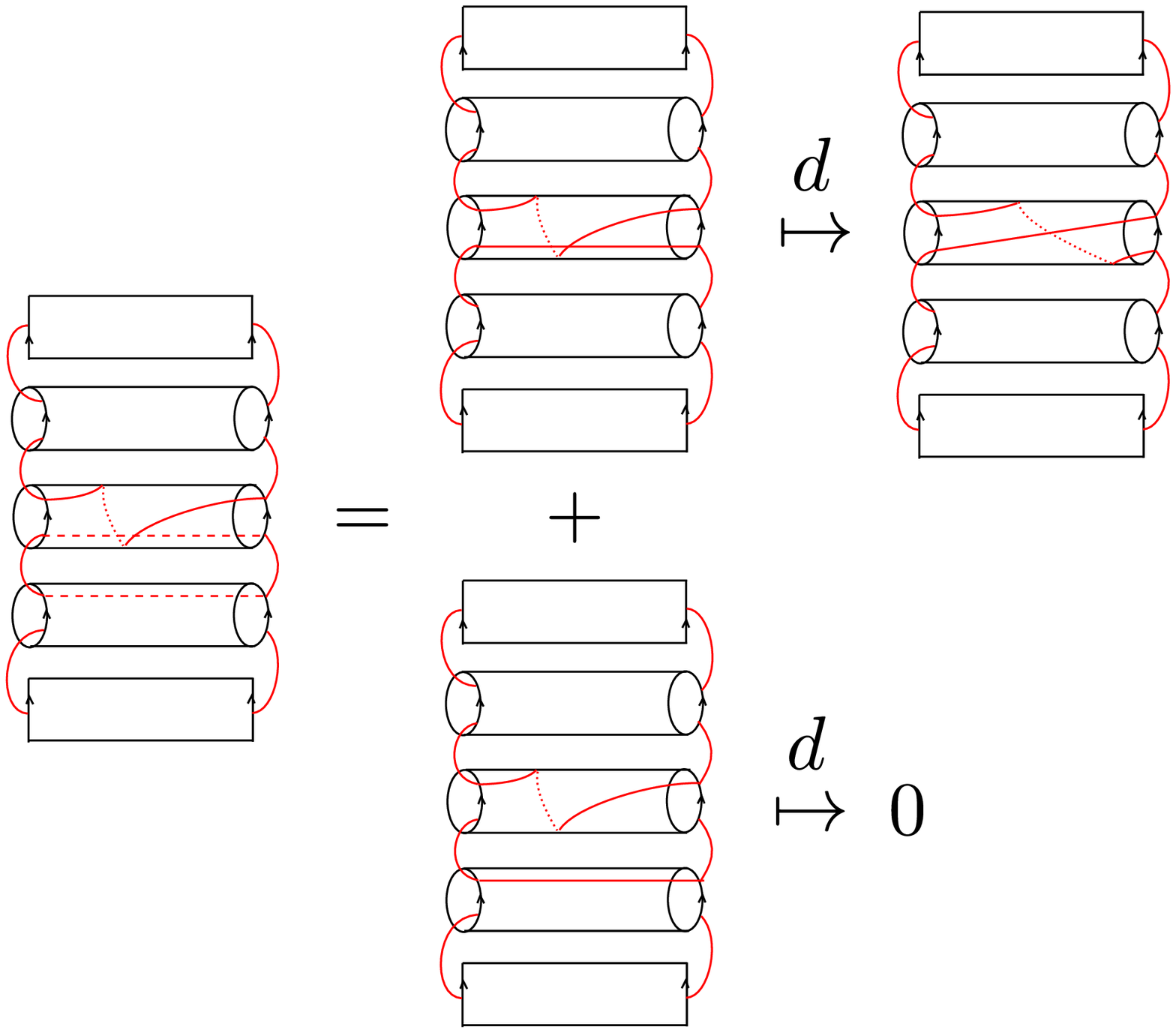}
\caption{Differentiating basis elements of $\sac nk\Sc$.}
\label{fig:DifferentialOfDotted}
\end{figure}

\begin{lemma}\label{lem:delc preserves Ank}
For any $a\in\sac nk\Sc$, we have $\de_j^c a\in\sac nk\Sc$ for all $j\in[1,n]$.
\end{lemma}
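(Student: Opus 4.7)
The plan is to reduce to the case of a single basis element $a = E(s, \vec{c})$ by linearity and to expand
\[
\partial_j^c E(s, \vec{c}) = \sum_{\ib \subset \Ib} \partial_j^c(s_\ib, \vec{c}),
\]
where $\Ib \subset s(0)$ is the set of starting basepoints of constant strands of $s$. The case $\vec{c}(j) = 0$ is trivial (the result is $0$), so I assume $\vec{c}(j) = 1$. The key observation for organizing the computation is that the non-constant strands of $s$ on $S^1_j$ appear unchanged in every $s_\ib$, whereas the constant strands on $S^1_j$ in $s_\ib$ vary with $\ib$: a constant of $s$ at $z_j^\pm$ is moved off $S^1_j$ precisely when $\ib$ contains that basepoint, while a constant of $s$ at $z_{j \mp 1}^\pm$ is moved onto $S^1_j$ precisely when $\ib$ contains that basepoint.

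I would then split into two cases according to whether $s$ has a strand of positive speed on $S^1_j$. In the positive-speed case, the set of maximal-speed strands on $S^1_j$ in $s_\ib$ is independent of $\ib$ (it consists of the maximal-speed non-constant strands of $s$ on $S^1_j$, since any constant has speed $0$). Call these $s_{a_1}, \ldots, s_{a_r}$, and let $s^{(a_i)}$ denote $s$ with the speed of strand $a_i$ increased by $2$. Since each $a_i$ is non-constant, the $\ib$-substitution and the speed-up commute, giving $(s_\ib)^{(a_i)} = (s^{(a_i)})_\ib$. Moreover $s^{(a_i)}$ has the same starting and ending basepoints as $s$ (since a speed-$2$ strand on $S^1_j$ wraps once and returns to the same point) and hence satisfies the non-degeneracy conditions. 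Swapping the order of summation yields
\[
\partial_j^c E(s, \vec{c}) = \sum_{i=1}^{r} E\bigl(s^{(a_i)}, \vec{c} - e_j\bigr) \in \sac nk\Sc.
\]

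The more delicate case is when $s$ has no positive-speed strand on $S^1_j$. Here every strand on $S^1_j$ appearing in any $s_\ib$ is a constant of speed $0$, and hence automatically maximal. I would analyze the ``$z_j^-$ side'' and the ``$z_j^+$ side'' independently. The non-degeneracy condition $s(0) \cap M(s(0)) = \varnothing$ forces at most one of the two possibilities $z_j^- \in \Ib$ (sub-case (a)) and $z_{j-1}^+ \in \Ib$ (sub-case (b)) to occur, and similarly on the $z_j^+$ side. Whenever such a constant exists, I define a modified $k$-strand $s^-$ by replacing it with a non-constant speed-$2$ strand based at $z_j^-$, where in sub-case (b) this also moves the starting basepoint of the strand from $z_{j-1}^+$ to $z_j^-$; and similarly I define $s^+$. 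A direct check, relying on $s(0) \cap M(s(0)) = \varnothing$, confirms that $s^\pm$ satisfies the non-degeneracy conditions.

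The main obstacle is then to show that the contributions to $\partial_j^c E(s, \vec{c})$ of the form ``speed up the $z_j^-$-strand in $s_\ib$,'' summed over all $\ib \subset \Ib$ for which this strand exists, reassemble into $E(s^-, \vec{c} - e_j)$. In sub-case (a) one sets $\tilde\ib = \ib$ (restricting to $\ib$ with $z_j^- \notin \ib$); in sub-case (b) one sets $\tilde\ib = \ib \setminus \{z_{j-1}^+\}$ (restricting to $\ib \ni z_{j-1}^+$). In either sub-case $\tilde\ib$ ranges bijectively over $\Ib_{s^-}$, and one verifies the identity $(s^-)_{\tilde\ib} = (s_\ib)^{(a_-)}$, where $a_-$ is the index of the relevant constant strand. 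The analogous analysis on the $z_j^+$ side then yields
\[
\partial_j^c E(s, \vec{c}) = E\bigl(s^-, \vec{c} - e_j\bigr) + E\bigl(s^+, \vec{c} - e_j\bigr),
\]
with terms omitted when a side has no contributing constant. In all cases the result lies in $\sac nk\Sc$, as required.
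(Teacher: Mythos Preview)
Your proof is correct and follows essentially the same strategy as the paper's: expand $E(s,\vec{c})$ as a sum over $\ib\subset\Ib$, apply $\partial_j^c$ termwise, and reassemble the result into $E$-expressions. The only organizational difference is that in the no-positive-speed case the paper first invokes Lemma~\ref{lem:E(S,T,s)} to replace $s$ by an $s_\ib$ with the maximal number of constant strands on $S^1_j$ (so the relevant constants sit at $z_j^\pm$ rather than at their matched basepoints), whereas you keep $s$ fixed and handle the two possible locations of each constant via your sub-cases (a) and (b); both routes yield the same final formula $E(s^-,\vec{c}-e_j)+E(s^+,\vec{c}-e_j)$.
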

\begin{proof}
By linearity, we can suppose that $a = E(s, \vec{c})$ where $s(0) \cap M(s(0)) = s(1) \cap M(s(1)) = \varnothing$.
If $s$ has any non-constant strands on the backbone $S^1_j$, then all constant strands of $s$ remain constant in any summand of $\de_j^c(s,\vec{c})$.  In such a case, $\Ib$ is the set of starting points for constant strands in any summand of $\de_j^c(s,\vec{c})$. Thus, we have
\begin{align*}\de_j^c(E(s,\vec{c})) &= \de_j^c\left(\sum_{\ib\subset\Ib} (s_\ib,\vec{c})\right)\\
&= \sum_{\ib\subset\Ib} \de_j^c(s_\ib,\vec{c})\\
&= E(\de_j^c(s,\vec{c})) \in \sac nk\Sc.
\end{align*}

Now we consider the case where $s$ has no strands of speeds greater than zero on the backbone $S^1_j$.  If $s_\ib$ has no strands at all on $S^1_j$ for all $\ib\subset\Ib$, then every term $\de_j^c (s_\ib,\vec{c})$ in the sum for $\de_j^c E(s,\vec{c})$ is zero.  Similarly, if $\vec{c}(j)=0$, every term in the sum is zero as well.  Thus, we can assume that $\vec{c}(j)=1$ and there is some $\ib\subset\Ib$ such that $s_\ib$ has at least one constant strand on $S^1_j$. 
After replacing $s$ with some $s_{\ib}$ if necessary, we can also assume that the number of (constant) strands of $s$ on $S^1_j$ is maximal among elements of $\{s_{\ib} \,|\, \ib \subset \Ib\}$.
From here we consider two further subcases.

\begin{enumerate}
\item (One dashed strand and one loop) If $s$ contains only one constant strand on $S^1_j$, with starting (and ending) basepoint $z_j^\circ$ (here $\circ$ can be a $+$ or a $-$), we compute as follows:
\begin{align*}
\de_j^c E(s,\vec{c}) &= \de_j^c \sum_{\ib\subset\left(\Ib\sm\set{z_j^\circ}\right)} \left( (s_\ib,\vec{c}) + (s_{\ib\cup\set{z_j^\circ}},\vec{c})\right)\\
&= \sum_{\ib\subset\left(\Ib\sm\set{z_j^\circ}\right)} \left( \de_j^c(s_\ib,\vec{c}) + \de_j^c(s_{\ib\cup\set{z_j^\circ}},\vec{c})\right)\\
&= \sum_{\ib\subset\left(\Ib\sm\set{z_j^\circ}\right)} \left( (s_\ib',\pvec{c}') + 0 \right)
\end{align*}
where $s_\ib'$ and $\pvec{c}'$ are defined as in Definition \ref{def:differential}.  The zero term in the last line follows from the fact that $s_{\ib\cup\set{z_j^\circ}}$ cannot have any strands on $S^1_j$. Meanwhile, the terms $(s_\ib',\pvec{c}')$ will be nonzero by assumption, and since $s'$ in Definition \ref{def:differential} replaces the formerly constant strand at $z_j^\circ$ by one with speed 2 (but does not change any other strands), the set $\Ib\sm\set{z_j^\circ}$ is precisely the set of starting basepoints of constant strands for $s'$. Thus we have
\begin{equation*}
\de_j^c E(s,\vec{c}) = E(s', \pvec{c}')
= E(\de_j^c(s,\vec{c})) \in \sac nk\Sc
\end{equation*}
just as in the case when we had non-constant strands.

\item (Two dashed strands and one loop) If $s$ contains two constant strands on $S^1_j$, with starting basepoints $z_j^-,z_j^+$, we begin in the same way:
\begin{align*}
\de_j^c E(s,\vec{c}) &= \de_j^c \sum_{\ib\subset\left(\Ib\sm\set{z_j^-,z_j^+}\right)} \left( (s_\ib,\vec{c}) + (s_{\ib\cup\set{z_j^-}},\vec{c}) + (s_{\ib\cup\set{z_j^+}},\vec{c}) + (s_{\ib\cup\set{z_j^-,z_j^+}},\vec{c}) \right)\\
&= \sum_{\ib\subset\left(\Ib\sm\set{z_j^-,z_j^+}\right)} \left( \de_j^c (s_\ib,\vec{c}) + \de_j^c (s_{\ib\cup\set{z_j^-}},\vec{c}) + \de_j^c (s_{\ib\cup\set{z_j^+}},\vec{c}) + 0 \right)
\end{align*}
where we get the zero term using the same reasoning as before.  Now we write $\de_j^c(s_\ib,\vec{c})=(s_\ib^-,\pvec{c}')+(s_\ib^+,\pvec{c}')$ where $s_\ib^\pm$ is the $k$-strand defined by replacing the constant strand at $z_j^\pm$ by a new strand of speed 2, while maintaining the other strands (including the constant strand at $z_j^\mp$).  We can then write our sum as
\[\de_j^c E(s,\vec{c}) = \sum_{\ib\subset\left(\Ib\sm\set{z_j^-,z_j^+}\right)} \left( (s_\ib^+,\pvec{c}') + (s'_{\ib\cup\set{z_j^-}},\pvec{c}') + (s_\ib^-,\pvec{c}') + (s'_{\ib\cup\set{z_j^+}},\pvec{c}') \right)
\]
where $s'_{\ib\cup\set{z_j^\pm}}$ are defined as in Definition \ref{def:differential}.  The key point is to recognize that $(s_\ib^+,\pvec{c}')$ and $(s'_{\ib\cup\set{z_j^-}},\pvec{c}')$ are complementary in the sense that both have a strand of speed 2 starting and ending at $z_j^+$, and indeed have the same strands everywhere except that $(s_\ib^+,\pvec{c}')$ has a constant strand at $z_j^-$, while $(s'_{\ib\cup\set{z_j^-}},\pvec{c}')$ has a constant strand at $z_{j-1}^+=M(z_j^-)$.  This reasoning implies that
\[\sum_{\ib\subset\left(\Ib\sm\set{z_j^-,z_j^+}\right)} \left( (s_\ib^+,\pvec{c}') + (s'_{\ib\cup\set{z_j^-}},\pvec{c}') \right) = \sum_{\ib\subset\left(\Ib\sm\set{z_j^+}\right)} (s^+_\ib,\pvec{c}')\]
and similarly we have
\[\sum_{\ib\subset\left(\Ib\sm\set{z_j^-,z_j^+}\right)} \left( (s_\ib^-,\pvec{c}') + (s'_{\ib\cup\set{z_j^+}},\pvec{c}') \right) = \sum_{\ib\subset\left(\Ib\sm\set{z_j^-}\right)} (s^-_\ib,\pvec{c}').\]
Since $\Ib\sm\set{z_j^\pm}$ is the set of constant strand starting points for $s^\pm$, we have
\begin{align*}
\de_j^c E(s,\vec{c}) &= \sum_{\ib\subset\left(\Ib\sm\set{z_j^+}\right)} (s^+_\ib,\pvec{c}') + \sum_{\ib\subset\left(\Ib\sm\set{z_j^-}\right)} (s^-_\ib,\pvec{c}')\\
&= E(s^+,\pvec{c}') + E(s^-,\pvec{c}')\\
&= E(\de_j^c(s,\vec{c})) \in\sac nk\Sc.
\end{align*}
\end{enumerate}

Thus in all cases we see that, after replacing $s$ with some $s_{\ib}$ if necessary as described above,
\begin{equation}\label{eq:delc commutes with E}
\de_j^c(E(s,\vec{c})) = E(\de_j^c(s,\vec{c}))\in\sac nk\Sc,
\end{equation}
proving the lemma.
\end{proof}

\begin{lemma}\label{lem:del0 preserves Ank}
For any $a\in\sac nk\Sc$, we have $\de_j^0 a\in\sac nk\Sc$ for all $j\in[1,n]$.
\end{lemma}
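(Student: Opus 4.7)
The plan is to mimic the structure of the proof of Lemma \ref{lem:delc preserves Ank}: by linearity we may assume $a=E(s,\vec c)$ with $s(0)\cap M(s(0))=s(1)\cap M(s(1))=\varnothing$, and by Lemma \ref{lem:E(S,T,s)} we can replace $s$ with some $s_\jb$ to maximize the number of (constant) strands of $s$ sitting on $S^1_j$. With this choice in hand, no $\ib\subset\Ib$ can introduce \emph{additional} strands to $S^1_j$ when passing to $s_\ib$: such an addition would have to come from a basepoint in $s(0)$ whose image under $M$ lies on $S^1_j$, and maximality together with the requirement $s(0)\cap M(s(0))=\varnothing$ rules this out. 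The goal is then to prove the formula
\[
\de_j^0(E(s,\vec c)) = E(\de_j^0(s,\vec c)),
\]
from which membership in $\sac nk\Sc$ follows immediately.

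I would then split into cases according to how many strands of $s$ lie on $S^1_j$. If there are fewer than two, then by maximality every $s_\ib$ also has fewer than two strands on $S^1_j$, so both sides are zero. Otherwise $s$ has exactly two strands on $S^1_j$, and I further split by how many of those two are constant: (a) both non-constant, (b) one constant at some $z_j^\circ$ and one of speed $p\geq 2$, and (c) both constant. In case (a), $\ib$ never affects $S^1_j$, so $\de_j^0$ acts identically on each $s_\ib$ and the identity follows by swapping summation orders. In case (c), both speeds are $0$, so both sides vanish trivially.

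The main obstacle is case (b), which is the analogue of the two-subcases in Lemma \ref{lem:delc preserves Ank}. Here $z_j^\circ\in\Ib$, and the terms in $\de_j^0(E(s,\vec c))$ with $z_j^\circ\in\ib$ vanish because $s_\ib$ then has only one strand on $S^1_j$. For the surviving $\ib\subset\Ib\setminus\{z_j^\circ\}$, the differential replaces the pair of speeds $(p,0)$ by $(p-1,1)$ (one term if $p=2$, two terms if $p\geq 4$), so in every resulting $k$-strand $t^\alpha$ the formerly constant strand at $z_j^\circ$ is now a strand of speed $1$, and the constant-strand basepoint set has become exactly $\Ib' := \Ib\setminus\{z_j^\circ\}$. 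Checking that $(t^\alpha)_\ib = (s_\ib)'$ for $\ib\subset\Ib'$ (with $'$ as in Definition~\ref{def:differential}, applied to the pair of speeds $(p,0)$ in $s_\ib$) then gives
\[
\de_j^0(E(s,\vec c)) = \sum_\alpha \sum_{\ib\subset\Ib'} ((t^\alpha)_\ib,\vec c) = \sum_\alpha E(t^\alpha,\vec c) = E(\de_j^0(s,\vec c)),
\]
completing case (b) and hence the lemma. The verification that only two terms survive when $p\geq 4$ uses the same degenerate-bigon argument already encoded in Definition~\ref{def:differential}, and requires no new input beyond what was used for the non-constant-strand subcase (a).
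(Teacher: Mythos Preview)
Your proof is correct and follows essentially the same approach as the paper's: both arguments normalize the representative $s$ via Lemma~\ref{lem:E(S,T,s)}, split according to whether the strands on $S^1_j$ are constant or not, and in the interesting case show that the constant-strand index set drops by exactly $\{z_j^\circ\}$ after applying $\de_j^0$. The only cosmetic difference is that the paper normalizes by choosing $s$ so that $\de_j^0(s,\vec c)\neq 0$ (handling the all-zero case separately up front), whereas you maximize the number of strands on $S^1_j$; this forces you to record the trivial cases ``fewer than two strands'' and ``both constant'' explicitly, but the substantive cases (a) and (b) coincide with the paper's two subcases.
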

\begin{proof}
As in the proof of Lemma \ref{lem:delc preserves Ank}, we can suppose that $a = E(s, \vec{c})$ where $s(0) \cap M(s(0)) = s(1) \cap M(s(1)) = \varnothing$.
If $\de^0_j(s_\ib,\vec{c})=0$ for all $\ib\subset\Ib$, then we have $\de^0_j a = 0\in \sac nk\Sc$ trivially.  Thus it is enough to consider the case where there is some subset $\ib\subset\Ib$ such that $\de^0_j(s_\ib,\vec{c})\neq 0$.  By Lemma \ref{lem:E(S,T,s)} we can replace $s$ by this $s_\ib$ without changing $a$, and so we may assume without loss of generality that $s$ itself satisfies $\de^0_j (s,\vec{c}) \neq 0$.

In particular, we may assume that $s$ contains two strands on the backbone $S^1_j$ having unequal speeds $p>q$.  We now split into two further subcases:

\begin{enumerate}
\item $q\neq 0$ (Two solid strands): In this case, neither strand on $S^1_j$ is constant, so every term in the sum $\de^0_j a = \sum_{\ib\subset\Ib} (\de^0_j (s_\ib, \vec{c}))$ is nonzero, and is a sum of one term (if $p-q=2$) or two terms (if $p-q>2$) with non-constant strands.  Furthermore, since $\de^0_j$ does not affect any strand away from the backbone $S^1_j$, constant strands of $s$ remain constant in any summand of $\de^0_j (s,\vec{c})$. Thus the set $\Ib$ of constant strand starting points for $s$ is the set of constant strand starting points for any summand of $\de^0_j (s,\vec{c})$ in this case.  For $\ib \subset \Ib$, we may write $\de^0_j(s_\ib, \vec{c}) = (\de^0_j (s, \vec{c}))_\ib$, so our sum becomes
\[\de^0_j a = \sum_{\ib\subset\Ib} (\de^0_j (s, \vec{c}))_{\ib} = E(\de^0_j(s, \vec{c})) \in\sac nk\Sc,\]
extending $()_{\ib}$ linearly.

\item\label{it:Del0DashedConstant} $q=0$ (One dashed strand): In this case, $s$ has a constant strand on some basepoint $z_j^\circ\in S^1_j$, where $\circ \in \{+,-\}$.  Thus for any $\ib\subset\Ib$ containing $z_j^\circ$, $s_\ib$ does not contain this constant strand, so $\de^0_j(s_\ib,\vec{c})=0$. It follows that the only terms in the sum that matter are those that come from subsets not containing $z_j^\circ$. For such subsets $\ib$, we may again write $\de^0_j (s_\ib,\vec{c}) = (\de^0_j (s,\vec{c}))_\ib$.

Meanwhile, the terms in $\de^0_j (s,\vec{c})$ will contain strands of speeds $p-1$ and $1$ on $S^1_j$. There will be two such terms if $p\neq 2$ and one such term if $p=2$.  All other strands of $s$ are maintained.  We cannot have $p=1$ since the strands of $s$ end on distinct basepoints in $s(1)$.  Thus, the set of constant strand starting points for $\de^0_j (s,\vec{c})$ is precisely $\Ib\setminus\set{z_j^\circ}$.  Altogether, we can write our sum as
\[\de^0_j a = \sum_{\ib\subset\Ib\sm\set{z_j^\circ}} (\de^0_j (s_\ib, \vec{c})) = \sum_{\ib\subset(\Ib\setminus\set{z_j^\circ})} (\de_j (s,\vec{c}))_\ib = E(\de^0_j (s,\vec{c})) \in\sac nk\Sc\]
where we have again extended $()_\ib$ linearly.
\end{enumerate}

As in Lemma~\ref{lem:delc preserves Ank}, we have
\begin{equation}\label{eq:del0 commutes with E}
\de_j^0(E(s,\vec{c})) = E(\de_j^0(s,\vec{c}))\in\sac nk\Sc
\end{equation}
in all cases, again after replacing $s$ with some $s_\ib$ if necessary, proving the lemma.
\end{proof}

\begin{proposition}\label{prop:AnkSubcomplex}
The subspace $\sac nk\Sc$ of $\tsac nk\Sc$ is preserved by the differential on $\tsac nk\Sc$.
\end{proposition}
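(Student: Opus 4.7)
The plan is to reduce Proposition~\ref{prop:AnkSubcomplex} immediately to the two preceding lemmas. By linearity, it suffices to show that $\partial a \in \sac nk\Sc$ for each standard basis element $a = E(s,\vec{c})$. By Definition~\ref{def:differential}, the differential on $\tsac nk\Sc$ decomposes as
\[
\partial = \sum_{j \in [1,n]} \partial_j, \qquad \partial_j = \partial_j^0 + \partial_j^c,
\]
so it is enough to verify that $\partial_j^0 a$ and $\partial_j^c a$ both lie in $\sac nk\Sc$ for every $j \in [1,n]$. This is exactly the content of Lemma~\ref{lem:delc preserves Ank} and Lemma~\ref{lem:del0 preserves Ank}.

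Concretely, I would write: Fix a basis element $a = E(s,\vec{c}) \in \sac nk\Sc$. For each $j \in [1,n]$, Lemma~\ref{lem:del0 preserves Ank} gives $\partial_j^0 a \in \sac nk\Sc$ and Lemma~\ref{lem:delc preserves Ank} gives $\partial_j^c a \in \sac nk\Sc$. Summing over $j$, and using that $\sac nk\Sc$ is an $\F_2$-subspace of $\tsac nk\Sc$, yields $\partial a \in \sac nk\Sc$. Extending linearly to an arbitrary element $a \in \sac nk\Sc$ completes the argument.

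There is no real obstacle here; the substantive combinatorial work, namely showing that resolving crossings involving dashed strands or closed loops produces another sum of the form $E(s',\vec{c}')$ (captured in the identities \eqref{eq:delc commutes with E} and \eqref{eq:del0 commutes with E}), has already been carried out in the two lemmas. So the proposition is essentially a one-line corollary: the two lemmas together assert $\partial_j(E(s,\vec{c})) = E(\partial_j(s,\vec{c}))$ for all $j$, hence $\partial E(s,\vec{c}) = E(\partial(s,\vec{c})) \in \sac nk\Sc$. This identity is also what will be needed later to efficiently compute differentials of standard basis elements via the combinatorics of Section~\ref{sec:Differential}.
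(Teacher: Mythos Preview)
Your proof is correct and matches the paper's own argument essentially line for line: decompose $\partial$ as $\sum_j(\partial_j^0+\partial_j^c)$ and invoke Lemmas~\ref{lem:delc preserves Ank} and \ref{lem:del0 preserves Ank}. The only minor imprecision is the final identity $\partial_j(E(s,\vec{c})) = E(\partial_j(s,\vec{c}))$, which in the lemmas holds only after possibly replacing $s$ by some $s_{\ib}$; this does not affect the conclusion since $E(s,\vec{c}) = E(s_{\ib},\vec{c})$.
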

\begin{proof}
Since $\de=\sum_{j\in[1,n]} \de_j^c + \de_j^0$, this proposition follows from Lemmas \ref{lem:delc preserves Ank} and \ref{lem:del0 preserves Ank}.
\end{proof}

\subsection{Idempotents and the unit}\label{sec:IdemsAndUnit}

At this point, we can almost say that $\sac nk\Sc$ is a differential algebra (we will see in Section \ref{sec:gradings} that it is in fact a dg algebra). One subtlety is that the unit of $\tsac nk\Sc$ is not an element of $\sac nk\Sc$. However, $\sac nk\Sc$ has its own unit, which we define below.

Let $B$ be the set of basepoints in $\mc Z(n)$ as above, and let $M$ be the matching on $B$.  We write $B/M:=B/(z\sim M(z))$ and consider the quotient map $q: B\rightarrow B/M$.  We can identify $B/M$ with $[0,n]$ by sending $\{z_i^+, z_{i+1}^-\} \in B/M$ to the index $i \in [0,n]$.

Let $\x$ be a $k$-element subset $\x$ of $B/M \cong [0,n]$, i.e. an element of $V(n,k)$ in the notation of Definition~\ref{def:ReviewVnk} (we will resume using this notation below). Following Lipshitz--Ozsv{\'a}th--Thurston, we will call a subset $\Sb \subset B$ a \emph{section of} $\x$ if $\Sb$ is the image of a section of the quotient map $q$ over $\x$.

\begin{definition}\label{def:Strands Idempotents}
For $\x \in V(n,k)$, let $\Jb_{\x}$ be the element $E(\const_{\Sb},\vec{0})$ of $\sac nk\Sc$ where $\Sb$ is any section of $\x$ and $\const_{\Sb}$ is the $k$-strand of constant strands at each basepoint in $\Sb$.  Note that this definition is independent of the choice of $\Sb$ by Lemma \ref{lem:E(S,T,s)}. Define 
\[
1_{\sac nk\Sc} := \sum_{\x \in V(n,k)} \Jb_{\x}.
\]
\end{definition}

A section $\Sb$ of $\x$ can always be chosen by the rule that $i\in\x$ if and only if $z_i^+\in\Sb$.  Regardless of the choice of section, however, the element $\Jb_\x$ is visually interpreted as the diagram consisting of a constant dashed strand at each point of $q^{-1}(\x) \subset B$.  The elements $\Jb_\x$ for $\x \in V(n,k)$ constitute a set of pairwise orthogonal idempotents in $\sac nk\Sc$.

\begin{proposition}
\label{prop:sac identity}
The element $1_{\sac nk\Sc}$ is an identity element for $\sac nk\Sc$.
\end{proposition}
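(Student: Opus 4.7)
The plan is to verify the unit axiom on standard basis elements $a = E(s,\vec{c})$ and extend by linearity. The key structural fact I will exploit is that $\Jb_{\x}$ is independent of the choice of section of $\x$ (by Lemma~\ref{lem:E(S,T,s)}), which lets me pick the most convenient section for each calculation, together with the explicit product formula developed in the proof of Proposition~\ref{prop:StrandsAlg closed under mult}, in particular equation~\eqref{eq:sa nk multiplication} and the case analysis underlying~\eqref{eq:SumForProduct}.

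Fix a basis element $a = E(s,\vec{c})$, and set $\x := q(s(0))$ and $\y := q(s(1))$. Note that $\x,\y \in V(n,k)$ precisely because the hypotheses $s(0) \cap M(s(0)) = s(1) \cap M(s(1)) = \varnothing$ imply that no two basepoints in $s(0)$ (respectively $s(1)$) are identified by $q$, so $|\x| = |\y| = k$. I will establish $1_{\sac nk\Sc} \cdot a = a$ by proving the two claims: (i) $\Jb_{\x} \cdot a = a$, and (ii) $\Jb_{\x'} \cdot a = 0$ for every $\x' \neq \x$ in $V(n,k)$.

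For claim~(i), I choose the section $\Sb := s(0)$ of $\x$, which is a section since $s(0) \cap M(s(0)) = \varnothing$. Then $\const_{\Sb}$ is a $k$-strand with $\const_{\Sb}(1) = s(0)$, so $\const_{\Sb} \cdot s$ is a well-defined concatenation. Because every strand of $\const_{\Sb}$ has speed $0$, the concatenation produces no bigons or annuli, and the resulting speeds agree with those of $s$, so $\const_{\Sb}\cdot s = s$ as $k$-strands. Applying equation~\eqref{eq:sa nk multiplication} gives $\Jb_{\x} \cdot a = E(\const_{\Sb}\cdot s, \vec{0}+\vec{c}) = E(s,\vec{c}) = a$.

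For claim~(ii), I revisit the derivation of~\eqref{eq:SumForProduct}. For any section $\Sb'$ of $\x'$, the product $E(\const_{\Sb'},\vec{0}) \cdot E(s,\vec{c})$ is a sum over pairs $(\ib,\jb)$ with $(\const_{\Sb'})_{\ib}(1) = s_{\jb}(0)$, which after applying the quotient $q$ forces $q(\Sb') = q(s(0))$, i.e.\ $\x' = \x$; since $\x' \neq \x$, no such pairs exist and the product vanishes. Summing these two claims gives $1_{\sac nk\Sc} \cdot a = a$. The proof that $a \cdot 1_{\sac nk\Sc} = a$ is entirely symmetric, using the section $s(1)$ of $\y$ and the fact that $s \cdot \const_{s(1)} = s$. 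The only subtlety worth watching is making sure that the $q$-image argument for the vanishing claim is carried out uniformly in the choice of $(\ib,\jb)$; this is exactly the content already packaged into the multiplication analysis in Proposition~\ref{prop:StrandsAlg closed under mult}, so no new case analysis is required.
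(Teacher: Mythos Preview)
Your proof is correct and follows essentially the same approach as the paper's: both argue on standard basis elements, show that $\Jb_{\x}\cdot a = a$ when $\x = q(s(0))$ by invoking the product formula from Proposition~\ref{prop:StrandsAlg closed under mult}, show that $\Jb_{\x'}\cdot a = 0$ for $\x' \neq \x$ by non-concatenability, and handle right multiplication symmetrically. Your version is slightly more explicit in choosing the section $\Sb = s(0)$, but the structure is the same.
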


\begin{proof}
Let $E(s,\vec{c})$ denote a standard basis element of $\sac nk\Sc$.  Let $\y,\z\in V(n,k)$ denote the images of $q(s(0))$ and $q(s(1))$ respectively under the identification of $B/M$ with $[0,n]$.  Note that, if $\x\neq\y$, then $\Jb_\x \cdot E(s,\vec{c})=0$ because none of the summands of $\Jb_{\x}$ and $E(s,\vec{c})$ are concatenable, while $\Jb_\y \cdot E(s,\vec{c})=E(s,\vec{c})$ (see the proof of Proposition \ref{prop:StrandsAlg closed under mult} where it is shown that $E(t,\vec{d})\cdot E(s,\vec{c}) = E(t\cdot s, \vec{d}+\vec{c})$ after $t$ and $s$ are chosen appropriately).  Similarly, $E(s,\vec{c})\cdot\Jb_\x=0$ for $\x\neq\z$, while $E(s,\vec{c})\cdot\Jb_\z=E(s,\vec{c})$.  Thus we have
\begin{gather*}1_{\sac nk\Sc} \cdot E(s,\vec{c}) = \Jb_\y \cdot E(s,\vec{c}) + \sum_{\x\neq \y} \Jb_\x \cdot E(s,\vec{c}) = E(s,\vec{c}),\\
E(s,\vec{c}) \cdot 1_{\sac nk\Sc} = E(s,\vec{c})\cdot \Jb_\z + \sum_{\x\neq \z} E(s,\vec{c})\cdot \Jb_\x = E(s,\vec{c}).
\end{gather*}
\end{proof}

We see that $\sac nk\Sc$ is a differential algebra over $\F_2$ (gradings will be discussed in Section~\ref{sec:gradings}). Moreover, $\sac nk\Sc$ can be viewed as an algebra over the idempotent ring $\Ib(n,k)=\F_2^{V(n,k)}$ via the ring homomorphism sending the indicator function of $\x\in V(n,k)$ to $\Jb_\x\in \sac nk\Sc$, and we have a natural splitting
\begin{equation}\label{eq:StrandsAlgIdemSplitting}
\sac nk\Sc = \bigoplus_{\x,\y \in V(n,k)} {\Jb_\x} \sac nk\Sc {\Jb_\y}
\end{equation}
(see \cite[\lemOrthogonalIdempotents]{MMW1} for more details).

Each element of the basis for $\sac nk\Sc$ from Definition~\ref{def:StrandsAlgBasis} is homogeneous with respect to the decomposition of \eqref{eq:StrandsAlgIdemSplitting}. The basis element $E(s,\vec{c})$ lies in $\Jb_{\x} \sac nk\Sc {\Jb_{\y}}$ if and only if $\x$ and $\y$ are the projections of $s(0), s(1) \subset B$ to $k$-element subsets of $(B/M) \cong [0,n]$ respectively (in other words, $\x$ and $\y$ are the unique I-states such that $s(0)$ is a section of $\x$ and $s(1)$ is a section of $\y$). Thus, we have the following lemma.

\begin{lemma}
Let $\x, \y \in V(n,k)$. An $\F_2$-basis of the summand ${\Jb_\x} \sac nk\Sc {\Jb_\y}$ of $\sac nk\Sc$ consists of all standard basis elements of $\sac nk\Sc$ of the form $E(s,\vec{c})$, where $s(0)$ and $s(1)$ are sections of $\x$ and $\y$ respectively.
\end{lemma}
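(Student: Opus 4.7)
The plan is to directly leverage the standard basis of $\sac nk\Sc$ from Definition~\ref{def:StrandsAlgBasis}, together with the computation of idempotent actions already carried out in the proof of Proposition~\ref{prop:sac identity}. The key observation is that each standard basis element $E(s,\vec c)$ is ``homogeneous'' with respect to the decomposition \eqref{eq:StrandsAlgIdemSplitting}, landing in exactly one summand determined by the projections of $s(0)$ and $s(1)$ to $B/M \cong [0,n]$.

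First, I would note that since $s(0) \cap M(s(0)) = \emptyset$, the projection $q \colon B \to B/M$ restricts to a bijection from $s(0)$ onto a $k$-element subset $\y' \subset [0,n]$, so $s(0)$ is a section of a unique $\y' \in V(n,k)$; similarly $s(1)$ is a section of a unique $\z' \in V(n,k)$. Then I would invoke the computation from the proof of Proposition~\ref{prop:sac identity}: for any $\x \in V(n,k)$, we have $\Jb_{\x} \cdot E(s,\vec c) = E(s,\vec c)$ if $\x = \y'$ and $0$ otherwise, and symmetrically $E(s,\vec c) \cdot \Jb_{\x} = E(s,\vec c)$ if $\x = \z'$ and $0$ otherwise. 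Therefore
\[
\Jb_{\x} \cdot E(s,\vec c) \cdot \Jb_{\y} =
\begin{cases} E(s,\vec c) & \text{if $s(0)$ is a section of $\x$ and $s(1)$ is a section of $\y$,} \\ 0 & \text{otherwise.}\end{cases}
\]

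From this I would conclude both the spanning and linear independence claims. Spanning: given any element $a \in \Jb_{\x} \sac nk\Sc \Jb_{\y}$, write $a$ as an $\F_2$-linear combination of standard basis elements $E(s,\vec c)$; then $a = \Jb_{\x} a \Jb_{\y}$ equals the subsum over those terms for which $s(0)$ and $s(1)$ are sections of $\x$ and $\y$ respectively. Independence: the proposed basis is a subset of the standard basis of $\sac nk\Sc$, which is linearly independent by Lemma~\ref{lem:EscLinIndep}.

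There is essentially no obstacle here; the statement is a bookkeeping consequence of the splitting \eqref{eq:StrandsAlgIdemSplitting} together with the identification of which standard basis element lies in which summand. The only mild subtlety is confirming that the correspondence $E(s,\vec c) \mapsto (\y',\z')$ is well-defined, which follows immediately from the non-degeneracy conditions $s(0) \cap M(s(0)) = s(1) \cap M(s(1)) = \varnothing$ imposed in the standard basis, ensuring that $q$ restricted to $s(0)$ (respectively $s(1)$) is injective.
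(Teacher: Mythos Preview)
Your proposal is correct and takes essentially the same approach as the paper. The paper's proof is in fact just the sentence preceding the lemma, which observes that each standard basis element is homogeneous for the decomposition \eqref{eq:StrandsAlgIdemSplitting} and lies in $\Jb_{\x}\sac nk\Sc\Jb_{\y}$ precisely when $s(0)$ and $s(1)$ project to $\x$ and $\y$; you have simply spelled out the spanning, independence, and well-definedness details that the paper leaves implicit.
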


\subsection{Far states and the strands algebra}

Recall that for $\x, \y \in V(n,k)$ that are ``far'' in the sense of Definition~\ref{def:ReviewNotFarCrossed}, we have $\Ib_{\x} \B(n,k,\Sc) \Ib_{\y} = 0$. Below we prove a similar result for the strands algebra.

\begin{lemma}\label{lem:FarStatesStrandAlgZero}
Let $\x, \y \in V(n,k)$. If $\x, \y$ are far, then $\Jb_{\x} \A(n,k,\Sc) \Jb_{\y} = 0$. 
\end{lemma}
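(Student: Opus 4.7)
The plan is to show the contrapositive: if there is a nonzero standard basis element $E(s,\vec{c}) \in \Jb_{\x}\A(n,k,\Sc)\Jb_{\y}$, then $\x$ and $\y$ are not far, i.e.\ $|x_a - y_a| \le 1$ for all $a \in [1,k]$. The key geometric observation is that each individual strand $s_a$ is confined to a single backbone of $\Zc(n)$, which severely restricts how far its endpoints can be separated under the projection $q\colon B \to B/M \cong [0,n]$.

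First I would check a strand-by-strand bound. By inspecting the four possibilities for the starting basepoint $w_a$ (namely $z_0^+$, $z_{n+1}^-$, $z_i^+$, or $z_i^-$ for $1 \le i \le n$) and using that $s_a(1)$ lies in the same backbone as $w_a$, one sees directly that $q(s_a(1)) \in \{q(w_a)-1,\,q(w_a),\,q(w_a)+1\}$. Linear backbones force constancy, and on a circular backbone $S^1_i$ the only available basepoints $z_i^\pm$ project to $\{i-1,i\}$, so in all cases the projected endpoint is within $1$ of the projected start.

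Next I would order the strands so that $q(s_a(0)) = x_a$ (the $a$-th smallest element of $\x$). Define $\phi\colon [1,k] \to \y$ by $\phi(a) = q(s_a(1))$; this is a bijection because $s(1)$ is a section of $\y$. The previous step gives $\phi(a) \in \{x_a - 1,\,x_a,\,x_a + 1\}$. To deduce $|y_a - x_a| \le 1$ for all $a$, I would argue by a simple counting: for each fixed $a$, the $a$ values $\phi(1),\ldots,\phi(a)$ are distinct elements of $\y$ all bounded above by $x_a + 1$, so at least $a$ elements of $\y$ are $\le x_a + 1$, whence $y_a \le x_a + 1$. Symmetrically, the $k-a+1$ values $\phi(a),\ldots,\phi(k)$ are all bounded below by $x_a - 1$, so at least $k - a + 1$ elements of $\y$ are $\ge x_a - 1$, giving $y_a \ge x_a - 1$.

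The only mild subtlety — and the place I would double-check carefully — is verifying the single-strand bound when two strands live on the same circular backbone, since they could in principle swap; but even then each strand's endpoint projects to $\{i-1, i\}$, so the bound $|q(s_a(1))-q(w_a)|\le 1$ persists. The decoration $\vec{c}$ plays no role: the argument only uses the underlying $k$-strand $s$, so no nonzero $E(s,\vec{c})$ exists when $\x$ and $\y$ are far, and $\Jb_{\x}\A(n,k,\Sc)\Jb_{\y} = 0$ follows since it is spanned by such standard basis elements.
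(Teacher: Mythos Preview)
Your proposal is correct and follows essentially the same approach as the paper: both argue the contrapositive by observing that a nonzero basis element yields a bijection $\x \to \y$ moving each element by at most $1$, then use a counting argument to conclude $|x_a - y_a| \le 1$. The only cosmetic difference is that the paper phrases the counting step as a contradiction starting from an extremal index, whereas you give a direct pigeonhole bound; the content is the same.
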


\begin{proof}
We will show the contrapositive.  Let $\x = \set{x_1<\cdots<x_k}$ and $\y = \set{y_1<\cdots<y_k}$ and suppose that $\Jb_{\x} \A(n,k,\Sc) \Jb_{\y} \neq 0$. Then there exists a $k$-strand $s$ from a section $\Sb$ of $\x$ to a section $\Tb$ of $\y$, which in turn gives a bijection $\varphi \colon \x \to \y$ with the property that for all $a \in [1,k]$ we have
\[
|\varphi(x_a) - x_a| \leq 1.
\]
We wish to show that this condition implies $|y_a - x_a| \leq 1$ for all $a$ as well (and hence $\x$ and $\y$ are not far). To prove that $y_a-x_a\leq1$, assume by contradiction that the set $\set{a \in [1,k] \,\middle|\, y_a - x_a > 1}$ is non-empty, and let $m$ be its minimum. Then, for all $b=1, \ldots, m-1$, we have that
\[
\varphi(x_b) \leq 1 + x_b < y_m.
\]
Moreover, $\varphi(x_m)- x_m \leq 1 < y_m - x_m$, so $\varphi(x_m)<y_m$ too. Then $\varphi$ is injective from $\set{x_1, \ldots, x_m}$ to $\set{y_1, \ldots, y_{m-1}}$, which is a contradiction.  To prove that $y_a-x_a\geq-1$, we can apply the same reasoning to the maximum of the set $\set{a \in [1,k] \,\middle|\, y_a - x_a < -1}$.  Thus we must have that $|y_a - x_a| \leq 1$, so $\x$ and $\y$ are not far.
\end{proof}

\subsection{Idempotent-truncated strands algebras}

As in \cite[\secOSzTruncatedAlgs]{MMW1}, one can define truncated versions of $\sac nk\Sc$ (see also \cite[Section 12]{OSzNew}).

\begin{definition}\label{def:TruncatedStrandsAlgs}
For $0 \leq k \leq n$, define $\A_r(n,k,\Sc)$ to be
\[
\bigg( \sum_{\x: 0 \notin \x} \Jb_{\x} \bigg) \A(n,k,\Sc) \bigg( \sum_{\x: 0 \notin \x} \Jb_{\x} \bigg).
\]
Similarly, define $\A_l(n,k,\Sc)$ to be
\[
\bigg( \sum_{\x: n \notin \x} \Jb_{\x} \bigg) \A(n,k,\Sc) \bigg( \sum_{\x: n \notin \x} \Jb_{\x} \bigg).
\]
For $0 \leq k \leq n-1$, define $\A'(n,k,\Sc)$ to be
\[
\bigg( \sum_{\x: 0,n \notin \x} \Jb_{\x} \bigg) \A(n,k,\Sc) \bigg( \sum_{\x: 0,n \notin \x} \Jb_{\x} \bigg).
\]
\end{definition}
As with the truncations of $\B(n,k,\Sc)$, one can also describe these algebras in terms of full subcategories of the dg category corresponding to $\A(n,k,\Sc)$; see \cite[\defTruncatedOSzAlgs]{MMW1}.

In fact, as with $\sac nk\Sc$, the truncated algebras are special cases of strands algebras for chord diagrams that will be defined in \cite{ManionRouquier}. We describe these diagrams below.
\begin{definition}\label{def:Zr(n)}
We define the chord diagram $\mc Z_r(n)$ to be $(\mc Z_r(n), B, M)$ where
\[
\mc Z_r(n) := S^1 \sqcup \cdots \sqcup S^1 \sqcup [0,1].
\]
For $i=2, \ldots, n$, let $z_i^-:=[0] \in S^1_i$ and $z_i^+:=[1]\in S^1_i$ be two distinct basepoints in $S^1_i$. We also fix points $z_1^+ \in S^1_1$ and $z_{n+1}^- \in \Int([0,1]_{n+1})$. We define a matching $M$ on the set of basepoints $B = \set{z_1^+, z_2^\pm, \ldots, z_n^\pm, z_{n+1}^-}$ by matching $z_i^+$ with $z_{i+1}^-$, i.e.,
\[
M(z_i^+) = z_{i+1}^-.
\]
We define $\mc Z_l(n)$ and $\mc Z'(n)$ similarly, with
\[
\mc Z_l(n) := [0,1] \sqcup S^1 \sqcup \cdots \sqcup S^1
\]
and
\[
\mc Z'(n) := S^1 \sqcup \cdots \sqcup S^1.
\]
\end{definition}

Both $\mc F(\mc Z_r(n))$ and $\mc F(\mc Z_l(n))$ are a connected genus-zero sutured surface with $n+1$ boundary components. One boundary component has two sutures; the rest have no sutures and are contained in $S_+$. The sutured surface $\mc F(\mc Z'(n))$ is a connected genus-zero surface with $n+1$ boundary components and no sutures. All boundary components are contained in $S^+$ except the outermost one, which is contained in $S^-$.

The chord diagrams $\mc Z_r(3)$, $\mc Z_l(3)$, and $\mc Z'(3)$ and the sutured surfaces $\mc F(\mc Z_r(3))$, $\mc F(\mc Z_l(3))$, and $\mc F(\mc Z'(3))$ are shown in Figure~\ref{fig:TruncatedSuturedSurface}.

\begin{figure}
\includegraphics[scale=0.7]{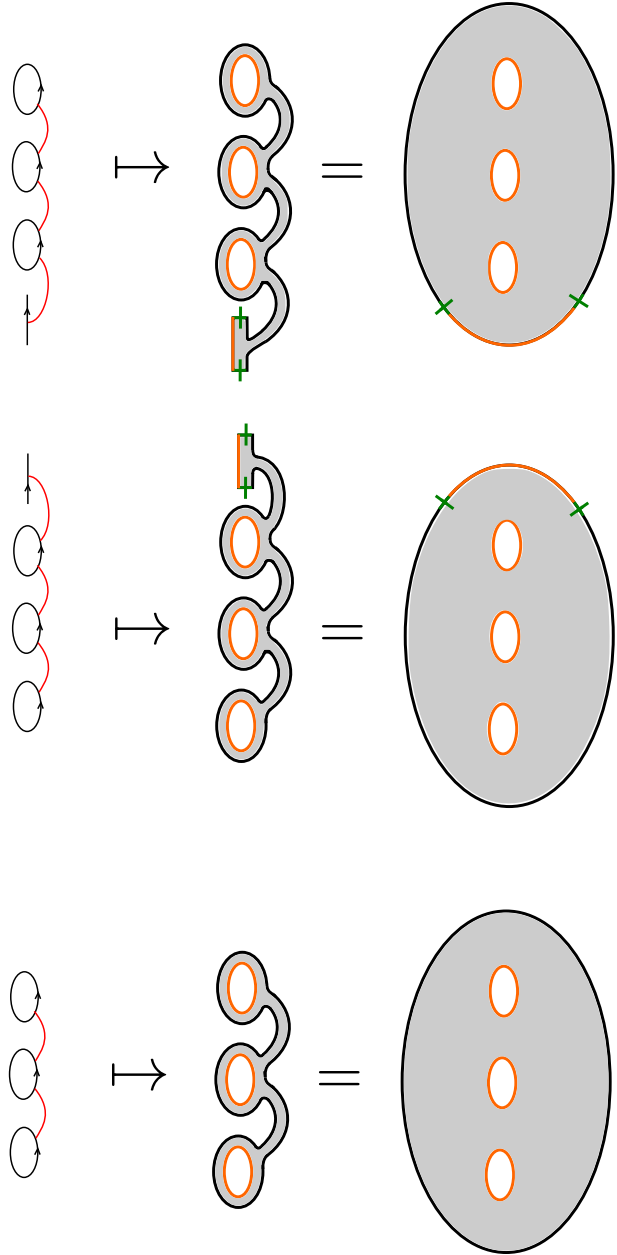}
\caption{The chord diagrams $\mc Z_l(3)$, $\mc Z_r(3)$, and $\mc Z'(3)$, and the sutured surfaces $\mc F(\mc Z_r(3))$, $\mc F(\mc Z_l(3))$, and $\mc F(\mc Z'(3))$.}
\label{fig:TruncatedSuturedSurface}
\end{figure}

\section{Structure of the strands algebras}\label{sec:StrandStructure}

\subsection{Notation and explicit bases for summands of \texorpdfstring{$\sac nk\Sc$}{A(n,k,\Sc)}}\label{sec:StrandsExplicitBasis}
As mentioned below Definition \ref{def:kstrand}, a $k$-strand can be described entirely by specifying the starting points and (constant) speeds of each strand.  When we pass to the subalgebra $\sac nk\Sc$ within $\tsac nk\Sc$, we treat constant strands somewhat differently from non-constant strands, but basis elements $E(s,\vec{c})$ should still be determined by starting points and speeds of each strand of $s$ (together with $\vec{c}$), where a speed of zero corresponds visually to a dashed strand rather than a solid strand.

Because the majority of results in this paper hinge upon the splitting
\[\sac nk\Sc \cong \bigoplus_{\x,\y\in V(n,k)} \Jb_{\x} \sac nk\Sc {\Jb_{\y}},\]
we allow our notation to take the starting idempotent $\x$ as a given.  That is, given a starting idempotent, we seek a notation that allows an immediate combinatorial and visual grasp of any given basis element $E(s,\vec{c})$ for any section $s(0)$ of $\x$.  With all of this in mind, we present the following definition starting from pairs $(s,\vec{c})$ in the pre-strands algebra.

\begin{definition}\label{def:piqi}
Suppose $(s,\vec{c})\in\tsac nk\Sc$.  Let $\Ar(s,\vec{c})$ denote the following combination of a square-free monomial in variables $C_i$ for $i \in \Sc$ together with an array of vectors:
\[\Ar(s,\vec{c}):= \prod_{i\in\Sc}C_i^{\vec{c}(i)} \vv{p_1}{q_1}{1}\vv{p_2}{q_2}{2}\cdots\vv{p_n}{q_n}{n}\]
where each $p_i,q_i$ is defined as follows.
\begin{itemize}
\item If $z_{i}^-$ is the starting point of a non-constant strand $s_a$ of $s$, then $p_i$ is the (constant) speed of this strand; otherwise we set $p_i$ equal to $0$.
\item if $z_{i}^+$ is the starting point of a non-constant strand $s_b$ of $s$, then $q_i$ is the (constant) speed of this strand; otherwise we set $q_i$ equal to $0$.
\end{itemize}
We may also omit columns of all zeros from the array.  In particular the following notation will be used often:
\begin{equation} \label{eq:pq single column}
\vv{p}{q}{i} \quad := \quad \vv{0}{0}{1} \cdots \vv{0}{0}{i-1} \vv{p}{q}{i} \vv{0}{0}{i+1} \cdots \vv{0}{0}{n}.
\end{equation}
\end{definition}
Recall that we have defined our circles $S^1_i$ and basepoints $z_i^\pm$ so that our speeds are integers, and a speed of 2 indicates a degree one map to the circle.  In particular, a strand starts and ends at the same basepoint if and only if its speed is even.

\begin{lemma}\label{lem:piqi well-defined}
Fix some starting idempotent $\x\in V(n,k)$.  For any two basis elements $(s,\vec{c})$ and $(s',\pvec{c}')$ of $\Jb_{\x} \tsac nk\Sc$ with $s(0) \cap M(s(0)) = s(1) \cap M(s(1)) = \varnothing$ and $s'(0) \cap M(s'(0)) = s'(1) \cap M(s'(1)) = \varnothing$, we have $E(s,\vec{c})=E(s',\pvec{c}')$ if and only if $\Ar(s,\vec{c})=\Ar(s',\pvec{c}')$.
\end{lemma}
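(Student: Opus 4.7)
The plan is to prove both directions by leveraging Lemma \ref{lem:E(S,T,s)}, which characterizes exactly when two $E$-elements in the pre-strands algebra coincide, and by noting that $\Ar(s,\vec{c})$ records precisely the data of $\vec{c}$ together with the non-constant strands of $s$ (starting basepoint and speed).

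For the forward direction ($E = E'$ implies $\Ar = \Ar'$), I would invoke Lemma \ref{lem:E(S,T,s)} to deduce that $\vec{c}=\pvec{c}'$ and $s' = s_{\jb}$ for some $\jb \subset \Ib$, where $\Ib$ is the set of starting points of constant strands of $s$. Since the construction $s \mapsto s_{\jb}$ of Lemma \ref{lem:matched const strands element} replaces constant strands by constant strands and leaves every non-constant strand untouched, the values $p_i$ and $q_i$ computed from $s$ and $s_{\jb}$ agree column by column, and the monomial in $C_i$ depends only on $\vec{c}$. Hence $\Ar(s,\vec{c}) = \Ar(s_{\jb},\vec{c}) = \Ar(s',\pvec{c}')$.

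For the reverse direction ($\Ar = \Ar'$ implies $E = E'$), I would first read off from $\Ar(s,\vec{c}) = \Ar(s',\pvec{c}')$ that $\vec{c}=\pvec{c}'$ and that $s, s'$ have identical non-constant strands (same starting basepoints, same speeds, and hence same ending basepoints). Consequently $s(0)$ and $s'(0)$ agree outside their constant-strand basepoints, and since both $s(0)$ and $s'(0)$ are sections of the same idempotent $\x$, the constant-strand basepoints of $s$ and $s'$ lie over the same subset of $\x$. At each such $i \in \x$, either one of the two lifts $\{z_i^+, z_{i+1}^-\}$ is already occupied by a non-constant strand endpoint (forcing the constant strand into the unique remaining slot, so $s$ and $s'$ agree there), or neither is, in which case $s$ and $s'$ may independently choose either lift. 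Setting $\jb$ to be the collection of constant-strand starting points of $s$ whose lifts differ from those of $s'$, the hypothesis $s(0) \cap M(s(0)) = \varnothing$ guarantees $\jb \subset \Ib$ with $s_{\jb} = s'$, and Lemma \ref{lem:E(S,T,s)} then yields $E(s,\vec{c}) = E(s_{\jb},\vec{c}) = E(s',\pvec{c}')$.

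The main obstacle I expect is the careful bookkeeping in the reverse direction to produce the subset $\jb$ and verify that the flip is always legal: one must use the non-intersection hypotheses $s(0) \cap M(s(0)) = s(1) \cap M(s(1)) = \varnothing$ (and the analogous ones for $s'$) to rule out the pathological case where one would need to flip a constant strand onto a basepoint already occupied by another strand endpoint. Once this structural point is pinned down, both implications reduce to the straightforward observation that $\Ar$ encodes exactly the equivalence-class data captured by Lemma \ref{lem:E(S,T,s)}.
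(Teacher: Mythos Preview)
Your proposal is correct and follows essentially the same approach as the paper's proof: both directions are reduced to Lemma~\ref{lem:E(S,T,s)}, using that $\Ar$ records exactly $\vec{c}$ together with the non-constant strands, and that $s(0)$ and $s'(0)$ being sections of the same $\x$ forces $s' = s_{\jb}$ once the non-constant strands agree. Your reverse direction is somewhat more explicit about constructing $\jb$ than the paper's one-line appeal to ``both sections of $\x$,'' but the argument is the same.
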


\begin{proof}
It is clear that if $\vec{c}\neq\pvec{c}'$, then we have both $E(s,\vec{c})\neq E(s',\pvec{c}')$ and $\Ar(s,\vec{c})\neq \Ar(s',\pvec{c}')$.  If $\vec{c}=\pvec{c}'$, Lemma \ref{lem:E(S,T,s)} shows that $E(s,\vec{c})=E(s',\pvec{c}')$ if and only if $s'=s_\ib$ for some $\ib\subset\Ib$, where $\Ib$ is the set of basepoints that are starting points for constant strands of $s$ as usual.

We claim that $s'=s_\ib$ for some $\ib\subset\Ib$ if and only if $\Ar(s,\vec{c})=\Ar(s',\vec{c})$.  Indeed if $s'=s_\ib$, then the only difference between $s$ and $s'$ is the placement of certain constant strands, which the notation of Definition \ref{def:piqi} ignores.  

Conversely, if $\Ar(s,\vec{c})=\Ar(s',\vec{c})$, the nonzero entries of the arrays demand that $s$ and $s'$ have the same non-constant strands, so that they (possibly) differ only in the placement of their constant strands.  Then since $s(0)$ and $s'(0)$ are both sections of $\x$, we must have $s'=s_\ib$ for some $\ib\subset\Ib$.
\end{proof}

Lemma \ref{lem:piqi well-defined} shows that $\Ar(s,\vec{c})$ descends to a well-defined notation for basis elements $E(s,\vec{c})$ in $\Jb_{\x} \sac nk\Sc$ once $\x$ has been fixed.  Notice that an entry of zero in $\Ar(s,\vec{c})$ can mean two different things for the corresponding $k$-strand $s$---it can mean that there is no strand at all at the given basepoint, or it can mean that there is a constant (i.e. speed 0) strand at the given basepoint.  In particular, the case $q_i=p_{i+1}=0$ can mean there are no strands present at all, or that there is a single constant strand starting at either $z_i^+$ or $z_{i+1}^-$, but it cannot mean that there are constant strands at both $z_i^+$ and $z_{i+1}^-$ since we have $s(0)\cap M(s(0))=\emptyset$.

Lemma \ref{lem:piqi well-defined} views this ambiguity as a helpful feature of the notation due to the ambiguity inherent in Lemma \ref{lem:E(S,T,s)}.  However, one might object that the notation alone does not distinguish between a constant strand starting point, an empty basepoint that is matched to a constant strand starting point, and an empty basepoint that is not matched to a constant strand starting point.  (As an extreme example, every idempotent element $\Jb_\x$ is written as an array of all zeros, regardless of $\x$.)  To address this objection, we always work with a fixed starting I-state $\x$, implying the existence (or lack thereof) of strands starting from certain matched pairs of basepoints.  We summarize this point with the following remark.

\begin{remark}
The notation of Definition \ref{def:piqi} is only well-defined for basis elements of $\Jb_\x \sac nk\Sc$ for some fixed beginning I-state $\x$.  It is therefore not helpful as a notation for general basis elements in $\sac nk\Sc$.  For computations in this paper using this notation, we will focus on a single summand $\Jb_\x \sac nk\Sc$ of $\sac nk\Sc$ at a time.
\end{remark}

Visually, once we have fixed a starting I-state $\x$, the notation $C_{i_1}\cdots C_{i_l} \vv{p_1}{q_1}{1}\cdots\vv{p_n}{q_n}{n}$ indicates a specific basis diagram in which solid strands are drawn according to their speeds (the placement of $p_i$ above $q_i$ in the notation is a reminder that $p_i$ is the speed starting from the upper basepoint, while $q_i$ starts from the lower basepoint).  Constant dashed strands are drawn on any matched pair of basepoints that are contained in $\x$ but have no solid strands coming from them. Closed loops are drawn on any cylinder whose $C_i$ variable appears in the monomial. See Figure~\ref{fig:PiQiBasis} for an example.

\begin{figure}
\includegraphics[scale=0.5]{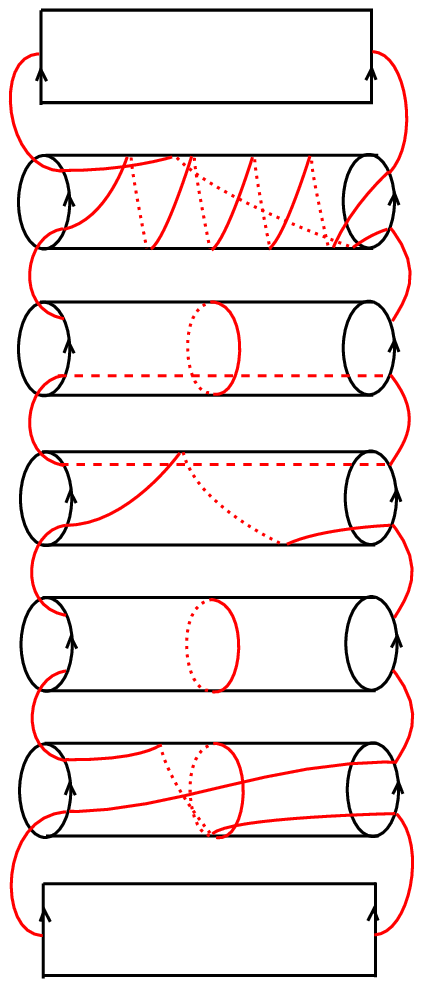}
\caption{The basis element $C_2 C_4 C_5 \binom{1}{9}_1 \binom{0}{0}_2 \binom{0}{2}_3 \binom{0}{0}_4 \binom{1}{1}_5$ of $\sac 56\Sc$, also denoted $C_2 C_4 C_5 \binom{1}{9}_1 \binom{0}{2}_3 \binom{1}{1}_5$, starting at the idempotent $\Jb_{\x}$ where $\x = \{0,1,2,3,4,5\}$ and $\Sc \subset [1,5]$ contains $\{2,4,5\}$.}
\label{fig:PiQiBasis}
\end{figure}

It should be clear that only certain arrays can appear in valid basis elements for a given summand $\Jb_\x \sac nk\Sc$ of the strands algebra.  Furthermore, given a valid array, the ending idempotent of the corresponding strands algebra element is also determined.  Visually all that is required is that no two solid strands start on matched basepoints, and that no two strands (whether solid or dashed) end on basepoints that are either the same or matched.  The following lemma describes the precise combinatorics involved; see Figure~\ref{fig:PiQiLemmaHelper} for reference.

\begin{figure}
\includegraphics[scale=0.5]{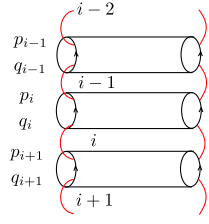}
\caption{Reference diagram for the proof of Lemma~\ref{lem:piqi}.}
\label{fig:PiQiLemmaHelper}
\end{figure}

\begin{lemma}
\label{lem:piqi}
For $\x \in V(n,k)$, the map $(s,c) \mapsto \Ar(s,\vec{c})$ of Definition~\ref{def:piqi} descends to a one-to-one correspondence between basis elements $E(s,\vec{c})$ of ${\Jb_\x}\sac nk\Sc$ and expressions
\[
C_{i_1} \cdots C_{i_l} \vv{p_1}{q_1}{1} \cdots \vv{p_n}{q_n}{n}
\]
satisfying the following conditions.
\begin{enumerate}[(i)]
\item \label{it:no double C} The indices $i_1, \ldots, i_l$ are distinct elements of $\Sc$.
\item \label{it:pq no matched start} For every $i \in [1,n-1]$, $q_{i} p_{i+1} = 0$ (i.e. $q_i$ and $p_{i+1}$ cannot both be nonzero).
\item \label{it:pq start from x} For every $i \in [0,n] \setminus \x$, $q_{i} = p_{i+1} = 0$.
\item \label{it:pq no dots pinzer} For every $i \in [1,n-1]$, $p_i q_{i+1}$ is even (i.e. $p_i$ and $q_{i+1}$ cannot both be odd).
\item \label{it:pq disjoint endpoints for non-const} For every $i \in [1,n]$, if $p_i,q_i$ are both nonzero, then $p_i \equiv q_i \pmod2$.
\item \label{it:pq dots move other dots} Suppose $i \in [1,n]$ and $\{i-1,i\}\subset\x$.  If $p_i$ is odd and $q_i=0$, then we must have $p_{i+1}$ odd (and thus $q_{i+1}=0$ by \eqref{it:pq no dots pinzer} and \eqref{it:pq disjoint endpoints for non-const}).  Symmetrically, if $p_i=0$ and $q_i$ is odd, then we must have $q_{i-1}$ odd (and thus $p_{i-1}=0$ by \eqref{it:pq no dots pinzer} and \eqref{it:pq disjoint endpoints for non-const}).
\end{enumerate}
By convention, we always set $q_0=p_{n+1}=0$.

Given an array expression satisfying the above conditions, let $E(s,\vec{c})$ denote the corresponding basis element of $\Jb_\x \sac nk\Sc$. We have $E(s,\vec{c}) \in \Jb_\x \sac nk\Sc \Jb_\y$ where $\y \in V(n,k)$ is the unique vertex satisfying the following conditions for $i \in [0,n]$.
\begin{itemize}
\item If $i \in \x$ and $q_i$ is odd, then $i-1 \in \y$.
\item If $i \in \x$ and $p_{i+1}$ is odd, then $i+1 \in \y$.
\item If $i \in \x$ and $q_i$ and $p_{i+1}$ are both even, then $i \in \y$.
\end{itemize}
Note that if $q_i$ or $p_{i+1}$ is odd, then $i \in \x$ follows from condition~\eqref{it:pq start from x} above.
\end{lemma}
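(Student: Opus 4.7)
The plan is to combine Lemma~\ref{lem:piqi well-defined}---which already gives the well-definedness and injectivity of the map $E(s,\vec{c}) \mapsto \Ar(s,\vec{c})$ on basis elements of $\Jb_\x \sac nk\Sc$---with a necessity argument for the six conditions and a sufficiency argument via explicit reconstruction of $(s,\vec{c})$ from an admissible array. Once both directions are in hand, the ending idempotent $\y$ will be read off directly from the endpoints $s(1)$.

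For necessity I will extract each condition from one of the defining constraints on standard basis elements: (a) $\vec{c} \in \{0,1\}^\Sc$; (b) $s(0)$ is a section of $\x$, in particular $s(0) \cap M(s(0)) = \varnothing$; and (c) $s(1) \cap M(s(1)) = \varnothing$ with $s(1)$ a $k$-element subset of $B$. Conditions (i)--(iii) are immediate from (a) and (b). Conditions (iv) and (v) reduce to tracking endpoint parities on $S^1_i = [0,2]/{\sim}$: a strand starting at $z_i^-$ ends at $z_i^+$ precisely when its speed is odd, and symmetrically for $z_i^+$, so (iv) blocks the matched pair $\{z_i^+, z_{i+1}^-\}$ from both lying in $s(1)$ while (v) blocks either basepoint of $S^1_i$ from appearing twice in $s(1)$. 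For (vi), I argue case-by-case: when $p_i$ is odd, $q_i = 0$, and $\{i-1,i\}\subset\x$, the point $z_i^+$ already lies in $s(1)$, which forces the section of $\x$ at coordinate $i$ to be $z_{i+1}^-$; enumerating the possibilities for $p_{i+1}$ (namely zero with a constant strand, nonzero and even, or nonzero and odd) and applying $s(1) \cap M(s(1)) = \varnothing$ rules out all but the odd case.

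For sufficiency, given an admissible array I construct $(s,\vec{c})$ by letting $\vec{c}$ be the indicator of $\{i_1,\ldots,i_l\}$, placing non-constant strands of speeds $p_i$ and $q_i$ starting at $z_i^-$ and $z_i^+$ respectively for each nonzero entry, and for each $i \in \x$ whose section basepoint has not yet been supplied by one of these non-constant starting points (by (ii) at most one has been) arbitrarily placing a constant strand at $z_i^+$ or $z_{i+1}^-$. Conditions (ii) and (iii) ensure $s(0)$ is a section of $\x$ satisfying $s(0) \cap M(s(0)) = \varnothing$, while (iv)--(vi) ensure $s(1)$ has $k$ distinct elements with $s(1) \cap M(s(1)) = \varnothing$. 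Lemma~\ref{lem:piqi well-defined} makes the resulting $E(s,\vec{c})$ independent of the constant-strand choices, and $\Ar(s,\vec{c})$ is visibly the given array. The formula for $\y$ then follows by projecting $s(1)$ to $[0,n]$ via $M$: for each $i \in \x$, the selected section basepoint evolves under $s$ to an endpoint whose projection is $i-1$ if $q_i$ is odd, $i+1$ if $p_{i+1}$ is odd, and $i$ in all remaining cases (constant strands or even non-constant speeds on either adjacent basepoint).

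The main obstacle will be the parity case analysis underlying condition (vi) and, symmetrically, the companion statement about $q_i$, $q_{i-1}$; both require carefully tracking how an already-occupied endpoint $z_i^+$ (or $z_i^-$) forces the section choice at an adjacent coordinate of $\x$ and, in turn, the parity of a neighboring speed. The remaining bookkeeping---enumerating the three possible endpoint behaviors for each section basepoint to verify the rules for $\y$---should be routine once the case analysis for (vi) has been set up cleanly.
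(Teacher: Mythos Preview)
Your proposal is correct and follows essentially the same approach as the paper's proof: invoke Lemma~\ref{lem:piqi well-defined} for well-definedness and injectivity, verify that conditions \eqref{it:no double C}--\eqref{it:pq dots move other dots} are forced by the defining constraints on $(s,\vec{c})$ (with \eqref{it:pq no dots pinzer}--\eqref{it:pq dots move other dots} coming from $s(1)\cap M(s(1))=\varnothing$ and distinctness of endpoints), and establish surjectivity by reconstructing $(s,\vec{c})$ from an admissible array by placing non-constant strands at the nonzero entries and filling in constant strands compatibly with $\x$. Your treatment of condition~\eqref{it:pq dots move other dots} and of the ending idempotent $\y$ is in fact more explicit than the paper's, which simply attributes \eqref{it:pq dots move other dots} to the same endpoint constraints and leaves the computation of $\y$ to the reader.
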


\begin{proof}
Lemma~\ref{lem:piqi well-defined} implies that the map under consideration is well-defined and injective into the set of all possible array expressions. We want to show that the image of this map lies in the subset consisting of array expressions satisfying conditions \eqref{it:no double C}--\eqref{it:pq dots move other dots}, and that the map is surjective onto this subset. 

Indeed, condition~\eqref{it:no double C} follows from the requirement that $\vec{c}(i) \in \{0,1\}$ for all $i \in \Sc$. Condition~\eqref{it:pq no matched start} follows from $s(0) \cap M(s(0)) = \varnothing$, and condition~\eqref{it:pq start from x} follows from the fact that $s(0)$ is a section of $\x$. Condition~\eqref{it:pq no dots pinzer} follows from $s(1) \cap M(s(1)) = \varnothing$. Condition~\eqref{it:pq disjoint endpoints for non-const} follows from the fact that $s(1)$ is a set of $k$ distinct basepoints, since $s$ is a $k$-strand. Finally, condition~\eqref{it:pq dots move other dots} also follows from $s(1) \cap M(s(1)) = \varnothing$ and the fact that $s$ is a $k$-strand. Thus, array expressions in the image of the map under consideration satisfy the listed conditions.

For surjectivity, given an array expression satisfying the conditions, we can form a putative $k$-strand $s$ by interpreting $p_i$ (respectively $q_i$) as the speed of a strand starting at $z_i^-$ (respectively $z_i^+$), filling in constant strands compatibly with $\x$, and translating the monomial $C_{i_1} \cdots C_{i_l}$ into a function $\vec{c} \in \{0,1\}^{\Sc}$ (this last step is possible by condition~\eqref{it:no double C}). By construction, $s(0)$ consists of $k$ distinct basepoints; the same is true for $s(1)$ by conditions~\eqref{it:pq disjoint endpoints for non-const} and \eqref{it:pq dots move other dots}, so $s$ is a $k$-strand. We have $s(0) \cap M(s(0)) = \varnothing$ by condition~\eqref{it:pq no matched start}, and we have $s(1) \cap M(s(1)) = \varnothing$ by conditions~\eqref{it:pq no dots pinzer} and \eqref{it:pq dots move other dots}. Finally, $s(0)$ is a section of $\x$ by condition~\eqref{it:pq start from x} and the fact that constant strands of $s$ were chosen to be compatible with $\x$.

It follows that the map under consideration is indeed a one-to-one correspondence. The determination of $\y$ from $\x$ and the parity of the speeds $p_i,q_i$ is a straightforward computation; we leave it to the reader.
\end{proof}

\subsubsection{An important special case}\label{sec:ATildeSpecialCase}

The following special case of the summands $\Jb_{\x} \A(n,k,\Sc) \Jb_{\y}$ will be important below.
\begin{definition}\label{def:GenAlgebra}
For $n \geq 1$, we define the \emph{generating algebra} to be
\[
\overline{A}(n,\Sc) := \Jb_{[1,n-1]} \A(n,n-1,\Sc) \Jb_{[1,n-1]}.
\]
While $\overline{A}(n,\Sc)$ is naturally a dg algebra, we will focus below on its structure as a chain complex over $\F_2$.
\end{definition}

\begin{lemma}
\label{lem:BasistdA}
A basis over $\F_2$ of $\overline{A}(n,\Sc)$ is given by square-free monomials in the $C_i$ variables as usual times all arrays $\vv{p_1}{q_1}{1} \cdots \vv{p_n}{q_n}{n}$ such that
\begin{enumerate}
\item \label{it:A1} for $i \in [1, n-1]$, $q_i p_{i+1} = 0$ (i.e. $q_i$ and $p_{i+1}$ cannot both be nonzero);
\item \label{it:A2} $p_1 = q_n = 0$;
\item \label{it:A3} for all $i \in [1,n]$, $p_i \equiv q_i \pmod 2$.
\end{enumerate}
\end{lemma}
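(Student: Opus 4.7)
The plan is to derive this as an immediate specialization of Lemma~\ref{lem:piqi} to $\x = [1,n-1]$, together with the further constraint $\y = [1,n-1]$ that cuts out the diagonal summand $\Jb_{[1,n-1]}\A(n,n-1,\Sc)\Jb_{[1,n-1]}$ from $\Jb_{[1,n-1]}\A(n,n-1,\Sc)$. I expect the easy half of the correspondence to come directly from the first three conditions in Lemma~\ref{lem:piqi}: condition~(\ref{it:no double C}) gives the square-free monomial condition on the $C_i$ letters, condition~(\ref{it:pq no matched start}) is exactly (\ref{it:A1}), and condition~(\ref{it:pq start from x}) applied to $[0,n]\setminus[1,n-1]=\{0,n\}$, combined with the convention $q_0=p_{n+1}=0$, is exactly (\ref{it:A2}).

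The main work, and the part I expect to be the only real obstacle, is to show that the remaining conditions (\ref{it:pq no dots pinzer})--(\ref{it:pq dots move other dots}) of Lemma~\ref{lem:piqi} together with the constraint $\y=[1,n-1]$ are equivalent to the single parity condition (\ref{it:A3}). For the forward direction, when $p_i$ and $q_i$ are both nonzero the parity matching is condition~(\ref{it:pq disjoint endpoints for non-const}) verbatim; the trickier case is when exactly one of them is zero and the other is odd. My idea is that if $p_i$ is odd while $q_i = 0$, then condition~(\ref{it:pq dots move other dots}) propagates the pattern to the right, using (\ref{it:pq no dots pinzer}) and (\ref{it:pq disjoint endpoints for non-const}) at each step to force $q_{i+1}=0$ and $p_{i+1}$ odd, until one reaches $p_n$ odd, which contradicts $n\notin\y$; the analogous chain starting at $i=1$ is ruled out at its base by (\ref{it:A2}). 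A symmetric leftward cascade handles the case $q_i$ odd with $p_i=0$.

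For the converse I would verify that conditions (\ref{it:A1})--(\ref{it:A3}) together with the square-free assumption recover all of Lemma~\ref{lem:piqi}(\ref{it:pq no dots pinzer})--(\ref{it:pq dots move other dots}) and force $\y=[1,n-1]$. Here (\ref{it:pq disjoint endpoints for non-const}) is immediate from (\ref{it:A3}); condition (\ref{it:pq no dots pinzer}) follows because $p_i$ odd makes $q_i$ nonzero by (\ref{it:A3}) and hence $p_{i+1}=0$ by (\ref{it:A1}); and (\ref{it:pq dots move other dots}) is vacuous because its hypothesis ``one of $p_i, q_i$ zero, the other odd'' directly contradicts (\ref{it:A3}). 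Finally, (\ref{it:A2}) combined with (\ref{it:A3}) forces $q_1$ and $p_n$ to be even, so by the recipe for $\y$ in Lemma~\ref{lem:piqi} we have $\y\subset[1,n-1]$, and the cardinality constraint $|\y|=n-1$ then forces equality.
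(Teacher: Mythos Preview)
Your proposal is correct and follows essentially the same route as the paper: both specialize Lemma~\ref{lem:piqi} to $\x=[1,n-1]$, identify (\ref{it:A1}) and (\ref{it:A2}) with conditions~(\ref{it:pq no matched start}) and~(\ref{it:pq start from x}), and then argue the equivalence of (\ref{it:A3}) with (\ref{it:pq no dots pinzer})--(\ref{it:pq dots move other dots}) together with $\y=[1,n-1]$ via the same rightward/leftward propagation using (\ref{it:pq dots move other dots}) to reach a contradiction with $n\in\y$ (resp.\ $0\in\y$). Your converse direction is in fact slightly more explicit than the paper's, since you verify $\y=[1,n-1]$ directly; the only minor slip is that in checking (\ref{it:pq no dots pinzer}) you stop at $p_{i+1}=0$ rather than applying (\ref{it:A3}) once more at index $i+1$ to conclude $q_{i+1}$ is even, but this is immediate.
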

\begin{proof}

Condition \eqref{it:A1} here is the same as \eqref{it:pq no matched start} from the general Lemma \ref{lem:piqi}.  In $\overline{A}(n, \Sc)$ where $\x=[1,n-1]$, condition \eqref{it:A2} here is equivalent to \eqref{it:pq start from x} from that lemma.  It remains to see that conditions \eqref{it:pq no dots pinzer}, \eqref{it:pq disjoint endpoints for non-const}, and \eqref{it:pq dots move other dots} from the general lemma are equivalent in $\overline{A}(n, \Sc)$ to condition \eqref{it:A3} here.

We show this by considering negations, assuming the first two conditions here.  Suppose condition \eqref{it:pq no dots pinzer} from the general lemma is false, so there exists some $i$ with $p_i,q_{i+1}$ both odd.  Then since at least one of $q_i,p_{i+1}$ must be zero by \eqref{it:A1}, we have some index $j$ where $p_j,q_j$ have opposite parity, so condition \eqref{it:A3} here is false.

Note also that if condition \eqref{it:pq disjoint endpoints for non-const} or condition \eqref{it:pq dots move other dots} from the general lemma is false, then condition \eqref{it:A3} here is false. 

Conversely, suppose that condition \eqref{it:A3} here is false, so there exists an index $i\in[1,n]$ where $p_i,q_i$ have opposite parity. By condition \eqref{it:pq disjoint endpoints for non-const}, we must have either $p_i = 0$ or $q_i = 0$; without loss of generality, we may assume that $p_i$ is odd and $q_i$ is zero. Let $i$ be the maximal such index. By condition \eqref{it:pq dots move other dots}, we have $i = n$. Since $p_n$ is odd, we have $n \in \y$ for the right idempotent $\Jb_{\y}$ of the basis element under consideration, contradicting the fact that the basis element lives in $\overline{A}(n,\Sc) := \Jb_{[1,n-1]} \A(n,n-1,\Sc) \Jb_{[1,n-1]}$.
\end{proof}

We will also need to consider the following three variants of $\overline{A}(n,\Sc)$.
\begin{definition}\label{def:EdgeAlgebra}
For $n \geq 1$, we define the \emph{edge algebras}:
\begin{itemize}
\item $\overline{A}_\lda(n,\Sc) := \Jb_{[0,n-1]} \A(n,n,\Sc) \Jb_{[0,n-1]}$,
\item $\overline{A}_\rho(n,\Sc) := \Jb_{[1,n]} \A(n,n,\Sc) \Jb_{[1,n]}$, and
\item $\overline{A}_{\lda\rho}(n,\Sc) := \A(n,n+1,\Sc)$.
\end{itemize}
\end{definition} 

The following three lemmas are analogous to Lemma \ref{lem:BasistdA}, and their proofs are omitted.

\begin{lemma}
\label{lem:BasistdAl}
A basis over $\F_2$ of $\overline{A}_\lda(n,\Sc)$ is given by square-free monomials in the $C_i$ as usual times all arrays $\vv{p_1}{q_1}{1} \cdots \vv{p_n}{q_n}{n}$ such that
\begin{enumerate}
\item \label{it:Al1} for $i \in [1, n-1]$, $q_i p_{i+1} = 0$;
\item \label{it:Al2} $q_n = 0$;
\item \label{it:Al3} for $i \in [1,n]$, $p_i \equiv q_i \pmod 2$.
\end{enumerate}
\end{lemma}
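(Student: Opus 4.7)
The plan is to mirror the proof of Lemma~\ref{lem:BasistdA}, specializing Lemma~\ref{lem:piqi} to the left and right idempotent $\x = \y = [0,n-1]$. Conditions \eqref{it:no double C} and \eqref{it:pq no matched start} of Lemma~\ref{lem:piqi} translate immediately to square-freeness of the $C_i$ monomial and to condition \eqref{it:Al1} here. Since $[0,n] \setminus \x = \{n\}$, condition \eqref{it:pq start from x} of Lemma~\ref{lem:piqi} reduces to $q_n = p_{n+1} = 0$, and as $p_{n+1} = 0$ by convention we recover condition \eqref{it:Al2}. The key difference from the $\overline{A}(n,\Sc)$ setting is that $p_1$ is no longer forced to vanish, since now $0 \in \x$.

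The remaining task is to show that, assuming \eqref{it:Al1} and \eqref{it:Al2}, the parity condition \eqref{it:Al3} is equivalent to the conjunction of \eqref{it:pq no dots pinzer}, \eqref{it:pq disjoint endpoints for non-const}, and \eqref{it:pq dots move other dots} of Lemma~\ref{lem:piqi} together with the requirement $\y = \x$. The forward direction is identical to the corresponding step in Lemma~\ref{lem:BasistdA}: a failure of any of these three conditions produces some index $j$ with $p_j$ and $q_j$ of opposite parity.

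The reverse direction is where the argument must be slightly adapted. Suppose \eqref{it:Al3} fails at some $i \in [1,n]$; by \eqref{it:pq disjoint endpoints for non-const} we may assume (up to symmetry) that either $p_i$ is odd with $q_i = 0$ or $q_i$ is odd with $p_i = 0$. Taking the maximal $i$ in the first case, condition \eqref{it:pq dots move other dots} propagates the violation from $i$ to $i+1$ as long as $\{i-1,i\} \subset [0,n-1]$, i.e.~as long as $i \leq n-1$; this forces $i = n$, and then $p_n$ odd puts $n \in \y$, contradicting $\y = [0,n-1]$, exactly as in Lemma~\ref{lem:BasistdA}.

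The main new subtlety, and the step I expect to require the most care, is the second case ($q_i$ odd and $p_i = 0$), because $0 \in \x$ now allows condition \eqref{it:pq dots move other dots} to be invoked at the left boundary. Taking $i$ minimal: for $i \in [2,n-1]$ the symmetric half of \eqref{it:pq dots move other dots} propagates the violation from $i$ to $i-1$, contradicting minimality; for $i = n$ we get $q_n$ odd, contradicting \eqref{it:Al2}; and for $i = 1$, provided $n \geq 2$ we have $\{0,1\} \subset \x$, so the symmetric \eqref{it:pq dots move other dots} applies and forces $q_0$ odd, contradicting the convention $q_0 = 0$. When $n = 1$, this second case cannot occur, since \eqref{it:Al2} already forces $q_1 = 0$. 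Combining all cases yields \eqref{it:Al3}, and the bijection described by Lemma~\ref{lem:piqi} between valid arrays for this idempotent and basis elements of $\overline{A}_\lda(n,\Sc)$ completes the proof.
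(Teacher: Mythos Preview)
Your proposal is correct and follows exactly the approach the paper intends: the paper omits this proof entirely, pointing to Lemma~\ref{lem:BasistdA} as the analogous template, and you have carefully carried out that adaptation, correctly handling the new asymmetry at the left boundary where $0 \in \x$. The only step left implicit (also implicit in the paper's proof of Lemma~\ref{lem:BasistdA}) is that conditions \eqref{it:Al1}--\eqref{it:Al3} force the right idempotent to be $[0,n-1]$; this follows since \eqref{it:Al2} and \eqref{it:Al3} give $p_n$ even, whence $n \notin \y$ and so $\y = [0,n-1]$ by cardinality.
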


\begin{lemma}
\label{lem:BasistdAr}
A basis over $\F_2$ of $\overline{A}_\rho(n,\Sc)$ is given by square-free monomials in the $C_i$ as usual times all arrays $\vv{p_1}{q_1}{1} \cdots \vv{p_n}{q_n}{n}$ such that
\begin{enumerate}
\item \label{it:Ar1} for $i \in [1, n-1]$, $q_i p_{i+1} = 0$;
\item \label{it:Ar2} $p_1 = 0$;
\item \label{it:Ar3} for $i \in [1,n]$, $p_i \equiv q_i \pmod 2$.
\end{enumerate}
\end{lemma}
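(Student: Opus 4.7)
The plan is to argue in close parallel with the proof of Lemma~\ref{lem:BasistdA}, invoking Lemma~\ref{lem:piqi} with $\x = [1,n]$ and further restricting to those arrays whose right idempotent (as determined by the formula in Lemma~\ref{lem:piqi}) equals $[1,n]$. Under this setup, condition~\eqref{it:Ar1} here is identical to condition~\eqref{it:pq no matched start} of Lemma~\ref{lem:piqi}, and condition~\eqref{it:Ar2} is equivalent to condition~\eqref{it:pq start from x}: the only index of $[0,n]$ missing from $\x = [1,n]$ is $0$, which forces $p_1 = 0$, while $q_0 = 0$ is automatic by convention.

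The substantive step is to show that, in the presence of \eqref{it:Ar1} and \eqref{it:Ar2}, condition~\eqref{it:Ar3} is equivalent to the remaining constraints \eqref{it:pq no dots pinzer}, \eqref{it:pq disjoint endpoints for non-const}, \eqref{it:pq dots move other dots} of Lemma~\ref{lem:piqi} together with the right-idempotent constraint $\y = [1,n]$. The forward implication is routine: under \eqref{it:Ar3}, condition~\eqref{it:pq disjoint endpoints for non-const} is immediate, condition~\eqref{it:pq no dots pinzer} follows from \eqref{it:Ar1} and \eqref{it:Ar3} together, and \eqref{it:pq dots move other dots} is vacuously true because its hypothesis violates \eqref{it:Ar3}. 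Using the formula in Lemma~\ref{lem:piqi}, each index $i \in [1,n]$ with $p_i, q_i$ both even contributes $i$ to $\y$, while each odd/odd pair $(p_i, q_i)$ (necessarily with $i \geq 2$ by \eqref{it:Ar2}) produces a swap between $i-1$ and $i$; condition~\eqref{it:Ar1} prevents overlapping swaps, and the boundary conditions $p_1 = 0$ and $p_{n+1} = 0$ keep the image of this bijection inside $[1,n]$.

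For the converse, I would argue by contrapositive, choosing a minimal $i \in [1,n]$ where $p_i \not\equiv q_i \pmod{2}$, and splitting into cases by \eqref{it:pq disjoint endpoints for non-const}. If $p_i$ is odd and $q_i = 0$, then $i \geq 2$ by \eqref{it:Ar2}; condition~\eqref{it:pq dots move other dots} then forces $p_{i+1}$ odd and $q_{i+1} = 0$, propagating the same failure to index $i+1$, and iterating eventually brings us to index $n$, where \eqref{it:pq dots move other dots} would demand $p_{n+1}$ odd, contradicting the convention $p_{n+1} = 0$. If $p_i = 0$ and $q_i$ is odd, then for $i \geq 2$ the symmetric part of \eqref{it:pq dots move other dots} forces $q_{i-1}$ odd and $p_{i-1} = 0$, so the same failure occurs at index $i - 1$, contradicting minimality. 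The only remaining case is $i = 1$, which is not ruled out by Lemma~\ref{lem:piqi} alone but is excluded by $\y = [1,n]$: Lemma~\ref{lem:piqi}'s formula places $i - 1 = 0$ in $\y$ when $q_1$ is odd.

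The main thing to track carefully, by contrast with Lemma~\ref{lem:BasistdA}, is an asymmetry between the two boundaries. There, both $p_1 = 0$ and $q_n = 0$ were enforced algebraically by $0, n \notin \x$, whereas here only $p_1 = 0$ is algebraic; the analogous constraint at the right endpoint is realized through the rightward propagation of~\eqref{it:pq dots move other dots}, and the single edge case at $i = 1$ in the second case above must be absorbed by the right-idempotent constraint rather than by any condition of Lemma~\ref{lem:piqi} itself. I anticipate no serious conceptual obstacle beyond this bookkeeping.
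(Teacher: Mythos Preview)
The paper omits this proof entirely, noting only that it is analogous to Lemma~\ref{lem:BasistdA}; your proposal carries out exactly that analogy and is correct. Your observation that the asymmetry between the two boundaries forces separate treatment of the two cases in the converse (rather than the paper's ``without loss of generality'' in the symmetric setting of Lemma~\ref{lem:BasistdA}) is precisely the adaptation needed here.
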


\begin{lemma}
\label{lem:BasistdAlr}
A basis over $\F_2$ of $\overline{A}_{\lda\rho}(n,\Sc)$ is given by square-free monomials in the $C_i$ as usual times all arrays $\vv{p_1}{q_1}{1} \cdots \vv{p_n}{q_n}{n}$ such that
\begin{enumerate}
\item \label{it:Alr1} for $i \in [1, n-1]$, $q_i p_{i+1} = 0$;
\item \label{it:Alr3} for $i \in [1,n]$, $p_i \equiv q_i \pmod 2$.
\end{enumerate}
\end{lemma}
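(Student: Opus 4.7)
The plan is to derive this lemma as a direct specialization of the general Lemma \ref{lem:piqi}, applied to $n$, $k = n+1$, and the unique I-state $\x = [0,n]$ (the only $(n+1)$-element subset of $[0,n]$). The square-free monomials in the $C_i$ and condition~(\ref{it:Alr1}) come immediately from conditions (\ref{it:no double C}) and (\ref{it:pq no matched start}) of Lemma \ref{lem:piqi}. Condition (\ref{it:pq start from x}) of Lemma \ref{lem:piqi} is vacuous here because $[0,n] \setminus \x = \varnothing$. So the whole content of the proof is showing that conditions (\ref{it:pq no dots pinzer}), (\ref{it:pq disjoint endpoints for non-const}), and (\ref{it:pq dots move other dots}) of Lemma \ref{lem:piqi}, taken together, are equivalent to the uniform parity condition (\ref{it:Alr3}) of Lemma \ref{lem:BasistdAlr}, namely $p_i \equiv q_i \pmod 2$ for every $i \in [1,n]$.

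For the easy direction, I would assume conditions (\ref{it:Alr1}) and (\ref{it:Alr3}) and verify the three conditions of Lemma \ref{lem:piqi}: (\ref{it:pq disjoint endpoints for non-const}) is immediate; for (\ref{it:pq no dots pinzer}), if $p_i$ is odd then $q_i$ is odd by (\ref{it:Alr3}), hence $p_{i+1} = 0$ by (\ref{it:Alr1}), hence $q_{i+1}$ is even by (\ref{it:Alr3}); and the hypothesis of (\ref{it:pq dots move other dots}) is forbidden by (\ref{it:Alr3}), so (\ref{it:pq dots move other dots}) holds vacuously.

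The main obstacle is the converse direction, and the key ingredient is that the boundary conventions $q_0 = p_{n+1} = 0$ propagate parity information through the array. Suppose for contradiction that $p_i \not\equiv q_i \pmod 2$ for some $i \in [1,n]$. Then by (\ref{it:pq disjoint endpoints for non-const}) one of them is zero, say (up to left-right symmetry) $p_i$ is odd and $q_i = 0$. Since $\{i-1, i\} \subset [0,n] = \x$, condition (\ref{it:pq dots move other dots}) forces $p_{i+1}$ odd; and if $q_{i+1} \neq 0$ then by (\ref{it:pq disjoint endpoints for non-const}) it would be odd, making $p_i q_{i+1}$ odd and contradicting (\ref{it:pq no dots pinzer}), so $q_{i+1} = 0$. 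Iterating produces odd $p_{i'}$ and vanishing $q_{i'}$ for all $i' \geq i$, ultimately forcing $p_{n+1}$ odd via (\ref{it:pq dots move other dots}) at $i' = n$, contradicting the convention $p_{n+1} = 0$. The symmetric case (where $q_i$ is odd and $p_i = 0$) propagates leftward via (\ref{it:pq dots move other dots}) and contradicts $q_0 = 0$. The bijection between basis elements and the described arrays is then just Lemma \ref{lem:piqi} restricted to this case, completing the proof.
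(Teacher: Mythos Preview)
Your proof is correct and follows the approach the paper intends (the paper omits this proof, deferring to the analogous Lemma~\ref{lem:BasistdA}). Your use of the boundary convention $p_{n+1}=0$ (resp.\ $q_0=0$) to terminate the iteration is exactly the right adaptation here, since in the $\lambda\rho$ case $\x=\y=[0,n]$ and the right-idempotent contradiction used at the end of the proof of Lemma~\ref{lem:BasistdA} is unavailable.
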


\subsection{Products and differentials of explicit basis elements}
\label{sec:mult and diff with pq}
In this section we wish to derive formulas for products and differentials of explicit basis elements written in the notation of Definition \ref{def:piqi}. Since $\sac nk\Sc$ is closed under multiplication and the differential, such products and differentials are sums of basis elements; we wish to write these sums explicitly in the same notation.

\subsubsection{Products of basis elements}
As seen in the proof of Proposition \ref{prop:StrandsAlg closed under mult}, in order for the product of two basis elements $E(s,\vec{c})\cdot E(t,\vec{d})$ to be nonzero, we must have some $\ib\subset\Ib,\jb\subset\Jb$ such that $s(1)_\ib=t(0)_\jb$ (see that proof for an explanation of the notation).  If we recall that $q:B\rightarrow B/M\cong [1,n]$ denotes the quotient map, this requirement implies that $q(s(1))=q(t(0))$ as $k$-element subsets of $[1,n]$ (the converse is not true, as we will explore shortly).

Because $E(s,\vec{c})\cdot E(t,\vec{d}) \neq 0$ at least requires $q(s(1))=q(t(0))$, we only write down formulas for the product of $a,a'\in \sac nk\Sc$ in the case where $a\in\Jb_\x \sac nk\Sc \Jb_\y$ and $a'\in\Jb_{\x'} \sac nk\Sc $ with $\x'=\y$. All other cases have trivial product.  Note that this assumption enforces certain conventions in our formulas regarding the meaning of zeros in the second (or third, etc) factor in a product.  For instance, as elements of $\Jb_\x \sac 22\Sc$ with $\x=\set{0,2}$, the formula
\[\vv{1}{0}{1}\vv{0}{0}{2} \cdot \vv{0}{0}{1} \vv{0}{2}{2} = \vv{1}{0}{1}\vv{0}{2}{2}\]
presumes that the starting I-state of $\vv{0}{0}{1} \vv{0}{2}{2}$ is $\y:=\set{1,2}$, and thus the entries $q_1=p_2=0$ for this term are forced to represent constant dashed strands, while the entry $p_1=0$ is forced to represent an empty space.  See Figure \ref{fig:easy pq mult example}.  In short, fixing $\x$ fixes the meaning of the notation for $a\in\Jb_\x \sac nk\Sc$, which in turn fixes $\y$, which then fixes the meaning of the notation for $a'\in\Jb_\y \sac nk\Sc$ when considering a product $a \cdot a'$.

\begin{figure}
\includegraphics[scale=0.5]{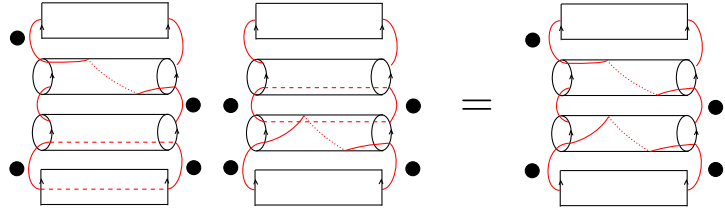}
\caption{A figure illustrating the formula $\binom{1}{0}_{1} \binom{0}{0}_{2} \cdot \binom{0}{0}_{1} \binom{0}{2}_{2} = \binom{1}{0}_{1} \binom{0}{2}_{2}$ in the case when the starting idempotent was $\x=\{0,2\}$, with dots placed on occupied matchings.}
\label{fig:easy pq mult example}
\end{figure}

However, the condition $\x'=\y$ above does not guarantee that $E(s,\vec{c})\cdot E(t,\vec{d}) \neq 0$, or even that there exist $\ib, \jb$ with $s(1)_{\ib} = t(0)_{\jb}$.  The elements may still be not concatenable, and even if they are, we may still create degenerate annuli or bigons upon concatenation (see Definition \ref{def:prestrand alg} and the discussion below equation \eqref{eq:sa nk multiplication}).  If we translate all of our monomials in $C_i$ variables and $p_i,q_i$-arrays into graphs of solid and dashed strands with closed loops, these situations become visually clear.  The following lemma presents the combinatorics that result from this analysis, including the formulas for the nonzero products.

\begin{lemma}
\label{lem:concatenable}
Let $a \in \Jb_{\x} \sac nk\Sc \Jb_\y$ and $a' \in \Jb_{\y} \sac nk\Sc$ be basis elements, represented by expressions
\[
a = C_{i_1} \cdots C_{i_l} \vv{p_1}{q_1}{1} \cdots \vv{p_n}{q_n}{n} \qquad \text{and} \qquad a' = C_{i'_1} \cdots C_{i'_{l'}} \vv{p_1'}{q_1'}{1} \cdots \vv{p_n'}{q_n'}{n}.
\]
Then $a\cdot a'\neq 0$ if and only if for all $i=1, \ldots, n$ the following conditions hold:
\begin{enumerate}[(I)]
\item \label{it:p odd concat} if $p_i$ is odd, then $p_{i+1}' = 0$;
\item \label{it:p even concat} if $p_i \neq 0$ and is even, then $q_{i-1}' = 0$;
\item \label{it:q odd concat} if $q_i$ is odd, then $q_{i-1}' = 0$;
\item \label{it:q even concat} if $q_i \neq 0$ and is even, then $p_{i+1}' = 0$;
\item \label{it:evens no bigons} if $p_i,q_i$ are both even, then $(p_i - q_i)(p_i'-q_i')\geq 0$;
\item \label{it:odds no bigons} if $p_i,q_i$ are both odd, then $(p_i-q_i)(p_i' - q_i') \leq 0$;
\item \label{it:no double loops} no $C_i$ variable appears in the monomial for both $a$ and $a'$.
\end{enumerate}
Moreover, when $a\cdot a'\neq 0$ we also have the following formulas
\[
a \cdot a' = C_{i_1} \cdots C_{i_l} C_{i'_1} \cdots C_{i'_{l'}} \vv{r_1}{s_1}{1} \cdots \vv{r_n}{s_n}{n} \in \Jb_\x \A,
\]
where
\[
r_i :=
\begin{cases}
p_i + q_i' & \text{if $p_i$ is odd}\\
p_i + p_i' & \text{if $p_i$ is even and $q_i$ is even}\\
0 & \text{if $p_i = 0$ and $q_i$ is odd}
\end{cases}
\]
and
\[
s_i :=
\begin{cases}
q_i + p_i' & \text{if $q_i$ is odd}\\
q_i + q_i' & \text{if $q_i$ is even and $p_i$ is even}\\
0 & \text{if $q_i = 0$ and $p_i$ is odd}
\end{cases}
\]
\end{lemma}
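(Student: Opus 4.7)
The plan is to unwind Proposition~\ref{prop:StrandsAlg closed under mult} in terms of the explicit $\Ar$-array notation from Lemma~\ref{lem:piqi}. Fix representatives $(s,\vec c)$ and $(t,\vec d)$ of the two basis elements; by that proposition, $a\cdot a'$ is a sum over $\ib\subset\Ib$ and $\jb\subset\Jb$ of products $(s_\ib,\vec c)\cdot(t_\jb,\vec d)$, and such a summand is nonzero precisely when $s_\ib(1)=t_\jb(0)$, no $C_i$ appears in both $\vec c$ and $\vec d$, and the concatenation $s_\ib\cdot t_\jb$ produces no degenerate bigon. The no-annulus requirement immediately gives condition~\eqref{it:no double loops}.

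To obtain the concatenability conditions~\eqref{it:p odd concat}--\eqref{it:q even concat}, the key observation is that on $S^1_i=[0,2]/\sim$ a constant-speed strand starting at $z_i^-$ with speed $p_i$ ends at $z_i^-$ when $p_i$ is even and at $z_i^+$ when $p_i$ is odd, and symmetrically for strands starting at $z_i^+$. Thus, for each matched pair $\{z_i^+, z_{i+1}^-\}$, the parities and nonvanishings of $p_i, q_i, p_{i+1}, q_{i+1}$ determine which (if any) basepoint of the pair is forced to lie in $s(1)$, while the nonvanishings of $q'_i, p'_{i+1}$ determine which basepoint is forced to lie in $t(0)$. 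Demanding no conflicting forcings across every matched pair yields exactly conditions~\eqref{it:p odd concat}--\eqref{it:q even concat}; conversely, when these hold, the remaining dashed-strand slots of $s$ and $t$ can be chosen compatibly to realize $s_\ib(1)=t_\jb(0)$.

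The bigon conditions~\eqref{it:evens no bigons}--\eqref{it:odds no bigons} arise from a bigon analysis on each circle $S^1_i$. The $s$-diagram has two strands on $S^1_i$ iff $p_i q_i\neq 0$, in which case they share a parity by Lemma~\ref{lem:piqi}\eqref{it:pq disjoint endpoints for non-const}. When both are even, the $p_i$-strand ends at $z_i^-$ and the $q_i$-strand at $z_i^+$, pairing with the $p'_i$- and $q'_i$-strands of $t$ (any vanishing $p'_i$ or $q'_i$ being filled in by a forced constant strand from the dashed-strand placement on the other factor); the bigon inequality simplifies to $(p_i-q_i)(p'_i-q'_i)<0$, whose negation is~\eqref{it:evens no bigons}. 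When both are odd, the pairing crosses over ($p_i$ joins $q'_i$ at $z_i^+$, $q_i$ joins $p'_i$ at $z_i^-$), flipping the inequality to produce~\eqref{it:odds no bigons}. The formulas for $r_i$ and $s_i$ then follow by summing the speeds of concatenated strands in each parity case.

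The main obstacle will be the bookkeeping of dashed-strand placements when one of $p_i, q_i, p'_i, q'_i$ vanishes: a vanishing entry can mean either an empty basepoint or a constant strand that must be positioned consistently on the other factor, and one must verify that using the literal $p'_i-q'_i$ in the bigon inequality correctly captures bigons formed by these forced constant strands. One must also verify in each subcase that the resulting output array satisfies the conditions of Lemma~\ref{lem:piqi}, so that it genuinely represents a basis element of $\sac nk\Sc$ with the correct endpoint idempotent.
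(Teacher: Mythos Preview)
Your proposal is correct and follows essentially the same approach as the paper's proof: deduce conditions \eqref{it:p odd concat}--\eqref{it:q even concat} from concatenability of sections, conditions \eqref{it:evens no bigons}--\eqref{it:odds no bigons} from the bigon clause of Definition~\ref{def:prestrand alg}, condition \eqref{it:no double loops} from the annulus clause, and the formulas from equation~\eqref{eq:sa nk multiplication} together with speed addition. The obstacle you flag---vanishing entries in the bigon inequalities---is handled in the paper by a short idempotent argument: when $\{i-1,i\}\not\subset\x$ and $p_i,q_i$ are both even, a violation of \eqref{it:evens no bigons} forces either $p_i,q'_i$ or $q_i,p'_i$ both nonzero, contradicting that $a'$ has left idempotent $\Jb_\y$; when $\{i-1,i\}\subset\x$, one can choose representatives with two strands each on $S^1_i$ and apply the bigon clause directly.
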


In the above lemma, by convention we always set $q_0 = p_{n+1} = q_0' = p_{n+1}' = 0$. In the definition of $r_i$, note that if $p_i$ is even and $q_i$ is odd, then we must have $p_i = 0$, hence we cover all the cases (and similarly for $s_i$).

\begin{proof}
We first prove the `only if' direction. We write $a=E(s,\vec{c})$ and $a'=E(s',\pvec{c}')$ with $\Ib,\Jb$ denoting the starting basepoints of constant strands from $s$ and $s'$ respectively, as in the proof of Proposition \ref{prop:StrandsAlg closed under mult}.

Suppose that item \eqref{it:p odd concat} fails for some fixed index $i$.  Since $p_i$ is odd, the point $z_i^+$ is the endpoint of a non-constant strand of $s$, so $z_{i+1}^- =M(z_i^+)$ is not in $s(1)$, and indeed not in $s(1)_\ib$ for any $\ib\subset\Ib$.  Meanwhile if $p_{i+1}' \neq 0$, then there is a non-constant strand departing from $z_{i+1}^-$ in $s'$, meaning $z_{i+1}^-\in s'(0)_\jb$ for all $\jb\subset\Jb$.  Thus each product in the double sum for $a\cdot a'$ is not concatenable, so $a\cdot a'=0$.  Items \eqref{it:p even concat}, \eqref{it:q odd concat}, and \eqref{it:q even concat} are similar.  Visually, these four items cover the cases when a solid strand in $a'$ has no strand (solid or dashed) in $a$ to concatenate with.

To show that item \eqref{it:evens no bigons} holds, first suppose that $\{i-1,i\}$ is not a subset of $\x$. If the quantity in \eqref{it:evens no bigons} is negative, then either $p_i$ and $q'_i$ are both nonzero or $q_i$ and $p'_i$ are both nonzero. Since $p_i$ and $q_i$ are even, both cases contradict the assumption that the right idempotent of $a$ is the left idempotent of $a'$. If $\{i-1,i\} \subset \x$ and $a \cdot a' \neq 0$, then there exist representatives $(s,\vec{c}), (t,\vec{d})$ for $a,a'$ such that $s,t$ have two strands each on the backbone $S^1_i$. These representatives satisfy the ``no degenerate bigon'' condition of Definition \ref{def:prestrand alg}, implying item \eqref{it:evens no bigons}. 

The argument for item~\eqref{it:odds no bigons} is similar (note that when $p_i,q_i$ are odd, the relative positions of the starting points of the strands swaps, causing a flip in the sign of $p_i'-q_i'$ relative to the phrasing in Definition \ref{def:prestrand alg}). Finally, negating item \eqref{it:no double loops} means that we have $\vec{c}(i) + \pvec{c}'(i) = 2$ for some $i$, so $a \cdot a'$ is zero. Visually, negating item \eqref{it:evens no bigons} or \eqref{it:odds no bigons} results in a degenerate bigon after concatenation, while negating item \eqref{it:no double loops} results in a degenerate annulus.

In the other direction, the proof of Proposition \ref{prop:StrandsAlg closed under mult} shows that $a\cdot a'\neq 0$ so long as there exist some $\ib\subset\Ib,\jb\subset\Jb$ with $s(1)_\ib=s'(0)_\jb$ and such that the concatenation $s_\ib \cdot s'_\jb$ has no degenerate bigons or annuli.  Annulus creation violates item \eqref{it:no double loops}. Bigon creation between two strands must take place on some fixed backbone $S^1_i$; the reader may verify that the result violates one of items \eqref{it:evens no bigons} or \eqref{it:odds no bigons} depending on the parity of $p_i,q_i$.  Thus it is enough to show that, if we assume items \eqref{it:p odd concat}, \eqref{it:p even concat}, \eqref{it:q odd concat}, and \eqref{it:q even concat} (along with $q(s(1))=q(s'(0))=\y$), then we can find the requisite $\ib,\jb$ making $s_\ib,s'_\jb$ concatenable.

Suppose $s(1)\neq s'(0)$, so there exists some basepoint $z_i^{\pm}\in s(1) \setminus s'(0)$.  Since $q(s(1))=q(s'(0))$, the matched basepoint $M(z_i^{\pm})$ must be an element of $s'(0)$.  If the basepoint $z_i^{\pm}\in s(1)$ is the endpoint of a non-constant strand, then we are in one of the cases covered by items \eqref{it:p odd concat}, \eqref{it:p even concat}, \eqref{it:q odd concat}, and $\eqref{it:q even concat}$, forcing $M(z_i^\pm)$ to be the starting point of a constant strand in $s'$.  This means we can choose $\jb=\set{M(z_i^\pm)}$; using Lemma \ref{lem:E(S,T,s)}, we can replace $s'$ by $s'_\jb$ and begin again with one fewer element in $s(1) \setminus s'(0)$.  On the other hand, if $z_i^{\pm}\in s(1)$ was the endpoint of a constant strand, then we have $z_i^{\pm}\in s(0)$ as well and we can choose $\ib=\set{z_i^\pm}$ to accomplish the same goal after replacing $s$ with $s_\ib$.  In either case, we decrease the size of $s(1) \setminus s'(0)$. This process does not change the elements $a,a'$, so it preserves the entire list of conditions above. Since the sets $s(1)$ and $s'(0)$ are finite, we must eventually make $s$ and $s'$ concatenable, proving the characterization of nonzero products $a \cdot a'$.

Assuming that $a\cdot a'\neq 0$, choose $(s,\vec{c})$ and $(s',\pvec{c}')$ with $a = E(s,\vec{c})$ and $a'=E(s',\pvec{c}')$ such that $s \cdot s'$ is nondegenerate. Equation \eqref{eq:sa nk multiplication} shows us that to compute $a \cdot a'$, we need only take the product of $(s,\vec{c})$ and $(s',\pvec{c}')$ in $\tsac nk\Sc$, where speeds of various strands add.  With this observation in mind, the formulas above follow so long as one recalls that strands with odd speeds start and end at opposite basepoints, essentially reversing the role of $p_i'$ and $q_i'$.

\end{proof}

\subsubsection{Differentials of basis elements}
According to Proposition \ref{prop:AnkSubcomplex}, the differential of Definition \ref{def:differential} descends to the strands algebra $\sac nk\Sc$; the proofs of Lemma \ref{lem:delc preserves Ank} and Lemma \ref{lem:del0 preserves Ank} show how to compute the differential on $\sac nk\Sc$.  Therefore, we will refrain from a detailed proof of the resulting formulas when applying this reasoning to basis elements written in our $p_i,q_i$-notation.

\begin{lemma}\label{lem:general pq differential}
Let
\[
a=C_{i_1} \cdots C_{i_l} \vv{p_1}{q_1}{1}\cdots\vv{p_n}{q_n}{n}
\]
be a basis element of the summand $\Jb_\x \sac nk\Sc$.  For $1 \leq i \leq n$, $\de_i^0 a$ is the element of $\Jb_{\x} \sac nk\Sc$ given as follows:
\begin{equation}\label{eq:general pq differential bad idemp}
\de^0_i a = 0 \qquad \text{if $\set{i-1,i}\not\subset\x$},
\end{equation}
and otherwise, defining $m_i:=\min(p_i,q_i)$ and $M_i:=\max(p_i,q_i)$,
\begin{equation}\label{eq:general pq differential good idemp}
\de^0_i a = \begin{cases}
C_{i_1} \cdots C_{i_l} \vv{p_1}{q_1}{1}\cdots\vv{m_i+1}{M_i-1}{i} \cdots \vv{p_n}{q_n}{n} + \\
C_{i_1} \cdots C_{i_l} \vv{p_1}{q_1}{1}\cdots\vv{M_i-1}{m_i+1}{i} \cdots \vv{p_n}{q_n}{n}
 & \text{if $q_{i-1}=p_{i+1}=0$ and $M_i-m_i\geq 4$;}\\
 \\
C_{i_1} \cdots C_{i_l} \vv{p_1}{q_1}{1}\cdots\vv{m_i+1}{M_i-1}{i} \cdots \vv{p_n}{q_n}{n} & \text{if $q_{i-1}=p_{i+1}=0$ and $M_i-m_i=2$;}\\
\\
0 & \text{if either of $q_{i-1},p_{i+1}$ is $\neq 0$,}
\\
 & \text{or if $(M_i-m_i)=0$.}
\end{cases}.
\end{equation}
\end{lemma}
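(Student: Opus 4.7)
The plan is to use the identity $\partial_i^0(E(s, \vec{c})) = E(\partial_i^0(s, \vec{c}))$ from equation~\eqref{eq:del0 commutes with E}, established in the proof of Lemma~\ref{lem:del0 preserves Ank} after choosing a representative $s$ of $a$ with the maximal number of strands on the backbone $S^1_i$. Since the pre-strands differential $\partial_i^0$ acts nontrivially on a generator only when the underlying $k$-strand has two strands of distinct speeds on $S^1_i$ (Definition~\ref{def:differential}), the argument will reduce to identifying when such a representative of $a$ exists and then reading off the result from Definition~\ref{def:differential}, translating it back into the $p_i, q_i$-notation via Lemma~\ref{lem:piqi}.

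For the idempotent condition, I would observe that a strand in any representative $s$ of $a$ can start at $z_i^-$ only if $z_i^- \in s(0)$, which forces $i-1 \in \x$; similarly, a strand at $z_i^+$ forces $i \in \x$. Thus $\set{i-1,i}\not\subset\x$ makes two strands on $S^1_i$ impossible and establishes equation~\eqref{eq:general pq differential bad idemp}. Assuming $\set{i-1,i}\subset\x$, the strands at $z_i^-$ and $z_i^+$ have speeds $p_i$ and $q_i$ respectively (with $0$ meaning constant). A strand can sit at $z_i^-$ if and only if the matched basepoint $z_{i-1}^+$ is unoccupied, i.e.\ $q_{i-1}=0$; this is automatic when $p_i>0$ by condition~\ref{it:pq no matched start} of Lemma~\ref{lem:piqi}, but becomes a genuine constraint when $p_i=0$ in order to place a constant strand. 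Symmetrically, $p_{i+1}=0$ is needed in order to host a strand at $z_i^+$. So a representative with two strands on $S^1_i$ exists precisely when $q_{i-1}=p_{i+1}=0$, and in that case their speeds are $p_i$ and $q_i$; if either condition fails, or if $M_i=m_i$, then Definition~\ref{def:differential} forces $\partial_i^0 a = 0$, recovering the last clause of equation~\eqref{eq:general pq differential good idemp}.

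In the remaining case $q_{i-1}=p_{i+1}=0$ and $M_i - m_i \geq 2$, I would apply Definition~\ref{def:differential} directly: it replaces the two strands on $S^1_i$ by strands of speeds $M_i - 1$ and $m_i + 1$, giving a single term when $M_i - m_i = 2$ and two terms (the two orderings of these speeds at $z_i^-, z_i^+$) when $M_i - m_i \geq 4$. All other strands of $s$ remain unchanged, and both new speeds are positive, so no new constant strand appears and no entry $p_j, q_j$ away from index $i$ is altered; the conditions of Lemma~\ref{lem:piqi} are therefore still satisfied, and the resulting pre-strands elements represent genuine basis elements of $\Jb_\x \sac nk\Sc$ whose array expressions are exactly those displayed in equation~\eqref{eq:general pq differential good idemp}. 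The main obstacle throughout is the dual role played by zero entries---the notation $p_i=0$ (respectively $q_i=0$) can mean either ``no strand'' or ``constant strand at $z_i^-$'' (respectively $z_i^+$), depending on the chosen representative---and pinning down when the constant-strand interpretation is realizable in some representative of $a$; once this combinatorial compatibility is handled via Lemma~\ref{lem:piqi}, the rest is a direct transcription of Definition~\ref{def:differential}.
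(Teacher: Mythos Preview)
Your proposal is correct and follows essentially the same approach as the paper: both arguments reduce to the identity $\de_i^0(E(s,\vec{c})) = E(\de_i^0(s,\vec{c}))$ from the proof of Lemma~\ref{lem:del0 preserves Ank}, after passing to a representative with the maximal number of strands on $S^1_i$, and then read off the cases from Definition~\ref{def:differential}. Your treatment of the ambiguity in the zero entries (when $p_i=0$ or $q_i=0$) is slightly more explicit than the paper's, but the content is the same---the paper simply points back to item~\eqref{it:Del0DashedConstant} of Lemma~\ref{lem:del0 preserves Ank} for the dashed-strand case.
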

The ellipses of equation \eqref{eq:general pq differential good idemp} are meant to indicate that all entries of the array for $a$ have been kept the same except for those in the $i^\text{th}$ column.
\begin{proof}
If $\set{i-1,i}\not\subset\x$, write $a = E(s,\vec{c})$. For each term $(s_{\ib},\vec{c})$ in the sum defining $E(s,\vec{c})$, the $k$-strand $s_{\ib}$ can have at most one strand on the backbone $S^1_i$, so $\de^0_i a = 0$.  Similarly, if either of $q_{i-1},p_{i+1}$ is $\neq 0$, each $s_\ib$ can have at most one strand on $S^1_i$, so $\de^0_i a=0$.  Also, $M_i-m_i=0$ if and only if $p_i=q_i$, again indicating that $\de^0_i a=0$.  The only cases remaining are those where $\set{i-1,i}\subset\x$, $q_{i-1}=p_{i+1}=0$, and $p_i\neq q_i$.  In these cases, if both $p_i,q_i$ are nonzero, we have the formula immediately from Definition \ref{def:differential}. If $p_i = 0$ and $q_i \neq 0$, recall that $q_{i-1} = 0$ and $i-1 \in \x$; if $q_i = 0$ and $p_i \neq 0$, recall that $p_{i+1} = 0$ and $i \in \x$. The proof of item~\eqref{it:Del0DashedConstant} in Lemma~\ref{lem:del0 preserves Ank} now implies the stated formula.
\end{proof}

\begin{lemma}\label{lem:general pq C-differential}
Let
\[
a=C_{i_1} \cdots C_{i_l} \vv{p_1}{q_1}{1}\cdots\vv{p_n}{q_n}{n}
\]
be a basis element of the summand $\Jb_\x \sac nk\Sc$.  For $1 \leq i \leq n$, $\de_i^c a$ is the element of $\Jb_{\x} \sac nk\Sc$ given as follows:
\begin{equation}\label{eq:general pq C-differential no C}
\de_i^c a =0 \qquad \text{if $C_i$ does not appear in the monomial $C_{i_1} \cdots C_{i_l}$};
\end{equation}
otherwise, as long as $p_i,q_i$ are not both zero,
\begin{equation}\label{eq:general pq C-differential}
\de_i^c a = \begin{cases}
\frac{C_{i_1} \cdots C_{i_l}}{C_i} \vv{p_1}{q_1}{1}\cdots\vv{p_i+2}{q_i}{i} \cdots \vv{p_n}{q_n}{n} + \\
\frac{C_{i_1} \cdots C_{i_l}}{C_i} \vv{p_1}{q_1}{1}\cdots\vv{p_i}{q_i+2}{i} \cdots \vv{p_n}{q_n}{n}
 & \text{if $p_i=q_i\neq 0$;}\\
 \\
\frac{C_{i_1} \cdots C_{i_l}}{C_i} \vv{p_1}{q_1}{1} \cdots \vv{p_i+2}{q_i}{i} \cdots \vv{p_n}{q_n}{n} & \text{if $p_i>q_i$;}\\
\\
\frac{C_{i_1} \cdots C_{i_l}}{C_i} \vv{p_1}{q_1}{1} \cdots \vv{p_i}{q_i+2}{i} \cdots \vv{p_n}{q_n}{n} & \text{if $p_i<q_i$.}
\end{cases}
\end{equation}
If $p_i=q_i=0$, we have a potential sum of terms depending on $\x$ and the entries $q_{i-1}$ and $p_{i+1}$ as follows:
\begin{equation}\label{eq:general pq C-differential zeroes}
\begin{split}
\de_i^c \left( C_{i_1} \cdots C_{i_l} \vv{p_1}{q_1}{1}\cdots\vv{0}{0}{i}\cdots\vv{p_n}{q_n}{n} \right)= 
& \delta_{i-1} \left( \frac{C_{i_1} \cdots C_{i_l}}{C_i} \vv{p_1}{q_1}{1} \cdots \vv{2}{0}{i} \cdots \vv{p_n}{q_n}{n} \right) + \\
& \epsilon_{i} \left( \frac{C_{i_1} \cdots C_{i_l}}{C_i} \vv{p_1}{q_1}{1} \cdots \vv{0}{2}{i} \cdots \vv{p_n}{q_n}{n} \right)
\end{split}
\end{equation}
where $\delta_{i-1}$ and $\epsilon_i$ are defined as
\[ \delta_{i-1} := \begin{cases}
1 & \text{if $i-1\in\x$ and $q_{i-1}=0$}\\
0 & \text{otherwise}
\end{cases}, \qquad
\epsilon_i := \begin{cases}
1 & \text{if $i\in\x$ and $p_{i+1}=0$}\\
0 & \text{otherwise}
\end{cases}.
\]

\end{lemma}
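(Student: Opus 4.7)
The strategy is to apply equation (4.8) from the proof of Lemma \ref{lem:delc preserves Ank}, which states that $\de_i^c(E(s,\vec{c})) = E(\de_i^c(s,\vec{c}))$ once $s$ is replaced by a representative carrying the maximal number of strands on the backbone $S^1_i$. Writing $a = E(s,\vec{c})$ with the monomial $C_{i_1}\cdots C_{i_l}$ corresponding to $\vec{c}\in\{0,1\}^\Sc$, the hypothesis of \eqref{eq:general pq C-differential no C} that $C_i$ is absent amounts to $\vec{c}(i) = 0$, and then Definition \ref{def:differential} gives $\de_i^c(s_\ib,\vec{c}) = 0$ for every $\ib\subset\Ib$, so $\de_i^c a = 0$. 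From now on I may assume $\vec{c}(i) = 1$, i.e., $C_i$ appears in the monomial.

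Next I would fix the representative $s$ of $a$ with the maximal number of strands on $S^1_i$: in addition to the non-constant strand of speed $p_i$ starting at $z_i^-$ (when $p_i\neq 0$) and of speed $q_i$ starting at $z_i^+$ (when $q_i\neq 0$), this places a constant strand at $z_i^-$ precisely when $\delta_{i-1} = 1$ and at $z_i^+$ precisely when $\epsilon_i = 1$; conditions \eqref{it:pq no matched start} and \eqref{it:pq start from x} of Lemma \ref{lem:piqi} confirm that these choices are legal exactly under those hypotheses (the neighboring entries $q_{i-1}$ and $p_{i+1}$ must vanish). Equation (4.8) then reduces the computation to applying Definition \ref{def:differential} to this $s$ on $S^1_i$: each strand of maximal speed on $S^1_i$ contributes one term to $\de_i^c(s,\vec{c})$, obtained by raising that strand's speed by $2$ and setting $\vec{c}(i) = 0$.

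For the cases in \eqref{eq:general pq C-differential}, at least one of $p_i, q_i$ is positive, so the non-constant strand of speed $\max(p_i,q_i)$ strictly dominates any potential constant strand on $S^1_i$. When $p_i \neq q_i$ there is a unique maximal strand, producing the single term $\vv{p_i+2}{q_i}{i}$ or $\vv{p_i}{q_i+2}{i}$ as stated; when $p_i = q_i \neq 0$ there are exactly two maximal strands and one obtains both summands of \eqref{eq:general pq C-differential}. For the case $p_i = q_i = 0$ treated in \eqref{eq:general pq C-differential zeroes}, $s$ has only (zero, one, or two) constant strands on $S^1_i$, as recorded by $\delta_{i-1}$ and $\epsilon_i$: the zero-strand subcase gives $\de_i^c a = 0$; the one-strand subcase falls into case (1) (``One dashed strand and one loop'') of the proof of Lemma \ref{lem:delc preserves Ank}, producing exactly one of the two terms of \eqref{eq:general pq C-differential zeroes}; and the two-strand subcase falls into case (2) (``Two dashed strands and one loop''), whose argument yields both $\vv{2}{0}{i}$ and $\vv{0}{2}{i}$ contributions after the complementary cancellations carried out there. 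In every subcase, applying $E$ to the result reproduces the $p,q$-array shown in the statement, because any constant strands on backbones other than $S^1_i$ are unaffected.

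The main obstacle will be organizing the dashed-strand bookkeeping in the two-constant-strand subcase of $p_i = q_i = 0$; here one cannot simply ``read off'' $\de_i^c$ from a single pre-strands representative, since the two summands arising from Definition \ref{def:differential} must be paired with the appropriate $\ib$-subsets of $\Ib$ before $E$ is applied. Luckily, the combinatorial cancellation at the heart of case (2) of Lemma \ref{lem:delc preserves Ank} is exactly what is needed, and one need only verify that the resulting basis elements satisfy the conditions of Lemma \ref{lem:piqi} --- in particular, condition \eqref{it:pq no dots pinzer} is automatically respected because $\delta_{i-1} = 1$ already forces $q_{i-1} = 0$ and $\epsilon_i = 1$ already forces $p_{i+1} = 0$. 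The remaining cases are direct bookkeeping from Definition \ref{def:differential} and equation (4.8).
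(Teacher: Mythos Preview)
Your proposal is correct and follows essentially the same approach as the paper: both reduce the computation to Definition~\ref{def:differential} via the identity $\de_i^c(E(s,\vec{c})) = E(\de_i^c(s,\vec{c}))$ established in Lemma~\ref{lem:delc preserves Ank}. The paper's proof is terser, simply calling equations~\eqref{eq:general pq C-differential no C} and~\eqref{eq:general pq C-differential} ``straightforward translations'' and noting for~\eqref{eq:general pq C-differential zeroes} that the two terms appear exactly when the zero entries $p_i$ and $q_i$ represent dashed strands at $z_i^-$ and $z_i^+$ respectively; you make the same argument but spell out the case analysis and explicitly invoke equation~(4.8) and the two subcases of Lemma~\ref{lem:delc preserves Ank}. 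One small imprecision: your sentence about placing a constant strand at $z_i^-$ ``precisely when $\delta_{i-1}=1$'' is only literally correct under the additional hypothesis $p_i=0$ (otherwise $z_i^-$ is already occupied), but this does not affect the argument since you treat the $p_i,q_i$ not-both-zero case separately and correctly observe there that any constant strand is strictly dominated.
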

\begin{proof}
Equations \eqref{eq:general pq C-differential no C} and \eqref{eq:general pq C-differential} are straightforward translations of Definition \ref{def:differential} into this notation (note that the ambiguity of a zero entry is irrelevant for $\de_i^c$ if there is another strand of positive speed on $S^1_i$).  Meanwhile, equation \eqref{eq:general pq C-differential zeroes} splits $\de_i^c$ into a sum of terms---the first term appears if and only if the entry $p_i=0$ refers to a dashed strand at $z_i^-$ in the visual representation for $a$, while the second term appears if and only if the entry $q_i=0$ refers to a dashed strand at $z_i^+$. One can check that equation \eqref{eq:general pq C-differential zeroes} also follows from Definition \ref{def:differential}.
\end{proof}

\subsection{More results on the strands algebra}\label{sec:StrandsMoreResults}

Because the idempotents $\Jb_\x\in\sac nk\Sc$ are indexed by subsets $\x\in V(n,k)$, we can extend some of the terminology of Section~\ref{sec:MMW1Review} to our current setting. By Lemma \ref{lem:FarStatesStrandAlgZero}, we already know that $\Jb_\x \sac nk\Sc \Jb_\y = 0$ when $\x$ and $\y$ are far as in Definition \ref{def:ReviewNotFarCrossed}.  The following lemma relates our $p_i,q_i$-notation to the entries $v_i(\x,\y)$ of the relative weight vectors (Definition \ref{def:ReviewVnk}) and the notion of crossed lines from $\x$ to $\y$ (Definition~\ref{def:CL}).

\begin{lemma}
\label{lem:cl}
Let $a = C_{i_1} \cdots C_{i_l} \vv{p_1}{q_1}{1} \cdots \vv{p_n}{q_n}{n} \in \Jb_{\x} \sac nk\Sc \Jb_{\y}$ be a standard basis element of the strands algebra.
Then $\x$ and $\y$ are not far (in the sense of Definition \ref{def:ReviewNotFarCrossed}), and the following conditions are equivalent:
\begin{enumerate}
\item line $i$ from $\x$ to $\y$ is crossed;
\item for any $k$-strand $s$ such that $a=E(s,\vec{c})$, $s$ has only one strand mapping to the $i$-th circular backbone, and this strand connects either $z_i^-$ to $z_i^+$ or $z_i^+$ to $z_i^-$;
\item $p_i \not\equiv q_i \pmod2$.
\end{enumerate}
Moreover, in such a case, the following are equivalent too:
\begin{enumerate}
\item $v_i(\x, \y) = 1$ (resp.~$v_i(\x, \y) = -1$);
\item the strand of $s$ on the $i$-th circular backbone connects $z_i^-$ to $z_i^+$ (resp.~$z_i^+$ to $z_i^-$);
\item $q_i = 0$ (resp.~$p_i=0$).
\end{enumerate}
\end{lemma}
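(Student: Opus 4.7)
The plan is to begin with the non-farness assertion, which is immediate: since $a$ is a standard basis element of $\Jb_{\x} \sac nk\Sc \Jb_{\y}$ it is in particular nonzero, so Lemma~\ref{lem:FarStatesStrandAlgZero} forces $\x$ and $\y$ to be not far. Once this is known, Proposition~\ref{prop:QuartumNonDatur} ensures that the notions of ``crossed line'' and $v_i(\x,\y) \in \{-1,0,1\}$ are meaningful.

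For the equivalence of (2) and (3) in the first list, recall that a strand of speed $p_i$ starting at $z_i^-$ ends at $z_i^-$ if $p_i$ is even and at $z_i^+=M(z_{i-1}^+)$ if $p_i$ is odd; similarly a strand of speed $q_i$ starting at $z_i^+$ ends at $z_i^+$ if $q_i$ is even and at $z_i^-$ if $q_i$ is odd. Condition~(v) of Lemma~\ref{lem:piqi} forbids $p_i$ and $q_i$ from being nonzero of opposite parity, so $p_i \not\equiv q_i \pmod 2$ is equivalent to saying that exactly one of $p_i,q_i$ is $0$ while the other is odd. This is precisely the configuration of a single non-constant strand on $S^1_i$ joining $z_i^-$ to $z_i^+$ (if $p_i$ is odd) or $z_i^+$ to $z_i^-$ (if $q_i$ is odd), matching (2); conversely any such configuration forces this parity pattern.

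For the equivalence (1) $\Leftrightarrow$ (3), I would read off from the endpoint-determination rule in Lemma~\ref{lem:piqi} how $\y$ is built from $\x$: at each $j \in \x$, an odd $q_j$ pulls the dot from position $j$ down to $j-1$, an odd $p_{j+1}$ pushes it up to $j+1$, and otherwise the dot stays at position $j$. Hence the net flow across the cut between positions $i-1$ and $i$ is exactly $\#\{p_i \text{ odd}\} - \#\{q_i \text{ odd}\}$, each summand being $0$ or $1$; this quantity equals $v_i(\x,\y)=|\y\cap[i,n]|-|\x\cap[i,n]|$. Thus $v_i(\x,\y)\neq 0$ iff precisely one of $p_i,q_i$ is odd, which (in light of the parity constraint above) is equivalent to $p_i \not\equiv q_i \pmod 2$. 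The second set of equivalences is then just a refinement: $v_i(\x,\y)=+1$ corresponds to $p_i$ odd and $q_i=0$, which is exactly the case of a strand from $z_i^-$ to $z_i^+$, while $v_i(\x,\y)=-1$ corresponds to $q_i$ odd and $p_i=0$, i.e.\ a strand from $z_i^+$ to $z_i^-$.

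The only real subtlety is the bookkeeping in identifying $B/M$ with $[0,n]$: the pair $\{z_j^+,z_{j+1}^-\}$ corresponds to index $j$, so that a strand starting at $z_i^-$ sits at position $i-1$, while a strand starting at $z_i^+$ sits at position $i$, and an odd speed is precisely what swaps one to the other across line $i$. Once this dictionary is in hand the three steps above are routine verifications against Lemma~\ref{lem:piqi}, and there is no essential technical obstacle.
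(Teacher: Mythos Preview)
Your proposal is correct and close in spirit to the paper's argument, though the key computation is routed slightly differently. The paper proves $(1)\Leftrightarrow(2)\Leftrightarrow(3)$ directly from the sections: it writes $|\x\cap[i,n]|=|s(0)\cap\{z_i^+,z_{i+1}^\pm,\ldots\}|$ and similarly for $\y$, observes that the strands on backbones $S^1_j$ with $j>i$ give a bijection between the tails of $s(0)$ and $s(1)$ past $z_i^+$, and concludes that $v_i(\x,\y)$ is determined entirely by whether $z_i^+$ lies in $s(0)\setminus s(1)$, $s(1)\setminus s(0)$, or neither. The three cases are then read off immediately.

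Your route instead pulls the endpoint-determination rule from Lemma~\ref{lem:piqi} and counts which dots cross the cut between positions $i-1$ and $i$, arriving at $v_i(\x,\y)=[p_i\text{ odd}]-[q_i\text{ odd}]$. This is entirely valid and gives the same conclusion; it is arguably a bit more bookkeeping-heavy, since you are implicitly re-deriving from Lemma~\ref{lem:piqi} the bijection that the paper uses directly. One small point: your invocation of Proposition~\ref{prop:QuartumNonDatur} is unnecessary, since ``line $i$ is crossed'' is defined simply as $v_i(\x,\y)\neq 0$ once $\x,\y$ are not far, and that notion is already available.
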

\begin{proof}
The claim that $\x$ and $\y$ are not far follows from Lemma \ref{lem:FarStatesStrandAlgZero}.  If we write $a=E(s,\vec{c})$, we see that for any $i\in[1,n]$ we have
\[
|\x \cap [i, n]| = |s(0) \cap \set{z_i^+, z_{i+1}^\pm, \ldots, z_n^\pm, z_{n+1}^-}|
\]
and
\[
|\y \cap [i, n]| = |s(1) \cap \set{z_i^+, z_{i+1}^\pm, \ldots, z_n^\pm, z_{n+1}^-}|.
\]
The strands of $s$ on the circular backbones $S^1_j$ for $j\geq i+1$ (and the final linear backbone) give a one-to-one correspondence between
\[
s(0) \cap \set{z_{i+1}^\pm, \ldots, z_n^\pm, z_{n+1}^-} \qquad \mbox{and} \qquad s(1) \cap \set{z_{i+1}^\pm, \ldots, z_n^\pm, z_{n+1}^-},
\]
so there are only three possibilities for $v_i(\x,\y) = |\y \cap [i, n]| - |\x \cap [i, n]|$.
\begin{itemize}
\item If $z_i^+ \in s(0) \setminus s(1)$, then $v_i(\x,\y) = -1$ (line $i$ is crossed); in such a case, there must be only a single strand on $S^1_i$ starting from $z_i^+$ and ending at $z_i^-$, which is equivalent to $p_i=0$ and $q_i$ odd.
\item If $z_i^+ \in s(1) \setminus s(0)$, then $v_i(\x,\y) = 1$ (line $i$ is crossed); in such a case, there must be only a single strand on $S^1_i$ ending at $z_i^+$ and starting from $z_i^-$, which is equivalent to $q_i=0$ and $p_i$ odd.
\item If $z_i^+ \in s(0) \cap s(1)$ or $z_i^+ \not\in s(0) \cup s(1)$, then $v_i(\x,\y) = 0$. If $z_i^+ \in s(0) \cap s(1)$, then either $s$ has a single strand from $z_i^+$ to $z_i^+$ ($p_i=0,q_i$ even), or $s$ has at least two strands on the $i$-th cylinder ($p_i\equiv q_i \pmod 2$). If $z_i^+ \not\in s(0) \cup s(1)$, then $s$ can have no strand starting from or ending in $z_i^+$ ($q_i=0$ and $p_i$ even).
\end{itemize}
The assertions of the lemma follow.
\end{proof}

\begin{corollary}
\label{cor:pi+qimod2}
Let $\x, \y\in V(n,k)$. If 
\[
a = C_{i_1} \cdots C_{i_l} \vv{p_1}{q_1}{1} \cdots \vv{p_n}{q_n}{n}
\]
and 
\[
a' = C_{i'_1} \cdots C_{i'_{l'}} \vv{p_1'}{q_1'}{1} \cdots \vv{p_n'}{q_n'}{n}
\]
are basis elements of $\Jb_\x \sac nk\Sc \Jb_\y$, then for all $i = 1, \ldots, n$, we have $p_i + q_i \equiv p_i' + q_i' \pmod2$.
\end{corollary}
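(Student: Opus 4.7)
The plan is to deduce this immediately from Lemma~\ref{lem:cl}, which shows that the parity of $p_i + q_i$ for a basis element is an invariant of the left and right idempotents.

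More precisely, first I would observe that the condition $p_i \not\equiv q_i \pmod 2$ is exactly the statement $p_i + q_i \equiv 1 \pmod 2$, while $p_i \equiv q_i \pmod 2$ is the statement $p_i + q_i \equiv 0 \pmod 2$. Thus the parity of $p_i + q_i$ is determined by whether $p_i$ and $q_i$ agree modulo $2$.

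By Lemma~\ref{lem:cl}, for any basis element of $\Jb_\x \sac nk\Sc \Jb_\y$ written in the notation $C_{i_1}\cdots C_{i_l} \binom{p_1}{q_1}_1 \cdots \binom{p_n}{q_n}_n$, the condition $p_i \not\equiv q_i \pmod 2$ is equivalent to the condition that line $i$ is crossed from $\x$ to $\y$. But the set $\CL{\x,\y}$ of crossed lines depends only on the pair $(\x,\y)$ (see Definition~\ref{def:CL}), not on the choice of basis element in $\Jb_\x \sac nk\Sc \Jb_\y$. Therefore applying Lemma~\ref{lem:cl} to both $a$ and $a'$ yields
\[
p_i + q_i \equiv \begin{cases} 1 \pmod 2 & \text{if } i \in \CL{\x,\y}, \\ 0 \pmod 2 & \text{if } i \notin \CL{\x,\y},\end{cases}
\]
and the same formula holds for $p_i' + q_i'$. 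Hence $p_i + q_i \equiv p_i' + q_i' \pmod 2$, as required.

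There is no main obstacle here; the corollary is an immediate consequence of packaging Lemma~\ref{lem:cl} through the observation that $\CL{\x,\y}$ is intrinsic to the pair $(\x,\y)$.
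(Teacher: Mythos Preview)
Your proof is correct and follows essentially the same approach as the paper: both argue that by Lemma~\ref{lem:cl} the parity of $p_i+q_i$ is determined by whether line $i$ is crossed, which depends only on $\x$ and $\y$. Your version is just a slightly more expanded statement of the paper's one-sentence proof.
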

\begin{proof}
By Lemma \ref{lem:cl}, the parity of $p_i + q_i$ is determined by whether or not line $i$ is crossed, which depends only on $\x$ and $\y$.
\end{proof}

For I-states $\x$ and $\y$ that are not far, there is a unique minimally winding basis element of $\sac nk\Sc$, which should be viewed as an analogue to the generator $f_{\x, \y}$ of $\B(n,k,\Sc)$ as in \cite[\defOSzStyleDef]{MMW1}. Visually, this element is found by placing speed zero strands for each stationary dot (in the sense of the motions of dots in \cite[\secGraphicalInterp]{MMW1}), and placing speed one strands for each moving dot. The following lemma presents the combinatorics of this construction; see Figure \ref{fig:gXYexample} for an example.

\begin{lemma}
\label{lem:gxy}
If $\x$ and $\y$ are not far, then there exists a unique basis element
\[
g_{\x, \y} = \vv{p_1}{q_1}{1} \cdots \vv{p_n}{q_n}{n} \in {\Jb_{\x}} \sac nk\Sc {\Jb_{\y}}
\]
with the following properties:
\begin{itemize}
\item $p_i = 1$ if $v_i(\x, \y) = 1$;
\item $q_i = 1$ if $v_i(\x, \y) = -1$;
\item $p_i$ and $q_i$ are $0$ in all other cases.
\end{itemize}
Moreover, if $C_{i_1} \cdots C_{i_l} \vv{r_1}{s_1}{1} \cdots \vv{r_n}{s_n}{n}$ is a basis element of ${\Jb_{\x}} \sac nk\Sc {\Jb_{\y}}$, then, for all $i = 1, \ldots, n$, we have $r_i \geq p_i$ and $s_i \geq q_i$.
\end{lemma}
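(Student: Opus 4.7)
The plan is to break the proof into three parts: existence, uniqueness, and the minimality property. For existence, I would verify that the array described in the statement satisfies each of the conditions (i)--(vi) of Lemma~\ref{lem:piqi}. Two preliminary observations drive this: first, since $\x$ and $\y$ are not far, an elementary counting argument using the order-preserving bijection $\x \leftrightarrow \y$ shows that $v_i(\x,\y) \in \{-1,0,1\}$ for every $i$, so all $p_i, q_i$ specified in the statement lie in $\{0,1\}$; second, the identity $v_{i+1} - v_i = \mathbf{1}_{i \in \x} - \mathbf{1}_{i \in \y}$ constrains how crossed lines can appear at consecutive indices. Condition (i) is vacuous since no $C$-variables appear; conditions (ii) and (iv) would require $(v_i, v_{i+1}) \in \{(-1,1),(1,-1)\}$, which the identity above rules out. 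Condition (iii) is immediate from the fact that a nonzero $p_i$ or $q_i$ in $g_{\x,\y}$ records a dot leaving position $i-1$ or $i$ respectively, hence requires $i \in \x$. Condition (v) is vacuous since $p_i = q_i = 1$ would demand $v_i$ equal both $+1$ and $-1$.

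The main obstacle will be verifying condition (vi), which I expect to be the most delicate step. Here I would track dots through the not-far bijection. Specifically, in the case $p_i=1$, $q_i=0$, $\{i-1,i\}\subset\x$, the fact that $v_i = 1$ together with $\y$ being a set forces there to be a unique index $a$ with $x_a=i-1$, $y_a=i$ and no index $b$ with $x_b=i$, $y_b=i-1$. Then the dot of $\x$ at position $i$, which exists since $i\in\x$, cannot go to $i-1$ or stay at $i$ (the latter because $y_a=i$ is already taken), so it must move to $i+1$. Applying the identity $v_{i+1} = v_i - \mathbf{1}_{i\in\y} + \mathbf{1}_{i\in\x} = 1 - 1 + 1 = 1$ then forces $p_{i+1}=1$, as required; the symmetric case is analogous. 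A parallel dot-tracking argument will show that the right idempotent produced by the rule in Lemma~\ref{lem:piqi} is precisely $\y$. Uniqueness then follows immediately, since the three bullet conditions in the statement fully determine every $p_i$ and $q_i$.

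For the minimality claim, I would consider an arbitrary basis element $b = C_{i_1}\cdots C_{i_l}\vv{r_1}{s_1}{1}\cdots\vv{r_n}{s_n}{n} \in \Jb_\x\sac nk\Sc\Jb_\y$ and apply Lemma~\ref{lem:cl} at each index $i$ separately. When $v_i(\x,\y) = 1$, Lemma~\ref{lem:cl} forces $s_i=0$ and $r_i$ odd, so $r_i \geq 1 = p_i$ and $s_i = 0 = q_i$; the case $v_i(\x,\y)=-1$ is symmetric. When $v_i(\x,\y)=0$, we have $p_i=q_i=0$, and the inequalities hold trivially since array entries of a basis element are non-negative integers. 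The $C$-monomial plays no role in these inequalities.
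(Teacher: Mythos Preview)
Your proposal is correct and follows essentially the same approach as the paper: verify the conditions of Lemma~\ref{lem:piqi} using the constraint $|v_{i+1}-v_i|\le 1$ (which the paper leaves implicit), then check the right idempotent is $\y$, and deduce minimality from Lemma~\ref{lem:cl}. Your treatment of condition~(vi) via dot-tracking is just a more verbose version of the paper's one-line deduction that $v_i=1$ and $i\in\x$ force $v_{i+1}=1$; your phrasing of condition~(iii) slightly muddles the indices (it concerns $q_i$ and $p_{i+1}$, both of which witness a dot leaving position $i$), but the intended argument is the same.
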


\begin{proof}
The three properties listed above completely determine all entries $p_i$ and $q_i$. We need to check that such an array of vectors defines an element of ${\Jb_{\x}} \sac nk\Sc {\Jb_{\y}}$, i.e., that it satisfies the properties of Lemma \ref{lem:piqi}. Condition \eqref{it:no double C} is automatic.

If $q_i \neq 0$, then $v_i(\x, \y) = -1$, so $v_{i+1}(\x, \y)$ must be $-1$ or $0$, hence $p_{i+1} = 0$. Thus $q_i p_{i+1} = 0$, and condition \eqref{it:pq no matched start} holds.

If $p_i$ is odd, then $v_i(\x,\y) = 1$ so that $v_{i+1}(\x,\y)$ must be $1$ or $0$, hence $q_{i+1}=0$.  Thus $p_iq_{i+1} = 0$ is even, and condition \eqref{it:pq no dots pinzer} holds.

If $i \not\in \x$, then there are two cases. If $i \not \in \y$, then $v_i(\x, \y) = v_{i+1}(\x, \y) = 0$, otherwise $\x$ and $\y$ would be far. If $i \in \y$, then $v_i(\x, \y) = v_{i+1}(\x, \y) + 1$, so $v_i(\x, \y) \neq -1$ and $v_{i+1}(\x,\y) \neq 1$. In all these cases, we have $q_i = p_{i+1} = 0$ and condition \eqref{it:pq start from x} holds. Condition \eqref{it:pq disjoint endpoints for non-const} is immediate because $p_i$ and $q_i$ are never both nonzero.

Finally, if $p_i$ is odd and $q_i=0$ (respectively $p_i=0$ and $q_i$ is odd), we have $v_i(\x,\y)=1$ (respectively $v_i(\x,\y)=-1$).  Assuming $\{i-1,i\}\subset\x$, we then have $v_{i+1}(\x,\y)=1$ (respectively $v_{i-1}(\x,\y)=-1$), so that $p_{i+1}=1$ is odd (respectively $q_{i-1}=1$ is odd). Thus, condition \eqref{it:pq dots move other dots} is also satisfied.

Thus $g_{\x, \y} \in \Jb_\x \sac nk\Sc$. Let $\y'$ denote the ending I-state of $g_{\x,\y}$, so that $g_{\x, \y} \in \Jb_\x \sac nk\Sc \Jb_{\y'}$. If $i \in \x$ and $p_{i+1}$ is odd, then $i+1 \in \y'$. On the other hand, $p_{i+1}$ is odd if and only if $v_{i+1}(\x, \y) = 1$, which, by the closeness of $\x$ and $\y$, implies that $i \in \x$ and $i+1 \in \y$. Analogously, if $i \in \x$ and $q_i$ is odd, then we deduce both $i-1 \in \y'$ and $i-1 \in \y$.

If $i \in \x$ and $q_i$ and $p_{i+1}$ are both even, then $v_i(\x, \y) \geq 0$ and $v_{i+1}(\x, \y) \leq 0$. By the fact that
\[
0 \leq v_{i}(\x,\y) - v_{i+1}(\x,\y) = \delta_{i \in \y} - \delta_{i \in \x} = \delta_{i \in \y} - 1,
\]
we deduce that $i \in \y$. Thus, by Lemma \ref{lem:piqi}, $\y$ and $\y'$ coincide.

Lastly, to check that $r_i \geq p_i$ and $s_i \geq q_i$ for the general basis element of $\Jb_{\x} \sac nk\Sc \Jb_{\y}$, we use Lemma \ref{lem:cl}. If $v_i(\x, \y) = 1$, then $r_i \not\equiv0 \pmod2$, so $r_i \geq 1 = p_i$. If $v_i(\x, \y) = -1$, then $s_i \not\equiv0 \pmod2$, so $s_i \geq 1 = q_i$.
\end{proof}

\begin{figure}
\includegraphics[scale=0.5]{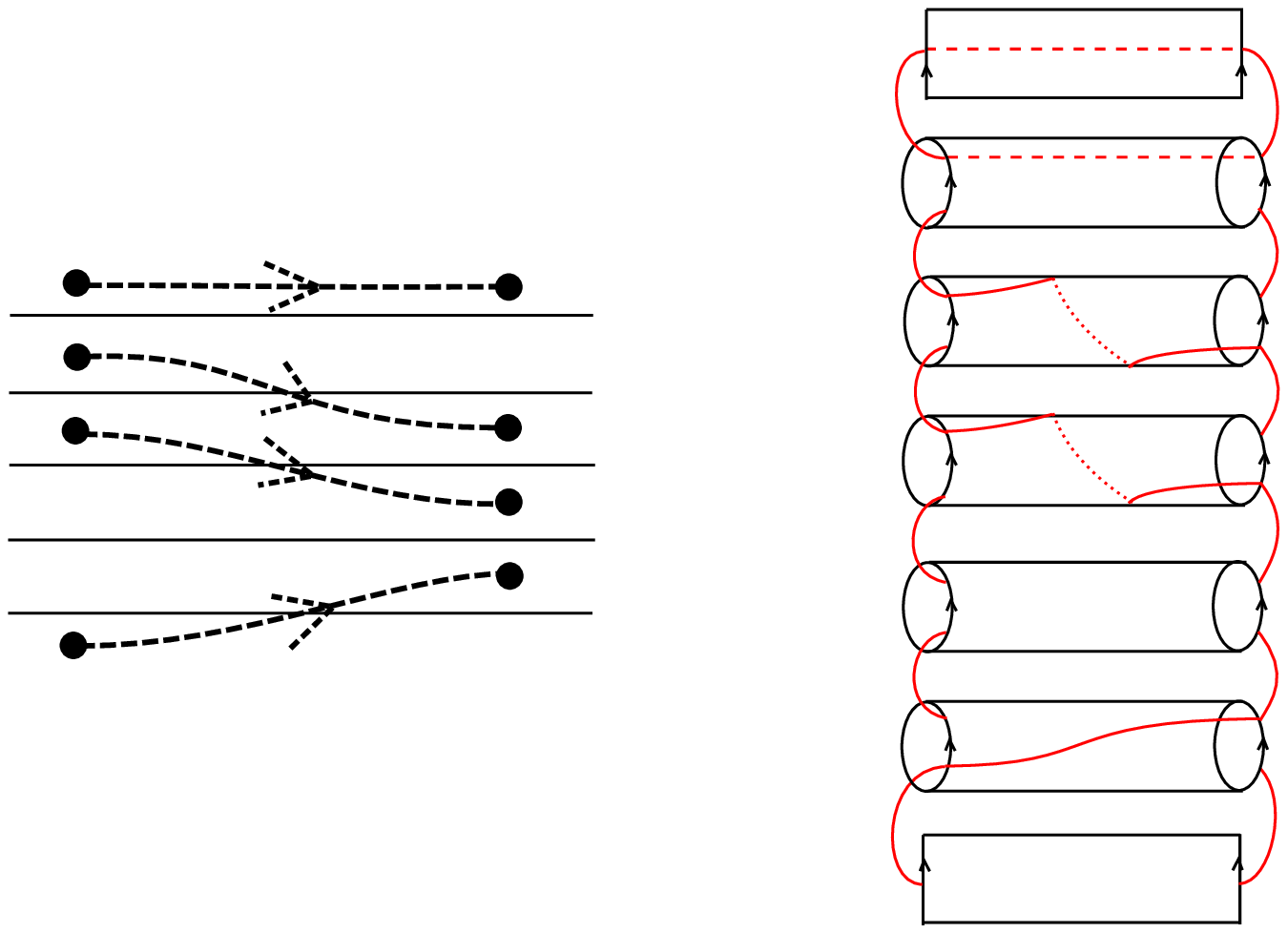}
\caption{The elements $f_{\x,\y}$ of $\Path(K(5,4))$ and $g_{\x,\y}$ of $\mc A(5,4)$ for $\x = \{0,1,2,5\}$ and $\y = \{0,2,3,4\}$.}
\label{fig:gXYexample}
\end{figure}

\begin{corollary}
\label{cor:xAy}
The summand ${\Jb_{\x}} \sac nk\Sc {\Jb_{\y}}$ of the strands algebra is nonzero if and only if $\x$ and $\y$ are not far.
\end{corollary}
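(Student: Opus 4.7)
The plan is to observe that Corollary \ref{cor:xAy} is an immediate combination of two results already established in the excerpt, so the proof should essentially consist of assembling these two pieces.

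For the forward direction, suppose that $\Jb_{\x} \sac nk\Sc \Jb_{\y} \neq 0$. I would simply invoke Lemma \ref{lem:FarStatesStrandAlgZero} in its contrapositive form: that lemma shows that if $\x$ and $\y$ are far, then $\Jb_{\x} \sac nk\Sc \Jb_{\y} = 0$, so from nonvanishing of the summand we immediately conclude that $\x$ and $\y$ are not far.

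For the reverse direction, suppose that $\x$ and $\y$ are not far. Then Lemma \ref{lem:gxy} produces an explicit basis element $g_{\x, \y} \in \Jb_{\x} \sac nk\Sc \Jb_{\y}$, namely the minimally winding one with entries $p_i = 1$ if $v_i(\x,\y) = 1$, $q_i = 1$ if $v_i(\x,\y) = -1$, and all other entries zero. Since $g_{\x,\y}$ is a nonzero standard basis element lying in $\Jb_{\x} \sac nk\Sc \Jb_{\y}$, this summand is nonzero.

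There is no real obstacle here; the work has been done by the two preceding lemmas. The only thing worth commenting on is that the two lemmas together give a qualitative analogue of \cite[Proposition 3.7]{OSzNew}, confirming that the ``far/not far'' dichotomy for I-states controls the vanishing of idempotent-truncated summands in the strands algebra exactly as it does in Ozsv\'ath--Szab\'o's algebra $\B(n,k,\Sc)$.
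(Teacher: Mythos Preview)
Your proposal is correct and matches the paper's approach exactly: the corollary is stated without proof in the paper, as it follows immediately from Lemma~\ref{lem:FarStatesStrandAlgZero} (one direction) and the existence of $g_{\x,\y}$ from Lemma~\ref{lem:gxy} (the other direction), precisely as you describe.
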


\section{Gradings}
\label{sec:gradings}

In this section we endow our strands algebra $\sac nk\Sc$ with several gradings, defined combinatorially in terms of the $(p_i,q_i)$-notation of Definition \ref{def:piqi}. We then illustrate the relationship between our gradings and the group-valued gradings of \cite{LOT} in Sections~\ref{sec:UnrefinedGradings} and \ref{sec:RefinedGradings}.  Throughout this section, we extend the function $\vec{c}\in\{0,1\}^\Sc$ to a function $\vec{c}\in\{0,1\}^{[1,n]}$ by declaring that $\vec{c}(i)=0$ if $i\notin\Sc$.

\subsection{The gradings, combinatorially}\label{sec:CombinatorialStrandsGradings}

\begin{definition}\label{def:gradings}
Let $a=E(s,\vec{c})\in\Jb_\x\sac nk\Sc$ be a basis element; we can write $a$ as
\[
a=C_1^{\vec{c}(1)} \cdots C_n^{\vec{c}(n)} \vv{p_1}{q_1}{1}\cdots\vv{p_n}{q_n}{n}.
\]
Let $\chi_{\Sc}: [1,n] \to \{0,1\}$ be the indicator function of $\Sc \subset [1,n]$ ($0$ if $i \notin \Sc$ and $1$ if $i \in \Sc$). As in Definition~\ref{def:ReviewOSzGradings}, let $\tau_1,\beta_1, \ldots, \tau_n, \beta_n$ denote the standard basis of $\Z^{2n}$, while $e_1,\ldots,e_n$ denotes the standard basis of $\Z^n$. We have the following four notions of a degree for $a$:
\begin{enumerate}
\item The \emph{Maslov grading} $\m:\sac nk\Sc \rightarrow \Z$ is defined by
\[
\m(a):= \sum_{i=1}^n \left( \frac{|p_i - q_i|}{2} - (p_i + q_i) + (-1)^{\chi_{\Sc}(i)} \left(\vec{c}(i) + \frac{p_i + q_i}{2} \right)\right).
\]
\item The \emph{unrefined Alexander grading} $w^{\un}:\sac nk\Sc \rightarrow \Z^{2n}$ is defined by
\[ 
w^{\un}(a) := \sum_{i=1}^n (w^{\un,\tau}_i(a) \tau_i +w^{\un,\beta}_i(a) \beta_i)
\]
where
\[ 
w^{\un,\tau}_i(a) := \vec{c}(i) + \floor*{\frac{p_i}2}+\ceil*{\frac{q_i}2}
\]
and
\[
w^{\un,\beta}_i(a) := \vec{c}(i) + \ceil*{\frac{p_i}2}+\floor*{\frac{q_i}2}.
\]

\item The \emph{refined Alexander grading} $w:\sac nk\Sc \rightarrow (\frac{1}{2}\Z)^n$ is defined by
\[ w(a) = \sum_{i=1}^n \left(\vec{c}(i) + \frac{p_i + q_i}2\right)e_i. \]
As in Definition~\ref{def:ReviewOSzGradings}, $w$ is recovered from $w^{\un}$ by the homomorphism sending both $\tau_i,\beta_i$ to $\frac 12 e_i$.

\item The \emph{single Alexander grading} $\Alex: \sac nk\Sc \to \frac{1}{2}\Z$ is defined by
\[
\Alex(a) = \sum_{i=1}^{n} (-1)^{\chi_{\Sc}(i)}\left(\vec{c}(i) + \frac{p_i + q_i}{2}\right).
\]
\end{enumerate}
\end{definition}

Visually, the entries of the unrefined Alexander grading count how often any strand traverses each arc between basepoints on the circular backbones (there are $2n$ such arcs), while the entries of the refined Alexander grading count the total winding number of all strands on each circular backbone. The Maslov grading is a bit more complicated.

With these definitions in place, the reader can use Lemmas \ref{lem:concatenable} and \ref{lem:general pq differential} to verify the homogeneity of both multiplication and differentiation, as described by the following proposition.

\begin{proposition}
\label{prop:effectsofde}
For $a \in \sac nk\Sc$ homogeneous with respect to any of the following gradings, we have:
\[
\m(\de a) = \m(a) - 1; \qquad w^{\un}(\de a) = w^{\un}(a); \qquad w(\de a) = w(a); \qquad \Alex(\de a) = \Alex(a).
\]
Moreover, all of these gradings are additive with respect to multiplication in the algebra: for $a, b \in \sac nk\Sc$ $g$-homogeneous (where $g$ is any of the gradings introduced so far) and such that $a \cdot b \neq 0$, we have
\[
g(a \cdot b) = g(a) + g(b).
\]
\end{proposition}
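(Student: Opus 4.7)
The plan is to reduce to basis elements and then perform a case analysis parallel to the ones already appearing in Lemmas \ref{lem:concatenable}, \ref{lem:general pq differential}, and \ref{lem:general pq C-differential}.

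For the additivity of each grading under multiplication, I would first establish two entry-by-entry identities. Whenever $a \cdot a' \neq 0$, I claim that for every $i$ one has
\[
r_i + s_i = p_i + q_i + p_i' + q_i' \quad \text{and} \quad |r_i - s_i| = |p_i - q_i| + |p_i' - q_i'|,
\]
and that no $C_j$ variable is repeated, which is condition (\ref{it:no double loops}) of Lemma \ref{lem:concatenable}. The first identity is immediate in the both-even and both-odd subcases of Lemma \ref{lem:concatenable}; in the mixed subcases the nonvanishing hypotheses together with the constraint that $a' \in \Jb_\y \A$ (where $\y$ is the right idempotent of $a$) force the ``missing'' primed entries to vanish—for instance, if $p_i$ is odd and $q_i = 0$, then $p_i'$ must vanish because condition (\ref{it:p odd concat}) applied at index $i-1$, or the observation that $i-1\notin\y$, obstructs any non-constant strand of $a'$ starting at $z_i^-$. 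The second identity uses conditions (\ref{it:evens no bigons}) and (\ref{it:odds no bigons}), which guarantee that parities and signs align so that the absolute values combine additively. Granting both identities, the refined, single, and unrefined Alexander gradings are additive by inspection of their defining formulas (a brief floor/ceiling check handles the unrefined case using Corollary \ref{cor:pi+qimod2} to control parities), and the Maslov grading is additive because each of its three pieces $|p_i - q_i|/2$, $-(p_i + q_i)$, and $(-1)^{\chi_\Sc(i)}(\vec{c}(i) + (p_i + q_i)/2)$ is individually additive.

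For the differential, I would use $\partial = \sum_i (\partial_i^0 + \partial_i^c)$ and treat each summand separately. The refined Alexander grading is invariant under $\partial_i^0$ because the swap $(p_i, q_i) \mapsto (m_i + 1, M_i - 1)$ or $(M_i - 1, m_i + 1)$ preserves $p_i + q_i$, and invariant under $\partial_i^c$ because decrementing $\vec{c}(i)$ from $1$ to $0$ while adding $2$ to either $p_i$ or $q_i$ leaves $\vec{c}(i) + (p_i + q_i)/2$ unchanged; the single Alexander grading is then invariant by functoriality, and the unrefined grading requires a short parallel floor/ceiling check. For the Maslov grading the term $(-1)^{\chi_\Sc(i)}(\vec{c}(i) + (p_i + q_i)/2)$ is preserved by exactly the same argument, while the remaining piece $|p_i - q_i|/2 - (p_i + q_i)$ decreases by precisely $1$ in every case: under $\partial_i^0$ the sum $p_i + q_i$ is fixed and $|p_i - q_i|$ drops by $2$; under $\partial_i^c$ both $|p_i - q_i|$ and $p_i + q_i$ increase by $2$ (including the $p_i = q_i \neq 0$ subcase where $|p_i - q_i|$ jumps from $0$ to $2$). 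The sporadic $p_i = q_i = 0$ subcase with the $\delta_{i-1}, \epsilon_i$ corrections of Lemma \ref{lem:general pq C-differential} behaves identically.

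The main obstacle is the multiplicative additivity of $|p_i - q_i|/2$: this is the only place where the case analysis draws nontrivial input from conditions (\ref{it:evens no bigons}) and (\ref{it:odds no bigons}) of Lemma \ref{lem:concatenable}, and in the mixed subcases one must invoke the global constraint that $a, a'$ share an idempotent in order to force the vanishing of certain primed entries. The rest of the proof is routine bookkeeping, aided by the fact that all other pieces of the gradings depend on $p_i, q_i$ only through the sum $p_i + q_i$ and on $\vec{c}(i)$, quantities that behave transparently under both multiplication and the differential.
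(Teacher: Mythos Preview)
Your proposal is correct and follows exactly the approach the paper indicates: the paper's own ``proof'' is merely the sentence preceding the proposition, instructing the reader to verify homogeneity using Lemmas~\ref{lem:concatenable} and~\ref{lem:general pq differential} (and implicitly~\ref{lem:general pq C-differential}). You have supplied the case analysis that the paper leaves to the reader, including the two key entry-by-entry identities for $r_i+s_i$ and $|r_i-s_i|$, which are precisely what is needed.
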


\subsection{Where the unrefined gradings come from, topologically}\label{sec:UnrefinedGradings}
Lipshitz, Ozsv\'ath, and Thurston discuss gradings on the strands algebra associated to a pointed matched circle in \cite[Section 3.3]{LOT}.  Their ideas are easily carried over to the case of a general chord diagram $\mathcal{Z}=(\mc Z,B,M)$.
The unrefined gradings of \cite{LOT} take values in a subgroup of a central extension by $\frac12\Z$ of $H_1(\mc Z,B)$ determined by $M$; in general, such an extension gives a nonabelian group. We will see that, in the case of our specific chord diagram $\mathcal{Z}(n)=(\mc Z(n),B,M)$, this extension is in fact trivial, leading to the unrefined grading group of Definition \ref{def:gradings}. We begin with a definition.

\begin{definition}[cf. \cite{LOT}]\label{def:general multiplicity}
Let $\mathcal{Z}=(\mc Z,B,M)$ be a chord diagram as in Definition \ref{def:chord diagram}.  For $p\in B$ and $\alpha\in H_1(\mc Z,B)$, the \emph{multiplicity} $m(\alpha,p)$ of $p$ in $\alpha$ is the average multiplicity with which $\alpha$ covers the two arcs on either side of $p$.  Extend $m$ to a map $H_1(\mc Z,B)\times H_0(B)\rightarrow \frac{1}{2}\Z$ bilinearly.
\end{definition}

Using the multiplicity $m$, \cite{LOT} define a bilinear ``linking'' function $L:H_1(\mc Z,B)\times H_1(\mc Z,B)\rightarrow \frac{1}{2}\Z$ as
\[L(\alpha_1,\alpha_2):=m(\alpha_2,\de \alpha_1)\]
where $\de$ is the connecting homomorphism $\de: H_1(\mc Z,B)\rightarrow H_0(B)$ from the long exact sequence for the pair $(\mc Z,B)$. Note that $L$ is antisymmetric; equivalently, $L(\alpha,\alpha) = 0$ for any $\alpha$. Using $L$, we can define a group $G'(\mathcal{Z})$ as follows.

\begin{definition}[Definition 3.33 of \cite{LOT}]\label{def:general unrefined group}
Define $\varepsilon: H_1(\mc Z,B) \to (\frac{1}{2} \Z)/\Z$ by
\[
\varepsilon(\alpha) = \frac{1}{4}\#(\textrm{parity changes in }\alpha) \mod 1,
\]
where a parity change in $\alpha$ is a point $p \in B$ such that $m(\alpha,p)$ is a half-integer. The \emph{unrefined grading group} $G'(\mathcal{Z})$ is the subset of $\frac{1}{2}\Z\times H_1(\mc Z,B)$ consisting of pairs $(j,\alpha)$ satisfying
\[
j \equiv \varepsilon(\alpha) \textrm{ mod } 1.
\]
The multiplication on $G'(\Zc)$ is given by
\[(j_1,\alpha_1)\cdot(j_2,\alpha_2):= (j_1+j_2+L(\alpha_1,\alpha_2),\alpha_1+\alpha_2);\]
one can check that the condition $j \equiv \varepsilon(\alpha)$ mod $1$ is satisfied for the product.
\end{definition}

For our chord diagram $\Zc(n)$, the linking function is trivial as shown below.
\begin{lemma}\label{lem:L vanishes}
Consider the chord diagram $\mathcal{Z}(n)=(\mc Z(n),B,M)$ of Definition \ref{def:Z(n)}. For any $\alpha_1, \alpha_2 \in H_1(\mc Z(n), B)$, we have $L(\alpha_1,\alpha_2) = 0$.
\end{lemma}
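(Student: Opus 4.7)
The plan is to use bilinearity of $L$ to reduce to a basis of $H_1(\mc Z(n), B)$ and verify vanishing on each pair of generators; this turns out to be immediate from the very special local picture of $\mc Z(n)$ at each basepoint.

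First, I would identify a convenient basis. The linear backbones $[0,1]_0$ and $[0,1]_{n+1}$ each contain only a single basepoint ($z_0^+$ and $z_{n+1}^-$ respectively), so their contribution to the relative $H_1$ is trivial. On each circular backbone $S^1_i$, the two basepoints $z_i^-$ and $z_i^+$ split $S^1_i$ into two arcs, call them $\sigma_i^+$ (from $z_i^-$ to $z_i^+$) and $\sigma_i^-$ (from $z_i^+$ to $z_i^-$); these give a free basis for $H_1(S^1_i, \set{z_i^-, z_i^+}) \cong \Z^2$. Thus
\[
H_1(\mc Z(n), B) \cong \bigoplus_{i=1}^n \Z\langle \sigma_i^+, \sigma_i^-\rangle,
\]
and the connecting map satisfies $\partial \sigma_i^\pm = \pm(z_i^+ - z_i^-)$.

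Next, I would record the following symmetry property of the multiplicity function. Because $z_i^+$ and $z_i^-$ are the only two basepoints on $S^1_i$, the two arcs adjacent to $z_i^+$ are exactly $\sigma_i^+$ and $\sigma_i^-$, and the same is true at $z_i^-$. Consequently, for $\alpha = a\sigma_i^+ + b\sigma_i^-$ supported on $S^1_i$, one has
\[
m(\alpha, z_i^+) = \tfrac{a+b}{2} = m(\alpha, z_i^-),
\]
while for $\alpha$ supported on any backbone other than $S^1_i$, one has $m(\alpha, z_i^\pm) = 0$.

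By bilinearity, it then suffices to check $L(\sigma_i^\varepsilon, \sigma_j^\delta) = 0$ for all $i, j \in [1,n]$ and signs $\varepsilon, \delta \in \set{+,-}$. If $i \neq j$, then $\partial \sigma_i^\varepsilon$ is supported on basepoints of $S^1_i$, but $\sigma_j^\delta$ has multiplicity zero there, so $L = 0$. If $i = j$, then $\partial \sigma_i^\varepsilon = \pm(z_i^+ - z_i^-)$, and the symmetry $m(\sigma_i^\delta, z_i^+) = m(\sigma_i^\delta, z_i^-)$ makes the resulting signed sum cancel. There is essentially no obstacle beyond setting up notation; the content of the lemma is simply the observation that each circular backbone of $\mc Z(n)$ has exactly two basepoints, both adjacent to the same pair of arcs, which forces the basepoint-multiplicity function to be constant on each circle.
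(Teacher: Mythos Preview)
Your argument is correct and essentially identical to the paper's proof: both reduce by bilinearity to basis elements supported on a single $S^1_i$, observe that $\partial$ of such an element is $\pm(z_i^+ - z_i^-)$, and then use that the two arcs adjacent to $z_i^+$ are the same as those adjacent to $z_i^-$ to conclude $m(\alpha_2, z_i^+) = m(\alpha_2, z_i^-)$. Your write-up is more explicit in separating the $i\neq j$ case, but the content is the same.
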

\begin{proof} Any standard basis element $\alpha_1\in H_1(\mc Z(n),B) \cong \Z^{2n}$ will lie entirely on some $S^1_i$, and so will have either $\de \alpha_1=0$ or $\de \alpha_1 = \pm(z_i^+-z_i^-)$.  Since the arcs on either side of $z_i^+$ are the same as the arcs on either side of $z_i^-$, we have $m(\alpha_2,\de \alpha_1)=0$ in all cases.
\end{proof}

\begin{corollary} For the chord diagram $\mathcal{Z}(n)$, the unrefined grading group $G'(\mathcal{Z}(n))$ of \cite{LOT} is isomorphic to the subgroup of $\frac{1}{2}\Z\times H_1(\mc Z(n),B) \cong \frac{1}{2}\Z\times \Z^{2n}$ consisting of pairs $(j,\alpha)$ with $j \equiv \varepsilon(\alpha)$ mod $1$.
\end{corollary}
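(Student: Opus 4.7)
The statement is essentially a direct consequence of Lemma~\ref{lem:L vanishes}, so the plan is to unpack the definition of $G'(\Zc)$ and observe what Lemma~\ref{lem:L vanishes} buys us. By Definition~\ref{def:general unrefined group}, the underlying set of $G'(\Zc(n))$ is already described as the subset of $\frac12\Z \times H_1(\mc Z(n), B)$ consisting of pairs $(j,\alpha)$ with $j \equiv \varepsilon(\alpha) \pmod 1$; the only question is whether the group operation agrees with the (componentwise) addition inherited from the ambient abelian group $\frac12\Z \times H_1(\mc Z(n),B)$.

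So the plan is: first invoke Lemma~\ref{lem:L vanishes} to conclude that $L(\alpha_1,\alpha_2) = 0$ for all $\alpha_1, \alpha_2 \in H_1(\mc Z(n), B)$. Substituting this into the multiplication formula of Definition~\ref{def:general unrefined group} gives
\[
(j_1,\alpha_1)\cdot(j_2,\alpha_2) = (j_1 + j_2 + L(\alpha_1,\alpha_2), \alpha_1+\alpha_2) = (j_1+j_2, \alpha_1+\alpha_2),
\]
which is precisely componentwise addition. Second, verify that the subset cut out by the condition $j \equiv \varepsilon(\alpha) \pmod 1$ really is a subgroup of $\frac12\Z \times H_1(\mc Z(n),B)$ under componentwise addition; this is immediate from the fact that $\varepsilon \colon H_1(\mc Z(n),B) \to (\frac12\Z)/\Z$ is a group homomorphism (it is the mod-$1$ reduction of one-quarter of the count of parity changes, and parity changes add under addition of $1$-chains).

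There is essentially no obstacle: the whole content is Lemma~\ref{lem:L vanishes}, which has already been proved, together with the observation that $L = 0$ forces the central extension to split trivially as $\frac12\Z \oplus H_1(\mc Z(n), B)$ restricted to the specified subgroup. The corollary is therefore a matter of assembling these observations in order, and the resulting group is abelian, as claimed in Theorem~\ref{thm:IntroGradings}. Using the identification $H_1(\mc Z(n), B) \cong \Z^{2n}$ via the basis of arcs between consecutive basepoints completes the identification with a subgroup of $\frac12\Z \times \Z^{2n}$.
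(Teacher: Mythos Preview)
Your proposal is correct and matches the paper's approach: the paper states this corollary without proof, treating it as an immediate consequence of Lemma~\ref{lem:L vanishes}, and your argument simply makes explicit that vanishing of $L$ collapses the twisted multiplication of Definition~\ref{def:general unrefined group} to componentwise addition on the same underlying subset. The only mild imprecision is the phrase ``parity changes add under addition of $1$-chains''; more accurately, closure of the subset under the product is already asserted in Definition~\ref{def:general unrefined group}, and once $L=0$ that product is addition, so the subset is automatically a subgroup.
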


The next lemma shows that $G'(\mathcal{Z}(n))$ is non-canonically isomorphic to $\Z \times \Z^{2n}$.
\begin{lemma}\label{lem:NonCanonicalGradingIso}
Write $\tau_1, \beta_1, \ldots, \tau_n, \beta_n$ for the generators of $H_1(\mc Z(n),B) \cong \Z^{2n}$. For $1 \leq i \leq n$, choose $j_i^{\tau}, j_i^{\beta} \in \frac{1}{2}\Z \setminus \Z$. The elements
\[
\{ \lambda = (1,0), (j_1^{\tau},\tau_1), (j_1^{\beta},\beta_1), \ldots, (j_n^{\tau},\tau_n), (j_n^{\beta},\beta_n) \}
\]
form a basis of $G'(\Zc(n))$ as a free abelian group.
\end{lemma}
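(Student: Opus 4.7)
The plan is to exhibit $G'(\Zc(n))$ as a split central (in fact, trivial) extension of $H_1(\mc Z(n),B)$ by $\Z$, and then observe that the listed elements constitute one generator of the kernel together with a choice of section over a basis of the quotient.

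First I would record the structural simplifications. Because of Lemma~\ref{lem:L vanishes} the linking cocycle vanishes, so the multiplication on $G'(\Zc(n))$ reduces to coordinatewise addition, and in particular $G'(\Zc(n))$ is abelian. Next, I would identify $H_1(\mc Z(n),B)$: the two linear backbones $[0,1]_0$ and $[0,1]_{n+1}$ each contain a single basepoint in their interior, so they are contractible rel that basepoint and contribute nothing to $H_1(\mc Z(n),B)$; meanwhile each circular backbone $S^1_i$ has two basepoints $z_i^\pm$ cutting it into two arcs, contributing a $\Z^2$ with basis $\tau_i,\beta_i$. Thus $H_1(\mc Z(n),B)\cong\Z^{2n}$ with the named free basis.

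Then I would consider the projection $\pi\colon G'(\Zc(n))\to H_1(\mc Z(n),B)$ given by $(j,\alpha)\mapsto\alpha$. This is a surjective homomorphism of abelian groups, with kernel $\{(j,0):j\in\tfrac12\Z,\ j\equiv\varepsilon(0)=0\bmod 1\}=\Z\cdot\lambda$. For surjectivity, and to produce a section on basis elements, I would compute $\varepsilon(\tau_i)$ and $\varepsilon(\beta_i)$: each of $\tau_i$ and $\beta_i$ covers exactly one of the two arcs incident to $z_i^-$ and one of the two arcs incident to $z_i^+$ with multiplicity $1$, and the other with multiplicity $0$, so both multiplicities are $\tfrac12$, giving exactly two parity changes and hence $\varepsilon(\tau_i)=\varepsilon(\beta_i)=\tfrac12\bmod 1$. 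This is precisely the condition for $(j_i^\tau,\tau_i)$ and $(j_i^\beta,\beta_i)$ to lie in $G'(\Zc(n))$, since by hypothesis $j_i^\tau,j_i^\beta\in\tfrac12\Z\setminus\Z$.

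The short exact sequence
\[
0\longrightarrow\Z\langle\lambda\rangle\longrightarrow G'(\Zc(n))\xrightarrow{\ \pi\ }\Z^{2n}\longrightarrow 0
\]
therefore splits by sending the free basis $\{\tau_i,\beta_i\}$ of $\Z^{2n}$ to $\{(j_i^\tau,\tau_i),(j_i^\beta,\beta_i)\}$. Since $\Z^{2n}$ is free, the resulting map $\Z\oplus\Z^{2n}\to G'(\Zc(n))$ sending the standard generators to $\lambda$ and the listed lifts is an isomorphism, proving the claim. There is no genuine obstacle here; the only subtlety worth being careful about is the computation of $\varepsilon$ on $\tau_i$ and $\beta_i$, since that is what forces the half-integrality conditions $j_i^\tau,j_i^\beta\in\tfrac12\Z\setminus\Z$ in the statement.
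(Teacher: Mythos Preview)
Your proof is correct and follows essentially the same idea as the paper's. The paper's version is slightly more direct: it asserts independence and then shows generation by writing an arbitrary $(j,\alpha)$ as a product of the listed elements times a power of $\lambda$, using that $j\equiv\varepsilon(\alpha)\equiv j'\pmod 1$. Your framing via the split short exact sequence $0\to\Z\langle\lambda\rangle\to G'(\Zc(n))\to\Z^{2n}\to 0$ is a clean repackaging of the same argument, and your explicit computation of $\varepsilon(\tau_i)=\varepsilon(\beta_i)=\tfrac12$ makes transparent why the half-integrality hypothesis on $j_i^\tau,j_i^\beta$ is exactly what is needed for the lifts to land in $G'(\Zc(n))$.
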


\begin{proof}
The set is independent, so it suffices to show these elements generate $G'(\Zc(n))$. Indeed, let $(j, \alpha)$ be an arbitrary element of $G'(\Zc(n))$, where $\alpha = \sum_{i=1}^n (a_i^{\tau} \tau_i + a_i^{\beta} \beta_i)$ for some integers $a_i^{\tau}, a_i^{\beta}$. We have
\[
\prod_{i=1}^n (j_i^{\tau},\tau_i)^{a_i^{\tau}} (j_i^{\beta},\beta_i)^{a_i^{\beta}} = (j',\alpha)
\]
for some $j' \in \frac{1}{2}\Z$, and since $j \equiv \varepsilon(\alpha) \equiv j'$ mod $1$, we have $(j,\alpha) = \lambda^a (j',\alpha)$ for some $a \in \Z$.
\end{proof}

We will use $\Sc$ to choose the half-integers $j_i$ above.
\begin{definition}\label{def:ThetaGradingMap}
If $i \in \Sc$, pick $j_i^\tau = j_i^\beta = 1/2$ in Lemma~\ref{lem:NonCanonicalGradingIso}. If $i \notin \Sc$, pick $j_i^\tau = j_i^\beta = -1/2$. We get an isomorphism $\Theta_{\Sc}$ from $G'(\Zc(n))$ to $\Z \times \Z^{2n}$ by sending:
\begin{itemize}
\item $\lambda = (1,0) \mapsto (1,0)$
\item $(j_i^{\tau},\tau_i) \mapsto (0,\tau_i)$
\item $(j_i^{\beta},\beta_i) \mapsto (0,\beta_i)$.
\end{itemize}
\end{definition}

We now define a grading by $G'(\Zc(n))$ on $\sac nk\Sc$, following \cite[Definition 3.38]{LOT}. Applying $\Theta_{\Sc}$ to this grading, we will get the combinatorially defined Maslov and unrefined Alexander gradings from Section~\ref{sec:CombinatorialStrandsGradings}. We require one further definition.

\begin{definition}\label{def:NumberOfInversions}
Let $(s,\vec{c})$ be a generator of the pre-strands algebra $\tsac nk\Sc$. The \emph{number of inversions} of $(s,\vec{c})$, denoted by $\inv(s,\vec{c})$, is defined as
\[
\inv(s,\vec{c}) = \sum_{i=1}^n \inv_i(s,\vec{c})
\]
where $\inv_i(s,\vec{c})$ is defined as follows:
\begin{itemize}
\item If $s(0) \cap \{z_i^{\pm}\} = \varnothing$, then $\inv_i(s,\vec{c}) := 0$.
\item If $|s(0) \cap \{z_i^{\pm}\}| = 1$, then $\inv_i(s,\vec{c}) := \vec{c}(i)$.
\item If $|s(0) \cap \{z_i^{\pm}\}| = 2$ and the two strands of $s$ on $S^1_i$ have speeds $p_i$ and $q_i$, then
\[
\inv_i(s,\vec{c}) := \frac{|p_i - q_i|}{2} + 2\vec{c}(i).
\]
\end{itemize}
\end{definition}

Note that $\inv(s,\vec{c})$ is only well-defined for elements $(s,\vec{c})$ of $\tsac nk\Sc$, not for elements of $\sac nk\Sc$. Visually speaking, $\inv(s,\vec{c})$ counts the number of crossings between strands in a pictorial representative of $(s,\vec{c})$ where the positions of the closed loops are chosen to avoid triple intersections.  The terms $\vec{c}(i)$ and $2\vec{c}(i)$ then account for crossings between strands of $s$ and closed loops.
\begin{definition}
Let $(s,\vec{c})$ be a basis element of $\tsac nk\Sc$. Let $\tau_1, \beta_1, \ldots, \tau_n, \beta_n$ denote the generators of $H_1(\mc Z(n),B) \cong \Z^{2n}$. The \emph{homology class} of $(s,\vec{c})$, denoted by $[s,\vec{c}] \in H_1(\mc Z(n),B)$, is the sum of the relative homology classes represented by the strands of $s$ viewed as paths in $\mc Z(n)$, together with the additional term
\[
\sum_{i=1}^n \vec{c}(i)(\tau_i + \beta_i)
\]
accounting for closed loops.
\end{definition}

As in \cite[Definition 3.38]{LOT}, we can use the homology classes $[s,\vec{c}]$ and the multiplicity function $m$ to ``correct'' the quantity $\inv(s,\vec{c})$, allowing it to descend to $\sac nk\Sc$.

\begin{lemma}\label{lem:HomologyClassDependsOnlyOnMoving}
Let $(s,\vec{c})$ be a basis element of the pre-strands algebra $\tsac nk\Sc$. We have 
\[
[s,\vec{c}] = [s',\vec{c}]
\]
and
\[
\inv(s,\vec{c}) - m([s,\vec{c}],[s(0)]) = \inv(s',\vec{c}) - m([s',\vec{c}],[s'(0)])
\]
where $s'$ is obtained from $s$ by removing all constant strands.
\end{lemma}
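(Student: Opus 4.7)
The plan is to prove both assertions by reducing to a strand-by-strand local computation. For the first, each constant strand at a point $z \in B$ represents the zero class in $H_1(\mc Z(n), B)$, since its image is just $\{z\}$. Because $[s, \vec{c}]$ is defined as a sum of relative classes of the strands of $s$ together with a term depending only on $\vec{c}$, removing constant strands leaves it unchanged. In particular, $[s'', \vec{c}] = [s, \vec{c}]$ at every intermediate stage of an iterated removal.

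For the inversion identity, bilinearity of $m$ combined with $[s, \vec{c}] = [s', \vec{c}]$ shows it is equivalent to
\[
\inv(s, \vec{c}) - \inv(s', \vec{c}) \;=\; \sum_{z} m([s, \vec{c}], z),
\]
where $z$ ranges over the starting basepoints of the constant strands of $s$. I will establish this by removing constant strands one at a time and checking that removing a single one, based at $z$, changes $\inv$ by exactly $m([s, \vec{c}], z)$. If $z \in \{z_0^+, z_{n+1}^-\}$ (a linear-backbone basepoint), both sides vanish: $\inv$ has no contribution from these indices, and the arcs adjacent to $z$ lie on a linear backbone that supplies no generator of $H_1(\mc Z(n), B)$, so the multiplicity is zero.

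Now suppose $z \in \{z_j^-, z_j^+\}$ for some $j \in [1,n]$. The two arcs adjacent to $z$ are the generators $\tau_j, \beta_j$ of $H_1$, and using the formulas from Definition~\ref{def:gradings} their coefficients in $[s, \vec{c}]$ sum to $2\vec{c}(j) + p_j + q_j$, where $p_j, q_j$ are the speeds of the (at most two) strands of $s$ on $S^1_j$ starting at $z_j^-$ and $z_j^+$ respectively, with the convention that $p_j$ or $q_j$ is $0$ when the corresponding basepoint is not in $s(0)$. Hence $m([s, \vec{c}], z) = \vec{c}(j) + (p_j + q_j)/2$. I then split into three subcases for the remaining configuration on $S^1_j$: only the constant strand to be removed is present; a second non-constant strand is also present; a second constant strand is also present. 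In each subcase, Definition~\ref{def:NumberOfInversions} gives $\inv_j(s, \vec{c}) - \inv_j(s'', \vec{c}) = \vec{c}(j) + (p_j + q_j)/2$, matching the multiplicity exactly. The main obstacle is the bookkeeping in these subcases---in particular, distinguishing absent strands from constant strands of speed $0$ in the notation, and correctly tracking which of $\{z_j^-, z_j^+\}$ is the basepoint of the removed strand---but each subcase reduces to a short arithmetic check.
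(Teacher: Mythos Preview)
Your proof is correct and takes essentially the same approach as the paper. The only organizational difference is that you remove the constant strands one at a time and check that each removal changes $\inv$ by exactly $m([s,\vec{c}],z)$, whereas the paper removes them all at once and separately matches the $\vec{c}$-contributions and the strand contributions to $\inv(s,\vec{c})-\inv(s',\vec{c})$ against the corresponding pieces of $m([s,\vec{c}],[s(0)])-m([s,\vec{c}],[s'(0)])$; the underlying local computation on each $S^1_j$ is identical.
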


\begin{proof}
The first claim is true because constant strands of $s$ represent zero in $H_1(\mc Z(n),B)$. The second claim is similar to that of \cite[Proposition 3.40]{LOT}, but we give details for completeness.

We can write $\inv(s,\vec{c}) - \inv(s',\vec{c})$ as the sum of $\vec{c}(i)$ over $i \in [1,n]$ with $|s(0) \cap \{z_i^{\pm}\}| = 1$ such that the strand of $s$ on $S^1_i$ is constant, plus the sum of $\frac{|p_i - q_i|}{2} + \xi\vec{c}(i)$ over $i \in [1,n]$ with $|s(0) \cap \{z_i^{\pm}\}| = 2$ such that at least one of the two strands of $s$ on $S^1_i$ is constant, where $\xi$ is the number of constant strands of $s$ on $S^1_i$.

On the other hand, we have $[s,\vec{c}] = [s',\vec{c}]$, and we can write $m([s,\vec{c}], [s(0)]) - m([s,\vec{c}], [s'(0)])$ as the sum of two terms. The first term is 
\[
m\left(\sum_i \vec{c}(i)(\tau_i + \beta_i), [s(0)]\right) - m\left(\sum_i \vec{c}(i) (\tau_i + \beta_i), [s'(0)]\right),
\]
agreeing with the contribution of $\vec{c}$ to $\inv(s,\vec{c}) - \inv(s',\vec{c})$. 

The second term is the sum over $a \in [1,k]$ of $m([s_a], [s(0)]) - m([s_a], [s'(0)])$. For a given index $a$, this difference is the sum of $m([s_a], [s_b(0)])$ over $b$ such that $s_b$ is constant. There can be at most one nonzero term $m([s_a], [s_b(0)])$, and if this term is nonzero then $s_a$ and $s_b$ are different strands on the same backbone $S^1_i$ for some $i$. In such a case, we have $m([s_a], [s_b(0)]) = \frac{|p_i - q_i|}{2}$ (recall that at least one of $p_i,q_i$ vanishes). It follows that $\inv(s,\vec{c}) - \inv(s',\vec{c}) = m([s,\vec{c}],[s(0)]) - m([s',\vec{c}], [s'(0)])$, proving the lemma.
\end{proof}

\begin{definition}
For a basis element $a = E(s,\vec{c})$ of $\sac nk\Sc$, we define 
\[
\deg'(a) = (\inv(s,\vec{c}) - m([s,\vec{c}],[s(0)]), [s,\vec{c}]) \in G'(\Zc(n)).
\]
\end{definition}
Note that $\deg'(a)$ is well-defined by Lemma~\ref{lem:HomologyClassDependsOnlyOnMoving}, since all terms of $E(s,\vec{c})$ give the same element $(s',\vec{c})$ when constant strands are removed. For the condition $j \equiv \varepsilon(\alpha)$ mod $1$, note that $\inv(s,\vec{c})$ is an integer and we can ignore integer contributions to $m([s,\vec{c}],[s(0)])$. For an index $b$ with $s_b(0) = s_b(1)$, there are no half-integer contributions to $m([s,\vec{c}],[s(0)])$ from $s_b$, and there is also no contribution to $\varepsilon(\alpha)$ from $s_b$. For $b$ with $s_b(0) \neq s_b(1)$, we get a contribution of $\frac{1}{2}$ to $m([s,\vec{c}],[s(0)])$ as well as to $\varepsilon(\alpha)$, since $\alpha$ has two parity changes from $s_b$. See also \cite[Proposition 3.39]{LOT}.

\begin{warning}
Lipshitz, Ozsv{\'a}th, and Thurston refer to $\inv(s,\vec{c}) - m([s,\vec{c}],[s(0)])$ as the ``Maslov component'' and $[s,\vec{c}]$ as the ``$\Spinc$ component'' of $\deg'(a)$. However, this Maslov component (a half-integer in general) is different from the Maslov grading by $\Z$ that we will extract from $\deg'$.
\end{warning}

The quantity $\inv(s,\vec{c}) - m([s,\vec{c}],[s(0)])$ is independent of $\vec{c}$, as we prove below.
\begin{lemma}\label{lem:MaslovCptIndepOfClosedLoops}
For a basis element $(s,\vec{c})$ of $\tsac nk\Sc$, we have
\[
\inv(s,\vec{c}) - m([s,\vec{c}],[s(0)]) = \inv(s,\vec{0}) - m([s,\vec{0}],[s(0)]).
\]
\end{lemma}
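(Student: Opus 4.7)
The strategy is a direct computation: I would show separately that
\[
\inv(s,\vec{c}) - \inv(s,\vec{0}) \quad\text{and}\quad m([s,\vec{c}],[s(0)]) - m([s,\vec{0}],[s(0)])
\]
both equal the same quantity, namely $\sum_{i=1}^n \vec{c}(i) \cdot |s(0) \cap \{z_i^+, z_i^-\}|$, from which the lemma follows by subtracting.

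For the first difference, I would unpack $\inv$ via Definition~\ref{def:NumberOfInversions} one index $i$ at a time. In each of the three cases ($s(0)$ contains $0$, $1$, or $2$ of the basepoints $z_i^\pm$), the $\vec{c}$-dependent part of $\inv_i(s,\vec{c}) - \inv_i(s,\vec{0})$ is exactly $\vec{c}(i) \cdot |s(0) \cap \{z_i^+, z_i^-\}|$, namely $0$, $\vec{c}(i)$, or $2\vec{c}(i)$ respectively. Summing over $i$ gives the desired expression.

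For the second difference, by the definition of the homology class we have $[s,\vec{c}] - [s,\vec{0}] = \sum_{i=1}^n \vec{c}(i)(\tau_i + \beta_i)$, so by bilinearity of $m$,
\[
m([s,\vec{c}],[s(0)]) - m([s,\vec{0}],[s(0)]) = \sum_{i=1}^n \vec{c}(i)\, m(\tau_i + \beta_i, [s(0)]).
\]
The class $\tau_i + \beta_i$ is the fundamental class of $S^1_i$, which covers both of the arcs adjacent to $z_i^\pm$ with multiplicity $1$. Applying Definition~\ref{def:general multiplicity}, the average of these two multiplicities is $1$, so $m(\tau_i + \beta_i, z_j^\pm) = \delta_{ij}$. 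Hence $m(\tau_i+\beta_i, [s(0)]) = |s(0) \cap \{z_i^+, z_i^-\}|$, matching the first computation.

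There is no substantive obstacle; the only subtlety is the convention in Definition~\ref{def:general multiplicity} that $m$ takes the \emph{average} of the multiplicities on the two adjacent arcs, ensuring that the full circle $\tau_i + \beta_i$ contributes $1$ (rather than $\tfrac12$ or $2$) at each basepoint on $S^1_i$. Once this is noted, the two differences visibly agree term by term and the lemma follows.
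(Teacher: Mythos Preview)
Your proposal is correct and follows essentially the same approach as the paper: both arguments observe that the $\vec{c}$-dependent contribution to $\inv_i(s,\vec{c})$ is $0$, $\vec{c}(i)$, or $2\vec{c}(i)$ according to $|s(0)\cap\{z_i^\pm\}|$, and that this is exactly cancelled by $\vec{c}(i)\,m(\tau_i+\beta_i,[s(0)\cap\{z_i^\pm\}])$. Your write-up is slightly more explicit in separating the two differences and computing $m(\tau_i+\beta_i,z_j^\pm)=\delta_{ij}$, but the substance is the same.
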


\begin{proof}
For $1 \leq i \leq n$, the contributions $0$, $\vec{c}(i)$, or $2\vec{c}(i)$ to $\inv(s,\vec{c})$ in Definition~\ref{def:NumberOfInversions} are cancelled by the contribution
\[
\vec{c}(i) m(\tau_i + \beta_i, [s(0) \cap \{z_i^{\pm}\}])
\]
to $m([s,\vec{c}],[s(0)])$.
\end{proof}

\begin{lemma}\label{lem:PiQiHomologyClass}
Let $a=E(s,\vec{c})\in\Jb_\x\sac nk\Sc$ be a basis element; write $a$ as
\[
a=C_1^{\vec{c}(1)} \cdots C_n^{\vec{c}(n)} \vv{p_1}{q_1}{1}\cdots\vv{p_n}{q_n}{n}.
\]
We have
\[
[s,\vec{c}] = \sum_{i=1}^n (a^{\tau}_i \tau_i + a^{\beta}_i \beta_i),
\]
where
\[
a^{\tau}_i = \vec{c}(i) + \floor*{\frac{p_i}{2}} + \ceil*{\frac{q_i}{2}}
\]
and
\[
a^{\beta}_i = \vec{c}(i) + \ceil*{\frac{p_i}{2}} + \floor*{\frac{q_i}{2}}.
\]
\end{lemma}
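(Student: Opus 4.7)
The strategy is a direct geometric computation: identify the generators $\tau_i, \beta_i$ with specific arcs of $\mc Z(n)$, then count how many times each arc is traversed by each strand of $s$ (plus the closed loop contribution).

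First, I would fix the geometric meaning of the generators. Since each linear backbone of $\mc Z(n)$ contains exactly one basepoint in its interior, the relative homology $H_1(I, \{\text{pt}\})$ vanishes, so the linear backbones contribute nothing to $H_1(\mc Z(n), B)$. Each circular backbone $S^1_i = [0,2]/\sim$ with its two basepoints $z_i^- = [0]$ and $z_i^+ = [1]$ contributes a $\Z^2$ summand generated by its two oriented subarcs. Using the orientation of $S^1_i$, I identify $\beta_i$ with the arc $[0,1] \subset S^1_i$ running from $z_i^-$ to $z_i^+$, and $\tau_i$ with the arc $[1,2] \subset S^1_i$ running from $z_i^+$ to $z_i^-$; the full positively-oriented loop around $S^1_i$ is then $\beta_i + \tau_i$. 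This is the convention I will verify by checking small cases against the stated formula.

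Next, by Lemma~\ref{lem:HomologyClassDependsOnlyOnMoving}, $[s,\vec{c}]$ depends only on the non-constant strands of $s$ together with $\vec{c}$, so I may reduce to a representative in which every strand encoded by the array $\vv{p_i}{q_i}{i}$ is either absent (if $p_i=0$ or $q_i=0$ respectively) or non-constant. By definition of $[s,\vec{c}]$, the closed-loop portion contributes $\sum_i \vec{c}(i)(\tau_i+\beta_i)$. It then remains to compute the homology class of each non-constant strand on each $S^1_i$ and sum.

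For a non-constant strand of speed $p_i \geq 1$ starting at $z_i^-$, the orientation convention above shows it traverses the sequence of arcs $\beta_i,\tau_i,\beta_i,\tau_i,\ldots$ for $p_i$ steps, contributing $\lceil p_i/2 \rceil \beta_i + \lfloor p_i/2 \rfloor \tau_i$ to $[s,\vec c]$. Symmetrically, a strand of speed $q_i \geq 1$ starting at $z_i^+$ traverses $\tau_i,\beta_i,\tau_i,\beta_i,\ldots$, contributing $\lceil q_i/2 \rceil \tau_i + \lfloor q_i/2 \rfloor \beta_i$. Both formulas trivially extend to $p_i=0$ or $q_i=0$ since $\lfloor 0/2 \rfloor = \lceil 0/2 \rceil = 0$. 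Summing the closed-loop, $p_i$-strand, and $q_i$-strand contributions over $i=1,\ldots,n$ yields coefficients $\vec c(i)+\lfloor p_i/2\rfloor+\lceil q_i/2\rceil$ on $\tau_i$ and $\vec c(i)+\lceil p_i/2\rceil+\lfloor q_i/2\rfloor$ on $\beta_i$, matching the claimed formulas for $a^\tau_i$ and $a^\beta_i$.

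The only real step requiring care is fixing the $\beta_i$ versus $\tau_i$ convention so that it agrees with the stated formula; I would pin this down by checking the $p_i=1$ and $q_i=1$ cases explicitly. Everything else is bookkeeping, and I do not anticipate any essential obstacle.
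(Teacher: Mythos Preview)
Your proposal is correct and follows essentially the same approach as the paper: both proofs directly count how many times each arc $\tau_i$ and $\beta_i$ is traversed by a strand of speed $p_i$ starting at $z_i^-$ (resp.\ $q_i$ starting at $z_i^+$), plus the closed-loop contribution. You are slightly more explicit about fixing the $\tau_i$/$\beta_i$ convention and about invoking Lemma~\ref{lem:HomologyClassDependsOnlyOnMoving} to discard constant strands, whereas the paper's proof absorbs these into a one-paragraph description, but the content is the same.
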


\begin{proof}
The terms $\vec{c}(i)$ are present by the definition of $[s,\vec{c}]$. For the other terms, note that the strands of $s$ representing nonzero homology classes correspond to nonzero entries $p_i$ or $q_i$ in the representation of $a$. An entry $p_i > 0$ represents a homology class of a path traversing $S^1_i$ for $p_i$ half-turns, with the segment $\beta_i$ being traversed one more time than $\tau_i$ if $p_i$ is odd (where we identify $\tau_i$ and $\beta_i$ with the oriented segments on $\mathcal{Z}$ in their respective relative homology classes). An entry $q_i > 0$ is similar, except that $\tau_i$ is traversed one more time than $\beta_i$ if $q_i$ is odd. Counting up how many times the segments $\tau_i$ and $\beta_i$ are traversed by all strands of $s$, we get the formulas of the lemma.
\end{proof}

\begin{lemma}\label{lem:MaslovComponentComputation}
Let $a=E(s,\vec{c})\in\Jb_\x\sac nk\Sc$ be a basis element; write $a$ as
\[
a=C_1^{\vec{c}(1)} \cdots C_n^{\vec{c}(n)} \vv{p_1}{q_1}{1}\cdots\vv{p_n}{q_n}{n}.
\]
We have
\[
\inv(s,\vec{c}) - m([s,\vec{c}],[s(0)]) = \sum_{i=1}^n \left( \frac{|p_i - q_i|}{2} - (p_i + q_i) \right).
\]
\end{lemma}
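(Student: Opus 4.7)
The plan is to reduce to the case $\vec{c} = \vec{0}$ with no constant strands, and then to read off both $\inv(s, \vec{0})$ and $m([s,\vec{0}], [s(0)])$ directly from the $(p_i, q_i)$ array, matching them against the claimed formula.

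First, by Lemma \ref{lem:MaslovCptIndepOfClosedLoops}, the quantity $\inv(s,\vec{c}) - m([s,\vec{c}], [s(0)])$ equals $\inv(s, \vec{0}) - m([s, \vec{0}], [s(0)])$, so one may assume $\vec{c} = \vec{0}$. By Lemma \ref{lem:HomologyClassDependsOnlyOnMoving}, the same quantity is unchanged upon replacing $s$ by the strand $s'$ obtained by deleting all constant strands. Since the linear backbones $[0,1]_0$ and $[0,1]_{n+1}$ each contain only a single basepoint, every non-constant strand lies on some circular backbone $S^1_i$, so the $(p_i,q_i)$-representation of $a$ is unchanged and $s'(0)$ is contained in $\bigcup_i \{z_i^-, z_i^+\}$.

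Next I compute $\inv(s', \vec{0})$ directly from Definition \ref{def:NumberOfInversions}: with $\vec{c} = \vec{0}$ and no constant strands, the $i$-th contribution is $\frac{|p_i - q_i|}{2}$ when both $p_i$ and $q_i$ are positive, and $0$ otherwise. For the multiplicity term, Lemma \ref{lem:PiQiHomologyClass} (with $\vec{c} = \vec{0}$) gives $[s', \vec{0}] = \sum_i (a^{\tau}_i \tau_i + a^{\beta}_i \beta_i)$ with $a^{\tau}_i + a^{\beta}_i = p_i + q_i$. Each basepoint $z_i^{\pm}$ on $S^1_i$ has exactly one copy of $\tau_i$ and one of $\beta_i$ adjacent to it, and no arcs from other backbones meet it, so $m([s', \vec{0}], z_i^{\pm}) = \tfrac{1}{2}(p_i + q_i)$. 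Summing over $s'(0)$, which contains two points on $S^1_i$ when both $p_i, q_i > 0$, one point when exactly one is positive, and none otherwise, yields
\[
m([s', \vec{0}], [s'(0)]) = \sum_{p_i, q_i > 0} (p_i + q_i) + \sum_{\text{exactly one of } p_i, q_i > 0} \tfrac{p_i + q_i}{2}.
\]

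Finally, subtracting and splitting the target sum $\sum_i \bigl( \tfrac{|p_i - q_i|}{2} - (p_i + q_i) \bigr)$ according to the same three cases (both positive, exactly one positive, both zero), the contributions match term by term. The main (mild) obstacle is the bookkeeping in the ``exactly one positive'' case, where $\tfrac{|p_i - q_i|}{2} = \tfrac{p_i + q_i}{2}$ arises from the $-m$ term rather than from $\inv$; once this is tracked, the identity is immediate.
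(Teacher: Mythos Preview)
Your proof is correct and follows essentially the same route as the paper's own argument: both reduce to $\vec{c}=\vec{0}$ and no constant strands via Lemmas~\ref{lem:MaslovCptIndepOfClosedLoops} and \ref{lem:HomologyClassDependsOnlyOnMoving}, then split the computation backbone by backbone into the three cases (no strands, one strand, two strands on $S^1_i$) and match each against the formula. Your observation that in the one-strand case the term $\tfrac{|p_i-q_i|}{2}$ comes from $-m$ rather than from $\inv$ is exactly the content of the paper's middle case.
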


\begin{proof}
By Lemma~\ref{lem:HomologyClassDependsOnlyOnMoving}, we may assume that $s$ has no horizontal strands, and by Lemma~\ref{lem:MaslovCptIndepOfClosedLoops}, we may assume that $\vec{c} = \vec{0}$. For $1 \leq i \leq n$, we consider three cases:
\begin{itemize}
\item If $s(0) \cap \{z_i^{\pm}\} = \varnothing$, then $\inv_i(s,\vec{0}) = 0$ and $[s,\vec{0}]$ has coefficient zero on $\tau_i$ and $\beta_i$, so $m([s,\vec{0}],[s(0)])$ has no contribution from the cylinder $\I \times S^1_i$. We also have $p_i = q_i = 0$.
\item If $|s(0) \cap \{z_i^{\pm}\}| = 1$, then $\inv_i(s,\vec{0}) = 0$. Assume first that $p_i > 0$ and $q_i = 0$. We have $m(\tau_i,[s(0)]) = m(\beta_i,[s(0)]) = \frac{1}{2}$. Thus, by Lemma~\ref{lem:PiQiHomologyClass}, we have
\begin{align*}
m([s,\vec{0}],[s(0) \cap \{z_i^{\pm}\}]) &= \frac{\ceil*{\frac{p_i}{2}} + \floor*{\frac{p_i}{2}}}{2} \\
&= \frac{p_i}{2}.
\end{align*}
It follows that 
\begin{align*}
\inv_i(s,\vec{c}) - m([s,\vec{c}],[s(0) \cap \{z_i^{\pm}\}]) &= -\frac{p_i}{2} \\
&= \frac{|p_i - 0|}{2} - (p_i + 0).
\end{align*}
The case where $p_i = 0$ and $q_i > 0$ is similar.
\item If $\{z_i^{\pm}\} \subset s(0)$, then $\inv_i(s,\vec{0}) = \frac{|p_i - q_i|}{2}$. We have $m(\tau_i,[s(0)]) = m(\beta_i,[s(0)]) = 1$, so by Lemma~\ref{lem:PiQiHomologyClass}, we have
\begin{align*}
m([s,\vec{0}],[s(0)]) &= \floor*{\frac{p_i}{2}} + \ceil*{\frac{q_i}{2}} + \ceil*{\frac{p_i}{2}} + \floor*{\frac{q_i}{2}} \\
&= p_i + q_i.
\end{align*}
It follows that
\[
\inv_i(s,\vec{c}) - m([s,\vec{c}],[s(0) \cap \{z_i^{\pm}\}]) = \frac{|p_i - q_i|}{2} - (p_i + q_i).
\]
\end{itemize}
The lemma follows from summing over $i$.
\end{proof}

\begin{proposition}\label{prop:UnrefinedGradingsCorrespond}
For a basis element $a = E(s,\vec{c})$ of $\sac nk\Sc$, we have $\Theta_{\Sc}(\deg'(a)) = (\m(a), w^{\un}(a))$, where $\Theta_{\Sc}$ is defined in Definition \ref{def:ThetaGradingMap}. 
\end{proposition}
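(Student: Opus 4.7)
The plan is to compute $\Theta_\Sc(\deg'(a))$ directly from the $(p_i,q_i)$-description of $a$ and match the two components separately against the formulas in Definition~\ref{def:gradings}. By Lemma~\ref{lem:PiQiHomologyClass}, the $H_1$-component $[s,\vec{c}]$ of $\deg'(a)$ is
\[
\sum_{i=1}^n (a_i^\tau\tau_i + a_i^\beta\beta_i), \quad a_i^\tau = \vec{c}(i) + \floor{p_i/2} + \ceil{q_i/2}, \quad a_i^\beta = \vec{c}(i) + \ceil{p_i/2} + \floor{q_i/2},
\]
which are exactly the coefficients $w^{\un,\tau}_i(a)$ and $w^{\un,\beta}_i(a)$ from Definition~\ref{def:gradings}. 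So once we check that the $H_1$-part is unaffected by $\Theta_\Sc$ (which is immediate from its definition, since $\Theta_\Sc$ sends $(j_i^\tau,\tau_i)\mapsto(0,\tau_i)$ and $\lambda\mapsto(1,0)$), the second component of $\Theta_\Sc(\deg'(a))$ automatically equals $w^{\un}(a)$.

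For the first component, I would write $\deg'(a) = \lambda^m\cdot \prod_i(j_i^\tau,\tau_i)^{a_i^\tau}(j_i^\beta,\beta_i)^{a_i^\beta}$ in $G'(\Zc(n))$, with $j_i^\tau = j_i^\beta = 1/2$ if $i\in\Sc$ and $-1/2$ otherwise (so that $\Theta_\Sc$ sends the product factor to $(0, w^{\un}(a))$ and $\lambda^m$ to $(m,0)$). Because $L$ vanishes on $H_1(\mc Z(n),B)$ by Lemma~\ref{lem:L vanishes}, the product in the central extension is abelian and its first component is simply $\sum_i(a_i^\tau j_i^\tau + a_i^\beta j_i^\beta)$. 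Thus
\[
m = \bigl(\inv(s,\vec{c}) - m([s,\vec{c}],[s(0)])\bigr) - \sum_{i=1}^n(a_i^\tau j_i^\tau + a_i^\beta j_i^\beta).
\]

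The last step is a routine algebraic verification. Plugging in Lemma~\ref{lem:MaslovComponentComputation} for the first summand, and the identity $a_i^\tau + a_i^\beta = 2\vec{c}(i) + p_i + q_i$ in the second summand (together with $j_i^\tau = j_i^\beta = -(-1)^{\chi_\Sc(i)}/2$), yields
\[
m = \sum_{i=1}^n\!\left(\frac{|p_i-q_i|}{2} - (p_i+q_i) + (-1)^{\chi_\Sc(i)}\!\left(\vec{c}(i) + \frac{p_i+q_i}{2}\right)\right),
\]
which is precisely $\m(a)$ from Definition~\ref{def:gradings}. Therefore $\Theta_\Sc(\deg'(a)) = (\m(a), w^{\un}(a))$.

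I do not anticipate a real obstacle here: all ingredients are already assembled by the earlier lemmas in the section, so the argument reduces to carefully lining up definitions and performing the bookkeeping above. The one place to be careful is the sign convention relating $\chi_\Sc$ to the choice of $j_i^\tau, j_i^\beta$, but this is purely a matter of checking cases $i\in\Sc$ and $i\notin\Sc$ separately.
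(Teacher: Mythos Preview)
Your proposal is correct and follows essentially the same approach as the paper's own proof: both express $\deg'(a)$ in terms of the basis $\{\lambda,(j_i^\tau,\tau_i),(j_i^\beta,\beta_i)\}$ from Lemma~\ref{lem:NonCanonicalGradingIso}, read off the $\Z^{2n}$-component via Lemma~\ref{lem:PiQiHomologyClass}, and then compute the $\Z$-component by combining Lemma~\ref{lem:MaslovComponentComputation} with the identity $a_i^\tau + a_i^\beta = 2\vec{c}(i) + p_i + q_i$ and the choice $j_i^\tau = j_i^\beta = -(-1)^{\chi_\Sc(i)}/2$.
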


\begin{proof}
Let $a^{\tau}_i$, $a^{\beta}_i$ be defined as in Lemma~\ref{lem:PiQiHomologyClass}. By definition, $\Theta_{\Sc}$ sends the element
\[
\sum_{i=1}^n \left((-1)^{\chi_{\Sc}(i)+1} \frac{a^{\tau}_i + a^{\beta}_i}{2}, a^{\tau}_i \tau_i + a^{\beta}_i \beta_i\right)
\]
to an element of $\Z \times \Z^{2n}$ with first component zero. Thus, the first component of $\Theta_{\Sc}(\deg'(a))$ is
\[
\inv(s,\vec{c}) - m([s,\vec{c}],[s(0)]) + \sum_{i=1}^n (-1)^{\chi_{\Sc}(i)} \frac{a^{\tau}_i + a^{\beta}_i}{2}.
\]
By Lemmas~\ref{lem:MaslovComponentComputation} and \ref{lem:PiQiHomologyClass}, this quantity equals
\[
\sum_{i=1}^n \left( \frac{|p_i - q_i|}{2} - (p_i + q_i) + (-1)^{\chi_{\Sc}(i)} \left(\vec{c}(i) + \frac{p_i + q_i}{2} \right)\right),
\]
which is $\m(a)$ by Definition~\ref{def:gradings}.

For the rest of the components, we have $\pi_{\Z^{2n}}(\Theta_{\Sc}(j,\alpha)) = \alpha$, where we are identifying $H_1(\mc Z(n),B)$ with $\Z^{2n}$ as usual. Thus, Lemma~\ref{lem:PiQiHomologyClass} implies that $\pi_{\Z^{2n}}(\Theta_{\Sc}(\deg'(a))) = w^{\un}(a)$.
\end{proof}

\subsection{Where the refined gradings come from, topologically}\label{sec:RefinedGradings}

Let $\Zc = (\mc Z,B,M)$ be a chord diagram and let $q: B \to B/M$ be the quotient map defined in Section~\ref{sec:IdemsAndUnit}.
\begin{definition}[Section 3.3.2 of \cite{LOT}]
The \emph{refined grading group} $G(\mc Z)$ of $\Zc$ is the subgroup of $G'(\mc Z)$ consisting of elements $(j,\alpha)$ with $q_* \circ \de(\alpha) = 0$, where
\[
q_* \circ \de \colon H_1(\mc Z, B) \to H_0(B/M)
\]
is the composition of $q_*$ with $\de: H_1(\mc Z,B) \to H_0(B)$.
\end{definition}

Recall from Definition \ref{def:sutured surface for chord diagram} that a chord diagram $\mc Z=(\mc Z,B,M)$ determines a sutured surface $F(\mc Z)$ that is built by attaching 1-handles to $\mc Z\times[0,1]$ according to the matching $M$. Lipshitz, Ozsv\'ath, and Thurston~\cite[Section 3.3.2]{LOT} show how to identify the kernel of $q_* \circ \de$ with the homology group $H_1(F(\mc Z))$. Correspondingly, they identify $G(\mc Z)$ (non-canonically) with a central extension of $H_1(F(\mc Z))$ by $\Z$, where $gh = hg \lambda^{2[g] \cap [h]}$ for $g,h \in G(\mc Z)$. Here $[g]$ denotes the image of $g$ in $H_1(F(\mc Z))$.
\begin{remark}
One can also describe $G(\mc Z)$ in terms of nonvanishing vector fields as in Seiberg--Witten theory; see \cite[Remark 3.48]{LOT}.
\end{remark}

As discussed in Section~\ref{sec:ExampleofInterest}, the surface $F(\mc Z(n))$ is an $n$-punctured disc; the circular backbones of $\mc Z(n)$ provide a basis for $H_1(F(\mc Z(n)))$. The intersection pairing on $H_1(F(\mc Z(n)))$ is trivial, so $G(\mc Z(n))$ is abelian (in fact, $G(\mc Z(n))$ is a subgroup of $G'(\mc Z(n))$ and $G'(\mc Z(n))$ is already abelian).

In \cite[Remark 3.47]{LOT}, Lipshitz--Ozsv{\'a}th--Thurston mention that in some cases one can obtain a grading by $G(\Zc)$ from a grading by $G'(\Zc)$ by applying a homomorphism from $G'(\Zc)$ to $G(\Zc)$ fixing $G(\Zc)$ as a subgroup of $G'(\Zc)$ (extension of scalars is usually required to define such a homomorphism, and even then it does not always exist). 

In our case, the homomorphism exists and the extension of scalars is unproblematic, so we do not need to make choices for each idempotent as in \cite[Section 3.3.2]{LOT}. Indeed, the isomorphism $\Theta_{\Sc}$ from Definition~\ref{def:ThetaGradingMap} sends $G(\Zc(n))$ to the subgroup of $\Z \times \Z^{2n}$ generated by $(1,0)$ and $(0,\tau_i + \beta_i)$ for all $i$, regardless of $\Sc$. This subgroup is isomorphic to $\Z \times \Z^{n}$ where we identify $(0,\tau_i + \beta_i)$ with $(0,e_i)$. We can thus extend scalars by replacing $G(\Zc(n))$ with $\Z \times (\frac{1}{2}\Z)^n$. We have a homomorphism $\Psi$ from $\Z \times \Z^{2n}$ to $\Z \times (\frac{1}{2}\Z)^n$ sending:
\begin{itemize}
\item $(1,0) \mapsto (1,0)$,
\item $(0,\tau_i) \mapsto (0,\frac{e_i}{2})$, and
\item $(0,\beta_i) \mapsto (0,\frac{e_i}{2})$.
\end{itemize}
Conjugating by the isomorphisms $\Theta_{\Sc}$, we get a homomorphism $\Psi_{\Sc}: G'(\Zc(n)) \to G(\Zc(n))$ such that the diagram
\[
\xymatrix{
G'(\Zc(n)) \ar[r]^{\Theta_{\Sc}}_{\cong} \ar[d]_{\Psi_{\Sc}} & \Z \times \Z^{2n} \ar[d]^{\Psi} \\ G(\Zc(n)) \ar[r]_{\Theta_{\Sc}}^{\cong} & \Z \times (\frac{1}{2}\Z)^n
}
\]
commutes.
The generators $(1,0)$ and $(0,\tau_i + \beta_i)$ of $\Theta_{\Sc}(G(\Zc(n)))\subset\Theta_\Sc(G'(\Zc(n)))$ are sent to themselves by $\Psi$, so $\Psi_{\Sc}$ fixes the original (unextended) $G(\Zc(n))$ as a subgroup of $G'(\Zc(n))$.
Note that $\Psi_{\Sc}$ is independent of $\Sc$.
\begin{definition}
For a basis element $a$ of $\sac nk\Sc$, define $\deg(a) := \Psi_{\Sc}(\deg'(a))$, an element of the (extended) grading group $G(\Zc(n)) \cong \Z \times (\frac{1}{2}\Z)^n$. Since $\Psi$ (and thus $\Psi_{\Sc}$) is a group homomorphism preserving $\lambda = (1,0)$, this grading is well-defined.
\end{definition}

\begin{corollary}
For a basis element $a = E(s,\vec{c})$ of $\sac nk\Sc$, we have $\Theta_{\Sc}(\deg(a)) = (\m(a), w(a))$.
\end{corollary}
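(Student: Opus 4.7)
The plan is to leverage Proposition~\ref{prop:UnrefinedGradingsCorrespond} together with the commutative diagram relating $\Theta_\Sc$ and $\Psi_\Sc$ via $\Psi$. By definition $\deg(a) = \Psi_\Sc(\deg'(a))$, so the commutativity of
\[
\Theta_\Sc \circ \Psi_\Sc = \Psi \circ \Theta_\Sc
\]
gives $\Theta_\Sc(\deg(a)) = \Psi(\Theta_\Sc(\deg'(a)))$. Applying Proposition~\ref{prop:UnrefinedGradingsCorrespond}, this equals $\Psi(\m(a), w^{\un}(a))$, so the corollary reduces to showing that $\Psi$ sends the pair $(\m(a), w^{\un}(a))$ to $(\m(a), w(a))$.

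The first component is immediate since $\Psi$ fixes $(1,0)$ and hence the $\Z$-factor. For the second component, I would unpack the definitions: $\Psi$ sends $\tau_i$ and $\beta_i$ both to $e_i/2$, so applying $\Psi$ to $w^{\un}(a) = \sum_i (w_i^{\un,\tau}(a)\,\tau_i + w_i^{\un,\beta}(a)\,\beta_i)$ produces $\sum_i \frac{w_i^{\un,\tau}(a) + w_i^{\un,\beta}(a)}{2}\, e_i$. Using the explicit formulas for $w_i^{\un,\tau}$ and $w_i^{\un,\beta}$ from Definition~\ref{def:gradings}, one has
\[
w_i^{\un,\tau}(a) + w_i^{\un,\beta}(a) = 2\vec{c}(i) + \floor*{\tfrac{p_i}{2}} + \ceil*{\tfrac{p_i}{2}} + \floor*{\tfrac{q_i}{2}} + \ceil*{\tfrac{q_i}{2}} = 2\vec{c}(i) + p_i + q_i,
\]
so the resulting coefficient of $e_i$ is exactly $\vec{c}(i) + (p_i+q_i)/2$, which is the coefficient appearing in the combinatorial definition of $w(a)$. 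Hence $\Psi(\m(a), w^{\un}(a)) = (\m(a), w(a))$ and the corollary follows.

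There is no substantial obstacle here: the proof is a one-line consequence of Proposition~\ref{prop:UnrefinedGradingsCorrespond} and the explicit description of $\Psi$ on generators, combined with the elementary identity $\floor*{x/2} + \ceil*{x/2} = x$ for integers. The only thing worth emphasizing in the write-up is the compatibility of $\Psi_\Sc$ with $\Theta_\Sc$, which was already established in the paragraph defining $\Psi_\Sc$, so that the corollary is truly just a packaging of earlier material.
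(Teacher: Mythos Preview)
Your proof is correct and follows essentially the same approach as the paper: both use the commutativity $\Theta_\Sc \circ \Psi_\Sc = \Psi \circ \Theta_\Sc$, apply Proposition~\ref{prop:UnrefinedGradingsCorrespond}, and then observe that $\Psi$ sends $\tau_i,\beta_i \mapsto e_i/2$ to conclude. The only difference is that you spell out the floor/ceiling computation explicitly, whereas the paper simply invokes the fact (already recorded in Definition~\ref{def:gradings}) that $w$ is recovered from $w^{\un}$ by this very homomorphism.
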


\begin{proof}
By definition, $\Theta_{\Sc}(\deg(a)) = \Psi(\Theta_{\Sc}(\deg'(a)))$, which equals $\Psi(\m(a),w^{\un}(a))$ by Proposition~\ref{prop:UnrefinedGradingsCorrespond}. Since $\Psi$ sends both $\tau_i$ and $\beta_i$ to $\frac{e_i}{2}$, we have $\Psi(\m(a),w^{\un}(a)) = (\m(a), w(a))$.
\end{proof}

\section{Symmetries}\label{sec:StrandSymmetries}
Now we will define analogues of the symmetries $\rho$ and $o$ from \cite[\secOSzSymmetries]{MMW1} for the strands algebras $\sac nk\Sc$ (see Definition~\ref{def:ReviewSymmetries} for a brief review, as well as \cite[Section 3.6]{OSzNew} where these symmetries were first introduced). We use the notation of \cite[\secOSzSymmetries]{MMW1} and Definition~\ref{def:ReviewSymmetries}.

\begin{proposition}\label{prop:ConstructingRhoOnStrandsAlg}
For a generator $a$ of $\Jb_{\x} \sac nk\Sc \Jb_{\y}$, write
\[
a = C_{i_1} \cdots C_{i_l} \vv{p_1}{q_1}{1} \cdots \vv{p_n}{q_n}{n}
\]
as in Lemma~\ref{lem:piqi}. Define an array of vectors $\vv{p'_i}{q'_i}{i}$ by $p'_i = q_{n+1-i}$ and $q'_i = p_{n+1-i}$. The expression 
\[
C_{n+1-i_1} \cdots C_{n+1-i_l} \vv{p'_1}{q'_1}{1} \cdots \vv{p'_n}{q'_n}{n}
\]
represents a valid generator of $\Jb_{\rho(\x)} \sac nk{\rho(\Sc)} \Jb_{\rho(\y)}$.
\end{proposition}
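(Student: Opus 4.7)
The plan is to verify directly that the proposed expression satisfies conditions \eqref{it:no double C}–\eqref{it:pq dots move other dots} of Lemma~\ref{lem:piqi} for the starting idempotent $\Jb_{\rho(\x)}$ and subset $\rho(\Sc)$, and then to check via the ending-idempotent recipe at the end of that lemma that the right idempotent is $\Jb_{\rho(\y)}$. The guiding observation is that the transformation $(p_i,q_i)\mapsto(p'_i,q'_i)=(q_{n+1-i},p_{n+1-i})$ realizes the combinatorial content of the visual symmetry of $\I\times\Zc(n)$ that reverses the linear ordering of the circular backbones and simultaneously swaps the upper and lower basepoints $z^+_i\leftrightarrow z^-_{n+1-i}$ on each one; this is exactly how $\rho$ acts on idempotents, on $\Sc$, and (as we will see) on the combinatorial data defining a basis element.

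Then I would run through the six conditions via the substitution $j=n+1-i$. Condition~\eqref{it:no double C} is immediate since $i\mapsto n+1-i$ is a bijection of $[1,n]$ restricting to a bijection $\Sc\leftrightarrow\rho(\Sc)$. For condition~\eqref{it:pq no matched start}, one computes $q'_i p'_{i+1}=p_{n+1-i}\,q_{n-i}=p_{j+1}q_j$, which vanishes by condition~\eqref{it:pq no matched start} for $a$; condition~\eqref{it:pq no dots pinzer} follows identically with the parity in place of vanishing. For condition~\eqref{it:pq start from x}, note that $i\in[0,n]\setminus\rho(\x)$ iff $n-i\in[0,n]\setminus\x$, and then $q'_i=p_{n+1-i}$ and $p'_{i+1}=q_{n-i}$ both vanish by condition~\eqref{it:pq start from x} for $a$. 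Condition~\eqref{it:pq disjoint endpoints for non-const} is immediate since $(p'_i,q'_i)$ is just a reindexed swap of $(p_{n+1-i},q_{n+1-i})$.

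Condition~\eqref{it:pq dots move other dots} requires a little care because the two subcases are exchanged by the symmetry. Assume $\{i-1,i\}\subset\rho(\x)$, equivalently $\{n-i,n+1-i\}\subset\x$. If $p'_i$ is odd and $q'_i=0$, then $q_{n+1-i}$ is odd and $p_{n+1-i}=0$, so the second subcase of condition~\eqref{it:pq dots move other dots} for $a$ at index $j=n+1-i$ yields $q_{j-1}=q_{n-i}$ odd, i.e.~$p'_{i+1}$ odd, as required. The reverse subcase is symmetric.

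For the ending idempotent, under $j=n-i$ each of the three cases in the Lemma~\ref{lem:piqi} recipe for the right idempotent of the primed expression translates into the corresponding case for $a$ determining $\y$: e.g., $i\in\rho(\x)$ with $q'_i$ odd becomes $j\in\x$ with $p_{j+1}$ odd, which by the recipe for $a$ places $j+1=n+1-i\in\y$, equivalently $i-1\in\rho(\y)$. The other two cases are similar. The main (in fact only) obstacle is strictly bookkeeping: since $\rho$ acts by $i\mapsto n-i$ on region indices in $[0,n]$ but by $i\mapsto n+1-i$ on backbone indices in $[1,n]$, the two different shifts must be kept scrupulously distinct when passing between region-indexed data (the idempotents $\x,\y$) and cylinder-indexed data (the arrays and $\Sc$).
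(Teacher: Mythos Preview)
Your proposal is correct and follows essentially the same approach as the paper's proof: both verify that the conditions of Lemma~\ref{lem:piqi} are invariant under the substitution $C_i\mapsto C_{n+1-i}$, $p_i\mapsto q_{n+1-i}$, $q_i\mapsto p_{n+1-i}$, $\x\mapsto\rho(\x)$, $\Sc\mapsto\rho(\Sc)$, and then check that the ending idempotent is $\rho(\y)$. The paper's proof is terser---it works out only condition~\eqref{it:pq start from x} as a sample and declares the rest similar---whereas you have written out each condition and the idempotent recipe explicitly, which is fine.
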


\begin{proof}
The conditions of Lemma~\ref{lem:piqi} are invariant under replacing $C_i$ with $C_{n+1-i}$, $p_i$ with $q_{n+1-i}$, $q_i$ with $p_{n+1-i}$, $\x$ with $\rho(\x)$, $\y$ with $\rho(\y)$, and $\Sc$ with $\rho(\Sc)$. For example, to see that the new condition~\eqref{it:pq start from x} is satisfied, note that if $i \notin \rho(\x)$, then $n-i \notin \x$, so $q_{n-i} = p_{n+1-i} = 0$ by the old condition~\eqref{it:pq start from x}. The rest of the conditions are similar. Also note that $\rho(\y)$ is the element of $V(n,k)$ that Lemma~\ref{lem:piqi} constructs given $\rho(\x)$ and the array of vectors $\vv{p'_1}{q'_1}{1} \cdots \vv{p'_n}{q'_n}{n}$.
\end{proof}

\begin{definition}
For a generator $a$ of $\Jb_{\x} \sac nk\Sc \Jb_{\y}$, define 
\[
\rho(a) \in \Jb_{\rho(\x)} \sac nk{\rho(\Sc)} \Jb_{\rho(\y)}
\]
to be the generator constructed in Proposition~\ref{prop:ConstructingRhoOnStrandsAlg}.
\end{definition}

We thus have an $\Ib(n,k)$-linear map $\rho: \sac nk\Sc \to \sac nk{\rho(\Sc)}$, where the action of $\Ib(n,k)$ on $\sac nk{\rho(\Sc)}$ is modified so that $\Jb_{\x}$ acts via the usual action by $\Jb_{\rho(\x)}$. We claim that $\rho$ is an involution of dg algebras over $\Ib(n,k)$, after suitable modifications to the gradings.

\begin{proposition}\label{prop:RhoMultiplicativeStrands}
As in \cite[\secOSzSymmetries]{MMW1}, modify the unrefined Alexander multi-grading on $\sac nk{\rho(\Sc)}$ by postcomposing the degree function with the involution of $\Z^{2n}$ sending $\tau_i$ to $\beta_{n+1-i}$ and sending $\beta_i$ to $\tau_{n+1-i}$. Then the map $\rho$ is a homomorphism of dg algebras from $\sac nk\Sc$ to $\sac nk{\rho(\Sc)}$ and satisfies $\rho^2 = \id$. Similar grading statements hold for the refined and single Alexander gradings.
\end{proposition}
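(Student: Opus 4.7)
The plan is to exploit the fact that $\rho$ is the combinatorial translation of a geometric involution of $\I \times \Zc(n)$: namely the homeomorphism that vertically flips each strand diagram, sending $S^1_i$ to $S^1_{n+1-i}$ by exchanging its basepoints $z_i^{\pm}$, swapping the two linear backbones, and carrying closed loops $C_i$ to $C_{n+1-i}$. This flip preserves the matching $M$ and does not reverse the $\I$ direction, so it should intertwine concatenation with itself and crossing-resolution with itself; the proof will reduce to mechanical verifications via the explicit formulas of Section~\ref{sec:mult and diff with pq}.

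The concrete steps are as follows. First, $\rho^2 = \id$ is immediate from $p''_i = q'_{n+1-i} = p_i$, the analogous identity for $q_i$, and involutivity of $i \mapsto n+1-i$ on the $C$-monomials. Second, for multiplicativity I would verify that each of the seven conditions (I)--(VII) of Lemma~\ref{lem:concatenable} is preserved by the substitution $(p_i, q_i) \leftrightarrow (q_{n+1-i}, p_{n+1-i})$: for instance, condition (I) at index $i$ for $a$ becomes condition (IV) at index $n-i$ for $\rho(a)$, and vice versa. One then checks that the output quantities $r_i$ and $s_i$ listed in that lemma transform correctly to $s_{n+1-i}$ and $r_{n+1-i}$ for $\rho(a)\cdot\rho(b)$, using only parity bookkeeping. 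Third, I would verify $\rho \circ \de^0_i = \de^0_{n+1-i} \circ \rho$ and $\rho \circ \de^c_i = \de^c_{n+1-i} \circ \rho$ via Lemmas~\ref{lem:general pq differential} and \ref{lem:general pq C-differential}; the key observations are that the flanking entries $q_{i-1}, p_{i+1}$ governing $\de^0_i$ transform to the corresponding flanking entries of $\de^0_{n+1-i}$, and that the two branches of $\de^c_i$ dictated by $p_i > q_i$ versus $p_i < q_i$ are exchanged by $\rho$. Summing over $i$ then gives $\rho \circ \de = \de \circ \rho$.

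For the grading claims, each summand in the expressions of Definition~\ref{def:gradings} is either manifestly symmetric under $p_i \leftrightarrow q_i$ (the Maslov, refined Alexander, and single Alexander cases) or becomes its $\tau \leftrightarrow \beta$-dual upon exchange (the unrefined Alexander case). Combining this with the reindexing $i \leftrightarrow n+1-i$ and the identity $\chi_{\rho(\Sc)}(i) = \chi_{\Sc}(n+1-i)$ yields the Maslov invariance and the $\tau_i \mapsto \beta_{n+1-i}$, $\beta_i \mapsto \tau_{n+1-i}$ transformation of the unrefined multi-grading specified by the proposition. The refined and single Alexander versions then follow by naturality of the maps $\Z^{2n} \to (\tfrac{1}{2}\Z)^n \to \tfrac{1}{2}\Z$ of Definition~\ref{def:ReviewOSzGradings}.

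The main obstacle is purely organizational: the lemmas of Section~\ref{sec:mult and diff with pq} produce several case splits on the parities of $p_i$ and $q_i$, and the vertical-flip substitution swaps the roles of some of these cases, so one must track carefully which branch of the post-$\rho$ formula matches which branch of the pre-$\rho$ formula. No individual step is technically difficult.
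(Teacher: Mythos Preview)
Your approach is correct and is essentially the same as the paper's: both invoke Lemma~\ref{lem:concatenable} for multiplicativity, Lemmas~\ref{lem:general pq differential} and \ref{lem:general pq C-differential} for the differential, and Definition~\ref{def:gradings} for the gradings, with the paper's proof being somewhat terser. One small slip in your illustrative example: under the substitution $(p_i,q_i)\leftrightarrow(q_{n+1-i},p_{n+1-i})$, condition~(I) at index $i$ corresponds to condition~(III) (not (IV)) at index $n+1-i$, and likewise (II)$\leftrightarrow$(IV); this does not affect the overall argument.
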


\begin{proof}
The equation $\rho^2 = \id$ is immediate from the definition of $\rho$. To see that $\rho$ respects multiplication, let $a \in \Jb_{\x} \sac nk\Sc \Jb_{\y}$ and $a' \in \Jb_{\y} \sac nk\Sc \Jb_{\z}$. The product $a \cdot a'$ is given by Lemma~\ref{lem:concatenable}. If it is zero, then one can check that $\rho(a) \cdot \rho(a')$ is also zero. Otherwise, $\rho(a) \cdot \rho(a')$ has the same idempotents, $C_i$ variables, and array of vectors as $\rho(a \cdot a')$, so $\rho(a) \cdot \rho(a') = \rho(a \cdot a')$.  Similarly, Lemma~\ref{lem:general pq differential} implies that $\rho$ respects the differential. One can check that $\rho$ respects the gradings of Definition~\ref{def:gradings} after the above modification. 
\end{proof}

Next we define a symmetry $o$ on our strands algebras.

\begin{proposition}\label{prop:ConstructingOOnStrandsAlg}
For a generator $a$ of $\Jb_{\x} \sac nk\Sc \Jb_{\y}$, write
\[
a = C_{i_1} \cdots C_{i_l} \vv{p_1}{q_1}{1} \cdots \vv{p_n}{q_n}{n}
\]
as in Lemma~\ref{lem:piqi}. The expression
\[
C_{i_1} \cdots C_{i_l} \vv{q_1}{p_1}{1} \cdots \vv{q_n}{p_n}{n}
\]
represents a valid generator of $\Jb_{\y} \sac nk\Sc \Jb_{\x}$.
\end{proposition}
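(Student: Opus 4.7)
The plan is to mirror the proof of Proposition~\ref{prop:ConstructingRhoOnStrandsAlg}: I would verify that the swapped expression satisfies the six conditions (i)--(vi) of Lemma~\ref{lem:piqi} with left idempotent $\y$ and right idempotent $\x$, leveraging the analogous conditions for the original expression with left idempotent $\x$ and right idempotent $\y$. Unlike the $\rho$ case, there is no index reversal; the transformation acts column by column, interchanging $p_i$ and $q_i$ within each $i \in [1,n]$.

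I would begin with the conditions whose statements are symmetric in $p_i$ and $q_i$ within a single column: condition (i) is automatic since the $C$-monomial is unchanged, and condition (v) is preserved because the parity constraint $p_i \equiv q_i \pmod 2$ is symmetric. Condition (iii) for the swap requires that if $i \notin \y$ then $q_i' = p_{i+1}' = 0$; I would verify this by applying the formulas in the last paragraph of Lemma~\ref{lem:piqi}, which determine $\y$ from $\x$ together with the original $p,q$ data, to deduce that $p_i = 0$ and $q_{i+1} = 0$.

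The heart of the argument is comparing conditions (ii) and (iv), which are interchanged by the swap. The new condition (iv) asks that $p_i' q_{i+1}' = q_i p_{i+1}$ be even, which follows immediately from the original (ii) ($q_i p_{i+1} = 0$). The new condition (ii) asks that $q_i' p_{i+1}' = p_i q_{i+1}$ vanish, which is strictly stronger than the ``even'' statement of the original (iv). To obtain this stronger conclusion, I would combine the original (iv) with conditions (iii), (v), and (vi) together with the explicit description of $\y$: the propagation rule in (vi) governing how odd speeds at $z_i^\pm$ force the parities of neighboring columns should, together with the idempotent constraints, rule out the residual cases where $p_i$ and $q_{i+1}$ are simultaneously nonzero. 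Condition (vi) for the swapped expression then follows from the analogous propagation applied to $\y$ in place of $\x$, and the right idempotent is identified as $\x$ by a final application of the last paragraph of Lemma~\ref{lem:piqi}.

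The main obstacle I anticipate is precisely this promotion of ``$p_i q_{i+1}$ is even'' to ``$p_i q_{i+1} = 0$'' required by the new condition (ii): unlike the purely syntactic checks in the $\rho$ case, here it is genuinely necessary to exploit the combinatorics of sections of $\y$ and the structural constraints linking adjacent columns. If this step proves subtler than expected, a backup strategy would be to work at the level of the underlying $k$-strand $s$ with $a = E(s,\vec{c})$, define the swap visually as time reversal combined with a reflection of each circular backbone, and then check via Lemma~\ref{lem:E(S,T,s)} and equation (4.1) that the resulting construction descends to a well-defined map on $\sac nk\Sc$ valued in $\Jb_{\y}\sac nk\Sc\Jb_{\x}$.
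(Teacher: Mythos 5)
You have correctly identified the crux---promoting ``$p_i q_{i+1}$ is even'' (old condition~(iv)) to ``$p_i q_{i+1} = 0$'' (new condition~(ii)), and establishing new condition~(iii)---but the promotion fails, and no combination of (iii)--(vi) together with the description of $\y$ can supply it. A minimal counterexample: with $n=1$, $k=1$, $\Sc=\varnothing$, $\x=\{0\}$, take $a = \vv{2}{0}{1} \in \Jb_{\{0\}}\A(1,1)\Jb_{\{0\}}$. The last paragraph of Lemma~\ref{lem:piqi} gives $\y = \{0\}$. The swapped array $\vv{0}{2}{1}$, read with left idempotent $\y=\{0\}$, puts a nonconstant strand at $z_1^+$, whose $B/M$-coordinate is $1 \notin \y$, so new condition~(iii) already fails. (With $n=2$, $\x=\y=\{0,2\}$, the element $\vv{2}{0}{1}\vv{0}{2}{2}$ similarly violates both new~(ii) and~(iii) under the blanket swap.)

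The analogy with Proposition~\ref{prop:ConstructingRhoOnStrandsAlg} is therefore not parallel. The map $\rho$ involves no time reversal, so a strand's new starting basepoint is determined by the ambient relabeling alone. But $o$ does involve time reversal: the starting basepoint of the transformed strand is the (reflected) \emph{ending} basepoint of the original, and that ending basepoint depends on the parity of the speed. An even-speed strand starts and ends at the same basepoint, so its column is unchanged; an odd-speed strand crosses from $z_i^\mp$ to $z_i^\pm$, so its column is swapped. This parity-dependent rule is also what the anti-multiplicativity argument in Section~\ref{sec:PhiSymmetries} actually produces: if $\x\cap\{i-1,i\}=\{i-1\}$, then $o(\vv{2}{0}{i}) = o(\Phi(R_i)\Phi(L_i)) = o(\Phi(L_i))\,o(\Phi(R_i)) = \Phi(R_i)\Phi(L_i) = \vv{2}{0}{i}$, not $\vv{0}{2}{i}$. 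Your backup strategy---working with $a = E(s,\vec{c})$ and defining $o$ geometrically via time reversal composed with a reflection of each circular backbone---is the right instinct and leads directly to this corrected rule.
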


\begin{proof}
As with Proposition~\ref{prop:ConstructingRhoOnStrandsAlg}, one can check that the conditions of Lemma~\ref{lem:piqi} for the old expression imply the conditions for the new expression, and that $\x \in V(n,k)$ is the vertex selected by Lemma~\ref{lem:piqi} given $\y$ and the new array of vectors.
\end{proof}

\begin{definition}
For a generator $a$ of $\Jb_{\x} \sac nk\Sc \Jb_{\y}$, define 
\[
o(a) \in \Jb_{\y} \sac nk\Sc \Jb_{\x}
\]
to be the generator constructed in Proposition~\ref{prop:ConstructingOOnStrandsAlg}.
\end{definition}

We thus have a $\Ib(n,k)$-linear map $o: \sac nk\Sc \to \sac nk\Sc$, where the $\Ib(n,k)$-algebra structure on the target side is unmodified (unlike for $\rho$). We claim that $o$ respects multiplication, differential, and gradings when we take the opposite algebra on the target side.

\begin{figure}
\includegraphics[scale=0.5]{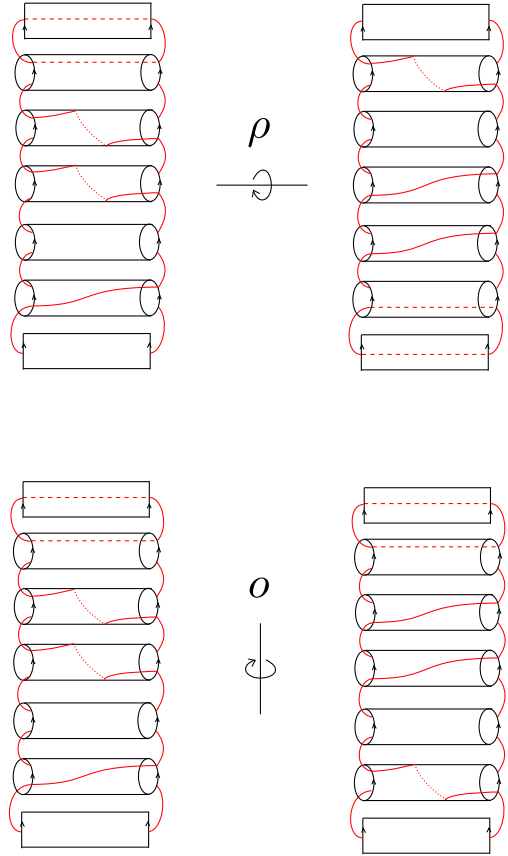}
\caption{Graphical interpretation of the symmetries $\rho$ and $o$ on the strands algebra $\sac nk\Sc$.}
\label{fig:StrandsSymmetries}
\end{figure}

\begin{proposition}
Modify the unrefined Alexander multi-grading on $(\sac nk{\Sc})^{\op}$ by postcomposing the degree function with the involution of $\Z^{2n}$ sending $\tau_i$ to $\beta_i$ and sending $\beta_i$ to $\tau_i$. The map $o$ is a homomorphism of dg algebras from $\sac nk\Sc$ to $(\sac nk\Sc)^{\op}$ and satisfies $o^2 = \id$. Similar statements hold for the refined and single Alexander gradings.
\end{proposition}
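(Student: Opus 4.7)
The plan is to prove the three parts of the proposition in turn---$o^2 = \id$, the multiplicative intertwining with the opposite algebra, and the grading compatibility---using the combinatorial definition of $o$ from Proposition~\ref{prop:ConstructingOOnStrandsAlg} together with the calculus for products, differentials, and gradings developed in Sections~\ref{sec:mult and diff with pq} and \ref{sec:gradings}.

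The assertion $o^2 = \id$ is immediate: applying the swap $p_i \leftrightarrow q_i$ in each column of the $(p_i, q_i)$-array twice recovers the original array, and the $C$-variables and idempotents are untouched. For the grading statement, I would read off each grading from Definition~\ref{def:gradings}. The Maslov grading $\m(a)$, the refined Alexander grading $w(a)$, and the single Alexander grading $\Alex(a)$ depend on $p_i$ and $q_i$ only through the symmetric quantities $|p_i-q_i|$ and $p_i+q_i$, so they are invariant under $o$ and no modification is needed; this gives the ``similar statements'' for the refined and single Alexander gradings. The unrefined Alexander grading satisfies
\[
w^{\un,\tau}_i(a) = \vec c(i) + \floor*{\tfrac{p_i}{2}} + \ceil*{\tfrac{q_i}{2}}, \qquad w^{\un,\beta}_i(a) = \vec c(i) + \ceil*{\tfrac{p_i}{2}} + \floor*{\tfrac{q_i}{2}},
\]
and these two quantities are exchanged under the swap $p_i \leftrightarrow q_i$, so applying $o$ corresponds exactly to postcomposing with the involution $\tau_i \leftrightarrow \beta_i$ on $\Z^{2n}$ described in the statement.

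For the dg algebra intertwining, the differential compatibility uses Lemmas~\ref{lem:general pq differential} and \ref{lem:general pq C-differential}. In each, the formulas are built from $m_i = \min(p_i, q_i)$ and $M_i = \max(p_i, q_i)$ together with the case splits $p_i = q_i$, $p_i > q_i$, $p_i < q_i$, all of which are either symmetric in $p_i, q_i$ or exchange one case with another of the same shape under the swap. Hence $\de^0_i$ and $\de^c_i$ commute with $o$ for every $i$, so $o \circ \de = \de \circ o$. For the multiplicative intertwining $o(a \cdot a') = o(a') \cdot o(a)$, where $a\in\Jb_\x \sac nk\Sc \Jb_\y$ and $a' \in \Jb_\y \sac nk\Sc \Jb_\z$, I would apply Lemma~\ref{lem:concatenable} to both sides: first checking that each of the vanishing conditions (I)--(VII) for $a \cdot a'$ has a counterpart among the vanishing conditions for $o(a') \cdot o(a)$ obtained by swapping $p \leftrightarrow q$ in every column of both factors and reversing their order, and then comparing the explicit formulas for the product entries $(r_i, s_i)$. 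Column by column, the entries of $o(a \cdot a')$, which are $(s_i, r_i)$, agree with the entries produced by Lemma~\ref{lem:concatenable} for $o(a') \cdot o(a)$.

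I expect the main obstacle to lie in this multiplication check: Lemma~\ref{lem:concatenable} has seven vanishing conditions and three sub-cases each for the entries $r_i$ and $s_i$, so carrying out the full comparison requires patient case-splitting. The crucial ingredient that makes the check work is the parity constraint of Lemma~\ref{lem:piqi}~(v) (namely $p_i \equiv q_i \pmod 2$ whenever both are nonzero), which forces the parity-based case distinctions for $a \cdot a'$ and $o(a') \cdot o(a)$ to line up. With this in hand, no new ideas are needed beyond a finite and fully mechanical verification.
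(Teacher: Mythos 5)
Your overall plan mirrors the paper's own (extremely terse) proof, which simply says the result ``amounts to checking that Lemma~\ref{lem:concatenable}, Lemma~\ref{lem:general pq differential}, and Definition~\ref{def:gradings} are compatible with the symmetry $o$'' and omits all details. Your checks of $o^2 = \id$ and of the four gradings are correct as written, and you correctly identify the exchange $\tau_i \leftrightarrow \beta_i$ in the unrefined grading.

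However, your differential and multiplication arguments contain a genuine gap. You write that the case splits in Lemmas~\ref{lem:general pq differential} and \ref{lem:general pq C-differential} ``are either symmetric in $p_i, q_i$ or exchange one case with another of the same shape under the swap,'' but this only addresses the dependence on $m_i = \min(p_i,q_i)$ and $M_i = \max(p_i,q_i)$. Those formulas also have case splits governed by the starting idempotent (``if $\{i-1,i\}\not\subset\x$'') and by the \emph{neighboring} columns (``if either of $q_{i-1}, p_{i+1}$ is $\neq 0$''). The map $o$ changes the starting idempotent from $\x$ to $\y$, so when you compute $\de^0_i(o(a))$ you must verify that the conditions $\{i-1,i\}\subset\y$ and $q'_{i-1}=p'_{i+1}=0$ (where the primed entries are those of $o(a)$) hold precisely when $\{i-1,i\}\subset\x$ and $q_{i-1}=p_{i+1}=0$ hold for $a$. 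This is not automatic and is in fact the crux of the check. The same issue arises for conditions~\eqref{it:p odd concat}--\eqref{it:q even concat} of Lemma~\ref{lem:concatenable}, which are not symmetric in the column indices.

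Trying to carry out this check carefully reveals a subtlety you (and the paper) need to address. The literal, columnwise $p_i \leftrightarrow q_i$ swap of Proposition~\ref{prop:ConstructingOOnStrandsAlg} does not always produce an array satisfying the conditions of Lemma~\ref{lem:piqi} with respect to the new starting idempotent $\y$. For example, take $n = 2$, $k = 3$, $\x = \y = \{0,1,2\}$, and the basis element $b = \binom{2}{0}_1\binom{1}{1}_2$; the literal swap produces $\binom{0}{2}_1\binom{1}{1}_2$, which has $q'_1 p'_2 = 2\cdot 1 \neq 0$, violating condition~\eqref{it:pq no matched start}. What the rotation-by-$180^\circ$ interpretation and the required intertwining with $\Phi$ actually give is a \emph{parity-dependent} swap: a column is swapped exactly when it contains an odd speed, and left fixed when both speeds are even (the single Alexander, Maslov, and refined gradings cannot distinguish the two, and the unrefined grading still behaves as claimed because $\floor{p_i/2}=\ceil{p_i/2}$ when $p_i$ is even). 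With this corrected description of $o$, the idempotent and neighbor conditions do line up and your verification strategy goes through. The moral is that the word ``symmetric'' in your argument is hiding exactly the part of the proof that requires work, so you should spell out how $\x$, $\y$, and the neighboring entries $q_{i-1}, p_{i+1}$ transform before concluding that the formulas commute with $o$.
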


\begin{proof}
Like in Proposition~\ref{prop:RhoMultiplicativeStrands}, the proof amounts to checking that Lemma~\ref{lem:concatenable}, Lemma~\ref{lem:general pq differential}, and Definition~\ref{def:gradings} are compatible with the symmetry $o$. The detailed checks will be omitted.
\end{proof}

Note that $\rho \circ o = o \circ \rho$, properly interpreted.

\begin{remark}
The symmetries $\rho$ and $o$ on the strands algebras may be understood visually as follows, in terms of the graphical interpretation of Section~\ref{sec:StrandsAlgDef}: $\rho$ is rotation by 180 degrees around a horizontal line, and $o$ is rotation by 180 degrees around a vertical line (both lines are in the plane of the page as drawn). See Figure~\ref{fig:StrandsSymmetries} for an illustration. Note that the group of orientation-preserving self-diffeomorphisms of $\I \times \Zc(n)$ preserving the matching data, modulo isotopies among such diffeomorphisms, is $\Z/2\Z \times \Z/2\Z$. The rotations corresponding to $\rho$ and $o$ may be taken as generators. Thus, all geometric symmetries of $\I \times \Zc(n)$ are reflected in the algebras $\sac nk\Sc$. We will relate these symmetries with the symmetries $\rho$ and $o$ on $\B(n,k,\Sc)$ in Section~\ref{sec:PhiSymmetries}.
\end{remark}
\section{Homology of the strands algebra}
\label{sec:Homology}

The goal of this section is to compute the homology of $\A(n,k) = \A(n,k,\varnothing)$. The homology of $\A(n,k,\Sc)$ for general $\Sc$ will then follow from Theorem \ref{thm:ReviewOSzHomology} and Theorem~\ref{thm:FinalQIThm}.

By \cite[\lemOrthogonalIdempotents]{MMW1}, the homology $H_*(\A(n,k))$ is still an $\Ib(n,k)$-algebra, and it can be decomposed as
\[
H_*(\A(n,k)) = \bigoplus_{\x, \y \in V(n,k)} \Jb_\x H_*(\A(n,k)) \Jb_\y,
\]
where $\Jb_\x H_*(\A(n,k)) \Jb_\y = H_*(\Jb_\x \A(n,k) \Jb_\y)$. Thus, it suffices to compute the homology of each summand $\Jb_\x \sa nk \Jb_\y$.
Since $\Jb_\x \sa nk \Jb_\y = 0$ if $\x$ and $\y$ are far (see Lemma \ref{lem:FarStatesStrandAlgZero}), we can focus on the case when $\x$ and $\y$ are not far.

Before computing $\Jb_\x H_*(\A(n,k)) \Jb_\y$, we introduce some notation that will be useful later. Recall from Definition \ref{def:differential} that the differential $\de = \de^0$ on $\sa nk$ is a sum over differentials $\de_i$ on each circular backbone $S^1_i$.  We can augment this notation as follows.

\begin{definition}\label{def:subset differential}
Given a subset $S \subset [1,n]$, we define a new differential $\de_S$ on $\sa nk$ by 
\[
\de_S := \sum_{i\in S} \de_i.
\]
\end{definition}

By a simple generalization of the arguments in Section \ref{sec:Differential}, $\de_S$ gives a well-defined differential on $\sa nk$.  The following lemma follows from a comparison of the sets $S,T,S\cup T,$ and $S\cap T$.

\begin{lemma}
For all subsets $S, T \subset [1,n]$, we have $\de_S + \de_T = \de_{S \cup T} + \de_{S \cap T}$.
\end{lemma}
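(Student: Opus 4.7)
The plan is to expand both sides of the claimed identity into their constituent elementary differentials $\de_i$ and then verify the resulting identity among coefficients via an elementary set-theoretic check. Because $\sa nk$ is an $\F_2$-vector space and each $\de_i$ is a fixed endomorphism of it, the sum of a family of differentials is governed entirely by the (integer) multiplicities with which each $\de_i$ occurs.

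Concretely, by Definition~\ref{def:subset differential} the left-hand side expands as
\[
\de_S + \de_T \;=\; \sum_{i=1}^n \bigl(\mathbf{1}_S(i) + \mathbf{1}_T(i)\bigr)\,\de_i,
\]
while the right-hand side expands as
\[
\de_{S \cup T} + \de_{S \cap T} \;=\; \sum_{i=1}^n \bigl(\mathbf{1}_{S \cup T}(i) + \mathbf{1}_{S \cap T}(i)\bigr)\,\de_i.
\]
So the lemma reduces to the pointwise identity of integer-valued functions
\[
\mathbf{1}_S + \mathbf{1}_T \;=\; \mathbf{1}_{S \cup T} + \mathbf{1}_{S \cap T}
\]
on $[1,n]$ (which, since everything happens over $\F_2$, only needs to hold modulo $2$, though it in fact holds in $\Z$).

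This last identity follows from a routine four-case check on whether a given index $i$ belongs to $S$ and/or $T$: for $i \in S \cap T$ both sides evaluate to $2$; for $i \in S \setminus T$ or $i \in T \setminus S$ both evaluate to $1$; for $i \notin S \cup T$ both evaluate to $0$. Hence the two formal sums of operators agree as elements of $\mathrm{End}_{\F_2}(\sa nk)$, which is the content of the lemma. There is no serious obstacle here; the statement is a set-theoretic reshuffling promoted to an identity of differential operators, and is presumably invoked later as a bookkeeping tool for rewriting differentials supported on unions or intersections of subsets of backbone indices.
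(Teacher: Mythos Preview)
Your proof is correct and is essentially the same as the paper's approach: the paper simply says the identity ``follows from a comparison of the sets $S$, $T$, $S\cup T$, and $S\cap T$,'' which is exactly the indicator-function case check you spell out. You have merely made explicit what the paper leaves as a one-line remark.
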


\begin{corollary}
\label{cor:splitdifferential}
If $S_1 \sqcup \dots \sqcup S_a = [1,n]$ is a partition of $[1,n]$, then
\[
\de_{S_1} + \dots + \de_{S_a} = \de = \de_{[1,n]}.
\]
\end{corollary}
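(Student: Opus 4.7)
The plan is to deduce the corollary by a short induction on $a$, powered by the preceding lemma together with two trivial identities. First, by convention $\de_\varnothing = 0$ since $\de_S$ is an empty sum of $\de_i$'s when $S = \varnothing$. Second, $\de_{[1,n]} = \de$ by comparing Definition~\ref{def:subset differential} with the defining formula $\partial = \sum_{i \in [1,n]} \partial_i$ from Definition~\ref{def:differential}. With these two observations, the right-hand equality $\de = \de_{[1,n]}$ is automatic, and only the left-hand equality requires work.

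I would prove the slightly more flexible claim that for any collection of pairwise disjoint subsets $S_1, \ldots, S_a$ of $[1,n]$,
\[
\de_{S_1} + \dots + \de_{S_a} = \de_{S_1 \cup \dots \cup S_a},
\]
by induction on $a$. The base case $a = 1$ is vacuous. For the inductive step, set $T := S_1 \cup \dots \cup S_{a-1}$; by the inductive hypothesis, $\de_{S_1} + \dots + \de_{S_{a-1}} = \de_T$. Since the $S_i$ are pairwise disjoint we have $T \cap S_a = \varnothing$, so the preceding lemma (applied with $S = T$ and $T$ replaced by $S_a$) gives
\[
\de_T + \de_{S_a} = \de_{T \cup S_a} + \de_{T \cap S_a} = \de_{T \cup S_a} + \de_{\varnothing} = \de_{S_1 \cup \dots \cup S_a}.
\]
Applying this to the partition in the statement, where $S_1 \cup \dots \cup S_a = [1,n]$, then yields $\de_{S_1} + \dots + \de_{S_a} = \de_{[1,n]} = \de$, which is the desired conclusion.

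There is no real obstacle here: the entire content sits inside the preceding lemma, and the corollary is essentially the assertion that pairwise disjointness makes the intersection correction terms collapse. The only thing to be careful about is making the empty-sum convention $\de_\varnothing = 0$ explicit, so that the intersection term in the lemma can be dropped at each inductive step.
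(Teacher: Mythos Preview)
Your proof is correct and is exactly the argument the paper has in mind: the corollary is stated there without proof, as an immediate consequence of the preceding lemma, and your induction just makes explicit how the intersection terms vanish one step at a time.
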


\subsection{A splitting theorem}
\label{sec:splittingthm}

In this section we present an important theorem on the structure of any summand $\Jb_\x \sa nk \Jb_\y$ of our strands algebra, whose proof will occupy Sections \ref{sec:JAJtoTensorProduct}, \ref{sec:TensorProducttoJAJ}, and \ref{sec:proof of splitting JAJ}.  The idea is as follows.  The differential $\de$ acts on each backbone $S^1_i$ independently, and $S^1_i$ only admits certain types of strands depending on whether $S^1_i$ corresponds to a crossed line or a member of a generating or edge interval from $\x$ to $\y$.  Thus we expect a tensor product decomposition for $\Jb_\x \sa nk \Jb_\y$ based on the generating interval data, similar to \cite[\corIBItoTensorProduct]{MMW1} for $\Ib_{\x} \B(n,k,\Sc) \Ib_{\y}$.

As in \cite[\secOSzSplittingTheorem]{MMW1}, let $\x, \y \in V(n,k)$ be not far. Based on the structure of the generating intervals and edge intervals for $\x$ and $\y$, we introduce a regrading of the generating and edge algebras $\overline{A}(l)$ and $\overline{A}_{\rho}(l)$ from Definitions~\ref{def:GenAlgebra} and \ref{def:EdgeAlgebra}.  Let $\CL{\x,\y}$ be the set of crossed lines from $\x$ to $\y$ (see Definition \ref{def:CL}), and let $[j_1+1, j_1+l_1], \ldots, [j_b+1, j_b+l_b]$ be the generating intervals for $\x$ and $\y$ (see Definition \ref{def:ReviewGenInts}), of lengths $l_1, \ldots, l_b$ respectively, ordered so that $j_1< \cdots < j_b$.

\begin{definition}\label{def:RegradedGenAlgs}

For a generating interval $G=[j_a+1, j_a + l_a]$ between $\x$ and $\y$, we have a canonical isomorphism of differential algebras
\begin{equation}\label{eq:psiG for non edge}
\psi_G \colon \Jb_{[j_a+1, j_a +l_a-1]} \sa{n}{l_a-1} \Jb_{[j_a+1, j_a +l_a-1]} \to \overline{A}(l_a),
\end{equation}
by a simple re-indexing of the circular backbones, omitting the empty ones. Redefine the Alexander multi-gradings on $\overline{A}(l_a)$ by shifting the indices by $j_a$, so that $\tau_{i}, \beta_{i} \mapsto \tau_{i+j_a}, \beta_{i+j_a}$ and the isomorphism preserves the Maslov grading and all Alexander gradings from Definition~\ref{def:gradings}. 

Similarly, if $G = [n-l_{b+1}+1,n]]$ is a right edge interval for $\x$ and $\y$, there is a canonical isomorphism
\begin{equation}\label{eq:psiG for right edge}
\psi_G \colon \Jb_{[n-l_{b+1}+1, n]} \sa{n}{l_{b+1}} \Jb_{[n-l_{b+1}+1,n]} \to \overline{A}_\rho(l_{b+1}).
\end{equation}
Modify the Alexander gradings on $\overline{A}_\rho(l_{b+1})$ so that $\psi_G$ preserves them as above.

If $G = [[1,l_0]$ is a left edge interval for $\x$ and $\y$, then there is a canonical isomorphism
\begin{equation}\label{eq:psiG for left edge}
\psi_G \colon \Jb_{[0, l_0-1]} \sa{n}{l_0} \Jb_{[0,l_0-1]} \to \overline{A}_\lda(l_0).
\end{equation}
Note that there is no need to redefine the Alexander multi-grading on $\overline{A}_\lda (l_0)$ in this case, because $\psi_G$ already preserves it. If $G = [[1,n]]$ is a two-faced edge interval for $\x$ and $\y$, then $\overline{A}_{\lda\rho}(n) = \Jb_{[0, n]} \sa{n}{n+1} \Jb_{[0,n]}$ by definition.

We will also use the graded polynomial algebra $\F_2[U_i \,|\, i\in\CL{\x,\y}]$ for crossed lines as in the $\Sc=\varnothing$ case of \cite[\defCrossedLinesAlg]{MMW1}.  As described there, $\F_2[U_i \,|\, i\in\CL{\x,\y}]$ has zero differential, and it carries an Alexander multi-grading defined by
\[
w_i(1) = \begin{cases}
\frac12 & \text{if $i\in\CL{\x,\y}$}\\
0 & \text{otherwise}\\
\end{cases}
\]
with multiplication by $U_j$ increasing $w_i$ by $\delta_{i,j}$.  Because we have $\Sc=\varnothing$ here, all of $\F_2[U_i \,|\, i\in\CL{\x,\y}]$ is placed in Maslov degree zero.
\end{definition}

\begin{theorem}
\label{thm:JAJtoTensorProduct}
Let $\x, \y \in V(n,k)$ be not far. With notation as above, there is an isomorphism of chain complexes over $\F_2$
\[
\psi \colon \Jb_\x \A(n,k) \Jb_\y \stackrel{\sim}{\longrightarrow} \F_2[U_i \,|\, i\in\CL{\x,\y}] \otimes \overline{A}_\circ(l_0) \otimes \overline{A}(l_1) \otimes \cdots \otimes \overline{A}(l_{b}) \otimes \overline{A}_\circ(l_{b+1}),
\]
which respects both the Alexander and the Maslov gradings, where the algebras $\overline{A}_\circ(l_0)$ and $\overline{A}_\circ(l_{b+1})$ are defined as follows. 
\begin{itemize}
\item If there is a left edge interval $[[1,l_0]$, then we set $\overline{A}_\circ(l_0) = \overline{A}_\lda(l_0)$; otherwise we set $\overline{A}_\circ(l_0) = \F_2$.
\item If there is a right edge interval $[n-l_{b+1}+1,n]]$, then we set $\overline{A}_\circ(l_{b+1}) = \overline{A}_\rho(l_{b+1})$; otherwise we set $\overline{A}_\circ(l_{b+1}) = \F_2$.
\item If $\x = \y = [0,n]$ (i.e. $[[1,n]]$ is a two-faced edge interval for $\x$ and $\y$), then we set the target of $\psi$ to be $\overline{A}_{\lambda\rho}(n)$.
\end{itemize}
The Alexander and Maslov gradings on the right hand side are specified in Definition~\ref{def:RegradedGenAlgs}.
\end{theorem}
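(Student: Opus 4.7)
The plan is to define $\psi$ on the standard basis of $\Jb_\x \A(n,k) \Jb_\y$ by reading off columns of the $(p_i,q_i)$-array according to the partition of $[1,n]$ given by Proposition~\ref{prop:QuartumNonDatur}, then to check well-definedness, chain-map compatibility, and grading preservation column-by-column. Given a basis element $a = \vv{p_1}{q_1}{1} \cdots \vv{p_n}{q_n}{n}$ (there are no $C_i$ variables since $\Sc = \varnothing$), for each $i \in \CL{\x,\y}$ Lemma~\ref{lem:cl} forces $p_i + q_i$ to be odd, so I let $\psi$ contribute the factor $U_i^{(p_i+q_i-1)/2}$ in $\F_2[U_i \mid i \in \CL{\x,\y}]$. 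For each generating interval $G = [j+1, j+l]$, I let $\psi$ contribute the element $\vv{p_{j+1}}{q_{j+1}}{1} \cdots \vv{p_{j+l}}{q_{j+l}}{l}$ of $\overline{A}(l)$ (after reindexing by $i \mapsto i - j$). Edge intervals are handled analogously, using $\overline{A}_\lda$, $\overline{A}_\rho$, or $\overline{A}_{\lda\rho}$.

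Well-definedness of $\psi$ requires that the column data for each interval satisfies the boundary conditions of the corresponding target algebra (Lemmas~\ref{lem:BasistdA}--\ref{lem:BasistdAlr}). The only non-automatic checks are that $p_{j+1} = 0$ and $q_{j+l} = 0$ for each generating interval $[j+1, j+l]$, and the analogous equalities for edge intervals. To verify $p_{j+1} = 0$ I split into two cases: either $j \notin \x$, in which case Lemma~\ref{lem:piqi}\eqref{it:pq start from x} applies directly, or $j \in \x \setminus \y$, in which case line $j$ must be crossed with $q_j$ odd (by the dot-movement analysis using the definition of generating intervals) and then Lemma~\ref{lem:piqi}\eqref{it:pq no matched start} forces $p_{j+1} = 0$. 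The verification of $q_{j+l} = 0$ and of the edge-interval boundary conditions is symmetric. This same analysis shows that the obvious assembly map in the reverse direction produces valid basis elements of $\Jb_\x \A(n,k) \Jb_\y$, so $\psi$ is a bijection on bases.

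Corollary~\ref{cor:splitdifferential} splits $\de$ as $\sum_i \de_i$, so the chain-map check reduces to confirming that each $\de_i$ corresponds under $\psi$ to the differential on the appropriate tensor factor. For $i \in \CL{\x,\y}$, column $i$ carries a single strand of odd speed on $S^1_i$, so $\de_i = 0$ by Lemma~\ref{lem:general pq differential}, matching the zero differential on $\F_2[U_i \mid i \in \CL{\x,\y}]$. For $i$ in a generating interval $[j+1, j+l]$, the boundary analysis of the previous paragraph shows that $\de_{j+1}$ and $\de_{j+l}$ both vanish (whether computed in the source or in $\overline{A}(l)$), while for $i \in [j+2, j+l-1]$ Lemma~\ref{lem:general pq differential} shows $\de_i$ depends only on columns $i-1, i, i+1$, all of which lie in the interval, so the two computations coincide. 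Grading preservation then follows column-by-column: Definition~\ref{def:gradings} expresses $\m$, $w^{\un}$, $w$, and $\Alex$ as sums of per-column contributions, and Definition~\ref{def:RegradedGenAlgs} shifts the Alexander indices on each factor so that the contributions align; a direct computation confirms that a crossed-line column with $p_i + q_i = 2m+1$ contributes $w_i = m + \tfrac12$ and Maslov degree $0$, matching $U_i^m$.

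The step I expect to be the main obstacle is the boundary analysis sketched in the second paragraph: one must carefully enumerate the endpoint configurations at the boundaries between crossed lines, generating intervals, and edge intervals, and confirm in every case both the target boundary conditions of Lemmas~\ref{lem:BasistdA}--\ref{lem:BasistdAlr} and the vanishing of the boundary differentials. Once this combinatorial bookkeeping is in place, the remaining properties follow formally from the column decomposition of $\de$ and of the gradings.
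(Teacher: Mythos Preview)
Your proposal is correct and follows essentially the same approach as the paper: the paper defines $\psi$ via exactly this column-by-column restriction (Lemmas~\ref{lem:x|_G}--\ref{lem:x|_Grho} and Definition~\ref{def:definition of psi JAJtoTensorProduct}), constructs the inverse assembly map $\phi$ (Definition~\ref{def:TensorProducttoJAJ} and Lemma~\ref{lem:phiwelldefined}), and verifies the chain-map and grading properties via the same decomposition $\de = \sum_i \de_i$ (Corollary~\ref{cor:psiChainMap} and Lemma~\ref{lem:psiPreservesGradings}). The boundary analysis you flag as the main obstacle is indeed where most of the work lies, and the paper carries it out in somewhat more detail than you sketch---in particular, verifying that the assembled array in the inverse direction satisfies all six conditions of Lemma~\ref{lem:piqi} and lands in the correct right idempotent $\Jb_\y$ requires a separate case check (Lemma~\ref{lem:phiwelldefined}) beyond just the interval-endpoint conditions.
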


Proving Theorem~\ref{thm:JAJtoTensorProduct} is the goal of Sections \ref{sec:JAJtoTensorProduct}, \ref{sec:TensorProducttoJAJ}, and \ref{sec:proof of splitting JAJ}.

\begin{remark}\label{rmk:splitting A wun}
Just as with \cite[\rmkSplittingBwun]{MMW1} concerning the splitting of $\Ib_\x\B(n,k,\Sc)\Ib_\y$, we could also assign unrefined Alexander gradings to the tensor factors in Theorem~\ref{thm:JAJtoTensorProduct} in such a way that $\psi$ respects these gradings as well.
\end{remark}

\subsection{Definition of \texorpdfstring{$\psi$}{psi}}
\label{sec:JAJtoTensorProduct}
Fix $\x,\y\in V(n,k)$.  If $\x = \y = [0, n]$, then the map $\psi$ of Theorem \ref{thm:JAJtoTensorProduct} is simply the identity map of $\overline{A}_{\lda \rho} (n)$.  Otherwise, there is not a two-faced edge interval and we will build the map $\psi$ by focusing on one tensor factor in the image at a time.

Let $a\in\Jb_\x \sa nk \Jb_\y$ be a standard basis element
\[
a = \vv{p_1}{q_1}{1} \cdots \vv{p_n}{q_n}{n}.
\]
If $i\in\CL{\x,\y}$, then by Lemma \ref{lem:cl} there must be a single strand on the $i$-th backbone, so one of $p_i, q_i$ is zero and the other one is odd (which of $p_i, q_i$ is zero is determined by the sign of $v_i(\x, \y)$).  To each crossed line $i$, then, we can associate the monomial $U_i^{\frac{p_i + q_i -1}{2}} \in \F_2[U_i]$.  Note that the corresponding strand winds on the $i$-th backbone by $p_i + q_i$ half twists.

\begin{lemma}
\label{lem:x|_G}
Given a standard basis element
\[
a = \vv{p_1}{q_1}{1} \ldots \vv{p_n}{q_n}{n} \in \Jb_\x \sa nk \Jb_\y
\]
and a generating interval $G = [j+1, j+l]$ from $\x$ to $\y$, the restriction
\[
a|_G := \vv{p_{j+1}}{q_{j+1}}{j+1} \cdots \vv{p_{j+l}}{q_{j+l}}{j+l}
\]
of $a$ to $G$ is a well defined basis element of $\Jb_{[j+1, j+l-1]} \sa {n}{l-1} \Jb_{[j+1, j+l-1]}$. 

Moreover, the differential $\de_G$ (see Definition \ref{def:subset differential}) satisfies $(\de_G a)|_G = \de(a|_G)$.
\end{lemma}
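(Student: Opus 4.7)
The plan is to verify three facts in sequence: (a) that $a|_G$ satisfies the conditions of Lemma~\ref{lem:piqi} with starting idempotent $\x' = [j+1, j+l-1]$; (b) that the ending idempotent of $a|_G$ is $[j+1, j+l-1]$ as well; and (c) that $\de_G$ commutes with restriction. All three reduce to controlling the behavior at the two boundary coordinates $j$ and $j+l$, since the interior $[j+2, j+l-1]$ is contained in $\x \cap \y$ and all relevant quantities are directly inherited from $a$.

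The crux will be establishing the vanishing identities $p_{j+1} = 0$ and $q_{j+l} = 0$ in $a$. For $p_{j+1}$: if $j \notin \x$, condition~(iii) of Lemma~\ref{lem:piqi} gives $p_{j+1} = 0$ immediately; otherwise, since $j$ is not fully used we have $j \in \x \setminus \y$, so $v_j(\x,\y) = -1$ and Lemma~\ref{lem:cl} yields $q_j$ odd, whereupon condition~(ii) forces $p_{j+1} = 0$. Symmetrically, for $q_{j+l}$: if $j+l \notin \x$, condition~(iii) finishes the job; otherwise $j+l \in \x \setminus \y$, so $v_{j+l+1}(\x,\y) = 1$, Lemma~\ref{lem:cl} makes $p_{j+l+1}$ odd, and condition~(ii) then gives $q_{j+l} = 0$. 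With these vanishings, together with $p_{i+1}, q_i = 0$ for entries strictly outside $G$, the remaining conditions of Lemma~\ref{lem:piqi} for $a|_G$ hold either vacuously outside $G$ or by inheritance from $a$ inside $G$, finishing (a).

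For (b), I will apply Lemma~\ref{lem:piqi}'s ending-idempotent formula. The vanishings combined with lines $j+1$ and $j+l$ being uncrossed (so $p_i \equiv q_i \pmod 2$ by Lemma~\ref{lem:cl}) make $q_{j+1}$ and $p_{j+l}$ even, excluding $j$ and $j+l$ from $\y'$. For each interior $i \in [j+1, j+l-1]$, the fact that $i \in \y$ for $a$ is witnessed by some contribution from $i' \in \x \cap \{i-1, i, i+1\}$; one checks that whenever $i' \in \{j, j+l\}$ could a priori have been the source, the boundary parity vanishings rule this out, so the contribution actually comes from $i' \in [j+1, j+l-1] = \x'$ and yields $i \in \y'$. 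Hence $\y' = [j+1, j+l-1]$.

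For (c), both $\de_G a$ and $\de(a|_G)$ are supported in columns $[j+1, j+l]$, so it suffices to match $\de_i^0 a$ with $\de_i^0(a|_G)$ for $i$ in that range. On $[j+2, j+l-1]$ all data entering Lemma~\ref{lem:general pq differential} --- namely $p_i, q_i, p_{i+1}, q_{i-1}$ and the condition $\{i-1, i\} \subset \x$ --- is shared by $a$ and $a|_G$ via $[j+1, j+l-1] \subset \x \cap \x'$, so the outputs coincide. At $i = j+1$ both differentials vanish: in $a$, either $j \notin \x$ (so $\{j, j+1\} \not\subset \x$) or $j \in \x \setminus \y$ forces $q_j \neq 0$, and either case triggers a vanishing clause of Lemma~\ref{lem:general pq differential}; in $a|_G$, $j \notin \x'$ kills the derivative directly. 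The case $i = j+l$ is analogous. The main obstacle throughout is organizing the boundary case analysis cleanly, but once the key vanishings $p_{j+1} = q_{j+l} = 0$ are secured, the rest is routine.
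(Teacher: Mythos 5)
Your proof is correct and hinges on the same key observations as the paper's: the boundary vanishings $p_{j+1}=0$ and $q_{j+l}=0$, deduced from the crossed-line structure at the boundary of $G$ (Lemma~\ref{lem:cl} together with condition~(ii) of Lemma~\ref{lem:piqi}), plus the observation that those vanishings force $\de_{j+1}$ and $\de_{j+l}$ to annihilate both $a$ and $a|_G$, so restriction commutes with the differential. The structural difference is that the paper collapses your steps (a) and (b): it re-indexes $a|_G$ into the generating algebra $\overline{A}(l)$ and invokes the basis characterization Lemma~\ref{lem:BasistdA}, which already encodes both the starting- and ending-idempotent constraints in its three conditions (A1)--(A3). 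You re-derive this from the six conditions of Lemma~\ref{lem:piqi} and then verify the ending idempotent by hand; this is sound but duplicates work already done in proving Lemma~\ref{lem:BasistdA}. One small elision to flag: the deduction $j\in\x\setminus\y\Rightarrow v_j(\x,\y)=-1$ (and symmetrically $j+l\in\x\setminus\y\Rightarrow v_{j+l+1}(\x,\y)=1$) silently uses that line $j+1$ (resp.\ $j+l$) is uncrossed, i.e.\ $v_{j+1}(\x,\y)=0$ (resp.\ $v_{j+l}(\x,\y)=0$); that is precisely where the definition of a generating interval enters and should be stated.
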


Geometrically, $a|_G$ is obtained by restricting the support of $a$ to the circular backbones labelled $j+1, \ldots, j+l$.
\begin{proof}
By the discussion in Section~\ref{sec:splittingthm}, we may equivalently view $a|_G$ as an element of $\overline{A}(l)$; we must show that the conditions of Lemma~\ref{lem:BasistdA} are satisfied. Condition~\eqref{it:A1} for $a|_G$ follows from condition~\eqref{it:pq no matched start} of Lemma~\ref{lem:piqi} for $a$, while condition \eqref{it:A3} follows from Lemma \ref{lem:cl} since no line in a generating interval is crossed (see Proposition \ref{prop:QuartumNonDatur}).

For condition \eqref{it:A2} of Lemma~\ref{lem:BasistdA}, we show that $p_{j+1}=0$; the proof that $q_{j+l}=0$ is similar.
First suppose that $j \notin \x$. Condition~\eqref{it:pq start from x} of Lemma \ref{lem:piqi} then implies that $p_{j+1}=0$. Now suppose that $j \in \x$. By the definition of generating interval, the coordinate $j$ is not fully used, so $j \notin \y$.  Since line $j+1$ is not crossed, we must have line $j$ crossed with $v_j(\x,\y)=-1$, and Lemma \ref{lem:cl} then ensures that $p_j=0$ and $q_j$ is odd.  From here, condition \eqref{it:pq no matched start} of Lemma \ref{lem:piqi} implies that $p_{j+1}=0$.

The fact that the restriction commutes with the differential amounts to ensuring that $\de_{j+1} a = \de_{j+l} a = 0$, since equation \eqref{eq:general pq differential bad idemp} of Lemma \ref{lem:general pq differential} implies that $\de_{j+1} a|_G = \de_{j+l} a|_G = 0$ (all other summands of the differential commute trivially).  For $\de_{j+1} a$, if $j\notin\x$ then $\de_{j+1} a =0$ by equation \eqref{eq:general pq differential bad idemp}, while if $j\in\x$ then $q_j$ is odd, so $\de_{j+1} a=0$ by the final case of equation \eqref{eq:general pq differential good idemp}.  The analysis for $\de_{j+l} a$ is similar.
\end{proof}

We have analogous statements when $G$ is an edge interval, with similar proofs.

\begin{lemma}
\label{lem:x|_Glambda}
Given a standard basis element
\[
a = \vv{p_1}{q_1}{1} \ldots \vv{p_n}{q_n}{n} \in \Jb_\x \sa nk \Jb_\y
\]
and a left edge interval $G = [[1, l]$ from $\x$ to $\y$, the restriction
\[
a|_G = \vv{p_{1}}{q_{1}}{1} \cdots \vv{p_l}{q_l}{l}
\]
of $a$ to $G$ is a well defined basis element of $\Jb_{[0, l-1]} \sa{n}{l} \Jb_{[0, l-1]}$.

Moreover, the differential $\de_G$ satisfies $(\de_G a)|_G = \de(a|_G)$.
\end{lemma}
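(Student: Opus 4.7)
The proof will closely parallel that of Lemma \ref{lem:x|_G}, with Lemma \ref{lem:BasistdAl} taking the role of Lemma \ref{lem:BasistdA} on the target side. I plan to split into two parts: first, verifying that $a|_G$, viewed in $\overline{A}_\lda(l)$ via $\psi_G$, satisfies the three conditions of Lemma \ref{lem:BasistdAl}; second, checking that the differential commutes with restriction, namely $(\de_G a)|_G = \de(a|_G)$.

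For the first part, condition (1) of Lemma \ref{lem:BasistdAl} follows immediately from condition (ii) of Lemma \ref{lem:piqi} applied to $a$, restricted to $i \in [1, l-1]$, and condition (3) follows from Lemma \ref{lem:cl} together with Proposition \ref{prop:QuartumNonDatur} (no line of an edge interval is crossed). In contrast to Lemma \ref{lem:x|_G}, there is no ``left boundary'' condition $p_1 = 0$ to verify, since $\overline{A}_\lda(l)$ imposes none. Thus only condition (2), $q_l = 0$, requires genuine argument, and I would split on whether $l \in \x$. If $l \notin \x$, then condition (iii) of Lemma \ref{lem:piqi} yields $q_l = 0$ directly. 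If $l \in \x$, then $l \notin \y$ since $l$ is not fully used, and since line $l$ lies in $G$ it is not crossed, so $v_l(\x,\y) = 0$. A short computation of $v_{l+1} - v_l$ then forces $v_{l+1}(\x,\y) = 1$, so by Lemma \ref{lem:cl} the entry $p_{l+1}$ is odd and in particular nonzero, whence $q_l = 0$ by condition (ii) of Lemma \ref{lem:piqi}.

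For the second part, I would verify $(\de_i a)|_G = \de_i(a|_G)$ column by column. For interior indices $i \in [2, l-1]$, this is routine: the formulas of Lemma \ref{lem:general pq differential} reference only data on columns $i-1, i, i+1$ together with the idempotent membership of $i-1$ and $i$, all of which agree between $a$ (left idempotent $\x$, containing $[0, l-1]$) and $a|_G$ (left idempotent $[0, l-1]$). The left boundary $i = 1$ is essentially interior when $l \geq 2$: both $\{0, 1\} \subset \x$ and $\{0, 1\} \subset [0, l-1]$ hold, and the convention $q_0 = 0$ applies in both algebras.

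The main obstacle will be the right boundary $i = l$, where the idempotent data genuinely diverges: in $\overline{A}_\lda(l)$ we have $l \notin [0, l-1]$, so $\de_l(a|_G) = 0$ by the bad-idempotent case of equation \eqref{eq:general pq differential bad idemp}, and I would need to show $\de_l a = 0$ in $\sa nk$ as well. If $l \notin \x$ this is the bad-idempotent case in $\sa nk$ too; if $l \in \x$, the argument from part one supplies $p_{l+1}$ odd and nonzero, triggering the final vanishing case of equation \eqref{eq:general pq differential good idemp}. The edge case $l = 1$ is subsumed by this right-boundary analysis, and once this vanishing is in hand the full identity $(\de_G a)|_G = \de(a|_G)$ follows by linearity.
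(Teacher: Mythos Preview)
Your proposal is correct and follows essentially the same approach that the paper indicates, namely adapting the proof of Lemma~\ref{lem:x|_G} with Lemma~\ref{lem:BasistdAl} in place of Lemma~\ref{lem:BasistdA}. Your handling of the right boundary $q_l = 0$ via the computation $v_{l+1}(\x,\y) = v_l(\x,\y) + 1 = 1$ (when $l \in \x$) and the consequent vanishing $\de_l a = 0$ from the nonzero $p_{l+1}$ is exactly the analogue of the paper's argument for $q_{j+l} = 0$ in the generating interval case.
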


\begin{lemma}
\label{lem:x|_Grho}
Given a standard basis element
\[
a = \vv{p_1}{q_1}{1} \ldots \vv{p_n}{q_n}{n} \in \Jb_\x \sa nk \Jb_\y
\]
and a right edge interval $G = [n-l+1, n]]$ from $\x$ to $\y$, the restriction
\[
a|_G = \vv{p_{n-l+1}}{q_{n-l+1}}{n-l+1} \cdots \vv{p_n}{q_n}n
\]
of $a$ to $G$ is a well defined basis element of $\Jb_{[n-l+1, n]} \sa{n}{l} \Jb_{[n-l+1, n]}$.

Moreover, the differential $\de_G$ satisfies $(\de_G a)|_G = \de(a|_G)$.
\end{lemma}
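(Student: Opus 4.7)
My plan is to mirror the proofs of Lemmas~\ref{lem:x|_G} and \ref{lem:x|_Glambda}, reducing the claim to verifying the boundary-behavior conditions of Lemma~\ref{lem:BasistdAr} and checking that the only circular backbone at which $\de_G$-restriction might fail to commute is handled correctly. Specifically, via the re-indexing isomorphism \eqref{eq:psiG for right edge}, $a|_G$ becomes an array of vectors in $\overline{A}_\rho(l)$, and I need to verify the three conditions of Lemma~\ref{lem:BasistdAr}: condition~\eqref{it:Ar1} (the adjacency-nonzero restriction) is immediate from condition~\eqref{it:pq no matched start} of Lemma~\ref{lem:piqi}, and condition~\eqref{it:Ar3} (parity matching of $p_i$ and $q_i$) follows from Lemma~\ref{lem:cl} together with Proposition~\ref{prop:QuartumNonDatur}, since no line in an edge interval can be crossed.

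The substantive point is condition~\eqref{it:Ar2}, i.e., showing that $p_{n-l+1}=0$ (the entry corresponding to $p_1$ after re-indexing). I split on whether $n-l \in \x$. If $n-l \notin \x$, then condition~\eqref{it:pq start from x} of Lemma~\ref{lem:piqi} directly gives $p_{n-l+1}=0$. If $n-l \in \x$, then since $n-l$ is not fully used, $n-l \notin \y$; since line $n-l+1$ lies in the edge interval and is therefore not crossed, line $n-l$ itself must be crossed with $v_{n-l}(\x,\y)=-1$. Lemma~\ref{lem:cl} then forces $p_{n-l}=0$ and $q_{n-l}$ odd, after which condition~\eqref{it:pq no matched start} of Lemma~\ref{lem:piqi} yields $p_{n-l+1}=0$. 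Note there is no corresponding condition to check at the right endpoint, because the right edge interval terminates at $n$ and no entry beyond $q_n$ exists.

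For the commutation $(\de_G a)|_G = \de(a|_G)$, observe that for $i \in [n-l+2, n]$ the summand $\de_i$ affects only columns lying in $G$, so commutation is automatic. The only possible discrepancy is at $i = n-l+1$: I must show $\de_{n-l+1} a = 0$, since equation~\eqref{eq:general pq differential bad idemp} and the last case of equation~\eqref{eq:general pq differential good idemp} from Lemma~\ref{lem:general pq differential} already guarantee $\de(a|_G)$ has no contribution from column $n-l+1$ outside $G$. If $n-l \notin \x$, then $\de_{n-l+1}a=0$ by \eqref{eq:general pq differential bad idemp}. If $n-l \in \x$, the analysis in the previous paragraph showed $q_{n-l}$ is odd and in particular nonzero, so the final case of \eqref{eq:general pq differential good idemp} gives $\de_{n-l+1}a=0$.

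I expect no real obstacle; the proof is a verbatim mirror image of Lemma~\ref{lem:x|_G} with the roles of ``left'' and ``right'' reversed and only one boundary to examine rather than two. The only mild asymmetry to keep track of is that the relevant boundary crossed line sits to the \emph{left} of the edge interval (at coordinate $n-l$), which forces a crossed $v_{n-l}=-1$ (rather than $+1$) and correspondingly makes $q_{n-l}$ odd, the mirror of the situation on the right boundary of a generating interval.
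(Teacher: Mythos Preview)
Your proposal is correct and is exactly the mirror-image argument the paper intends: the paper omits the proof of this lemma, saying only that it is analogous to Lemma~\ref{lem:x|_G}, and your verification of condition~\eqref{it:Ar2} and of $\de_{n-l+1}a=0$ via the crossed-line analysis at coordinate $n-l$ is precisely that analogy carried out in detail. One small cleanup: to see $\de_{n-l+1}(a|_G)=0$ you only need equation~\eqref{eq:general pq differential bad idemp} (since $n-l\notin[n-l+1,n]$), not the last case of~\eqref{eq:general pq differential good idemp}.
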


Suppose that $\x$ and $\y$ are not far. Let $G_1, \ldots, G_b$ denote the generating intervals, and $G_\lda$ and $G_\rho$ denote the edge intervals if they exist. 
\begin{definition}\label{def:definition of psi JAJtoTensorProduct}
For a basis element
\[
a = \vv{p_1}{q_1}{1} \cdots \vv{p_n}{q_n}{n}
\]
of $\Jb_{\x} \sa nk \Jb_{\y}$, we define $\psi(a) = a$ if $[[1,n]]$ is a two-faced edge interval from $\x$ to $\y$. Otherwise, we define
\begin{equation*}
\psi(a) := \left( \prod_{i\in\CL{\x,\y}} U_i^{\frac{p_i+q_i-1}{2}} \right) 
\otimes \psi_{G_\lda}(a|_{G_\lda}) \otimes \psi_{G_1}(a|_{G_1}) \otimes \cdots \otimes \psi_{G_b}(a|_{G_b}) \otimes \psi_{G_\rho}(a|_{G_\rho}),
\end{equation*}
where we set $\psi_{G_\lda}(a|_{G_\lda}) = 1_{\F_2}$ (resp.~$\psi_{G_\rho}(a|_{G_\rho}) = 1_{\F_2}$) if there is no left (resp.~right) edge interval (the various maps $\psi_{G_j}$ are the isomorphisms described in Equations \ref{eq:psiG for non edge}, \ref{eq:psiG for right edge}, and \ref{eq:psiG for left edge}).
\end{definition}

Next, we will show that $\psi$ is a chain map and that it preserves the gradings.

\begin{lemma}\label{lem:differential via generating ints}
The differential $\de$ on $\Jb_\x\sa nk \Jb_\y$ satisfies
\[\de = \de_{G_\lda} + \de_{G_1} + \cdots + \de_{G_b} + \de_{G_\rho} \]
for the various intervals $G_j$ described above (if either $G_\lda$ or $G_\rho$ is empty, we have $\de_\emptyset = 0$).
\end{lemma}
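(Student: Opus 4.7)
The plan is to combine Proposition~\ref{prop:QuartumNonDatur} with Corollary~\ref{cor:splitdifferential}, and then show that the contribution of the crossed lines to the differential vanishes on the summand $\Jb_\x \sa nk \Jb_\y$.

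First I would invoke Proposition~\ref{prop:QuartumNonDatur}, which partitions $[1,n]$ as the disjoint union
\[
[1,n] = \CL{\x,\y} \;\sqcup\; G_\lda \;\sqcup\; G_1 \;\sqcup\; \cdots \;\sqcup\; G_b \;\sqcup\; G_\rho,
\]
where $G_\lda$ and/or $G_\rho$ are omitted if the corresponding edge interval is absent. Applying Corollary~\ref{cor:splitdifferential} to this partition immediately yields
\[
\de \;=\; \de_{\CL{\x,\y}} \;+\; \de_{G_\lda} \;+\; \de_{G_1} \;+\; \cdots \;+\; \de_{G_b} \;+\; \de_{G_\rho}.
\]

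The remaining (and only substantive) step is to show that $\de_{\CL{\x,\y}} = 0$ on $\Jb_\x \sa nk \Jb_\y$, which I would do by arguing that $\de_i a = 0$ for every standard basis element $a$ of $\Jb_\x \sa nk \Jb_\y$ and every crossed line $i \in \CL{\x,\y}$. Since $\Sc = \varnothing$, the $C$-component $\de_i^c$ is identically zero, so it suffices to check $\de_i^0 a = 0$. Writing $a = \vv{p_1}{q_1}{1} \cdots \vv{p_n}{q_n}{n}$ and applying Lemma~\ref{lem:cl}, the hypothesis that line $i$ is crossed forces exactly one strand on $S^1_i$: one of $p_i,q_i$ is zero and the other is odd. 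Lemma~\ref{lem:general pq differential} then gives $\de_i^0 a = 0$, because that formula requires two strands of unequal speeds on $S^1_i$ to produce a nonzero term (equivalently, $\{i-1,i\} \subset \x$ together with $q_{i-1} = p_{i+1} = 0$ and $p_i \neq q_i$ with both contributing, which cannot happen when the parities of $p_i,q_i$ differ and one of them is zero).

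I do not expect any real obstacle here; the lemma is essentially a bookkeeping consequence of the partition of $[1,n]$ from Proposition~\ref{prop:QuartumNonDatur} together with the vanishing of $\de_i$ on crossed lines, which is an immediate consequence of Lemma~\ref{lem:cl} and Lemma~\ref{lem:general pq differential}. The only point to take a little care with is making sure one states the parity/uniqueness consequence of Lemma~\ref{lem:cl} cleanly enough to plug directly into the case analysis of Lemma~\ref{lem:general pq differential}.
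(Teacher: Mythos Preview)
Your proposal is correct and follows essentially the same approach as the paper: partition $[1,n]$ via Proposition~\ref{prop:QuartumNonDatur}, apply Corollary~\ref{cor:splitdifferential}, and then kill the crossed-line contributions using Lemma~\ref{lem:cl}. The only cosmetic difference is that the paper appeals directly to Definition~\ref{def:differential} (one strand on $S^1_i$ means there is no crossing to resolve, so $\de_i a = 0$) rather than routing through the case analysis of Lemma~\ref{lem:general pq differential}; your route is equivalent but slightly more circuitous.
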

\begin{proof}
By Proposition \ref{prop:QuartumNonDatur}, generating intervals, edge intervals, and crossed lines form a partition of $[1,n]$. Therefore, by Corollary \ref{cor:splitdifferential},
\[
\de = \de_{G_\lda} + \de_{G_1} + \cdots + \de_{G_b} + \de_{G_\rho} + \sum_{i\in\CL{\x,\y}}\de_i
\]
on $\Jb_\x \sa nk \Jb_\y$.
If line $i$ is crossed, then by Lemma \ref{lem:cl} any basis element $a$ of $\Jb_\x \sa nk \Jb_\y$ has only one strand on the $i$-th backbone. Thus, by Definition \ref{def:differential}, $\de_i a = 0$. It follows that, for all $i\in\CL{\x,\y}$, $\de_i$ vanishes on $\Jb_\x \sa nk \Jb_\y$, and the lemma is proved.
\end{proof}

\begin{corollary}
\label{cor:psiChainMap}
The map $\psi$ of Definition \ref{def:definition of psi JAJtoTensorProduct} is a chain map.
\end{corollary}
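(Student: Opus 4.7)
The plan is straightforward given the preceding lemmas, and amounts to organizing their content. First I would handle the trivial case: if $\x=\y=[0,n]$, then $[[1,n]]$ is a two-faced edge interval and $\psi$ is by definition the identity map $\overline{A}_{\lambda\rho}(n)\to \overline{A}_{\lambda\rho}(n)$, which is tautologically a chain map. So assume from now on that we are in the generic case, with the target of $\psi$ being a nontrivial tensor product indexed by generating and edge intervals together with a polynomial factor for crossed lines.

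Next I would reduce the chain-map condition to a per-interval statement using Lemma~\ref{lem:differential via generating ints}. That lemma already tells us that on $\Jb_\x \sa nk \Jb_\y$ we have
\[
\de \;=\; \de_{G_\lda} + \de_{G_1} + \cdots + \de_{G_b} + \de_{G_\rho},
\]
so by $\F_2$-linearity it suffices to show $\psi(\de_G a) = (\id \otimes \cdots \otimes \de \otimes \cdots \otimes \id)\psi(a)$ for each of the intervals $G$, where the differential acts on the tensor factor corresponding to $G$. The polynomial factor $\F_2[U_i \mid i\in\CL{\x,\y}]$ carries zero differential, and crossed lines contribute nothing to $\de$ by Lemma~\ref{lem:differential via generating ints}, so these tensor slots are automatically compatible.

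Now fix an interval $G$ (generating, left-edge, or right-edge). The key observation is that $\de_G$ only modifies strands on the backbones belonging to $G$: inspecting the formulas of Lemma~\ref{lem:general pq differential}, $\de_i$ changes only the $i^{\text{th}}$ column $\binom{p_i}{q_i}_i$. Consequently, for $a = \binom{p_1}{q_1}_1\cdots\binom{p_n}{q_n}_n$ and for any other interval $G'\neq G$, we have $(\de_G a)|_{G'} = a|_{G'}$ (and likewise for crossed-line columns). Combined with Lemmas~\ref{lem:x|_G}, \ref{lem:x|_Glambda}, and \ref{lem:x|_Grho}, which give $(\de_G a)|_G = \de(a|_G)$ in the respective algebra $\overline{A}(l_a)$, $\overline{A}_\lda(l_0)$, or $\overline{A}_\rho(l_{b+1})$, we get exactly that $\psi\circ \de_G = (\id\otimes\cdots\otimes \de\otimes \cdots\otimes \id)\circ\psi$. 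Summing over all intervals $G$ then yields $\psi\circ\de = \de\circ\psi$, as desired.

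The only mild subtlety — and the main thing to verify carefully — is that one has to expand $\de_G a$ as a sum of standard basis elements before restricting, and check that each summand still lies in $\Jb_{\x}\sa nk \Jb_{\y}$ with the same interval data from $\x$ to $\y$ (so that the restriction maps $\psi_{G'}$ make sense on every summand). This is immediate from the fact that $\de_G$ does not change the right idempotent of any basis element (the ending I-state is determined by parities of the $p_i,q_i$, which $\de_G$ preserves, as is visible in Lemma~\ref{lem:general pq differential}). With that observation in hand, no further computation is required.
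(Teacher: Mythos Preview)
Your proposal is correct and follows essentially the same approach as the paper: decompose $\de$ via Lemma~\ref{lem:differential via generating ints}, observe that $\de_G$ only alters columns in $G$ so restrictions to other intervals (and the crossed-line polynomial factor) are unchanged, and apply Lemmas~\ref{lem:x|_G}, \ref{lem:x|_Glambda}, \ref{lem:x|_Grho} for the interval in question. Your explicit treatment of the two-faced case and the remark about $\de_G$ preserving the right idempotent are minor additions, not substantive differences.
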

\begin{proof}
For any basis element $a\in \Jb_\x\sa nk \Jb_\y$, we rename $G_0:=G_\lda$ and $G_{b+1}:= G_\rho$ (again allowing for either to be the empty interval).  We then use Lemmas \ref{lem:differential via generating ints} and \ref{lem:x|_G} to compute
\begin{align*}
\psi( \de a) & = \psi \left( \de_{G_0} a + \de_{G_1}a + \dots + \de_{G_b} a + \de_{G_{b+1}} a\right) = \sum_{d =0}^{b+1} \psi (\de_{G_d} a)\\
& = \sum_{d =0}^{b+1} \left( \prod_{i\in\CL{\x,\y}} U_i^{\frac{p_i+q_i-1}{2}} \right) \otimes \psi_{G_0}(a|_{G_0}) \otimes \cdots \otimes \psi_{G_d}(\de a|_{G_d}) \otimes \cdots \otimes \psi_{G_{b+1}}(a|_{G_{b+1}}) \\
& = \sum_{d =0}^{b+1} \left( \prod_{i\in\CL{\x,\y}} U_i^{\frac{p_i+q_i-1}{2}} \right) \otimes \psi_{G_0}(a|_{G_0}) \otimes \cdots \otimes \de\left(\psi_{G_d}(a|_{G_d})\right) \otimes \cdots \otimes \psi_{G_{b+1}}(a|_{G_{b+1}}) \\
& = \de (\psi(a)).
\end{align*}
To derive the equality on the second line, note that no term in the differential can affect strands on backbones corresponding to crossed lines (see the proof of Lemma \ref{lem:differential via generating ints}), so that $\prod_{i\in\CL{\x,\y}} U_i^{\frac{p_i+q_i-1}{2}}$ is indeed the first factor of each term in the sum.  Similarly, $\de_{G_d}(a|_{G_e}) = a|_{G_e}$ whenever $d\neq e$, while $\de_{G_d}(a|_{G_d})=\de(a|_{G_d})$.
\end{proof}

\begin{lemma}
\label{lem:psiPreservesGradings}
The map $\psi$ of Definition \ref{def:definition of psi JAJtoTensorProduct} preserves the Alexander multi-grading and the Maslov grading.
\end{lemma}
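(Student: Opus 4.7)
My strategy is to exploit the fact that both gradings are index-by-index sums, together with the partition of $[1,n]$ provided by Proposition~\ref{prop:QuartumNonDatur}. Since $\Sc = \varnothing$ (so $\vec{c} = \vec{0}$ throughout), each formula in Definition~\ref{def:gradings} is manifestly of the form
\[
g(a) \;=\; \sum_{i=1}^{n} g^{(i)}(p_i, q_i),
\]
where $g^{(i)}$ depends only on the $i$-th column $\binom{p_i}{q_i}_i$. By Proposition~\ref{prop:QuartumNonDatur}, the index set $[1,n]$ is partitioned into the crossed lines $\CL{\x,\y}$ together with the maximal generating and edge intervals, so the total grading of $a$ splits as a sum of contributions coming from each of these pieces. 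Since the refined Alexander and Maslov gradings on a tensor product of graded complexes are sums of the gradings on the factors, it suffices to verify that the contribution from each piece of the partition matches the grading of the corresponding tensor factor of $\psi(a)$ as defined in Definition~\ref{def:definition of psi JAJtoTensorProduct}.

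For the generating-interval factors, the maps $\psi_{G_j}$ of Definition~\ref{def:RegradedGenAlgs} are grading-preserving by construction: the grading on the target $\overline{A}(l_a)$ has been re-indexed by $\tau_i, \beta_i \mapsto \tau_{i+j_a}, \beta_{i+j_a}$ exactly so as to intertwine the gradings, and the Maslov grading is automatically preserved because no shift is needed there. The right-edge factor is handled identically with the corresponding shift on $\overline{A}_\rho(l_{b+1})$, the left-edge factor requires no shift at all, and the two-faced case is tautological since $\psi$ is the identity on $\overline{A}_{\lda\rho}(n)$. Hence each of these tensor factors contributes the correct grading.

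The only contribution that must be checked by a direct computation is the crossed-line part. Fix $i \in \CL{\x,\y}$. By Lemma~\ref{lem:cl}, exactly one of $p_i, q_i$ is zero and the other is a positive odd integer, so $|p_i - q_i| = p_i + q_i$. Substituting into Definition~\ref{def:gradings} collapses the $i$-th Maslov contribution to $\tfrac{p_i+q_i}{2} - \tfrac{p_i+q_i}{2} = 0$ and yields the $i$-th refined Alexander contribution $w_i = \tfrac{p_i+q_i}{2}$. On the other side, the factor $U_i^{(p_i+q_i-1)/2} \in \F_2[U_j \mid j \in \CL{\x,\y}]$ sits in Maslov degree $0$ and, by the conventions recalled immediately above Theorem~\ref{thm:JAJtoTensorProduct}, has refined Alexander grading $w_i = \tfrac{1}{2} + \tfrac{p_i+q_i-1}{2} = \tfrac{p_i+q_i}{2}$. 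These match, and assembling the contributions from all $i \in [1,n]$ gives the desired equality of gradings. The main (and only) delicate step is this crossed-line check; the parity constraint from Lemma~\ref{lem:cl} makes the absolute value collapse cleanly, so no further input is required.
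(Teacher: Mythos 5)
Your proof is correct and follows essentially the same route as the paper: partition $[1,n]$ via Proposition~\ref{prop:QuartumNonDatur}, note that the generating/edge factors preserve gradings by the very construction of the regraded $\psi_{G_j}$ in Definition~\ref{def:RegradedGenAlgs}, and verify the crossed-line contributions directly using Lemma~\ref{lem:cl} and the conventions $w_i(1) = \tfrac12$, $\m = 0$ on the crossed-lines algebra. The only difference is presentational: you spell out explicitly that $\Sc = \varnothing$ forces $\vec c = \vec 0$, but the substance and the key computation are identical to the paper's.
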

\begin{proof}
We have already seen how the Alexander multi-grading $w_i$ is preserved under each $\psi_{G_j}$ for $i$ in a generating or edge interval $G_j$, and since generating intervals, edge intervals, and crossed lines form a partition of $[1,n]$ (Proposition \ref{prop:QuartumNonDatur}), it only remains to verify the preservation for the crossed lines. This claim follows from the definition of the grading on the crossed-lines algebra (recall that $w_i(1):=\frac{1}{2}$ if $i\in\CL{\x,\y}$, which offsets the $-1$ in the numerator of $U_{i}^{\frac{p_{i}+q_{i}-1}{2}}$).

The Maslov grading is similar. Since the grading on the tensor product will be a sum of gradings on each factor, and the generating interval factors preserve their contributions to this sum, we again only mention the crossed lines.  Note that for a crossed line $i$, one of $p_i,q_i$ is zero so that $|p_i-q_i|=p_i+q_i$, and so the corresponding summand in Definition \ref{def:gradings} is
\[
\frac{|p_i-q_i|}{2} - (p_i+q_i) + \left(0 + \frac{p_i + q_i}{2}\right) = \frac{|p_i-q_i|-(p_i+q_i)}{2} = 0,
\]
agreeing with the Maslov grading on the crossed-lines algebra.
\end{proof}

\subsection{Definition of \texorpdfstring{$\phi$}{phi}}
\label{sec:TensorProducttoJAJ}

In this subsection we define a map
\[
\phi \colon \F_2[U_i \, | \, i\in\CL{\x,\y}] \otimes \overline{A}_\circ(l_0) \otimes \overline{A}(l_1) \otimes \cdots \otimes \overline{A}(l_{b}) \otimes \overline{A}_\circ(l_{b+1}) \to \Jb_\x \A(n,k) \Jb_\y
\]
which will be the inverse to $\psi$.

For a crossed line $i$ from $\x$ to $\y$, we define, for a non-negative integer $r$,
\begin{equation}
\label{eq:phixyir}
\phi_{\x, \y}^i(r) =
\begin{cases}
\vv{1+2r}{0}i & v_i(\x, \y) = +1\\
\vv{0}{1+2r}i & v_i(\x, \y) = -1.
\end{cases}
\end{equation}

Recall that, given a generating interval $[j+1, j+l]$ from $\x$ to $\y$ and a standard basis element $a \in \Jb_{[j+1, j+l-1]} \sa{n}{l-1} \Jb_{[j+1, j+l-1]}$, we denote the array of vectors defining it by
\[
\Ar(a) = \vv{p_{j+1}}{q_{j+1}}{j+1} \cdots \vv{p_{j+l}}{q_{j+l}}{j+l}
\]

(see Lemma \ref{lem:piqi}, noting that all of the other vectors would have zeroes as entries). Note that $\Ar(a)$ itself does not record the ingoing and outgoing idempotents, so we can interpret it as a standard basis element in another idempotent as necessary. Arrays $\Ar(a)$ for edge intervals are defined similarly.

\begin{definition}\label{def:PhiHelper}
Given a monomial $\CLmonomial$ in $\F_2[U_i \, | \, i\in\CL{\x,\y}]$ and standard basis elements $a_c \in \overline{A}(l_c)$, define an array of vectors 
\begin{equation}\label{eq:PhiHelperArrayDef}
\Ar\left(\CLmonomial, a_1, \ldots, a_b\right) := \left( \prod_{i\in\CL{\x,\y}} \phi_{\x,\y}^i(r_i) \right) \cdot \Ar(\psi_{G_1}^{-1}(a_1)) \cdots \Ar(\psi_{G_b}^{-1}(a_b)),
\end{equation}
where we implicitly put the vectors appearing on the right side of the equation in increasing order. By Proposition~\ref{prop:QuartumNonDatur}, no line can be a crossed line while also belonging to a generating interval, so each vector in Equation~\eqref{eq:PhiHelperArrayDef} appears with a different index.

If there is a left edge interval $G_\lda = [[1,l_0]$, and we have $a_0 \in \overline{A}_\lda(l_0)$, then in Equation~\eqref{eq:PhiHelperArrayDef} we should include the term $\Ar(\psi_{G_\lda}^{-1}(a_0))$ as well.
If there is a right edge interval $G_\rho = [n-l_{b+1}+1,n]]$, and we have $a_{b+1} \in \overline{A}_\lda(l_{b+1})$, then in Equation~\eqref{eq:PhiHelperArrayDef} we should include the term $\Ar(\psi_{G_\rho}^{-1}(a_{b+1}))$ as well.
Lastly, if $\x = \y = [0,n]$, i.e.~there is a two-faced interval, then our monomial $\CLmonomial$ is $1$ and we have a single standard basis element $a_1 =: a$ of $\overline{A}_{\lda\rho}(n)$. We define $\Ar(a)$ to be the array associated to this basis element in Lemma \ref{lem:piqi}.
\end{definition}

\begin{lemma}
\label{lem:phiwelldefined}
The array $\Ar(\CLmonomial, a_1, \ldots, a_b)$ from Definition~\ref{def:PhiHelper} represents a basis element of $\Jb_\x \A \Jb_\y$ under the correspondence of Lemma~\ref{lem:piqi}.
\end{lemma}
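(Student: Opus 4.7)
The plan is to verify each of the six conditions of Lemma~\ref{lem:piqi} for the constructed array, and then check that the left and right idempotents produced by that lemma are $\x$ and $\y$ respectively. Since we are working with $\Sc = \varnothing$ throughout this section, condition~(i) is vacuous. All the remaining conditions are local, involving at most two adjacent indices, so I would argue them index-by-index and pair-by-pair.

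First I would use Proposition~\ref{prop:QuartumNonDatur} to write $[1,n]$ as the disjoint union of $\CL{\x,\y}$ and the generating/edge intervals $G_0=G_\lda, G_1,\ldots,G_b, G_{b+1}=G_\rho$. The key observation is that every index $i\in[1,n]$ contributes the vector $\vv{p_i}{q_i}{i}$ from exactly one source in Equation~\eqref{eq:PhiHelperArrayDef}: from $\phi_{\x,\y}^i(r_i)$ if $i\in\CL{\x,\y}$, and from $\Ar(\psi_{G_d}^{-1}(a_d))$ if $i$ belongs to the interval $G_d$. For a consecutive pair $(i,i+1)$ lying entirely within a single interval, conditions (ii), (iv), (v), and (vi) reduce directly to the defining conditions of whichever of Lemmas~\ref{lem:BasistdA}, \ref{lem:BasistdAl}, \ref{lem:BasistdAr}, or \ref{lem:BasistdAlr} governs that interval.

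The remaining pairs $(i,i+1)$ either have at least one member a crossed line or straddle the boundary between two different intervals $G_d,G_{d+1}$. The straddling case is controlled by the edge conditions $p_1=0$ and $q_n=0$ built into the generating-algebra lemmas, which make the relevant components zero on at least one side of the boundary. The crossed-line case is controlled by $\phi_{\x,\y}^i(r_i)$, which produces $p_i$ odd with $q_i=0$ or vice versa depending on the sign of $v_i(\x,\y)$; here I would use the inequality
\[
v_{i-1}(\x,\y) - v_i(\x,\y) = \delta_{i-1\in\y} - \delta_{i-1\in\x} \in \{-1,0,1\}
\]
to rule out the only potentially bad pattern, namely adjacent crossed lines with $v_{i-1}=-v_i=\pm1$. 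Condition~(iii), $q_i=p_{i+1}=0$ for $i\notin\x$, would then follow by the same dichotomy: such an $i$ must lie either outside all intervals (in which case the boundary conditions force the vanishing) or adjacent to a crossed line whose sign is forced by the same inequality to yield the correct zero. Condition~(vi), which ties the parity of a single $p_i$ or $q_i$ to the parity of an adjacent entry, follows from a parallel analysis using Lemma~\ref{lem:cl} to translate "line $i$ is crossed'' into "$p_i \not\equiv q_i \pmod 2$''.

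Finally, I would identify the left idempotent by reading off the rules in Lemma~\ref{lem:piqi}: the idempotent contains every interior index of every generating interval (since those are the idempotents used in the generating-algebra factors) and every boundary index dictated by the parities coming from the adjacent crossed line or edge interval; this collection is exactly $\x$. The analogous check for the right idempotent gives $\y$, using the explicit determination of the output idempotent in the second half of Lemma~\ref{lem:piqi} and the fact that each $a_d$ has matching left and right idempotents inside its generating algebra. The main obstacle is purely bookkeeping: the sheer number of subcases at boundaries between intervals, crossed lines, and edge intervals, none of them individually difficult once Lemma~\ref{lem:cl} is invoked to convert sign data into parity data.
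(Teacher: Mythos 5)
Your outline follows the paper's strategy exactly: partition $[1,n]$ via Proposition~\ref{prop:QuartumNonDatur}, verify the conditions of Lemma~\ref{lem:piqi} locally by distinguishing indices inside an interval, on an interval boundary, or on a crossed line, and then determine the right idempotent. The adjacent-difference identity you quote does dispose of conditions~(ii) and (iv), where a violation would require $|v_i(\x,\y)-v_{i+1}(\x,\y)|=2$ for a pair of adjacent crossed lines.

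It does not, however, carry condition~(iii) as you claim. When $i\notin\x$ and line $i$ is a crossed line, you must rule out $v_i(\x,\y)=-1$ (which would put $q_i\neq0$), and symmetrically $v_{i+1}(\x,\y)=+1$ when line $i+1$ is crossed. The identity $v_i(\x,\y)-v_{i+1}(\x,\y)=\delta_{i\in\y}-\delta_{i\in\x}$ only controls adjacent differences and by itself cannot determine whether $i$ lies in $\x$; it does not ``force the sign.'' What the paper actually invokes here (with the words ``because $\x$ and $\y$ are not far'') is the stronger pointwise consequence of Definition~\ref{def:ReviewNotFarCrossed}: $v_i(\x,\y)=-1$ forces $x_a=i$ and $y_a=i-1$ for some $a$ (hence $i\in\x$ and $i-1\in\y$), and symmetrically $v_i(\x,\y)=+1$ forces $i-1\in\x$ and $i\in\y$. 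This fact, not the difference identity, closes condition~(iii) on crossed lines, and it is also what drives the inclusion $\y\subset\y'$ in the right-idempotent verification; the difference identity then finishes condition~(vi) only after $i\in\x$ is already in hand and $|v_{i+1}(\x,\y)|\leq1$ has been supplied by non-farness.
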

\begin{proof}
We prove the lemma in the case where there are no edge intervals. The other cases are a straightforward variation of this proof (where one uses Lemmas \ref{lem:BasistdAl} and/or \ref{lem:BasistdAr} in addition to Lemma \ref{lem:BasistdA} below). We check that $\Ar(\CLmonomial, a_1, \ldots, a_n)$ satisfies the conditions of Lemma \ref{lem:piqi}, hence it represents a standard basis element of $\Jb_\x \sa nk$, and that the right idempotent of this basis element is $\Jb_\y$.   Note that by hypothesis we know that $\x$ and $\y$ are not far.  We will use $\dumline$ below for the index $i$ in items \eqref{it:pq no matched start}--\eqref{it:pq dots move other dots} of Lemma~\ref{lem:piqi}.

Condition~\eqref{it:no double C} of Lemma \ref{lem:piqi} is immediate because there are no $C_\dumline$ variables under consideration, so we begin with condition \eqref{it:pq no matched start}. Each of $\dumline$ and $\dumline+1$ is either a crossed line or belongs to a generating interval. If $\dumline$ and $\dumline+1$ belong to the same generating interval $G_c$, condition \eqref{it:pq no matched start} follows immediately from \eqref{it:pq no matched start} for $a_c$. If $\dumline$ belongs to a generating interval $G$, but $\dumline+1$ does not, then $q_\dumline = 0$ (see Lemma \ref{lem:BasistdA}). Analogously, if $\dumline+1$ belongs to a generating interval $G$, but $\dumline$ does not, then $p_{\dumline+1} = 0$.
Lastly, if both $\dumline$ and $\dumline+1$ are crossed lines, suppose that both $q_\dumline$ and $p_{\dumline+1}$ are nonzero. Then, by equation \eqref{eq:phixyir}, $v_\dumline(\x,\y) = -1$ and $v_{\dumline+1}(\x,\y)= 1$, from which we deduce
\[
\y \cap [\dumline, n] - \y \cap [\dumline+1, n] = \x \cap [\dumline, n] - \x \cap [\dumline+1, n] -2 < 0,
\]
contradicting that $[\dumline+1,n] \subset [\dumline,n]$. Thus we must have at least one of $q_\dumline,p_{\dumline+1}$ equal to zero, and in all cases condition \eqref{it:pq no matched start} is satisfied.

For condition \eqref{it:pq no dots pinzer} of Lemma \ref{lem:piqi}, we consider the same cases.  If $\dumline,\dumline+1\in G_c$, then the condition is guaranteed by the same condition for $a_c$.  If only $\dumline$ (respectively $\dumline+1$) is in some $G$, then Lemma \ref{lem:BasistdA} implies $q_\dumline=0$ (respectively $p_{\dumline+1}=0$) which in turn forces $p_\dumline$ even (respectively $q_{\dumline+1}$ even), and both cases satisfy condition \eqref{it:pq no dots pinzer}.  Finally, if both $\dumline,\dumline+1$ are crossed lines, equation \eqref{eq:phixyir} allows for a proof by contradiction as above, so that in all cases condition \eqref{it:pq no dots pinzer} is satisfied.

For condition \eqref{it:pq start from x}, we prove the contrapositive.  Assume that $q_\dumline \neq 0$. If $\dumline$ is a crossed line, then $v_{\dumline}(\x,\y) = -1$ by equation \eqref{eq:phixyir}, so $\dumline\in\x$ because $\x$ and $\y$ are not far.
If $\dumline$ belongs to some generating interval instead, then by Lemma \ref{lem:BasistdA}, item \eqref{it:A2}, we can conclude that line $\dumline +1$ is also in the generating interval, so that coordinate $\dumline$ must be fully used and $j\in\x$. The argument is similar when one assumes that $p_{\dumline+1} \not= 0$.

For condition~\eqref{it:pq disjoint endpoints for non-const} of Lemma~\ref{lem:piqi}, equation \eqref{eq:phixyir} implies that $\dumline$ belongs to a generating interval, so condition \eqref{it:A3} of Lemma \ref{lem:BasistdA} guarantees that $p_\dumline\equiv q_\dumline \pmod 2$ as desired.

Finally, for condition \eqref{it:pq dots move other dots} of Lemma \ref{lem:piqi}, the assumption that one of $p_\dumline,q_\dumline$ is odd while the other is zero implies that $\dumline$ is a crossed line (by condition \eqref{it:A3} of Lemma \ref{lem:BasistdA}, $\dumline$ cannot be contained in a generating interval).  If $p_\dumline$ is odd, then $v_\dumline(\x,\y)=1$ by equation~\eqref{eq:phixyir}. Since $\dumline\in\x$, we must have $v_{\dumline+1}(\x,\y)=1$ as well, so that $\dumline+1$ is a crossed line and equation \eqref{eq:phixyir} gives $p_{\dumline+1}$ odd as desired.  If $q_\dumline$ is odd, a similar argument forces $\dumline-1$ to be a crossed line with $q_{\dumline-1}$ odd.

We now check that the ending I-state of the element $a \in \Jb_\x \A$ defined by the array of vectors in equation~\eqref{eq:PhiHelperArrayDef} is indeed $\y$. Let $\y'$ denote the ending I-state of $a$ as characterized in Lemma \ref{lem:piqi}.  We will show that $\y\subset\y'$, which is sufficient since $|\y|=|\x|=|\y'|$.

Suppose $\dumline\in\y$.  If $\dumline\notin\x$, then we must have either $v_\dumline(\x,\y)=1$ or $v_{\dumline+1}(\x,\y)=-1$.  In the first case $\Ar(\CLmonomial, a_1, \ldots, a_n)$ contains $\vv{2r_\dumline+1}{0}{\dumline}$, and in the second case $\Ar(\CLmonomial, a_1, \ldots, a_n)$ contains $\vv{0}{2r_{\dumline+1}+1}{\dumline+1}$.  Either way, we have $\dumline\in\y'$ by Lemma \ref{lem:piqi}.  On the other hand, if $\dumline\in\x$, we have several cases to consider.  If line $\dumline$ is crossed, first suppose $p_\dumline$ is odd and $q_\dumline$ is zero. Then $\dumline \in \y'$ by Lemma~\ref{lem:piqi}. Alternately, if $p_\dumline$ is zero and $q_\dumline$ is odd, then $v_\dumline(\x,\y) = -1$. Since $\dumline \in \y$, we must have $v_{\dumline+1}(\x,\y) = -1$. It follows that $q_{\dumline+1}$ is odd, so $\dumline \in \y'$. The argument when line $\dumline+1$ is crossed is analogous. Finally, if neither line $\dumline$ nor $\dumline+1$ is crossed, then $\dumline$ and $\dumline+1$ are part of a generating interval and we have $p_\dumline\equiv q_\dumline \pmod{2}$ and $p_{\dumline+1}\equiv q_{\dumline+1} \pmod{2}$ by Lemma \ref{lem:BasistdA}.  Since $\dumline\in\x$, even in the case when some or all of these integers are zero, we must have $\dumline\in\y'$ by Lemma \ref{lem:piqi} and we are done.
\end{proof}

\begin{definition}\label{def:TensorProducttoJAJ}
Let $\CLmonomial$ be a monomial in $\F_2[U_i \, | \, i\in\CL{\x,\y}]$ and let $a_c \in \overline{A}(l_c)$ be standard basis elements. Define 
\[
\phi\left( \CLmonomial \otimes a_1 \otimes \cdots \otimes a_{b} \right)
\]
to be the element of $\Jb_\x \A \Jb_\y$ represented by $\Ar(\CLmonomial, a_1, \ldots, a_b)$.
\end{definition}

By Lemma~\ref{lem:phiwelldefined}, $\phi$ is well-defined.

\subsection{Proof of the splitting theorem}
\label{sec:proof of splitting JAJ}

In this subsection we prove Theorem \ref{thm:JAJtoTensorProduct}.

\begin{lemma}
\label{lem:phi=psi^-1}
The maps $\psi$ and $\phi$ defined in Sections \ref{sec:JAJtoTensorProduct} and \ref{sec:TensorProducttoJAJ} are inverses to each other.
\end{lemma}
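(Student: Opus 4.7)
The plan is to verify inverseness on the standard bases of both sides, since $\psi$ and $\phi$ are each defined via their values on basis elements and extended linearly. The case $\x = \y = [0,n]$ is trivial since both maps are the identity on $\overline{A}_{\lambda\rho}(n)$, so I will assume we are not in the two-faced edge case.

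First I would prove $\phi \circ \psi = \id$. Start with a standard basis element $a = \vv{p_1}{q_1}{1} \cdots \vv{p_n}{q_n}{n}$ of $\Jb_\x \A(n,k) \Jb_\y$ and compute $\phi(\psi(a))$ via the array formula of Definition~\ref{def:PhiHelper}. For each index $i$, Proposition~\ref{prop:QuartumNonDatur} says $i$ lies in exactly one of: a crossed line, a generating interval, or an edge interval. On a generating/edge interval $G$, the $i$-th column of $\Ar(\psi_G^{-1}(\psi_G(a|_G)))$ agrees with the $i$-th column of $a$ since $\psi_G^{-1} \circ \psi_G = \id$ by construction. On a crossed line $i$, Lemma~\ref{lem:cl} tells us that exactly one of $p_i, q_i$ is zero and the other is odd, with the nonzero slot determined by the sign of $v_i(\x,\y)$; in either case $r_i := (p_i + q_i - 1)/2$ is a non-negative integer and $\phi^i_{\x,\y}(r_i)$ reproduces $\vv{p_i}{q_i}{i}$ exactly by the definition in equation~\eqref{eq:phixyir}. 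Hence the reassembled array equals the original.

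Next I would prove $\psi \circ \phi = \id$. Take a pure tensor basis element $\CLmonomial \otimes a_0 \otimes a_1 \otimes \cdots \otimes a_b \otimes a_{b+1}$ (with $a_0, a_{b+1}$ present only when the corresponding edge interval exists), form the array $\Ar(\CLmonomial, a_0, \ldots, a_{b+1})$ via Definition~\ref{def:PhiHelper}, and then apply $\psi$. By Lemma~\ref{lem:x|_G} and its edge-interval analogues (Lemmas~\ref{lem:x|_Glambda} and \ref{lem:x|_Grho}), restricting to a generating or edge interval $G_c$ recovers $\psi_{G_c}^{-1}(a_c)$, so that $\psi_{G_c}$ applied to this restriction returns $a_c$. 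On each crossed line $i$, the inserted vector $\phi^i_{\x,\y}(r_i)$ has entries summing to $2r_i+1$, so $\psi$ records the exponent $(p_i + q_i - 1)/2 = r_i$, reproducing the original monomial $\CLmonomial$. Thus the tensor product is reconstructed.

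The only real obstacle is a bit of careful bookkeeping to confirm that the two ingredients — the crossed-line factors and the restricted generating/edge-interval pieces — occupy disjoint columns and together fill the whole array, which is exactly what Proposition~\ref{prop:QuartumNonDatur} guarantees. Combined with Lemma~\ref{lem:phiwelldefined} (ensuring $\phi$'s output is a genuine basis element) and the observation that each $\psi_{G_c}$ is already an isomorphism, everything fits together cleanly.
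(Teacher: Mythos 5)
Your proof is correct and follows essentially the same approach as the paper: verify $\phi\circ\psi=\id$ column-by-column on standard basis elements, splitting indices $i$ by the trichotomy of Proposition~\ref{prop:QuartumNonDatur} (crossed line versus generating/edge interval), and using Lemma~\ref{lem:cl} together with equation~\eqref{eq:phixyir} for the crossed-line columns. The paper leaves $\psi\circ\phi=\id$ ``to the reader''; you actually write it out, which is fine, though I'd note that the key fact in that direction — restriction of $\Ar(\CLmonomial,a_0,\dots,a_{b+1})$ to $G_c$ returning $\Ar(\psi_{G_c}^{-1}(a_c))$ — is really a tautology of Definition~\ref{def:PhiHelper} rather than a consequence of Lemmas~\ref{lem:x|_G}–\ref{lem:x|_Grho}, which instead guarantee that the restriction is a well-defined basis element of the appropriate summand.
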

\begin{proof}
Let
\[
a = \vv{p_1}{q_1}{1} \cdots \vv{p_n}{q_n}{n} \in \Jb_\x \sa nk \Jb_\y
\]
be a standard basis element (we resume our practice of identifying algebra elements and the arrays of vectors representing them). If line $i$ is crossed, then by Lemma \ref{lem:cl} we have
\begin{equation*}
\vv{p_i}{q_i}{i} =
\begin{cases}
\vv{p_i + q_i}{0}i & v_i(\x, \y) = +1,\\
\vv{0}{p_i + q_i}i & v_i(\x, \y) = -1.
\end{cases}
\end{equation*}
In $\psi(a)$ we get a factor $U_i^{\frac{p_i + q_i - 1}{2}}$, which produces a factor $\phi_{\x,\y}^i(\frac{p_i + q_i - 1}{2})$ in $\phi(\psi(a))$. By equation \eqref{eq:phixyir}, this factor agrees with $\vv{p_i}{q_i}{i}$. If $i$ is in a generating or edge interval, then $\vv{p_i}{q_i}{i}$ also appears as a factor in $\phi \circ \psi (a)$. As we noted after the definition of $\phi$, a factor indexed by some number $i$ does not appear more than once in the formula for $\phi \circ \psi(a)$. Thus, $a = \phi \circ \psi (a)$. The proof that $\psi \circ \phi = \id$ is similar and is left to the reader.
\end{proof}

\begin{proof}[Proof of Theorem \ref{thm:JAJtoTensorProduct}]
The map $\psi$ defined in Section \ref{sec:JAJtoTensorProduct} is a chain map (Corollary \ref{cor:psiChainMap}), it preserves the Alexander multi-grading and the Maslov grading (Lemma \ref{lem:psiPreservesGradings}), and it is bijective, since we exhibited an inverse map $\phi$ (Lemma \ref{lem:phi=psi^-1}). Thus, it is an isomorphism of chain complexes.
\end{proof}

With Theorem \ref{thm:JAJtoTensorProduct} in hand, we set out to compute the homology $\Jb_\x H_*(\sa nk) \Jb_\y$.  By the K{\"u}nneth theorem, it is enough to understand the homology of each generating algebra and edge algebra in the decomposition of Theorem \ref{thm:JAJtoTensorProduct}.  The next several sections are devoted to computing these homology groups.

\subsection{Elements of the generating algebra in Maslov degree zero}
\label{sec:Elements of Gen in Maslov=0}

Note that when $\Sc = \varnothing$, Definition~\ref{def:gradings} implies that the Maslov degree $\m(a)$ is nonpositive for any $a \in \A(n,k) = \A(n,k,\varnothing)$. In this subsection we study some homogeneous elements of the generating algebra in the maximal Maslov degree, namely zero.  Throughout this section, the Alexander grading refers to the refined Alexander grading of Definition \ref{def:gradings}.

\begin{definition}
For a vector $\ru = (r_1, \ldots, r_l) \in \Z_{\geq 0}^l$, define
\[
\overline{A}^\ru (l) = \set{x \in \overline{A}(l) \,\middle|\, w(x) = \ru},
\]
the vector subspace (or subcomplex) of $\overline{A}(l)$ consisting of all Alexander-homogeneous elements of Alexander degree $\ru$.
\end{definition}
Note that for every standard basis element $x \in \overline{A}(l)$, condition \eqref{it:A3} of Lemma~\ref{lem:BasistdA} implies that we have $w(x) \in \Z_{\geq 0}^l$, rather than just $(\frac12\Z_{\geq 0})^l$.

We start with the following observation.

\begin{proposition}
\label{prop:Aru(l)=0}
If $r_i>0$ for all $i=1, \ldots, l$, then $\overline{A}^\ru (l)=0$.
\end{proposition}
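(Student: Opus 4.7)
The plan is a short cascading argument along the array. Take any standard basis element $a = \vv{p_1}{q_1}{1} \cdots \vv{p_l}{q_l}{l}$ of $\overline{A}(l)$ with $w(a) = \ru$, and suppose for contradiction that every $r_i > 0$. Since $\Sc = \varnothing$, Definition~\ref{def:gradings} gives $w_i(a) = (p_i+q_i)/2$, and condition~\eqref{it:A3} of Lemma~\ref{lem:BasistdA} forces $p_i \equiv q_i \pmod 2$, so $w_i(a)$ is a nonnegative integer. Thus $r_i \geq 1$ translates to $p_i + q_i \geq 2$ for each $i$.

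Next I would walk across the array from left to right. By condition~\eqref{it:A2} of Lemma~\ref{lem:BasistdA} we have $p_1 = 0$, and then $p_1 \equiv q_1 \pmod 2$ together with $p_1+q_1 \geq 2$ forces $q_1 \geq 2$. Now I would use condition~\eqref{it:A1} of Lemma~\ref{lem:BasistdA}, namely $q_i p_{i+1} = 0$, to propagate: since $q_1 \neq 0$, we get $p_2 = 0$, and reapplying the parity and size constraints yields $q_2 \geq 2$. An obvious induction gives $p_i = 0$ and $q_i \geq 2$ for every $i \in [1,l-1]$, and finally $q_{l-1} \neq 0$ forces $p_l = 0$.

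At that point the contradiction is immediate: we also have $q_l = 0$ by condition~\eqref{it:A2}, so $p_l + q_l = 0$, contradicting $r_l \geq 1$. Hence no such basis element exists, so $\overline{A}^{\ru}(l) = 0$.

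There is no real obstacle here; the only thing to be careful about is that the argument uses all three conditions of Lemma~\ref{lem:BasistdA} (parity, the boundary vanishing $p_1 = q_l = 0$, and the consecutive vanishing $q_i p_{i+1} = 0$) together with the $\Sc = \varnothing$ identification $w_i = (p_i+q_i)/2$. The cascade starts from the left boundary condition and collides with the right boundary condition, which is what produces the contradiction.
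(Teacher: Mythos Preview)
Your argument is correct. The paper's proof reaches the same conclusion by a slightly different route: rather than propagating from left to right, it simply counts zeros. From $p_1=0$, $q_l=0$, and $q_i p_{i+1}=0$ for $i\in[1,l-1]$, at least $l+1$ of the $2l$ entries $p_1,q_1,\ldots,p_l,q_l$ vanish, so by pigeonhole some pair $(p_i,q_i)$ is $(0,0)$, forcing $r_i=0$. Your cascade is a constructive unfolding of the same constraints and in fact pins down exactly where the vanishing must occur (at $i=l$), whereas the pigeonhole version is nonconstructive but a hair shorter. Neither approach actually needs the parity condition~\eqref{it:A3}: once $p_i=0$, the hypothesis $r_i>0$ already gives $q_i>0$, which is all the propagation step requires.
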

\begin{proof}
If a standard basis element
\[
a = \vv{p_1}{q_1}{1} \cdots \vv{p_l}{q_l}{l}
\]
is in $\overline{A}^\ru (l)$, then $q_i p_{i+1} =0$ for all $i \in [1, l-1]$ by Lemma \ref{lem:BasistdA}, and $p_1 = q_l =0$ by the same lemma. Therefore, at least $l+1$ numbers among $p_1, q_1, \ldots, p_l, q_l$ must vanish. By the pigeonhole principle, there must exist $i$ such that $p_i = q_i = 0$, hence $r_i = 0$.  The result follows.
\end{proof}

\begin{definition}
For each $\ru \in \Z_{\geq 0}^l$, we define an element $a^\ru\in\overline{A}^\ru(l)$ by
\begin{equation}
\label{eq:defaru}
a^\ru := \Jb_{[1, l-1]} \prod_{i \,:\, r_i \not=0}\left[\vv{2r_i}{0}i + \vv{0}{2r_i}i\right] \Jb_{[1, l-1]}.
\end{equation}
\end{definition}
Note that after expanding the product defining $a^\ru$, some terms may vanish. For example, if $r_i \neq 0$ for all $i$, then $a^\ru$ must vanish by Proposition \ref{prop:Aru(l)=0}.
The role of the two factors $\Jb_{[1, l-1]}$ is to kill the possible terms containing $\vv{2r_1}{0}1$ or $\vv{0}{2r_l}l$, which are not in the generating algebra.

\begin{lemma}
\label{lem:aruMaslov0}
For each $\ru \in \Z_{\geq 0}^l$, we have $\m(a^\ru) = 0$.
\end{lemma}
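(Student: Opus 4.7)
The plan is to verify directly from the combinatorial formula for the Maslov grading (Definition \ref{def:gradings}) that every basis element appearing in the expansion of $a^\ru$ has Maslov degree zero.

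First I would expand the product defining $a^\ru$ distributively. Each resulting term is a product (in any order, via Lemma \ref{lem:concatenable}) of factors of the form $\vv{2r_i}{0}{i}$ or $\vv{0}{2r_i}{i}$, one factor for each $i$ with $r_i \neq 0$; the outer idempotents $\Jb_{[1,l-1]}$ annihilate those terms whose combined array fails to have $p_1 = q_l = 0$. Since the nonzero factors live on disjoint backbones, their product (when nonzero) is simply the basis element whose $i$-th column is $\vv{2r_i}{0}{i}$ or $\vv{0}{2r_i}{i}$ according to the choice made, with zero columns wherever $r_i = 0$. Thus every nonzero term appearing in the expansion of $a^\ru$ is a standard basis element of $\overline{A}(l)$ whose column entries $(p_i, q_i)$ each satisfy $p_i q_i = 0$ and $p_i + q_i = 2r_i$.

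Next I would evaluate $\m$ on such a basis element. Since $\Sc = \varnothing$, the indicator $\chi_\Sc$ vanishes everywhere and there are no $C_i$ variables (so $\vec{c} = 0$), so Definition \ref{def:gradings} reduces to
\[
\m(a) = \sum_{i=1}^l \left( \frac{|p_i - q_i|}{2} - (p_i + q_i) + \frac{p_i + q_i}{2} \right) = \sum_{i=1}^l \frac{|p_i - q_i| - (p_i + q_i)}{2}.
\]
For each $i$, either $p_i = 0$ or $q_i = 0$ (including the case $r_i = 0$), so $|p_i - q_i| = p_i + q_i$ and the $i$-th summand vanishes. Hence every nonzero term in the expansion of $a^\ru$ has Maslov grading $0$, so $a^\ru$ is Maslov-homogeneous of degree $0$ (including the vacuous case $a^\ru = 0$, to which we assign degree $0$ by convention).

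There is no real obstacle here: the argument is a direct substitution into the formula. The only mild subtlety is checking that expansion distributes correctly across the various multiplications and idempotent truncations — but this is immediate because factors indexed by distinct backbones commute by Lemma \ref{lem:concatenable} and the idempotents $\Jb_{[1,l-1]}$ only delete non-contributing terms without altering the remaining ones.
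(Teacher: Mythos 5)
Your proof is correct and ultimately computes the same quantity as the paper's, but it is organized differently. The paper notes that each factor $\vv{2r_i}{0}{i} + \vv{0}{2r_i}{i}$ is Maslov-homogeneous of degree $0$ (directly from Definition~\ref{def:gradings}) and then appeals to additivity of the Maslov grading under multiplication (Proposition~\ref{prop:effectsofde}) to conclude for the product. You instead expand the product fully into standard basis elements, observe that each has $p_i q_i = 0$ in every column, and then evaluate $\m$ on a single basis element at once. Both arguments reduce to the same elementary identity $|p_i - q_i| = p_i + q_i$ when $p_i q_i = 0$; the paper's version is just slightly more compact because it avoids re-deriving the distributivity and commutation facts that you spell out (for which, incidentally, Lemma~\ref{lem:concatenable} gives the product formula but commutativity across distinct backbones is a consequence of it, not quite a direct statement). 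Both are valid; yours trades a little concision for explicitness.
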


\begin{proof}
Each factor $\vv{2r_i}0i + \vv{0}{2r_i}i$ of $a^\ru$ appearing in Equation \eqref{eq:defaru} has vanishing Maslov degree by Definition \ref{def:gradings}. The Maslov degree of their product therefore vanishes too.
\end{proof}

\begin{remark}
\label{rem:aruMaslovmax}
By Definition \ref{def:gradings}, the Maslov degree $\m(a)$ of any homogeneous element $a \in \overline{A}^\ru(l)$ is non-negative.
By expanding Equation \eqref{eq:defaru}, one can check that if $r_i = 0$ for some $i$, then $a^\ru \neq 0$, so the Maslov degree of $a^\ru$ is the maximal Maslov degree in $\overline{A}^\ru(l)$. In fact, $a^\ru$ is the sum of all standard basis elements of $\overline{A}^\ru(l)$ in Maslov degree $0$.
\end{remark}

\begin{lemma}
\label{lem:arucdotaru}
For all $\ru$ and $\ru'$, we have $a^\ru \cdot a^{\ru'} = a^{\ru + \ru'}$ (taking the product in $\overline{A}(l)$).
\end{lemma}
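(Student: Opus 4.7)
My plan is to establish the identity by showing that the factors appearing in the definition of $a^\ru$ multiply in a particularly transparent way. The central observation is that for any $i \in [1,l]$ and $r, r' \geq 0$,
\[
\bigl(\vv{2r}{0}i + \vv{0}{2r}i\bigr) \cdot \bigl(\vv{2r'}{0}i + \vv{0}{2r'}i\bigr) = \vv{2(r+r')}{0}i + \vv{0}{2(r+r')}i.
\]
Indeed, expanding the left-hand side yields four products. The two same-sign products contribute the right-hand side (by Lemma~\ref{lem:concatenable}, since $p_i, q_i, p_i', q_i'$ are all even and speeds simply add), while the two mixed-sign products $\vv{2r}{0}i \cdot \vv{0}{2r'}i$ and $\vv{0}{2r}i \cdot \vv{2r'}{0}i$ vanish, failing condition~(V) of Lemma~\ref{lem:concatenable} (degenerate bigons).

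Next, I would verify that for distinct indices $i \neq j$ and arbitrary $r, r' \geq 0$, the factors $\vv{2r}{0}i + \vv{0}{2r}i$ and $\vv{2r'}{0}j + \vv{0}{2r'}j$ commute. For $|i-j| \geq 2$ this is immediate because the two factors live in disjoint columns. For $|i-j| = 1$, a case check using Lemma~\ref{lem:concatenable} shows that in each ordering of the product exactly one of the four cross terms vanishes and that the three surviving terms match; for example, with $j = i+1$, condition~(IV) kills the cross term $\vv{0}{2r}i \cdot \vv{2r'}{0}{i+1}$, while in the reversed order condition~(II) kills $\vv{2r'}{0}{i+1} \cdot \vv{0}{2r}i$.

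With these two identities in hand the main claim follows quickly. Setting $W_i^{(r)} := \vv{2r}{0}i + \vv{0}{2r}i$, we may write
\[
a^\ru \cdot a^{\ru'} = \Jb_{[1,l-1]}\Bigl(\prod_{i\,:\,r_i \neq 0} W_i^{(r_i)}\Bigr)\Bigl(\prod_{i\,:\,r'_i \neq 0} W_i^{(r'_i)}\Bigr) \Jb_{[1,l-1]},
\]
where the outer idempotents $\Jb_{[1,l-1]}$ absorb any boundary contributions $\vv{2r_1}{0}1$ and $\vv{0}{2r_l}l$ that do not lie in $\overline{A}(l)$. Commutation lets us regroup the factors by column, and the same-index identity collapses each paired factor to $W_i^{(r_i + r'_i)}$, yielding
\[
a^\ru \cdot a^{\ru'} = \Jb_{[1,l-1]} \prod_{i\,:\,r_i + r'_i \neq 0} W_i^{(r_i + r'_i)} \Jb_{[1,l-1]} = a^{\ru + \ru'}.
\]
The main obstacle is the case analysis verifying commutation for $|i-j| = 1$, which requires working through roughly eight instances of Lemma~\ref{lem:concatenable}; some minor bookkeeping is also needed for indices where $r_i = 0$ or $r'_i = 0$, so that the corresponding $W_i$ factor is absent from one of the two products but the combined formula still holds.
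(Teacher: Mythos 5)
Your proof is correct and follows the same basic approach as the paper: the core computation is the same-index identity
\[
\bigl(\vv{2r}{0}i + \vv{0}{2r}i\bigr) \cdot \bigl(\vv{2r'}{0}i + \vv{0}{2r'}i\bigr) = \vv{2(r+r')}{0}i + \vv{0}{2(r+r')}i,
\]
established by killing the two mixed-sign cross terms via condition~\eqref{it:evens no bigons} of Lemma~\ref{lem:concatenable}, together with the boundary observation about the idempotents $\Jb_{[1,l-1]}$ at $i=1$ and $i=l$. The notable difference is that you explicitly verify the commutation of the factors $W_i^{(r)}$ and $W_j^{(r')}$ for adjacent indices $|i-j|=1$, correctly identifying conditions~\eqref{it:q even concat} and~\eqref{it:p even concat} as the ones that kill the two asymmetric terms in the two orderings. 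The paper's proof passes over this regrouping step implicitly (after the same-index identity it simply asserts ``It follows that $a^\ru\cdot a^{\ru'}$ is the product of the elements \dots''). Your version is more careful; the commutation is indeed needed to interleave $\cdots W_i^{(r_i)} W_j^{(r_j)}\cdots W_i^{(r_i')} W_j^{(r_j')}\cdots$ into $\cdots W_i^{(r_i+r_i')} W_j^{(r_j+r_j')}\cdots$, and it is not entirely tautological for $|i-j|=1$ since the nonvanishing pattern of the four cross terms changes under reordering even though the surviving sum agrees. Your cited conditions check out against Lemma~\ref{lem:concatenable}.
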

\begin{proof}
First note that if both $r_i$ and $r_i'$ are non-vanishing, then
\[
\left[\vv{2r_i}{0}i + \vv{0}{2r_i}i\right] \cdot \left[\vv{2r_i'}{0}i + \vv{0}{2r_i'}i\right] = \vv{2(r_i+r_i')}{0}i + \vv{0}{2(r_i+r_i')}i,
\]
since the products $\vv{2r_i}{0}i \cdot \vv{0}{2r_i'}i$ and $\vv{0}{2r_i}i \cdot \vv{2r_i'}{0}i$ vanish by condition~\eqref{it:evens no bigons} of Lemma \ref{lem:concatenable}. It follows that $a^\ru \cdot a^{\ru'}$ is the product of the elements
\[
\vv{2(r_i+r_i')}{0}i + \vv{0}{2(r_i+r_i')}i
\]
over $i$ such that at least one of $r_i, r_i'$ is nonzero. The result is $a^{\ru+\ru'}$. 

A separate note should be made for the cases $i=1$ and $i=l$, for which the factor $\vv{2r_i}{0}i + \vv{0}{2r_i}i$ is replaced by either $\vv{0}{2r_i}i$ or $\vv{2r_i}{0}i$. In these cases, we have the equalities
\[
\vv{0}{2r_1}1 \cdot \vv{0}{2r_1'}1 = \vv{0}{2(r_1 + r_1')}1 \qquad \mbox{and} \qquad \vv{2r_l}{0}l \cdot \vv{2r_l'}{0}l = \vv{2(r_l + r_l')}{0}l. \qedhere
\]
\end{proof}

\subsection{Elements of the edge algebras in Maslov degree zero}

In the case of the edge algebras, we have elements $a^\ru_\lda$, $a^\ru_\rho$ and $a^\ru_{\lda\rho}$ analogous to $a^\ru$. In this subsection, $\circ$ denotes either $\lda$, $\rho$, or $\lda\rho$.

\begin{definition}
For a vector $\ru = (r_1, \ldots, r_l) \in \Z_{\geq 0}^l$, define
\[
\overline{A}^\ru_\circ (l) = \set{x \in \overline{A}_\circ(l) \,\middle|\, w(x) = \ru},
\]
the vector subspace (or subcomplex) of $\overline{A}_\circ(l)$ consisting of all Alexander-homogeneous elements of Alexander degree $\ru$.
\end{definition}

\begin{definition}\label{def:MaximalMaslovDegStrandsGens}
For each $\ru \in \Z_{\geq 0}^l$, we define
\begin{align*}
a^\ru_\lda &:= \Jb_{[0, l-1]} \prod_{i \,:\, r_i \not=0}\left[\vv{2r_i}{0}i + \vv{0}{2r_i}i\right] \Jb_{[0, l-1]} \in \overline{A}_\lda(l);\\
a^\ru_\rho &:= \Jb_{[1, l]} \prod_{i \,:\, r_i \not=0}\left[\vv{2r_i}{0}i + \vv{0}{2r_i}i\right] \Jb_{[1, l]} \in \overline{A}_\rho(l);\\
a^\ru_{\lda\rho} &:= \Jb_{[0, l]} \prod_{i \,:\, r_i \not=0}\left[\vv{2r_i}{0}i + \vv{0}{2r_i}i\right] \Jb_{[0, l]} \in \overline{A}_{\lda\rho}(l).
\end{align*}
\end{definition}

The proofs of the next two lemmas are similar to those of Lemmas \ref{lem:aruMaslov0} and \ref{lem:arucdotaru}, and are omitted.

\begin{lemma}
\label{lem:arucircMaslov0}
For each $\ru \in \Z_{\geq 0}^l$, we have $\m(a^\ru_\circ) = 0$.
\end{lemma}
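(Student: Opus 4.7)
The plan is to mimic exactly the proof of Lemma~\ref{lem:aruMaslov0} for the three edge-algebra variants. First I would observe that each factor $\vv{2r_i}{0}{i}$ or $\vv{0}{2r_i}{i}$ appearing in the defining products of $a^\ru_\lda$, $a^\ru_\rho$, and $a^\ru_{\lda\rho}$ has vanishing Maslov degree. This is a direct computation from Definition~\ref{def:gradings}: since $\Sc = \varnothing$ here we have $\vec c \equiv 0$ and $(-1)^{\chi_{\Sc}(i)} = 1$, so the Maslov formula collapses to $\m(a) = \sum_i \bigl(\tfrac{|p_i-q_i|}{2} - \tfrac{p_i+q_i}{2}\bigr)$, and for a single column with $p_i = 2r_i, q_i = 0$ (or vice versa) this sum equals $r_i - r_i = 0$.

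Next, the idempotent factors $\Jb_{[0,l-1]}$, $\Jb_{[1,l]}$, and $\Jb_{[0,l]}$ bracketing the products in Definition~\ref{def:MaximalMaslovDegStrandsGens} are, by inspection of Definition~\ref{def:gradings} applied to the zero array, also of Maslov degree $0$. By Proposition~\ref{prop:effectsofde}, Maslov degree is additive under the algebra multiplication (on any nonzero product of homogeneous elements), so the Maslov degree of $a^\ru_\circ$ equals the sum of the Maslov degrees of its factors, which is $0$.

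There is essentially no obstacle here: the argument is identical in structure to Lemma~\ref{lem:aruMaslov0}, and the only thing to check separately for each of the three cases is that the defining product makes sense inside the appropriate edge algebra---that is, that the bracketing idempotents are compatible and the resulting element still obeys the conditions of Lemmas~\ref{lem:BasistdAl}, \ref{lem:BasistdAr}, and \ref{lem:BasistdAlr}. This is immediate because each individual column $\vv{2r_i}{0}{i} + \vv{0}{2r_i}{i}$ is supported on a single backbone and has even parity entries, so it automatically satisfies the parity and no-matched-start conditions with any adjacent factor. Hence the proof reduces to the two observations above, and I would write it as a one-sentence reference back to the proof of Lemma~\ref{lem:aruMaslov0}.
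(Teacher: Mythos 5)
Your proposal is correct and is exactly the route the paper intends: the paper omits the proof of Lemma~\ref{lem:arucircMaslov0} entirely, noting only that it is "similar to that of Lemma~\ref{lem:aruMaslov0}," and your argument is that same one-line argument spelled out (each bracketed factor and each idempotent has Maslov degree $0$; additivity from Proposition~\ref{prop:effectsofde} finishes it). Your explicit single-column computation $\tfrac{|p_i-q_i|}{2} - \tfrac{p_i+q_i}{2} = r_i - r_i = 0$ is also correct for $\Sc = \varnothing$, which is the standing assumption in Section~\ref{sec:Homology}.
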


\begin{remark}
\label{rem:arucircMaslovmax}
By expanding the equations in Definition \ref{def:MaximalMaslovDegStrandsGens}, we have that $a^\ru_\circ \neq 0$ for all $\ru \in \Z_{\geq 0}^l$.
Thus, as in Remark \ref{rem:aruMaslovmax}, the Maslov degree of $a^\ru_\circ$ is the maximal Maslov degree in $\overline{A}^\ru_\circ(l)$. Moreover, one could characterize $a^\ru_\circ$ as the sum of standard basis elements of $\overline{A}^\ru_\circ(l)$ in Maslov degree $0$.
\end{remark}

\begin{lemma}
\label{lem:arucdotarucirc}
For all $\ru$ and $\ru'$, we have $a^\ru_\circ \cdot a^{\ru'}_\circ = a^{\ru + \ru'}_\circ$.
\end{lemma}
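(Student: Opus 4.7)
The plan is to adapt the proof of Lemma~\ref{lem:arucdotaru}, handling the three cases $\circ \in \{\lda, \rho, \lda\rho\}$ uniformly by adjusting the boundary analysis. First, I would use Remark~\ref{rem:arucircMaslovmax} to characterize $a^\ru_\circ$ as the sum of all Maslov-degree-zero standard basis elements of $\overline{A}^\ru_\circ(l)$. From Definition~\ref{def:gradings} with $\Sc = \varnothing$, a standard basis element is Maslov-zero exactly when $\min(p_i, q_i) = 0$ for every column $i$, and such an element is encoded by a sign assignment $\epsilon$ selecting, at each column $i$ with $r_i \neq 0$, whether $p_i = 2r_i$ (``$+$'') or $q_i = 2r_i$ (``$-$''). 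The valid assignments are those consistent with the boundary conditions of $\overline{A}_\circ$ (forcing $\epsilon_1 = -$ when $\circ = \rho$, $\epsilon_l = +$ when $\circ = \lda$, and imposing neither when $\circ = \lda\rho$) and having no adjacent $(-, +)$ pair, the latter by condition~(ii) of Lemma~\ref{lem:piqi}.

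Second, I would establish a bijection between valid sign assignments $\epsilon$ for $a^{\ru+\ru'}_\circ$ and pairs $(\epsilon, \epsilon')$ of valid assignments for $(a^\ru_\circ, a^{\ru'}_\circ)$ whose expanded product under the multiplication of $\overline{A}_\circ(l)$ is nonzero. Given a valid $\epsilon$ on the support $S = \{i : r_i + r_i' \neq 0\}$, I would restrict to $S_\ru = \{i : r_i \neq 0\}$ and $S_{\ru'} = \{i : r_i' \neq 0\}$, forcing $\epsilon_i = \epsilon_i'$ on the overlap $S_\ru \cap S_{\ru'}$; the resulting pair is then valid for the factors, and by Lemma~\ref{lem:concatenable} the column-wise speeds add to produce exactly the basis element of $a^{\ru+\ru'}_\circ$ corresponding to $\epsilon$. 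Conversely, for any pair giving a nonzero product, condition~(V) of Lemma~\ref{lem:concatenable} forces agreement on the overlap (so $\epsilon$ is well-defined), and condition~(IV) of that same lemma kills precisely those pairs whose combined $\epsilon$ would have an adjacent $(-, +)$ at indices $i, i+1$ with $q_i \neq 0$ even and $p_{i+1}' \neq 0$.

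The main subtlety, more a careful check than a real difficulty, will be verifying that the cross-column vanishing rule from condition~(IV) of Lemma~\ref{lem:concatenable} corresponds exactly to the cross-column validity constraint from condition~(ii) of Lemma~\ref{lem:piqi} applied to the target element; once this equivalence is in place, no invalid combined $\epsilon$ survives on the product side and no valid combined $\epsilon$ is missed. The three subcases $\lda, \rho, \lda\rho$ are then handled identically, since in each case the endpoint sign restrictions on $\epsilon, \epsilon'$ coincide with those on the combined $\epsilon$ obtained after restriction to $S_\ru$ or $S_{\ru'}$. The argument for Lemma~\ref{lem:arucircMaslov0}, meanwhile, follows verbatim from the proof of Lemma~\ref{lem:aruMaslov0} once one observes that the extra boundary freedoms in $\overline{A}_\lda, \overline{A}_\rho, \overline{A}_{\lda\rho}$ do not affect the Maslov grading of the individual factors.
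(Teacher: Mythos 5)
Your approach is valid but takes a genuinely different route from the paper's. The paper omits the proof, referring to Lemma~\ref{lem:arucdotaru}, whose template is algebraic: commute the single-column factors $\vv{2r_i}{0}i + \vv{0}{2r_i}i$ past one another, compute each column's product with condition~\eqref{it:evens no bigons} of Lemma~\ref{lem:concatenable} killing the cross-terms, and note the boundary columns separately; the edge algebras change only the boundary analysis, in fact simplifying it since fewer columns are forced. You instead expand each side into Maslov-zero basis elements via Remark~\ref{rem:arucircMaslovmax}, encode them as sign assignments with no adjacent $(-,+)$ and the appropriate boundary signs, and construct a bijection between surviving pairwise products and the terms of $a^{\ru+\ru'}_\circ$ by restriction of a combined sign. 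This trades an algebraic rearrangement for a combinatorial term match: it makes the coefficient accounting visible at the level of individual basis diagrams, at the cost of more per-column bookkeeping.

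One thing you should fill in when carrying this out: in arguing that nonzero pairwise products correspond exactly to valid combined sign assignments, you cite only condition~\eqref{it:q even concat} of Lemma~\ref{lem:concatenable}. That condition handles the case $(\epsilon_i, \epsilon_{i+1}') = (-,+)$ where the $-$ comes from the first factor and the $+$ from the second, i.e.\ $i\in S_\ru$ and $i+1\in S_{\ru'}\setminus S_\ru$. But the mirror case $(\epsilon_i', \epsilon_{i+1}) = (-,+)$, with $i\in S_{\ru'}\setminus S_\ru$ and $i+1\in S_\ru$, also yields a forbidden $(-,+)$ in the combined assignment, and here it is condition~\eqref{it:p even concat} (that $p_{i+1}\neq 0$ even forces $q_i'=0$) which kills the pair. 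For instance, with $\circ=\lda\rho$, $l=2$, $\ru=(0,1)$, $\ru'=(1,0)$, the term $\vv{2}{0}{2}\cdot\vv{0}{2}{1}$ vanishes by condition~\eqref{it:p even concat}, not condition~\eqref{it:q even concat}. Both conditions are needed to complete the bijection; adding the second is routine and the rest of your plan goes through.
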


\subsection{The homology of \texorpdfstring{$\sa nk$}{A(n,k)}}

The main technical result we use in this section is the following lemma, whose proof we postpone.

\begin{lemma}
\label{lem:homology}
For all $l > 0$ and $\ru \in \Z_{\geq 0}^l$,
\begin{enumerate}
\item \label{it:homA} If $r_i=0$ for some $i \in [1,l]$, then $H_*(\overline{A}^\ru(l)) \cong \F_2$; otherwise $H_*(\overline{A}^\ru(l)) =0$
\item \label{it:homB} $H_*(\overline{A}^\ru_\lda(l)) \cong \F_2$
\item \label{it:homC} $H_*(\overline{A}^\ru_\rho(l)) \cong \F_2$
\item \label{it:homD} $H_*(\overline{A}^\ru_{\lda\rho}(l)) \cong \F_2$.
\end{enumerate}
In all nonzero cases, the homology is generated by the cycle $a^\ru_\circ$. In particular, by Lemmas \ref{lem:aruMaslov0} and \ref{lem:arucircMaslov0}, it is concentrated in Maslov degree zero.
\end{lemma}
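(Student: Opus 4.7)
The plan is to use Proposition~\ref{prop:Aru(l)=0} to handle the vanishing part of (1), and then to establish $H_*(\overline{A}^\ru_\circ(l)) \cong \F_2$ in the remaining cases. Since Lemmas~\ref{lem:aruMaslov0} and \ref{lem:arucircMaslov0} already say that $a^\ru_\circ$ is a cycle in the top Maslov degree $0$ (and $0$ is the maximum Maslov degree by Definition~\ref{def:gradings} with $\Sc = \varnothing$), it cannot be a boundary, so it will suffice to establish the upper bound $\dim_{\F_2} H_*(\overline{A}^\ru_\circ(l)) \leq 1$.

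\textbf{Reducing the non-vanishing case of (1) to (2) and (3).} If $r_i = 0$ for some $i \in [1, l]$, every basis element of $\overline{A}^\ru(l)$ must have $p_i = q_i = 0$, which makes the adjacency constraints $q_{i-1} p_i = q_i p_{i+1} = 0$ at position $i$ automatic. Splitting each array into its left block on $[1, i-1]$ and right block on $[i+1, l]$ and matching residual boundary conditions against Lemmas~\ref{lem:BasistdAr} and \ref{lem:BasistdAl}, I would obtain an isomorphism of chain complexes
\[
\overline{A}^\ru(l) \;\cong\; \overline{A}_\rho^{(r_1, \ldots, r_{i-1})}(i-1) \,\otimes\, \overline{A}_\lambda^{(r_{i+1}, \ldots, r_l)}(l-i),
\]
under which $a^\ru$ corresponds to $a_\rho^{(r_1, \ldots, r_{i-1})} \otimes a_\lambda^{(r_{i+1}, \ldots, r_l)}$. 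Parts (2) and (3), combined with K{\"u}nneth, then imply (1).

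\textbf{Proving (2), (3), (4) by induction on $l$.} The base $l = 1$ is a direct computation using the explicit bases of Lemmas~\ref{lem:BasistdAl}, \ref{lem:BasistdAr}, and \ref{lem:BasistdAlr}. For the inductive step in $\overline{A}_\lambda^\ru(l)$, I would use that the last column is forced to $(p_l, q_l) = (2r_l, 0)$ since $q_l = 0$ and $p_l \equiv q_l \pmod 2$: if $r_l > 0$ this also forces $q_{l-1} = 0$, and the first $l - 1$ columns form a copy of $\overline{A}_\lambda^{(r_1, \ldots, r_{l-1})}(l-1)$; if $r_l = 0$ the constraint at $q_{l-1}$ is vacuous, and the first $l - 1$ columns form $\overline{A}_{\lambda\rho}^{(r_1, \ldots, r_{l-1})}(l-1)$. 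Checking that the active differentials (in particular $\de^0_{l-1}$) match on the two sides, and that $a^\ru_\lambda$ goes to the predicted generator of the smaller complex, closes the step. The symmetric argument splitting off the first column handles $\overline{A}_\rho^\ru(l)$, and $\overline{A}_{\lambda\rho}^\ru(l)$ is reduced by iteratively splitting off columns from either end.

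\textbf{Main obstacle.} Parts (2), (3), (4) must be proved by a \emph{simultaneous} induction on $l$, because a column-splitting in $\overline{A}_\lambda$ can land in either $\overline{A}_\lambda$ or $\overline{A}_{\lambda\rho}$ at the next level (and symmetrically for $\overline{A}_\rho$), so the three induction hypotheses interlock. The most delicate bookkeeping will be to verify that, under each splitting isomorphism, the multiplicatively defined cycle $a^\ru_\circ$ is carried to the analogous cycle in the smaller complex, so that the generator of the final $H_* \cong \F_2$ really is $[a^\ru_\circ]$ and not some other cycle in the same class.
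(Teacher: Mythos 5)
Your overall architecture — reduce (1) to (2) and (3) via a splitting at the zero coordinate, establish the base case $l=1$ by direct inspection, and then argue (2), (3), (4) by a simultaneous induction on $l$ that peels off a boundary column — matches the paper's. Your handling of (2) and (3) is in fact slightly more careful than the paper's write-up: you correctly note that the restriction of $\overline{A}_\lambda^\ru(l)$ to the first $l-1$ columns lands in $\overline{A}_\lambda^{\ru'}(l-1)$ when $r_l>0$ (because $p_l = 2r_l>0$ forces $q_{l-1}=0$ via condition (Al1)) and in $\overline{A}_{\lda\rho}^{\ru'}(l-1)$ only when $r_l=0$. (The paper states the target as $\overline{A}_{\lda\rho}^{\ru'}(l-1)$ in all cases, which is not literally an isomorphism of chain complexes for $r_l>0$ — for instance $\overline{A}_\lambda^{(1,1)}(2)$ is 1-dimensional while $\overline{A}_{\lda\rho}^{(1)}(1)$ is 3-dimensional — though the inductive conclusion $H_*\cong\F_2$ is of course unaffected.)

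The genuine gap is in your treatment of (4). You write that $\overline{A}_{\lda\rho}^\ru(l)$ ``is reduced by iteratively splitting off columns from either end,'' but this cannot work the way it does for $\overline{A}_\lambda$ or $\overline{A}_\rho$: in $\overline{A}_{\lda\rho}^\ru(l)$ neither end column is forced (the basis constraints from Lemma~\ref{lem:BasistdAlr} impose no condition like $p_1=0$ or $q_l=0$), so the first column $(p_1,q_1)$ ranges over all of the $2r_1+1$ pairs with $p_1+q_1 = 2r_1$ and $p_1\equiv q_1 \pmod 2$. Moreover $\de^0_1$ changes the first column, so the subspaces with a fixed first column are not subcomplexes and the complex does not split off a column as a tensor factor. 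To make the ``split off the first column'' idea rigorous one must filter by the quantity $\frac12|p_1-q_1|-r_1$ (the first column's Maslov contribution), take the associated graded pieces, use (3) and (4) at level $l-1$ to compute $E^1$, identify $(E^1,d_1)$ with $\overline{A}_{\lda\rho}^{r_1}(1)$, and invoke the base case Lemma~\ref{lem:HAldarho1} to show $E^2 \cong \F_2$ — which is exactly the spectral-sequence argument the paper runs. Without that machinery your induction loop never closes at (4), and since (2) and (3) both recurse into (4), the whole argument stalls.
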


We will prove that $a^\ru_\circ$ is indeed a cycle while proving Lemma \ref{lem:homology}.

From Lemma \ref{lem:homology} we deduce the following theorem. In the next theorem, $\F_2[U_1, \ldots, U_l]$ is endowed with an Alexander multi-grading by setting 
\[w_i(1)=0, \,\,\, w_i(U_j)=\delta_{i,j} \,\,\, \text{for all $i,j\in\set{1,\dots,l}$}.\]
We define the Maslov grading to be zero on $\F_2[U_1,\ldots,U_l]$.

\begin{theorem}
\label{thm:homology}
For all $l > 0$, we have the following isomorphisms of graded $\F_2$-vector spaces:
\begin{enumerate}
\item \label{it:caseA} $H_*(\overline{A}(l)) \cong \frac{\F_2[U_1, \ldots, U_l]}{U_1 \cdots U_l}$
\item $H_*(\overline{A}_\lda(l)) \cong \F_2[U_1, \ldots, U_l]$
\item $H_*(\overline{A}_\rho(l)) \cong \F_2[U_1, \ldots, U_l]$
\item $H_*(\overline{A}_{\lda\rho}(l)) \cong \F_2[U_1, \ldots, U_l]$.
\end{enumerate}
In all the cases, the isomorphism sends $a^\ru_\circ$ to $U_1^{r_1} \cdots U_l^{r_l}$.
\end{theorem}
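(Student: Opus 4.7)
The plan is to reduce Theorem~\ref{thm:homology} directly to Lemma~\ref{lem:homology} by exploiting the splitting of each algebra according to the refined Alexander multi-grading.

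First I would observe that by Proposition~\ref{prop:effectsofde}, the refined Alexander multi-grading is preserved by the differential $\de$, so each of $\overline{A}(l)$, $\overline{A}_\lda(l)$, $\overline{A}_\rho(l)$, and $\overline{A}_{\lda\rho}(l)$ splits as a direct sum of subcomplexes indexed by $\ru \in \Z_{\geq 0}^l$. (By condition \eqref{it:A3} of Lemma~\ref{lem:BasistdA} and its analogues, only non-negative integer Alexander degrees can occur, since $p_i \equiv q_i \pmod 2$.) Hence
\[
H_*(\overline{A}_\circ(l)) \cong \bigoplus_{\ru \in \Z_{\geq 0}^l} H_*(\overline{A}^\ru_\circ(l))
\]
as Alexander-multi-graded vector spaces, and similarly for $\overline{A}(l)$.

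Next I would apply Lemma~\ref{lem:homology} termwise. For the edge algebras (cases \eqref{it:homB}--\eqref{it:homD}), each summand $H_*(\overline{A}^\ru_\circ(l))$ is one-dimensional over $\F_2$, generated by the cycle $a^\ru_\circ$ in Maslov degree zero, and these span exactly one copy of $\F_2$ for each $\ru \in \Z_{\geq 0}^l$. The linear map
\[
\Phi_\circ \colon H_*(\overline{A}_\circ(l)) \to \F_2[U_1,\ldots,U_l], \qquad [a^\ru_\circ] \mapsto U_1^{r_1} \cdots U_l^{r_l},
\]
is therefore a bijection on a distinguished $\F_2$-basis, hence an isomorphism of $\F_2$-vector spaces. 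It preserves the Alexander multi-grading by the definition of $w$ on both sides, and the Maslov grading is concentrated in degree zero on both sides by Lemma~\ref{lem:arucircMaslov0} and our convention on $\F_2[U_1,\ldots,U_l]$.

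For $\overline{A}(l)$, part~\eqref{it:homA} of Lemma~\ref{lem:homology} gives $H_*(\overline{A}^\ru(l)) \cong \F_2$ precisely when at least one $r_i$ is zero, and $0$ otherwise. The vectors $\ru \in \Z_{\geq 0}^l$ with some $r_i = 0$ are exactly those for which $U_1^{r_1}\cdots U_l^{r_l}$ is nonzero in the quotient $\F_2[U_1,\ldots,U_l]/(U_1 \cdots U_l)$, since the relation $U_1 \cdots U_l = 0$ kills precisely the monomials divisible by every $U_i$. Thus the same assignment $[a^\ru] \mapsto U_1^{r_1}\cdots U_l^{r_l}$ induces a bijection between distinguished $\F_2$-bases of $H_*(\overline{A}(l))$ and $\F_2[U_1,\ldots,U_l]/(U_1 \cdots U_l)$, proving case~\eqref{it:caseA}. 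There is no real obstacle here: the substantive work is contained in Lemma~\ref{lem:homology}, which the authors will prove separately; once that input is granted, the theorem is purely bookkeeping on the Alexander-graded decomposition and on which monomials survive modulo $U_1 \cdots U_l$.
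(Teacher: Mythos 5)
Your proposal is correct and follows essentially the same route as the paper: split each algebra by the refined Alexander multi-grading (justified by Proposition~\ref{prop:effectsofde}), apply Lemma~\ref{lem:homology} summand by summand, and read off the monomial basis, with the relation $U_1\cdots U_l = 0$ accounting exactly for the vanishing summands in the $\overline{A}(l)$ case. The paper writes out only case~\eqref{it:caseA} and leaves the rest to the reader, whereas you spell out the edge-algebra cases too, but the argument is identical.
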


\begin{proof}
We prove the case \eqref{it:caseA}; the proof in the other cases requires only slight modification.
Note that we have a splitting
\[
\overline{A}(l) = \bigoplus_{\ru \in \Z_{\geq 0}^l} \overline{A}^\ru(l)
\]
as chain complexes, by Proposition \ref{prop:effectsofde}. Thus we have a natural splitting
\[
H_*(\overline{A}(l)) = \bigoplus_{\ru \in \Z_{\geq 0}^l} H_*(\overline{A}^\ru(l)) = \bigoplus_{\substack{\ru \in \Z_{\geq 0}^l \\ \exists i \,:\, r_i = 0}} \F_2 [a^\ru],
\]
by Lemma \ref{lem:homology}.

We define a linear map
\[
L \colon H_*(\overline{A}(l)) \to \frac{\F_2[U_1, \ldots, U_l]}{U_1 \cdots U_l}
\]
by setting $L(a^\ru) = U_1^{r_1} \cdots U_l^{r_l}$. The check that the map is bijective and that it preserves the gradings is left to the reader.
\end{proof}
Note that $L$ is in fact an isomorphism of $\F_2$-algebras by Lemmas \ref{lem:arucdotaru} and \ref{lem:arucdotarucirc}.

\begin{corollary}\label{cor:general homology of JAJ}
For $\x,\y\in V(n,k)$, there is an isomorphism
\begin{equation}\label{eq:general homology of JAJ}
\psi \colon \Jb_\x H_*(\sa nk) \Jb_\y \stackrel{\sim}{\longrightarrow}
\begin{cases}
0 & \text{if $\x$ and $\y$ are far}\\
\frac{\F_2[U_1,\dots,U_n]}{(p_G\,|\,\text{$G$ generating interval})} & \text{otherwise}
\end{cases}
\end{equation}
The Maslov grading on the right-hand side of equation \eqref{eq:general homology of JAJ} is zero, and the Alexander multi-grading on the right-hand side of equation \eqref{eq:general homology of JAJ} is defined as follows:
\[
w_i(1):=\begin{cases}
\frac{1}{2} & \text{if $i\in\CL{\x,\y}$}\\
0 & \text{otherwise}
\end{cases}
\qquad
w_i(U_j):=\delta_{i,j}. 
\]

\end{corollary}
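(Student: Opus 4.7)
The plan is to assemble this corollary as a direct consequence of the splitting theorem (Theorem \ref{thm:JAJtoTensorProduct}), the homology computation for the generating and edge algebras (Theorem \ref{thm:homology}), and the K\"unneth formula over $\F_2$.

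First I would dispatch the far case: by Lemma \ref{lem:FarStatesStrandAlgZero}, $\Jb_\x \sa nk \Jb_\y = 0$ when $\x$ and $\y$ are far, so its homology vanishes. Assume henceforth that $\x$ and $\y$ are not far, with generating intervals $G_1, \ldots, G_b$ of lengths $l_1, \ldots, l_b$ and edge intervals (if any) of lengths $l_0$, $l_{b+1}$. Applying Theorem \ref{thm:JAJtoTensorProduct} gives a grading-preserving isomorphism of chain complexes
\[
\Jb_\x \sa nk \Jb_\y \cong \F_2[U_i \mid i \in \CL{\x,\y}] \otimes \overline{A}_\circ(l_0) \otimes \overline{A}(l_1) \otimes \cdots \otimes \overline{A}(l_b) \otimes \overline{A}_\circ(l_{b+1}).
\]
Since $\F_2$ is a field, the K\"unneth formula yields no Tor terms and the homology of the right-hand side is the tensor product of the homologies of the factors.

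Next I would compute each factor's homology. The crossed-lines factor $\F_2[U_i \mid i \in \CL{\x,\y}]$ has zero differential, so equals its own homology. For each generating interval $G_c = [j_c+1, j_c+l_c]$, Theorem \ref{thm:homology}(\ref{it:caseA}) (and the regrading in Definition \ref{def:RegradedGenAlgs}, which shifts indices by $j_c$) identifies $H_*(\overline{A}(l_c))$ with $\F_2[U_i \mid i \in G_c]/(p_{G_c})$, where $p_{G_c} = U_{j_c+1}\cdots U_{j_c+l_c}$. The edge-algebra factors contribute polynomial rings $\F_2[U_i \mid i \in G]$ with no relations for their respective edge interval $G$. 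Proposition \ref{prop:QuartumNonDatur} says the crossed lines and generating/edge intervals partition $[1,n]$, so tensoring these factors over $\F_2$ produces
\[
\frac{\F_2[U_1, \ldots, U_n]}{\bigl(p_G \mid G \text{ generating interval}\bigr)},
\]
which is the claimed right-hand side.

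Finally I would verify gradings. Since each tensor factor above has its homology concentrated in Maslov degree $0$ (Lemmas \ref{lem:aruMaslov0} and \ref{lem:arucircMaslov0}, together with the vanishing Maslov grading on the crossed-lines polynomial algebra), additivity of $\m$ gives $\m = 0$ on the whole tensor product. For the Alexander multi-grading, I would invoke the index-wise additivity over the partition of $[1,n]$: the regrading in Definition \ref{def:RegradedGenAlgs} was arranged precisely so that $\psi_{G_c}$ identifies $w_i$ on the generating and edge factors with $w_i$ on $\Jb_\x \sa nk \Jb_\y$ for $i \in G_c$, while the crossed-lines factor carries the grading $w_i(1) = \tfrac12$ for $i \in \CL{\x,\y}$. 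Under the identifications $a^{\ru}_\circ \leftrightarrow \prod U_i^{r_i}$ and $U_i^r \leftrightarrow U_i^r$ from Theorem \ref{thm:homology}, each variable $U_i$ contributes $+1$ to $w_i$, matching the stated formula. There is no serious obstacle in any of these steps; the only mild bookkeeping is tracking how the index shifts in Definition \ref{def:RegradedGenAlgs} combine with the partition in Proposition \ref{prop:QuartumNonDatur} to give the correct global relations $p_G$ and the correct half-integer shifts on crossed lines.
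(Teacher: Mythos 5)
Your proposal is correct and follows essentially the same route as the paper's proof: apply the splitting isomorphism of Theorem~\ref{thm:JAJtoTensorProduct}, use K\"unneth over $\F_2$, compute the factor homologies via Theorem~\ref{thm:homology} and the triviality of the differential on the crossed-lines polynomial ring, and reassemble using the partition from Proposition~\ref{prop:QuartumNonDatur}. You supply more detail than the paper does (explicitly handling the far case and verifying the gradings), but there is no substantive difference in approach.
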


\begin{proof}
Theorem \ref{thm:JAJtoTensorProduct} gives us a decomposition of $\Jb_\x \sa nk \Jb_\y$ into a tensor product, and the K{\"u}nneth theorem for tensor products over $\F_2$ guarantees that we can compute the overall homology by tensoring together the homologies of the different factors.  The factor $\F_2[U_i \, | \, i\in\CL{\x,\y}]$ corresponding to the crossed lines has no differential, while the homologies of the generating intervals and edge intervals are computed in Theorem \ref{thm:homology}.  When tensored all together, we get a graded vector space that is isomorphic to the right hand side of equation \eqref{eq:general homology of JAJ}.
\end{proof}

\begin{corollary}\label{cor:HomologyBasisAsStrandsGens}
Applying the inverse of the isomorphism from Corollary~\ref{cor:general homology of JAJ} to a monomial $U_1^{r_1}\cdots U_n^{r_n}$, we get the homology class of the element
\begin{equation}\label{eq:H(JAJ) general generator}
\Jb_\x \cdot \prod_{\stackrel{i\in\CL{\x,\y}}{v_i(\x,\y)=1}} \vv{1+2r_i}{0}{i} \cdot \prod_{\stackrel{j\in\CL{\x,\y}}{v_j(\x,\y)=-1}} \vv{0}{1+2r_j}{j} \cdot \prod_{\stackrel{k\notin\CL{\x,\y}}{r_k\neq 0}} \left( \vv{2r_k}{0}{k} + \vv{0}{2r_k}{k} \right) \cdot \Jb_\y
\end{equation}
of $\Jb_\x H_*(\sa nk) \Jb_\y$, where again the idempotents $\Jb_\x$ and $\Jb_\y$ force some of these summands to be zero.
\end{corollary}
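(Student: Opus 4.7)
The plan is to unfold the definition of the inverse isomorphism by tracking how each factor of the tensor product decomposition of Corollary~\ref{cor:general homology of JAJ} contributes to the asserted product formula. The isomorphism of Corollary~\ref{cor:general homology of JAJ} was obtained by composing the isomorphism $\psi$ of Theorem~\ref{thm:JAJtoTensorProduct} with the K{\"u}nneth isomorphism and the explicit homology computation of Theorem~\ref{thm:homology}; so its inverse is the composition of $\phi$ (Definition~\ref{def:TensorProducttoJAJ}) with the inverses of those identifications. Under this composition, a monomial $U_1^{r_1}\cdots U_n^{r_n}$ (read modulo the $p_G$ relations) decomposes along the partition of $[1,n]$ into crossed lines and generating/edge intervals furnished by Proposition~\ref{prop:QuartumNonDatur}, producing a pure tensor whose factors we can then feed into $\phi$.

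First I would observe that under the inverse of the isomorphism in Theorem~\ref{thm:homology}, the monomial $\prod_{k\in G}U_k^{r_k}$ restricted to a generating or edge interval $G$ corresponds to the homology class of the cycle $a^{\ru_G}$ (respectively $a^{\ru_G}_{\lda}$, $a^{\ru_G}_{\rho}$, or $a^{\ru_G}_{\lda\rho}$), where $\ru_G$ is the restriction of $\ru$ to indices in $G$. This is precisely the content of Theorem~\ref{thm:homology}, which is built on Lemma~\ref{lem:homology}. Similarly, each crossed-line factor of the tensor decomposition contributes $U_i^{r_i}$ in the polynomial algebra $\F_2[U_i\mid i\in\CL{\x,\y}]$, with no differential.

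Next I would apply $\phi$ from Definition~\ref{def:TensorProducttoJAJ} to this pure tensor. By construction, $\phi$ produces the array
\[
\Bigl(\prod_{i\in\CL{\x,\y}}\phi^i_{\x,\y}(r_i)\Bigr)\cdot \prod_{G}\psi_G^{-1}(a^{\ru_G}_{\circ}),
\]
with the vectors reordered by index and bracketed by $\Jb_\x$ and $\Jb_\y$. The crossed-line factors $\phi^i_{\x,\y}(r_i)$ are given directly by equation~\eqref{eq:phixyir} as $\vv{1+2r_i}{0}{i}$ or $\vv{0}{1+2r_i}{i}$ according to the sign of $v_i(\x,\y)$. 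For each generating or edge interval $G$, the elements $a^{\ru_G}_\circ$ are defined in Section~6.6 as products of $\vv{2r_k}{0}{k}+\vv{0}{2r_k}{k}$ over $k\in G$ with $r_k\neq 0$, bracketed by the appropriate boundary idempotents. Since $\psi_G$ is just a reindexing of backbones, $\psi_G^{-1}(a^{\ru_G}_\circ)$ consists of the corresponding product of $\vv{2r_k}{0}{k}+\vv{0}{2r_k}{k}$ terms in the full strands algebra. Collecting these contributions by index yields precisely equation~\eqref{eq:H(JAJ) general generator}.

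The main subtlety (and really the only one) lies in checking that the inner idempotents of the $a^{\ru_G}_\circ$ at the endpoints of generating and edge intervals are correctly absorbed by the outer $\Jb_\x$ and $\Jb_\y$ idempotents when we recombine the pieces; but this is forced by the combinatorics of generating and edge intervals together with Proposition~\ref{prop:QuartumNonDatur}, which guarantees that adjacent index contributions are compatible. In particular, the two outer $\Jb_\x,\Jb_\y$ factors in~\eqref{eq:H(JAJ) general generator} already annihilate any summands that would have been killed by the boundary idempotents in Definition~\ref{def:MaximalMaslovDegStrandsGens}, so the two expressions agree on the nose.
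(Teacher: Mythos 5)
Your proposal correctly unfolds the definitions: the inverse of the isomorphism in Corollary~\ref{cor:general homology of JAJ} is indeed the composition of the inverse of Theorem~\ref{thm:homology}'s identification (sending $U_1^{r_1}\cdots U_l^{r_l}$ back to the cycle $a^{\ru}_\circ$ in each tensor factor) with the map $\phi$ of Definition~\ref{def:TensorProducttoJAJ}, and feeding the crossed-line and generating/edge-interval pieces into $\phi$ produces precisely equation~\eqref{eq:H(JAJ) general generator}. This is essentially the paper's own (implicit) argument — the corollary is stated without a separate proof because it is a definition-chase — and your flag about the outer idempotents $\Jb_\x,\Jb_\y$ absorbing the boundary idempotents of the $a^{\ru_G}_\circ$ is the right thing to check; the only slip is the section reference (``Section 6.6''), which should point to the subsections of Section~\ref{sec:Homology} where $a^\ru$ and $a^\ru_\circ$ are defined.
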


\begin{remark}
If we compare Corollary~\ref{cor:general homology of JAJ} to Theorem~\ref{thm:ReviewOSzHomology}, we see that the two algebras $\Jb_\x \sa nk \Jb_\y$ and $\Ib_\x \B(n,k) \Ib_\y$ do indeed have isomorphic homology, at least as graded vector spaces. Using the explicit formulas of \cite[\corExplicitQuiverGensForMonomials]{MMW1} and \ref{cor:HomologyBasisAsStrandsGens}, one can check without too much work that this isomorphism holds on the level of graded algebras. Section \ref{sec:qi} will be devoted to realizing this isomorphism via a genuine map of dg algebras from $\B(n,k)$ to $\sa nk$, and more generally from $\B(n,k,\Sc)$ to $\sac nk\Sc$.
\end{remark}

\subsection{Proof of Lemma \ref{lem:homology}}

As a first step toward proving Lemma \ref{lem:homology}, we study the homology of $\overline{A}^r_{\lda\rho}(1)$, which will constitute the base case for an inductive proof of the aforementioned lemma. Note that in this case $\ru = r$ is just a natural number. To simplify the notation, we will denote the element $\vv{p}{q}{1}$ by $\vv pq{}$.

\begin{lemma}
\label{lem:HAldarho1}
For all $r \in \Z_{\geq 0}$, we have $H_*(\overline{A}^r_{\lda\rho}(1)) \cong \F_2$, generated by the cycle $a^r_{\lda\rho}$ and concentrated in Maslov degree zero.
\end{lemma}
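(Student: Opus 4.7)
The plan is to compute $H_*(\overline{A}^r_{\lda\rho}(1))$ by a direct enumeration of basis, differential, cycles, and boundaries, which is feasible here because the algebra lives on a single circular backbone.

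First, I would use Lemma \ref{lem:BasistdAlr} to identify a basis of $\overline{A}_{\lda\rho}(1)$: it consists of arrays $\vv pq{}$ with $p, q \geq 0$ and $p \equiv q \pmod 2$. Restricting to Alexander degree $r$ forces $\tfrac{p+q}{2}=r$, i.e.\ $p+q = 2r$ (which automatically gives $p \equiv q \pmod 2$). So the basis of $\overline{A}^r_{\lda\rho}(1)$ has $2r+1$ elements, which I would parametrize as
\[
v_j := \vv{r-j}{r+j}{}, \qquad j \in \{-r, -r+1, \ldots, r-1, r\}.
\]
Using Definition \ref{def:gradings} (with $\Sc = \varnothing$), the Maslov degree is
\[
\m(v_j) = \tfrac{|p-q|}{2} - (p+q) + \tfrac{p+q}{2} = |j| - r,
\]
so the maximal Maslov degree is $0$, attained uniquely by $v_r = \vv{0}{2r}{}$ and $v_{-r} = \vv{2r}{0}{}$.

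Next I would apply Lemma \ref{lem:general pq differential} (noting $q_0 = p_2 = 0$ are vacuous and $\{0,1\} \subset \x = [0,1]$) to obtain
\[
\de v_0 = 0, \qquad \de v_{\pm 1} = v_0, \qquad \de v_j = v_{|j|-1} + v_{-(|j|-1)} \text{ for } |j| \geq 2.
\]
Setting $w_0 := v_0$ and $w_j := v_j + v_{-j}$ for $j \geq 1$, a brief check shows each $w_j$ is a cycle: for $|j| \geq 2$ the two terms in $\de v_j$ and $\de v_{-j}$ coincide and cancel over $\F_2$, and $\de w_1 = v_0 + v_0 = 0$. Conversely, if $c = \sum c_j v_j$ satisfies $\de c = 0$, examining the coefficient of each $w_k$ in $\de c$ forces $c_j = c_{-j}$ for $j \geq 1$, so every cycle lies in $\mathrm{span}(w_0, \ldots, w_r)$.

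Finally, the image of $\de$ is spanned by $\{\de v_j : j \neq 0\} = \{w_{|j|-1} : 1 \leq |j| \leq r\} = \mathrm{span}(w_0, \ldots, w_{r-1})$. Thus cycles have dimension $r+1$ and boundaries have dimension $r$, giving
\[
H_*(\overline{A}^r_{\lda\rho}(1)) = \F_2 \cdot [w_r] = \F_2 \cdot \bigl[\vv{0}{2r}{} + \vv{2r}{0}{}\bigr] = \F_2 \cdot [a^r_{\lda\rho}],
\]
where the last identification uses Definition \ref{def:MaximalMaslovDegStrandsGens}, and this class lies in Maslov degree $0$ by the computation above. There is no serious obstacle; the only care required is handling the $|j| = 1$ case of the differential separately (where the formula in \eqref{eq:general pq differential good idemp} produces a single rather than doubled term), and confirming that the symmetric combination $w_r$ is indeed $a^r_{\lda\rho}$ as defined.
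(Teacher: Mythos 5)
Your proof is correct and takes essentially the same approach as the paper: both proceed by enumerating the $2r+1$ basis elements, grouping them by Maslov degree, and computing the differential explicitly via Lemma~\ref{lem:general pq differential} to identify a one-dimensional homology generated by $\vv{0}{2r}{}+\vv{2r}{0}{}=a^r_{\lda\rho}$. The paper organizes the computation by writing out the graded chain complex with explicit $\begin{pmatrix}1&1\\1&1\end{pmatrix}$ matrices between the Maslov-graded pieces $C_s$, whereas you change basis to the symmetric combinations $w_j=v_j+v_{-j}$ to exhibit cycles and boundaries directly; these are just two presentations of the same calculation.
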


\begin{proof}
Let $C_s \subseteq \overline{A}^r_{\lda\rho}(1)$ be the span of the standard basis elements $\vv pq{}$ with $w \vv pq{} = r$ and $\m \vv pq{} = s$.  By the formulas of Definition~\ref{def:gradings}, basis elements in $C_s$ satisfy $|p-q|=2s+2r$, so $C_s$ is 1-dimensional if $s = -r$, 2-dimensional if $-r<s\leq 0$, and $0$ otherwise (recall that no element of $\overline{A}^r_{\lda\rho}(1)$ has positive Maslov degree). Using the standard basis elements $\vv pq{}$ as bases for each vector space $C_s$, we get isomorphisms to $\F_2$, $\F_2^2$, or $0$. We do not specify how we make each basis an ordered basis, because any order will give the same result.

Since a basis for $\overline{A}^r_{\lda\rho}(1)$ is given by all vectors $\vv pq{}$ with $w \vv pq{} = r$, we obtain a splitting $\overline{A}^r_{\lda\rho}(1) = \bigoplus_{s= -r}^{0} C_s$. Moreover, $\de(C_s) \subseteq C_{s-1}$. Using the formulas of Lemma \ref{lem:general pq differential}, we can compute the matrix of each map $\de\colon C_s \to C_{s-1}$. The resulting chain complex is
\[
\xymatrix{
C_{0} \ar[r]^\de \ar[d]^{\vsimeq} & C_{-1} \ar[r]^\de \ar[d]^{\vsimeq} & \cdots \ar[r]^\de & C_{-r+1} \ar[r]^\de \ar[d]^{\vsimeq} & C_{-r} \ar[d]^{\vsimeq}\\
\F_2^2 \ar[r]_{\scriptscriptstyle{\begin{pmatrix} 1&1\\1&1 \end{pmatrix}}} & 
\F_2^2 \ar[r]_{\scriptscriptstyle{\begin{pmatrix} 1&1\\1&1 \end{pmatrix}}} & 
\cdots \ar[r]_{\scriptscriptstyle{\begin{pmatrix} 1&1\\1&1 \end{pmatrix}}} & 
\F_2^2 \ar[r]_{\scriptscriptstyle{\begin{pmatrix} 1\\1 \end{pmatrix}}} & \F_2
}
\]
The homology of this complex is $1$-dimensional, concentrated in Maslov degree zero. It is generated by the sum of the two basis elements $\vv{2r}0{} + \vv0{2r}{}$ if $r\not=0$ and by $\vv00{}$ if $r=0$. This sum equals $a^r_{\lda\rho}$ by definition.
\end{proof}

\begin{proof}[Proof of Lemma \ref{lem:homology}]
We argue by induction on $l$. First suppose that $l=1$.

\eqref{it:homA}. This claim follows from the fact that the only non-trivial element of $\overline{A}(1)$ is $a^0=\vv001$.

\eqref{it:homB}. For every $r \in \Z_{\geq 0}$, there is a unique non-trivial element in $\overline{A}_\lda^r(1)$, namely $a_\lda^r = \vv{2r}01$ (note that in this algebra $\vv0{2r}1$ is set to $0$, because it is not an element of $\Jb_{\{0\}} \sa 11 \Jb_{\{0\}}$). The claim follows.

\eqref{it:homC}. This claim is analogous to \eqref{it:homB}.

\eqref{it:homD}. This claim is the content of Lemma \ref{lem:HAldarho1}.

For the inductive step, we now suppose that claims \eqref{it:homA}--\eqref{it:homD} are true for all $k<l$, and we prove them for $l$.

\eqref{it:homA}. When $r_i>0$ for all $i\in [1,l]$, by Proposition \ref{prop:Aru(l)=0} the algebra $\overline{A}^\ru(l)$ is trivial and so is its homology. Now suppose that $r_i = 0$ for some fixed $i\in[1,l]$. If $a \in \overline{A}^\ru(l)$ is a standard basis element, then $p_i = q_i = 0$. Let $I_L = [1,i-1]$ and $I_R = [i+1, l-1]$. Then $a|_{I_L}$ and $a|_{I_R}$ completely determine $a$. In fact, there is an isomorphism of complexes
\begin{equation*}
\alpha \colon \overline{A}^\ru(l) \to \overline{A}^{\ru'}_\rho (i-1) \otimes \overline{A}^{\ru''}_\lda (l-i)
\end{equation*}
sending a standard basis element $a$ to $\psi_{I_L}(a|_{I_L}) \otimes \psi_{I_R}(a|_{I_R})$, where $\ru'$ and $\ru''$ are the restrictions of $\ru$ to the first $i-1$ and the last $l-i$ coordinates. It is straightforward to check that the correspondence is bijective, and that it preserves the gradings (after shifting the Alexander multi-grading on $\overline{A}^{\ru''}_\lda (n-i)$ as in Section~\ref{sec:splittingthm}). The fact that $\alpha$ is a chain map follows from Lemmas \ref{lem:x|_Glambda} and \ref{lem:x|_Grho}:
\begin{align*}
\alpha(\de a) &= \alpha \circ (\de_{I_L} + \de_{\{i\}} + \de_{I_R}) (a) = \alpha \circ (\de_{I_L}a + \de_{I_R}a) \\
&= (\psi_{I_L} \circ \de (a|_{I_L})) \otimes \psi_{I_R}(a|_{I_R}) + \psi_{I_L} (a|_{I_L}) \otimes (\psi_{I_R} \circ \de (a|_{I_R})) \\
&= \de \alpha(a).
\end{align*}
Lastly, it follows from the definition of $a^\ru$ that $\alpha(a^\ru) = a^{\ru'}_\rho \otimes a^{\ru''}_\lda$. Thus, by induction, $H_*(\overline{A}^\ru(l)) \cong \F_2$, generated by $a^\ru$.

\eqref{it:homB}. For a standard basis element $a \in \overline{A}_\lda^\ru(l)$, we have $q_l = 0$ and $p_l = 2r_l$ (see Lemma \ref{lem:BasistdAl}). The map
\begin{align*}
\beta \colon \overline{A}^\ru_\lda(l) &\to \overline{A}^{\ru'}_{\lda\rho} (l-1) \left\{{r_l}\right\} \\
a & \mapsto \psi_{[1,l-1]}(a|_{[1,l-1]})
\end{align*}
is an isomorphism of chain complexes, where $\{r_l\}$ denotes an upward translation in the final component of the Alexander multi-grading by $r_l$ and $\ru'$ is the restriction of $\ru$ to the first $l-1$ coordinates. The result then follows from case \eqref{it:homD} for $l-1$.

\eqref{it:homC}. This claim is analogous to \eqref{it:homB}.

\eqref{it:homD}. For convenience, write $\ru = (r, \ru')$. For all $p,q \in \Z_{\geq 0}$ such that $p+q=2r$, define $C_{p,q}$ to be the submodule of $\overline{A}_{\lda\rho}^\ru(l)$ generated by the standard basis elements $a$ such that $a|_{\{1\}} = \vv{p}{q}{1}$. For $m \in \Z$, define
\[
C_m := \bigoplus_{\substack{p,q \in \Z_{\geq 0} \\ p+q = 2r \\ m=\frac12|p-q|-r}} C_{p,q}.
\]
Note that if $m=-r$, then $C_{-r} = C_{r,r}$. If $-r<m \leq 0$, then $C_m = C_{2r+m, -m} \oplus C_{-m, 2r+m}$. For all other values of $m$, we have $C_m = 0$. Moreover,
\[
\overline{A}_{\lda\rho}^\ru(l) = \bigoplus_{m=-r}^0 C_m = \bigoplus_{\substack{p,q \in \Z_{\geq 0} \\ p+q = 2r}} C_{p,q}.
\]
The number $m$ is in fact the first summand of the Maslov grading, as one can check from Definition \ref{def:gradings}.

By Corollary \ref{cor:splitdifferential}, we have that $\de = \de_{\{1\}} + \de_{[2,l]}$. To simplify the notation, denote $\de_1 := \de_{\{1\}}$ and $\de_0 := \de_{[2,l]}$. For every $p,q$ such that $p+q=2r$, we have $\de_0(C_{p,q}) \subset C_{p,q}$. For all $m$, we have $\de_0(C_m) \subset C_m$ and $\de_1(C_m) \subset C_{m-1}$. Thus, we can define a filtration on $\overline{A}_{\lda\rho}^\ru(l)$ by setting
\[
\mc F_s = \bigoplus_{m \leq s} C_m.
\]

Every filtered chain complex induces a spectral sequence. We refer the reader to \cite{mccleary}, and in particular to Section 2.2, for a discussion about spectral sequences arising from filtered chain complexes. In the proof below, in $E_s^t$, the index $s$ denotes the filtration level (usually denoted by $p$), and $t$ denotes the page of the spectral sequence. We skip the homological grading (usually denoted by $q$) to simplify the notation. Note that in \cite{mccleary} subscripts and superscripts are swapped, since McCleary deals with cochain complexes rather than chain complexes.

The zeroth page $(E^0_s, d_0)$ of the associated spectral sequence is the associated graded module, with differential induced by $\de = \de_1 + \de_0$. Therefore, $E^0_s = \mc F_s / \mc F_{s-1} \cong C_s$ under the projection map sending each other summand of $\mc F_s$ to $0$, and the differential $d_0$ is identified with $\de_0$. Thus we have
\[
(E^0_s, d_0) \cong (C_s, \de_0) \cong \bigoplus_{\substack{p,q \in \Z_{\geq 0} \\ p+q = 2r \\ s=\frac12|p-q|-r}} (C_{p,q}, \de_0),
\]
since, as we observed, $C_{p,q}$ is a $\de_0$-subcomplex of $C_s$.

For each $p,q \in \Z_{\geq 0}$ with $p+q = 2r$ and $|p-q|=2s + 2r$, we have an isomorphism of complexes
\[
(C_{p,q}, \de_0) \stackrel{\sim}{\To}
\begin{cases}
\left(\overline{A}_{\lda\rho}^{\ru'}(l-1), \de\right)\{(r,\underline0)\}[s] & \text{if }q=0\\
\left(\overline{A}_{\rho}^{\ru'}(l-1), \de\right)\{(r,\underline0)\}[s] & \text{if }q\not=0\\
\end{cases}
\]
induced by the map $a \mapsto \psi_{[2,l]}(a|_{[2,l]})$ (here the brackets $\{\cdot\}$ denote an upward shift in the Alexander multi-grading and the brackets $[\cdot]$ denote an upward shift in the Maslov grading). Note that the element $b^\ru_{p,q} := \vv pq1 a^{(0,\ru')}_{\lda\rho}$ is sent under the above isomorphism of complexes to $a^{\ru'}_{\circ}$.

Thus, it follows from the inductive hypothesis that
\[
E^1_s = \bigoplus_{\substack{p,q \in \Z_{\geq 0} \\ p+q = 2r \\ s=\frac12|p-q|-r}} \mb F_2 \left\langle \left[ b^\ru_{p,q} \right] \right\rangle.
\]
The differential $d_1$ on $E^1$ is induced by $\de = \de_1 + \de_0$. Since all elements of $E^1$ are represented by $\de_0$-cycles, $d_1 = [\de_1]$.
Therefore, the restriction map $a \mapsto a|_{\{1\}}$ induces an isomorphism of complexes 
\[
(E^1, d_1) \cong \left(\overline{A}_{\lda\rho}^r(1), \de\right) \{(0,\ru')\}
\]
which sends $[b^\ru_{p,q}] \mapsto \vv pq{}$. By Lemma \ref{lem:HAldarho1}, the second page $E^2$ of the spectral sequence is $1$-dimensional, spanned by the homology class of the element
\[
\begin{cases}
\left(\vv{2r}01 + \vv0{2r}1 \right) \cdot a^{(0,\ru')}_{\lda\rho} & \text{if }r>0 \\
\vv001 \cdot a^{(0,\ru')}_{\lda\rho} & \text{if }r=0
\end{cases}
\]
which is the element $a^\ru_{\lda\rho}$ by definition.

Therefore, $E^2 \cong \mb F_2 \left\langle \left[a^\ru_{\lda\rho}\right] \right\rangle$, and we note that $a^\ru_{\lda\rho}$ must be a non-zero $\de_0$-cycle as well as a $\de_1$-cycle. Thus, $a^\ru_{\lda\rho}$ is a $\de$-cycle too.

Since $E^2$ is 1-dimensional, it follows that the spectral sequence collapses at the second page, so $E^2 \cong E^\infty$ is the associated graded module of the homology $H_*\left(\overline{A}^\ru_{\lda\rho}(l)\right)$. Thus, $H_*\left(\overline{A}^\ru_{\lda\rho}(l)\right)$ must be 1-dimensional as well. We noted above that $a^\ru_{\lda\rho}$ is a $\de$-cycle, and it cannot be a $\de$-boundary because it has Maslov degree zero which is maximal. Thus, $\set{\left[a^\ru_{\lda\rho}\right]}$ is a basis for $H_*\left(\overline{A}^\ru_{\lda\rho}(l)\right)$..
\end{proof}
\section{The quasi-isomorphism \texorpdfstring{$\Phi$}{Phi}}
\label{sec:qi}

\subsection{Defining \texorpdfstring{$\Phi$}{Phi}}

We are now in a position to define our map $\Phi:\B(n,k,\Sc)\rightarrow\sac nk\Sc$. We will use Theorem~\ref{thm:ReviewQuiverEquivalence}. Recall that both $\B(n,k,\Sc)$ and $\sac nk\Sc$ can be viewed as algebras over $\Ib(n,k)$.

\begin{remark}
Even when viewing $\B(n,k,\Sc)$ and $\sac nk\Sc$ as algebras over the same ring $\Ib(n,k)$, we will continue to denote the basic idempotents of $\B(n,k,\Sc)$ by $\Ib_{\x}$ and the basic idempotents of $\sac nk\Sc$ by $\Jb_{\x}$.
\end{remark}

\begin{remark}
In this section, we will implicitly make the identification 
\[
\B(n,k,\Sc) \cong \Quiv(\Gamma(n,k,\Sc), \td{\mc R}_{\Sc})
\]
as in Section~\ref{sec:MMW1Review}.
\end{remark}

\begin{definition} \label{def:Phi}
Let $\x,\y \in V(n,k)$ and let $\gamma$ be an edge in $\Gamma(n,k,\Sc)$ from $\x$ to $\y$. To $\gamma$, we associate an element $\Phi(\gamma)$ of $\Jb_{\x} \sac nk\Sc \Jb_{\y}$ as follows: 
\begin{enumerate}
\item\label{it:PhiOfRi} If $\gamma$ has label $R_i$, let $\Phi(\gamma) = \vv{1}{0}{i}$.

\item If $\gamma$ has label $L_i$, let $\Phi(\gamma) = \vv{0}{1}{i}$.

\item\label{eq:PhiUVanishing} If $\gamma$ has label $U_i$ and $\x \cap \set{i-1,i} = \varnothing$, then let $\Phi(\gamma) = 0$.

\item\label{eq:PhiDefRLU} If $\gamma$ has label $U_i$ and $\x \cap \set{i-1,i} = \set{i-1}$, then in $\B(n,k,\Sc)$, we can factor $\gamma$ uniquely as $\gamma = \gamma' \cdot \gamma''$ where $\gamma'$ has label $R_i$ and $\gamma''$ has label $L_i$. Let $\Phi(\gamma) := \Phi(\gamma') \Phi(\gamma'') = \vv{2}{0}{i}$.

\item\label{eq:PhiDefLRU} If $e$ has label $U_i$ and  $\x \cap \set{i-1,i} = \set{i}$, then in $\B(n,k,\Sc)$, we can factor $\gamma$ uniquely as $\gamma = \gamma' \cdot \gamma''$ where $\gamma'$ has label $L_i$ and $\gamma''$ has label $R_i$. Let $\Phi(\gamma) := \Phi(\gamma') \Phi(\gamma'') = \vv{0}{2}{i}$.

\item If $\gamma$ has label $U_i$ and $\x \cap \set{i-1,i} = \set{i-1,i}$, then let $\Phi(\gamma) = \vv{2}{0}{i} + \vv{0}{2}{i}$.

\item If $\gamma$ has label $C_i$, then let $\Phi(\gamma) = C_i$.

\end{enumerate}
By \cite[\propQuiverAlgUniversalProp]{MMW1}, the above data defines a homomorphism of $\Ib(n,k)$-algebras $\Phi: \Path(\Gamma(n,k,\Sc)) \to \sac nk\Sc$. In Lemma~\ref{lem:PhiWellDef} below, we will show that $\Phi$ sends the relation ideal $\td{\mc R}_{\Sc}$ defining $\B(n,k,\Sc)$ to zero, so that $\Phi$ induces a homomorphism of $\Ib(n,k)$-algebras from $\B(n,k,\Sc)$ to $\sac nk\Sc $.
\end{definition}

Visually, we imagine the map $\Phi$ as follows.  A multiplicative generator of $\B(n,k,\Sc)$ is an arrow in the quiver algebra, visualized as a motion of a dot across a line or as a formal $U_i$ or $C_i$ generator. In mapping this motion to the strands algebra $\sac nk\Sc$, we imagine the line $i$ of $\B(n,k,\Sc)$ as the core of the corresponding cylinder $\I \times S^1_i$ in $\I \times \Zc(n)$.  A dot between two lines $i$ and $i+1$ in $\B(n,k,\Sc)$ corresponds to a choice of matching $(z_i^+,z_{i+1}^-)$ for $\sa nk $.  A motion of a dot across a line $i$ in $\B(n,k)$ corresponds to the shortest oriented path around the cylinder $\I \times S^1_i$ from one matched basepoint to another in $\sac nk\Sc$. Stationary dots are interpreted as pairs of dashed strands for the corresponding matchings in $\sac nk\Sc$. A $U_i$ loop that factors as $R_iL_i$ or $L_iR_i$ gets mapped to a path that loops once around $S^1_i$, starting and ending at $z_i^-$ or $z_i^+$ respectively. A nonzero $U_i$ loop that does not factor must be based at a vertex $\x$ with $\{i-1,i\} \subset \x$. This type of $U_i$ loop corresponds to a sum of two terms, each with a strand starting at $z_i^-$ or $z_i^+$ looping once around $S^1_i$ and an adjacent pair of dashed strands.  Finally, a $C_i$ generator in $\B(n,k,\Sc)$ is sent to a closed loop on the cylinder $\I \times S^1_i$. See Figure~\ref{fig:RLUCexamples} and Figure~\ref{fig:UexamplePart2} for an illustration of Definition~\ref{def:Phi}.

\begin{figure}
\includegraphics[scale=0.5]{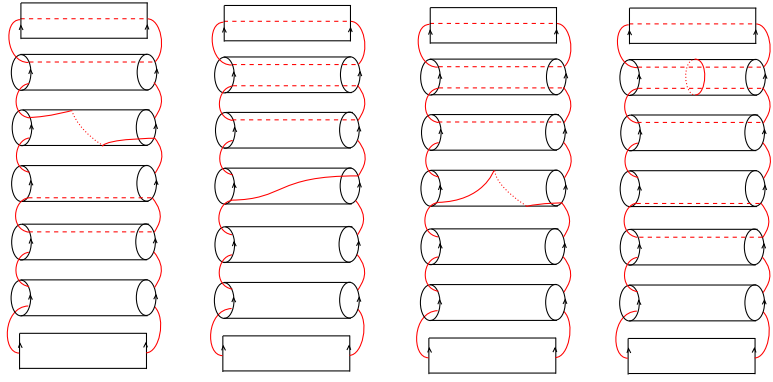}
\caption{From left to right: images under $\Phi$ of generators of $\B(5,3)$ starting at $\x = \{0,1,3\}$ and labeled $R_2$, $L_3$, $U_3$, and $C_1$ respectively.}
\label{fig:RLUCexamples}
\end{figure}

\begin{figure}
\includegraphics[scale=0.5]{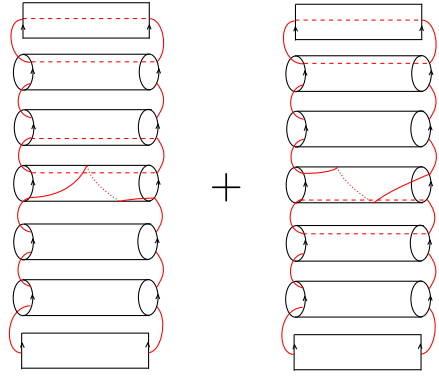}
\caption{Image under $\Phi$ of the generator of $\B(5,3)$ starting at $\x = \{0,2,3\}$ and labeled $U_3$.}
\label{fig:UexamplePart2}
\end{figure}

\begin{lemma}\label{lem:PhiWellDef}
$\Phi$ is well-defined and respects multiplication.
\end{lemma}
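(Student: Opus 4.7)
The plan is to apply the universal property of $\Path(\Gamma(n,k,\Sc))$ from \cite[\propQuiverAlgUniversalProp]{MMW1}: specifying $\Phi$ on edges of $\Gamma(n,k,\Sc)$ as in Definition~\ref{def:Phi} uniquely extends to an $\Ib(n,k)$-algebra homomorphism $\Phi \colon \Path(\Gamma(n,k,\Sc)) \to \sac nk\Sc$, which is automatically multiplicative on the path algebra. The remaining content of the lemma is to check that $\Phi$ annihilates every generator of the relation ideal $\td{\mc R}_{\Sc}$, whence it descends to a well-defined $\Ib(n,k)$-algebra homomorphism out of the quotient $\B(n,k,\Sc) \cong \Quiv(\Gamma(n,k,\Sc),\td{\mc R}_{\Sc})$, with multiplicativity on $\B(n,k,\Sc)$ inherited from the path algebra.

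I would verify the relations family by family using the multiplication formulas of Lemma~\ref{lem:concatenable}. The easy cases are handled quickly: the $U$-vanishing relation is built directly into item~\eqref{eq:PhiUVanishing} of Definition~\ref{def:Phi}; the relation $C_i^2$ maps to zero by condition~\eqref{it:no double loops}, which forbids a $C_i$ variable from appearing in both factors; and the two-line pass relations $R_iR_{i+1}$ and $L_{i+1}L_i$ vanish because the odd first-factor speed ($p_i = 1$ for the former, $q_{i+1} = 1$ for the latter) forces the nonzero entry on the adjacent index of the second factor to violate condition~\eqref{it:p odd concat} or \eqref{it:q odd concat}. The distant commutation and $C$-central relations follow from the observation that generators on disjoint backbones, and closed $C_j$ loops, do not interact in Lemma~\ref{lem:concatenable}: one reads off that the product formulas are symmetric in such factors.

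The harder work is the loop relations and the $U$-central relations, which require case analysis on the starting idempotent $\x$. For a loop relation, $R_iL_i$ is a valid path at $\x$ only when $\x \cap \{i-1,i\} = \{i-1\}$; there the product $\Phi(R_i)\Phi(L_i) = \vv{1}{0}{i}\cdot\vv{0}{1}{i}$ is computed via Lemma~\ref{lem:concatenable} (with $p_i = 1$ odd and $q_i' = 1$) to give $\vv{2}{0}{i} = \Phi(U_i)$ by item~\eqref{eq:PhiDefRLU}, and the relation $L_iR_i = U_i$ is symmetric. For the $U$-central relations $XU_j = U_jX$ with $X \in \{R_i,L_i,U_i\}$ and $|i-j| > 1$, disjointness of backbones gives commutation immediately. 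The main obstacle is the remaining cases $|i-j| \leq 1$: here $\Phi(U_j)$ expands as $\vv{2}{0}{j} + \vv{0}{2}{j}$ whenever $\{j-1,j\}$ is contained in the relevant starting idempotent, and one must track the cross-terms produced by the $r_i, s_i$ formulas of Lemma~\ref{lem:concatenable} across every subcase of $\x \cap \{i-1,i,i+1\}$ to verify that both orderings of the product rearrange to identical sums in $\sac nk\Sc$. Once this bookkeeping is complete, all relations map to zero and $\Phi$ descends as required.
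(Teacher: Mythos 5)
Your proposal follows essentially the same approach as the paper: appeal to the universal property of $\Path(\Gamma(n,k,\Sc))$ from \cite[\propQuiverAlgUniversalProp]{MMW1}, then verify relation by relation that $\td{\mc R}_{\Sc}$ maps to zero using the explicit product formulas of Lemma~\ref{lem:concatenable}, with the same breakdown of easy cases versus the idempotent-dependent case analysis for the loop and $U$-central relations. You stop short of carrying out the full bookkeeping for the $U$-central relations with $|i-j| \leq 1$ (which is the bulk of the paper's argument by length), but you correctly identify both what must be checked and the tools that do the work.
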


\begin{figure}
\includegraphics[scale=0.5]{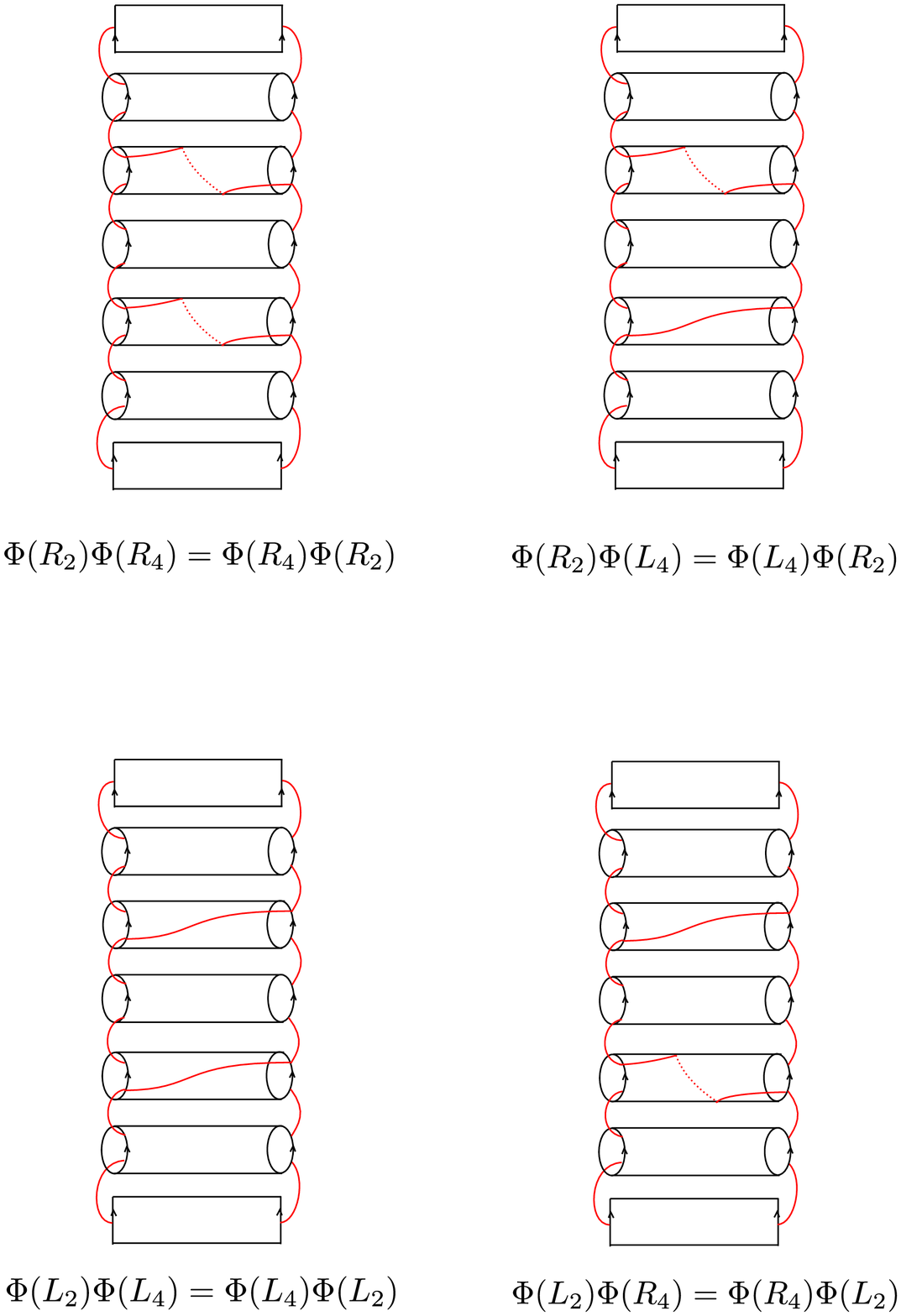}
\caption{Distant commutation relations in $\B(5,2)$.}
\label{fig:PhiCommutationRels}
\end{figure}

\begin{proof}
We must show that the relations given in Definition~\ref{def:ReviewGammaQuiverRels} are satisfied in $\sac nk\Sc$ after applying $\Phi$. The description below Definition \ref{def:Phi}, along with the examples, should make this lemma very plausible.  We carry through the algebraic checks one case at a time.  Recall that in the visualization of $\B(n,k,\Sc)$, line $i$ sits between regions $i-1$ and $i$, and similarly in $\sac nk\Sc$ the cylinder $\I \times S^1_i$ sits between matchings $(z^+_{i-1},z^-_i)$ and $(z^+_i,z^-_{i+1})$ (these can be viewed as matchings $i-1$ and $i$ respectively).  We will be using Lemma \ref{lem:concatenable} together with the convention of equation \eqref{eq:pq single column} throughout the proof.
\begin{itemize}
\item The ``$U$ vanishing relations'' $U_i = 0$ if $\gamma$ is a loop at a vertex $\x \in V(n,k)$ with $\x\cap\{i-1,i\}=\emptyset$:\\
These follow from item~\eqref{eq:PhiUVanishing} in Definition~\ref{def:Phi}.
\item The ``loop relations'' $R_i L_i = U_i$, $L_i R_i = U_i$:\\
In such a relation, let $\gamma$ be the edge labeled $U_i$. By items \eqref{eq:PhiDefRLU} or \eqref{eq:PhiDefLRU} of Definition~\ref{def:Phi}, $\Phi$ maps the relation to zero.
\item The ``distant commutation relations'' $R_i R_j = R_j R_i, L_i L_j = L_j L_i, R_i L_j = L_j R_i$ for $|i-j|>1$:\\
For a relation of the form $R_i R_j = R_j R_i$, both $\Phi(R_i) \Phi(R_j)$ and $\Phi(R_j) \Phi(R_i)$ are the basis element $\vv{1}{0}{i} \vv{1}{0}{j}$ by the formulas of Lemma \ref{lem:concatenable}. The other cases are similar; see Figure~\ref{fig:PhiCommutationRels}.
\item The ``two line pass'' relations $R_i R_{i+1} = 0, L_i L_{i-1} = 0$:\\
For $R_i R_{i+1} = 0$, we have 
\[
\Phi(R_i) \Phi(R_{i+1}) = \vv{1}{0}{i} \cdot \vv{1}{0}{i+1} = 0
\]
by condition~\eqref{it:p odd concat} of Lemma \ref{lem:concatenable}. The relation $L_i L_{i-1} = 0$ is similar, using condition~\eqref{it:q odd concat}.
\item The ``$U$ central relations'' part 1, $R_i U_j = U_j R_i, L_i U_j = U_j L_i$:\\
First consider a relation of the form $R_iU_j=U_jR_i$. We consider several cases; first assume $|i-j| > 1$. In this case we have $\x \cap \{j-1,j\} = (\x \setminus \{i-1\} \cup \{i\}) \cap \{j-1,j\}$.
\begin{itemize}
\item If $\x \cap \{j-1,j\} = 0$, then $\Phi(U_j)$ is zero on both sides of the relation.
\item If $\x \cap \{j-1,j\} = \{j-1\}$, then we have
\[
\Phi(R_i) \Phi(U_j) = \Phi(R_i) \Phi(R_j) \Phi(L_j) = \Phi(R_j) \Phi(L_j) \Phi(R_i) = \Phi(U_j) \Phi(R_i),
\]
using the ``distant commutation relations'' twice in the middle equality. The case where $\x \cap \{j-1,j\} = \{j\}$ is similar.
\item If $\x \cap \{j-1,j\} = \{j-1,j\}$, then by Lemma \ref{lem:concatenable} we have 
\[
\Phi(R_i) \Phi(U_j) = \vv{1}{0}{i} \vv{2}{0}{j} + \vv{1}{0}{i} \vv{0}{2}{j},
\]
and $\Phi(U_j) \Phi(R_i)$ gives the same result.
\end{itemize}
For the cases when $|i-j|\leq 1$, we note that the presence of $R_i$ as the only non-loop edge implies that $\x\cap\{i-1,i\}=\set{i-1}$.  First suppose $j = i-1$.  We consider two subcases:
\begin{itemize}
\item If $\x \cap \{i-2,i-1,i\} = \{i-1\}$, we have $\Phi(U_j) \Phi(R_i) = \Phi(L_{i-1}) \Phi(R_{i-1}) \Phi(R_i) = 0$ by the ``two-line pass'' relations. We also have $\Phi(R_i) \Phi(U_j) = 0$ because the second factor is zero.
\item If $\x \cap \{i-2,i-1,i\} = \{i-2,i-1\}$, we have 
\[
\Phi(U_j) \Phi(R_i) = \bigg[ \vv{2}{0}{i-1} + \vv{0}{2}{i-1} \bigg] \cdot \vv{1}{0}{i} = \vv{2}{0}{i-1} \vv{1}{0}{i} .
\]
by Lemma \ref{lem:concatenable} (the second summand is zero by condition~\eqref{it:q even concat} of that lemma). Meanwhile, we have
\[
\Phi(R_i) \Phi(U_j) = \vv{1}{0}{i} \cdot \vv{2}{0}{i-1} = \vv{2}{0}{i-1} \vv{1}{0}{i} ,
\]
so the relation holds.
\end{itemize}

If $j = i+1$, we follow a parallel argument:
\begin{itemize}
\item If $\x \cap \{i-1,i,i+1\} = \{i-1\}$, we have $\Phi(R_i) \Phi(U_j) = \Phi(R_i) \Phi(R_{i+1}) \Phi(L_{i+1}) = 0$ by the ``two-line pass'' relations. We also have $\Phi(U_j) \Phi(R_i) = 0$ because the first factor is zero.
\item If $\x \cap \{i-1,i,i+1\} = \{i-1,i+1\}$, we have 
\[
\Phi(R_i) \Phi(U_j) = \vv{1}{0}{i} \cdot \bigg[ \vv{2}{0}{i+1} + \vv{0}{2}{i+1} \bigg] = \vv{1}{0}{i} \vv{0}{2}{i+1}.
\]
by Lemma \ref{lem:concatenable} (the first summand is zero by condition~\eqref{it:p odd concat} of that lemma). Meanwhile, we have
\[
\Phi(U_j) \Phi(R_i) = \vv{0}{2}{i+1} \cdot \vv{1}{0}{i} = \vv{1}{0}{i} \vv{0}{2}{i+1},
\]
so the relation holds.
\end{itemize}
Finally, if $j = i$, then again since $\x \cap \{i-1,i\} = \{i-1\}$, we have $\Phi(R_i) \Phi(U_i) = \Phi(R_i) \Phi(L_i) \Phi(R_i) = \Phi(U_i) \Phi(R_i)$. The relations $L_i U_j = U_j L_i$ are analogous to $R_i U_j = U_j R_i$.
\item The ``$U$ central relations'' part 2, $U_i U_j = U_j U_i$:\\
Since both $U_i$ and $U_j$ are loops, we have $\x=\y$ in this case regardless of $i$ and $j$, so the meaning of $\Phi(U_i)$ and $\Phi(U_j)$ does not change depending on the order in which they are taken. We may also assume $i \neq j$.
\begin{itemize}
\item If $\x \cap \{i-1,i\} = \varnothing$ or $\x \cap \{j-1,j\} = \varnothing$, then either $\Phi(U_i)$ or $\Phi(U_j)$ is zero and the relation holds.
\item If $\x \cap \{i-1,i\} = \{i-1\}$, then 
\begin{align*}
\Phi(U_i) \Phi(U_j) &= \Phi(R_i) \Phi(L_i) \Phi(U_j) \\
&= \Phi(R_i) \Phi(U_j) \Phi(L_i) \\
&= \Phi(U_j) \Phi(R_i) \Phi(L_i) \\
&= \Phi(U_j) \Phi(U_i).
\end{align*}
A similar argument shows that the relation also holds if $\x \cap \{i-1,i\} = \{i\}$; by symmetry, it holds if $\x \cap \{j-1,j\} = \{j-1\}$ or $\x \cap \{j-1,j\} = \{j\}$.
\item If $\x \cap \{i-1,i\} = \{i-1,i\}$ and $\x \cap \{j-1,j\} = \{j-1,j\}$, then we consider subcases.
\begin{itemize}
\item
If $|i-j| > 1$, then $\Phi(U_i) \Phi(U_j)$ and $\Phi(U_j) \Phi(U_i)$ are both equal to
\[
\vv{2}{0}{i} \vv{2}{0}{j} + \vv{2}{0}{i} \vv{0}{2}{j} + \vv{0}{2}{i} \vv{2}{0}{j} + \vv{0}{2}{i} \vv{0}{2}{j}.
\]
\item If $j = i-1$, then
\[
\Phi(U_i) \Phi(U_j) = \bigg[\vv{2}{0}{i} + \vv{0}{2}{i} \bigg] \cdot \bigg[ \vv{2}{0}{i-1} + \vv{0}{2}{i-1} \bigg].
\]
We have 
\[
\vv{2}{0}{i} \cdot \vv{2}{0}{i-1} = \vv{2}{0}{i-1} \vv{2}{0}{i},
\]
\[
\vv{2}{0}{i} \cdot \vv{0}{2}{i-1} = 0,
\]
\[
\vv{0}{2}{i} \cdot \vv{2}{0}{i-1} = \vv{2}{0}{i-1} \vv{0}{2}{i},
\]
and 
\[
\vv{0}{2}{i} \cdot \vv{0}{2}{i-1} = \vv{0}{2}{i-1} \vv{0}{2}{i}.
\]
On the other hand,
\[
\Phi(U_j) \Phi(U_i) = \bigg[\vv{2}{0}{i-1} + \vv{0}{2}{i-1} \bigg] \cdot \bigg[ \vv{2}{0}{i} + \vv{0}{2}{i} \bigg].
\]
We have 
\[
\vv{2}{0}{i-1} \cdot \vv{2}{0}{i} = \vv{2}{0}{i-1} \vv{2}{0}{i},
\]
\[
\vv{2}{0}{i-1} \cdot \vv{0}{2}{i} = \vv{2}{0}{i-1} \vv{0}{2}{i},
\]
\[
\vv{0}{2}{i-1} \cdot \vv{2}{0}{i} = 0,
\]
and 
\[
\vv{0}{2}{i-1} \cdot \vv{0}{2}{i} = \vv{0}{2}{i-1} \vv{0}{2}{i}.
\]
Thus, $\Phi(U_i) \Phi(U_{i-1}) = \Phi(U_{i-1}) \Phi(U_i)$ when $\x \cap \{i-2,i-1,i\} = \{i-2,i-1,i\}$.
\item The case $j = i+1$ follows by symmetry.
\end{itemize}
\end{itemize}

\item The ``$C$ central relations'' $C_i A = A C_i$ for all generators $A$ labeled $R_j$, $L_j$, $U_j$, or $C_j$:\\
These relations hold in $\sac nk\Sc$ because multiplication in $\sac nk\Sc$ is defined using addition of the components $\vec{c}$ of generators $E(s,\vec{c})$, and addition is commutative. Visually, a closed loop may be isotoped to near the beginning or the end of a cylinder without changing the corresponding element of $\sac nk\Sc$.

\item The ``$C^2$ vanishing relations'' $C_i^2 = 0$:\\
These relations hold in $\sac nk\Sc$ because the product of $E(s,\vec{c})$ with $E(s',\pvec{c}')$ was defined to be zero if $(\vec{c}+\pvec{c}')(i) = 2$ for any $i$. Visually, the product results in a degenerate annulus.
\end{itemize}
\end{proof}

\begin{lemma}\label{lem:Phi respects differential}
The map $\Phi$ is a homomorphism of differential graded algebras.
\end{lemma}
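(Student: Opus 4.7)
The plan is to combine the multiplicative statement already established in Lemma~\ref{lem:PhiWellDef} with two further verifications: that $\Phi$ respects the Maslov and Alexander gradings, and that $\Phi$ intertwines the differentials. The key structural observation is that the differential on $\B(n,k,\Sc)$ vanishes on all multiplicative generators except $C_i$, where $\de C_i = U_i$, so at the generator level the work reduces to a handful of direct case checks.

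For gradings, I would verify on each generator $g \in \{R_i, L_i, U_i, C_i\}$ that the degree of $\Phi(g)$ computed via Definition~\ref{def:gradings} matches the degree of $g$ from Definition~\ref{def:ReviewOSzGradings}. For instance, $\Phi(R_i) = \vv{1}{0}{i}$ has Maslov degree $\frac{1}{2} - 1 + (-1)^{\chi_{\Sc}(i)} \cdot \frac{1}{2}$, equal to $0$ if $i \notin \Sc$ and $-1$ if $i \in \Sc$, matching $\m(R_i)$; the Alexander degree is $\frac{1}{2}$, again matching. Similar direct computations handle $L_i$, $U_i$ (in all three non-vanishing cases of $\x \cap \{i-1, i\}$), and $C_i$. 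Additivity of the gradings under multiplication (Proposition~\ref{prop:effectsofde} on the strands side, and the analogous property on the Ozsv\'ath--Szab\'o side) then propagates the compatibility to every element.

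For the differential on generators, I need $\de \Phi(g) = \Phi(\de g)$. When $g \in \{R_i, L_i, U_i\}$, the right-hand side is $0$, and computing $\de \Phi(g)$ using Lemmas~\ref{lem:general pq differential} and~\ref{lem:general pq C-differential} gives zero by inspection; the only nontrivial case is $\Phi(U_i)$ at an idempotent $\x$ with $\{i-1, i\} \subset \x$, where
\[
\de^0_i \left( \vv{2}{0}{i} + \vv{0}{2}{i} \right) = \vv{1}{1}{i} + \vv{1}{1}{i} = 0
\]
over $\F_2$. For $g = C_i$, the crucial identity $\de \Phi(C_i) = \Phi(U_i)$ is checked idempotent by idempotent: Equation~\eqref{eq:general pq C-differential zeroes} applied to $C_i \Jb_\x$ yields exactly $\delta_{i-1} \vv{2}{0}{i} + \epsilon_i \vv{0}{2}{i}$, with $\delta_{i-1}$ detecting $i-1 \in \x$ and $\epsilon_i$ detecting $i \in \x$; this matches the case-by-case definition of $\Phi(U_i) \Jb_\x$ in Definition~\ref{def:Phi} precisely.

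The main obstacle is then extending these generator-level checks to arbitrary elements of $\B(n,k,\Sc)$. The differential on $\sac nk\Sc$ is defined combinatorially by crossing resolution rather than as a derivation, so a Leibniz rule is not built into the definition. I would therefore establish separately that $\de$ on $\sac nk\Sc$ satisfies $\de(ab) = (\de a) b + a (\de b)$ on basis elements $a, b$, by a case analysis driven by Lemma~\ref{lem:concatenable} together with Lemmas~\ref{lem:general pq differential} and~\ref{lem:general pq C-differential}. Morally, the Leibniz rule holds because both multiplication and differentiation act locally on each circular backbone and on each $C_i$ loop variable, so on each backbone the identity reduces to comparing the two orders in which one may resolve a crossing and concatenate. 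Once the Leibniz property is in hand, the multiplicativity of $\Phi$ together with the generator-level identities $\de \Phi(g) = \Phi(\de g)$ gives $\de \Phi = \Phi \de$ throughout $\B(n,k,\Sc)$, completing the proof.
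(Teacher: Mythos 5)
Your proposal is correct and takes essentially the same approach as the paper: check the gradings and the identity $\de\Phi(g)=\Phi(\de g)$ generator by generator (with the same key computations $\de^0_i\bigl(\vv{2}{0}{i} + \vv{0}{2}{i}\bigr)=0$ over $\F_2$ and the case-by-case identity $\de^c\Phi(C_i)=\Phi(U_i)$ via the $\delta_{i-1}$, $\epsilon_i$ analysis of Lemma~\ref{lem:general pq C-differential}), then extend to all of $\B(n,k,\Sc)$ via multiplicativity of $\Phi$ and the Leibniz rule. You are somewhat more careful in flagging that the Leibniz rule for $\sac nk\Sc$ is not manifest from the crossing-resolution definition and merits a separate check, a point the paper leaves implicit in its assertion that $\sac nk\Sc$ is a dg algebra.
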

\begin{proof} The reader may use the grading formulas of Definitions~\ref{def:ReviewOSzGradings} and \ref{def:gradings} to confirm that $\Phi$ does indeed preserve gradings.

For the differential, we first show $\de(\Phi(\gamma)) = 0$ for an edge $\gamma$ with label $R_i$, $L_i$, or $U_i$ of $\B(n,k,\Sc)$. In all of these cases, the array for $\Phi(\gamma)$ has no monomial of $C_j$ variables out front, so $\de^c(\Phi(\gamma))=0$. The array also has $p_j=q_j=0$ for all $j\neq i$, so that $M_j=m_j=0$ in the language of Lemma \ref{lem:general pq differential}. Thus, $\de^0_j(\Phi(\gamma))=0$ for $j \neq i$.

To compute $\de^0_i(\Phi(\gamma))$, let $\x$ denote the starting vertex of $\gamma$. First note that $\de^0_i(\Phi(\gamma))=0$ if $\{i-1,i\}\not\subset\x$. This observation takes care of all cases of $\gamma$ under consideration except the case when $\gamma$ is labelled by $U_i$ and $\{i-1,i\}\subset\x$, for which we have $\Phi(\gamma)=\vv{2}{0}{i} + \vv{0}{2}{i}$.  By formula~\eqref{eq:general pq differential good idemp} in Lemma~\ref{lem:general pq differential}, we have $\de^0_i\left(\vv{2}{0}{i}\right) = \vv{1}{1}{i} = \de^0_i\left(\vv{0}{2}{i}\right)$. Since we are in characteristic $2$, it follows that $\de^0_i(\Phi(\gamma)) = 0$, and so we may conclude that $\de(\Phi(\gamma)) = 0$ in all of these cases.

Now consider a loop $\gamma$ labelled by $C_i$ in $\B(n,k,\Sc)$ at vertex $\x$.  The array $\Phi(\gamma)=C_i$ has all $p_j=q_j=0$, so Lemma \ref{lem:general pq differential} again ensures that $\de^0(\Phi(\gamma))=0$.  Meanwhile, Lemma \ref{lem:general pq C-differential} allows us to compute $\de^c(\Phi(\gamma))$ case-by-case by analyzing $\delta_{i-1}$ and $\epsilon_i$ in equation \eqref{eq:general pq C-differential zeroes}:
\begin{itemize}
\item If $\x \cap \{i-1,i\} = \varnothing$, then $\de^c(\Phi(C_i)) = 0$.
\item If $\x \cap \{i-1,i\} = \{i-1\}$, then $\de^c(\Phi(C_i)) = \vv{2}{0}{i}$.
\item If $\x \cap \{i-1,i\} = \{i\}$, then $\de^c(\Phi(C_i)) = \vv{0}{2}{i}$.
\item If $\x \cap \{i-1,i\} = \{i-1,i\}$, then $\de^c(\Phi(C_i)) = \vv{2}{0}{i} + \vv{0}{2}{i}$.
\end{itemize}
Using Definition \ref{def:Phi} in each case, we have $\de^c(\Phi(\gamma)) = \Phi(\gamma')$ for the loop $\gamma'$ at $\x$ labelled by $U_i$ instead of $C_i$ (note that $\gamma' = 0$ in $\B(n,k,\Sc)$ if $\x \cap \{i-1,i\} = \varnothing$). These four cases are illustrated in Figure~\ref{fig:DifferentialOfCExamples}.

We have shown that $\de(\Phi(\gamma)) = \Phi(\de(\gamma))$ for all edges $\gamma$ in the quiver $\Gamma(n,k,\Sc)$. It follows from the Leibniz rule that $\de(\Phi(\gamma)) = \Phi(\de(\gamma))$ for all $\gamma \in \B(n,k,\Sc)$. Thus, $\Phi$ is a homomorphism of dg algebras.
\end{proof}

\begin{figure}
\includegraphics[scale=0.5]{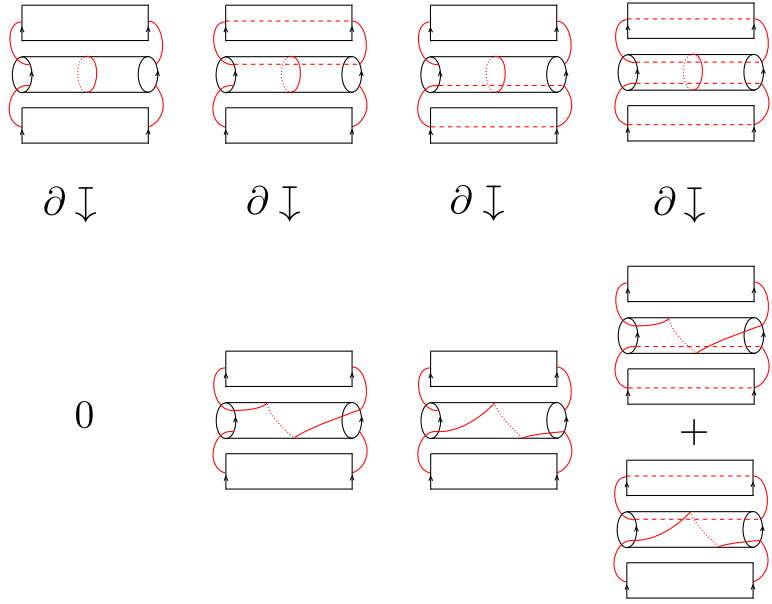}
\caption{Differential of $\Phi(C_i)$ in various cases.}
\label{fig:DifferentialOfCExamples}
\end{figure}

Recall that $\B(n,k,\Sc)$ is a dg algebra over $\F_2[U_1,\ldots,U_n]^{V(n,k)}$, so we have a dg ring homomorphism from $\F_2[U_1,\ldots,U_n]^{V(n,k)}$ to $\B(n,k,\Sc)$.
The natural inclusion $\Ib(n,k) \to \F_2[U_1, \ldots, U_n]^{V(n,k)}$ recovers the usual $\Ib(n,k)$-algebra structure of $\B(n,k,\Sc)$.
We now have another dg ring homomorphism $\Phi: \B(n,k,\Sc) \to \sac nk\Sc$, and we can compose to give $\sac nk\Sc$ the structure of a dg algebra over $\F_2[U_1,\ldots,U_n]^{V(n,k)}$, compatible with the dg algebra structure over $\Ib(n,k)$. Tautologically, $\Phi$ is a homomorphism of dg $\F_2[U_1,\ldots,U_n]^{V(n,k)}$-algebras. One can check that the natural map
\[
\F_2[U_1,\ldots,U_n] \to \F_2[U_1,\ldots,U_n]^{V(n,k)} \to \sac nk\Sc
\]
sends each variable $U_i$ to a central element of $\sac nk\Sc$, where the first map in the composition is the inclusion of constant functions.

\begin{lemma}\label{lem:StrandsHomologyBasisIdentPolyLinear}
For $\x, \y \in V(n,k)$ not far, the isomorphism
\[
\psi \colon \Jb_\x H_*(\sa nk) \Jb_\y \to \F_2[U_1,\ldots,U_n]/(p_G)
\]
given in equation~\eqref{eq:general homology of JAJ} of Corollary~\ref{cor:general homology of JAJ} is linear over $\F_2[U_1,\ldots,U_n]$.
\end{lemma}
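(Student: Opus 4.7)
The plan is to reduce the $\F_2[U_1,\ldots,U_n]$-linearity of $\psi$ to a local computation on each tensor factor via the splitting of Theorem~\ref{thm:JAJtoTensorProduct}, and then invoke Lemmas~\ref{lem:arucdotaru} and~\ref{lem:arucdotarucirc}. Since $\F_2$-generators of $\Jb_\x H_*(\sa nk) \Jb_\y$ are provided by the explicit classes $[c_\ru]$ of Corollary~\ref{cor:HomologyBasisAsStrandsGens}, which correspond to the monomials $U_1^{r_1}\cdots U_n^{r_n}$ under $\psi$, it suffices to verify the chain-level identity $[U_j \cdot c_\ru] = [c_{\ru+e_j}]$ for each index $j$ and each multi-index $\ru$.

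Because $U_j = \Phi(U_j)$ is supported only on the $j$-th backbone (Definition~\ref{def:Phi}), left multiplication by $U_j$ commutes with the factorwise splitting $\psi$, affecting only the factor containing $j$. The verification therefore splits into two cases according to whether $j$ lies in $\CL{\x,\y}$ or in one of the generating/edge intervals. If $j \in \CL{\x,\y}$, then Lemma~\ref{lem:cl} and Definition~\ref{def:Phi} together give $U_j \Jb_\x = \vv{2}{0}{j}$ or $\vv{0}{2}{j}$ depending on the sign of $v_j(\x,\y)$; a direct application of Lemma~\ref{lem:concatenable} then yields $\vv{2}{0}{j}\cdot\vv{1+2r_j}{0}{j} = \vv{3+2r_j}{0}{j}$ (and analogously on the other side), realizing $r_j \mapsto r_j+1$ on the crossed-line tensor factor.

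If instead $j$ lies in a generating or edge interval $G = [j_G+1, j_G+l_G]$, a case analysis on the position of $j$ within $G$ (and on whether $j_G, j_G+l_G$ belong to $\x$) shows that the restriction of $U_j \Jb_\x$ via $\psi_G$ coincides with the element $a^{e_{j-j_G}}_\circ$ of $\overline{A}_\circ(l_G)$. Lemma~\ref{lem:arucdotaru}, or its edge-algebra analog Lemma~\ref{lem:arucdotarucirc}, then gives $a^{e_{j-j_G}}_\circ \cdot a^{\ru|_G}_\circ = a^{\ru|_G + e_{j-j_G}}_\circ$, establishing the desired cycle-level identity. When the new multi-index has all entries nonzero inside a generating interval, the right-hand side vanishes by Proposition~\ref{prop:Aru(l)=0}, matching the relation $p_G = 0$ in the quotient on the polynomial side.

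The main obstacle will be the bookkeeping for boundary behavior in the generating/edge-interval case: when $j$ sits at an endpoint of $G$, the formula for $U_j \Jb_\x$ in $\A(n,k)$ may contain terms one might naively expect to contribute but which are in fact killed by the idempotent $\Jb_{[j_G+1, j_G+l_G-1]}$ of $\overline{A}(l_G)$. Once these subtleties are tracked across all sub-cases (including short intervals and the two-faced edge interval), everything reduces to the combinatorial formulas already established in Sections~\ref{sec:Elements of Gen in Maslov=0} and~\ref{sec:mult and diff with pq}.
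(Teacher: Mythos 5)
There is a genuine gap in your treatment of the crossed-line case. You assert that ``Lemma~\ref{lem:cl} and Definition~\ref{def:Phi} together give $U_j \Jb_\x = \vv{2}{0}{j}$ or $\vv{0}{2}{j}$ depending on the sign of $v_j(\x,\y)$.'' This is not correct: the element $\Phi(U_j)\Jb_\x$ lives in $\Jb_\x\sa nk\Jb_\x$, not in $\Jb_\x\sa nk\Jb_\y$, so Lemma~\ref{lem:cl} (which constrains basis elements from $\x$ to $\y$ when line~$j$ is crossed between those two states) says nothing about its form. Definition~\ref{def:Phi} shows that $\Phi(U_j)\Jb_\x$ is determined solely by $\x\cap\{j-1,j\}$, and in particular equals $\vv{2}{0}{j} + \vv{0}{2}{j}$ whenever $\{j-1,j\}\subset\x$ --- and this can certainly coexist with $j\in\CL{\x,\y}$. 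For instance with $\x=\{1,2\}$, $\y=\{2,3\}$ in $V(3,2)$, line $2$ is crossed ($v_2(\x,\y)=1$) and $\{1,2\}\subset\x$, so $\Phi(U_2)\Jb_\x$ has two terms.

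In that situation you still need to explain why the ``wrong'' term vanishes when multiplied into the homology basis element $\vv{1+2r_j}{0}{j}$ or $\vv{0}{1+2r_j}{j}$ at position~$j$. It does vanish, but the reason is the degenerate-bigon condition~\eqref{it:evens no bigons} of Lemma~\ref{lem:concatenable}: e.g.\ $\vv{0}{2}{j}\cdot\vv{1+2r_j}{0}{j}=0$ because $p_j=0$, $q_j=2$ are both even and $(p_j-q_j)(p_j'-q_j')=(-2)(1+2r_j)<0$. This step is not a consequence of Lemma~\ref{lem:cl} and must be argued separately. The paper's proof handles exactly this: it cases explicitly on $\x\cap\{i-1,i\}$ and, in the $\{i-1,i\}\subset\x$ case, shows for each sign of $v_i(\x,\y)$ which of the two summands dies. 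A parallel wrinkle appears in your generating/edge-interval case when $j$ sits at the boundary of a generating interval with $j_G\in\x$ or $j_G+l_G\in\x$: then $\Phi(U_j)\Jb_\x$ again has an extra term not killed by $\Jb_\x$, and you must invoke the bigon/idempotent argument to discard it after multiplication, not before. With these corrections the overall strategy --- routing through the splitting of Theorem~\ref{thm:JAJtoTensorProduct} and Lemmas~\ref{lem:arucdotaru}, \ref{lem:arucdotarucirc} --- does work, but it is worth noting the paper bypasses the tensor decomposition entirely and multiplies $\Phi(U_i)$ directly against the explicit cycles of Corollary~\ref{cor:HomologyBasisAsStrandsGens}, which is shorter.
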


\begin{proof}
The action of $U_i$ on a basis element of $\Jb_{\x} \sa nk \Jb_{\y}$ is given by multiplication (on either side) by $\Phi(\gamma)$ where $\gamma$ is a loop in $\Gamma(n,k,\Sc)$ labeled $U_i$. For concreteness, let $\gamma$ be based at $\x$, so that the action of $U_i$ is left multiplication by $\Phi(\gamma)$.

If $\x \cap \{i-1,i\} = \varnothing$, then $\gamma = 0$ so $\Phi(\gamma) = 0$. Correspondingly, line $i$ is contained in a generating interval of length $1$, so the action of $U_i$ on $\F_2[U_1,\ldots,U_n]/(p_G)$ is also zero.

If $\x \cap \{i-1,i\} = \{i-1\}$, then $\Phi(\gamma) = \vv{2}{0}{i}$. We cannot have $v_i(\x,\y) = -1$. If $v_i(\x,\y) = 1$, then line $i$ is crossed, and by Lemma~\ref{lem:concatenable}, $\Phi(\gamma)$ acts on a homology basis element of Corollary~\ref{cor:HomologyBasisAsStrandsGens} by increasing the value of $r_i$ by one. If $v_i(\x,\y) = 0$, then line $i$ is not crossed, but the term $\vv{0}{2r_i}{i}$ in this homology basis element is zero because $i \notin \x$ (see Lemma~\ref{lem:piqi}). Thus, $\Phi(\gamma)$ acts by increasing the value of $r_i$ by one in this case too. The argument when $\x \cap \{i-1,i\} = \{i\}$ is similar.

Finally, if $\{i-1,i\} \subset \x$, we have $\Phi(\gamma) = \vv{2}{0}{i} + \vv{0}{2}{i}$. If $v_i(\x,\y) = -1$, then $\vv{2}{0}{i}$ becomes zero when multiplied by a basis element of $\Jb_\x H_*(\sa nk) \Jb_\y$. If $v_i(\x,\y) = 1$, then $\vv{0}{2}{i}$ becomes zero when multiplied by a basis element of $\Jb_{\x} H_*(\sa nk) \Jb_{\y}$. If $v_i(\x,\y)=0$ (so $i\notin\CL{\x,\y}$) and $r_i \neq 0$, we see a product of the form
\[
\left( \vv{2}{0}{i} + \vv{0}{2}{i} \right) \cdot \left( \vv{2r_i}{0}{i} + \vv{0}{2r_i}{i} \right).
\]
The cross-terms $\vv{2}{0}{i}\cdot\vv{0}{2r_i}{i}$ and $\vv{0}{2}{i}\cdot\vv{2r_i}{0}{i}$ are both zero due to degenerate bigons (see Lemma \ref{lem:concatenable}), while the remaining terms give $\vv{2(r_i+1)}{0}{i}+\vv{0}{2(r_i+1)}{i}$. The case when $r_i = 0$ is left to the reader. In all three cases, we see that multiplication by $\Phi(\gamma)$ on the left has the effect of increasing the value of $r_i$ by one in a basis element from Corollary~\ref{cor:HomologyBasisAsStrandsGens}.
\end{proof}

\begin{lemma}\label{lem:MainTheoremHelper}
Suppose $\x,\y \in V(n,k)$ are not far. If $\gamma_{\x,\y}$ is the element of $\B(n,k)$ represented by the path from Definition \ref{def:ReviewGammaXY}, then $\Phi(\gamma_{\x,\y})$ is the basis element from Corollary~\ref{cor:HomologyBasisAsStrandsGens} with all $r_i$ equal to zero.
\end{lemma}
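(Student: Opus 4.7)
The plan is to argue by induction on $k - |\x \cap \y|$, mirroring the recursive definition of $\gamma_{\x,\y}$. First, I observe that the basis element from Corollary~\ref{cor:HomologyBasisAsStrandsGens} with all $r_i = 0$ is precisely the minimally winding element $g_{\x,\y}$ of Lemma~\ref{lem:gxy}, so the lemma reduces to proving $\Phi(\gamma_{\x,\y}) = g_{\x,\y}$. The base case $k - |\x \cap \y| = 0$ gives $\x = \y$, and both sides equal $\Jb_\x$: the left because $\gamma_{\x,\x}$ is the empty path based at $\x$, the right because $\CL{\x,\x} = \varnothing$.

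For the inductive step with $\x \neq \y$, I treat the ``rightward'' case of Definition~\ref{def:ReviewGammaXY}; the ``leftward'' case is handled symmetrically by swapping $R_i \leftrightarrow L_i$, $\vv{1}{0}{i} \leftrightarrow \vv{0}{1}{i}$, and the roles of conditions \eqref{it:p odd concat} and \eqref{it:q odd concat} of Lemma~\ref{lem:concatenable}. Let $a$ be the largest index with $x_a < y_a$, set $i^* := x_a + 1$, and let $\x' := (\x \setminus \{x_a\}) \cup \{i^*\}$, so that $\gamma_{\x,\y} = R_{i^*} \cdot \gamma_{\x',\y}$. A short calculation from the definition of $v_i$, combined with the maximality of $a$ and the closeness of $\x,\y$, yields the combinatorial facts $v_{i^*}(\x,\y) = 1$, $v_{i^*}(\x',\y) = 0$, and $v_i(\x,\y) = v_i(\x',\y)$ for all $i \neq i^*$. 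Hence $\CL{\x',\y} = \CL{\x,\y} \setminus \{i^*\}$ with every other sign preserved, so by induction $\Phi(\gamma_{\x',\y}) = g_{\x',\y}$ differs from $g_{\x,\y}$ only in column $i^*$.

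It remains to verify the product identity $\vv{1}{0}{i^*} \cdot g_{\x',\y} = g_{\x,\y}$ in $\sa nk$ via Lemma~\ref{lem:concatenable}. Because $\vv{1}{0}{i^*}$ is supported only in column $i^*$, the only concatenation condition with real content is \eqref{it:p odd concat}: since $p_{i^*} = 1$ is odd, the $p$-entry of $g_{\x',\y}$ in column $i^* + 1$ must vanish. This is the step I expect to be the main obstacle. Failure would force $v_{i^*+1}(\x,\y) = 1$, making $i^*$ and $i^*+1$ two adjacent crossed lines with $v = 1$; a direct combinatorial analysis using the bijection $x_b \leftrightarrow y_b$ under the not-far hypothesis shows this forces $i^* \in \x$. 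But the existence of the edge $R_{i^*}$ in $\Gamma(n,k,\Sc)$ requires $\x \cap \{i^*-1, i^*\} = \{i^*-1\}$, i.e.~$i^* \notin \x$, and indeed this can be verified directly from the maximality of $a$ (if $x_a+1$ were $x_{a+1}$, one forces $y_{a+1} \leq x_a + 1$ while $y_{a+1} > y_a > x_a$, a contradiction). With the obstruction ruled out, the explicit product formulas in Lemma~\ref{lem:concatenable} compute the product directly: the $i^*$-column becomes $\vv{1}{0}{i^*}$ while every other column is inherited unchanged from $g_{\x',\y}$, producing $g_{\x,\y}$ exactly.
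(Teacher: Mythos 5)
Your proof is correct and takes essentially the same route as the paper: induct on $k - |\x\cap\y|$ following the recursive definition of $\gamma_{\x,\y}$, use the product formulas of Lemma~\ref{lem:concatenable} with condition~\eqref{it:p odd concat} as the only substantive check, and track the single changed column $i^*=x_a+1$ via $v_i$. Your reframing of the target as $g_{\x,\y}$ from Lemma~\ref{lem:gxy} is a harmless and slightly cleaner packaging; the one small stylistic difference is that you argue $p'_{i^*+1}=0$ by contradiction via ``adjacent crossed lines,'' whereas one can see directly from your own combinatorial facts that $v_{i^*+1}(\x',\y)=v_{i^*+1}(\x,\y)=0$ (as the paper does), but the logic is sound either way.
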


\begin{proof}
We use the recursive definition of $\gamma_{\x,\y}$ and induct on $k - |\x \cap \y|$. When this quantity is zero, we have $\x = \y$ and $\gamma_{\x,\y}$ is the empty path. Thus, $\Phi(\x,\y) = \Jb_{\x}$, which is the strands element corresponding to $1 \in \mathbb{F}_2[U_1,\ldots,U_n]/(p_G)$ under Corollary~\ref{cor:HomologyBasisAsStrandsGens}. 

Now assume that $\Phi(\gamma_{\x',\y'})$ is as described whenever $k - |\x' \cap \y'| < k - |\x \cap \y|$. If we have $x_a < y_a$ for some $a \in [1,k]$, let $a$ be the maximal such index. Let $\x' = (\x \setminus x_a) \cup \{y_a\}$. By Definition \ref{def:ReviewGammaXY}, we have $\gamma_{\x,\y} = \gamma \cdot \gamma_{\x',\y}$ where $\gamma$ is the unique edge from $\x$ to $\x'$ labeled $R_{x_a + 1}$.
By item~\eqref{it:PhiOfRi} in Definition~\ref{def:Phi}, we have $\Phi(\gamma) = \vv{1}{0}{x_a + 1}$, and by induction $\Phi(\gamma_{\x',\y})$ is the strands element whose array has vectors $\vv{1}{0}{i}$ for $i$ with $v_i(\x',\y) = 1$, $\vv{0}{1}{i}$ for $i$ with $v_i(\x',\y) = -1$, and no other nontrivial factors.

We can use Lemma~\ref{lem:concatenable} to show that the product of these strands elements has vector $\vv{1}{0}{i}$ for $i$ with $v_i(\x,\y) = 1$, $\vv{0}{1}{i}$ for $i$ with $v_i(\x,\y) = -1$, and no other nontrivial factors. We first show that the product is nonzero. The main thing to check is condition~\eqref{it:p odd concat}; conditions~\eqref{it:p even concat}--\eqref{it:no double loops} are tautological. We only have $p_i$ odd for $i = x_a + 1$, so we want to show that $p'_{x_a + 2} = 0$. Indeed, if $p'_{x_a + 2}$ is nonzero, then we have $x_{a+1} = x_a + 1 = y_a < y_{a+1}$, contradicting that $a$ is the maximal index with $x_a < y_a$. Thus, condition~\eqref{it:p odd concat} holds, so the product under consideration is nonzero.

It follows that the product is given by the formula in Lemma~\ref{lem:concatenable}; we must show this product is the desired strands element. We have $r_{x_a + 1} = p_{x_a + 1} + q'_{x_a + 1}$ and $p_{x_a + 1} = 1$; we claim that $q'_{x_a + 1} = 0$. Indeed, if it is nonzero then $q'_{x_a + 1} = 1$ and $p'_{x_a + 1} = 0$, so by Lemma~\ref{lem:cl}, line $x_a + 1$ is crossed from $\x'$ to $\y$ and we have $v_{x_a + 1}(\x',\y) \neq 0$. But the equality $x'_a = y_a$ implies that $v_{x_a + 1}(\x',\y) = 0$, since 
\[
|\x' \cap [x'_a + 1, n]| = k - a = |\y \cap [y_a + 1, n]|,
\]
so we have a contradiction. It follows that $r_{x_a + 1} = 1$. We also have $s_{x_a + 1} = 0$ because $p_{x_a + 1}$ is odd, so vector $i$ in the product is equal to $\vv{1}{0}{i}$ when $i = x_a + 1$. Note that $v_{x_a + 1}(\x,\y) = 1$.

Now consider $i \neq x_a + 1$. We have $p_i = q_i = 0$, so Lemma~\ref{lem:concatenable} gives us $r_i = p_i + p'_i = p'_i$ and $s_i = q_i + q'_i = q'_i$. Thus, $r_i = 1$ if and only if $v_i(\x',\y) = 1$ (in which case $s_i = 0$) and $s_i = 1$ if and only if $v_i(\x',\y) = -1$ (in which case $r_i = 0$). For $i \neq x_a + 1$, we have $|\x \cap [i,n]| = |\x' \cap [i,n]|$, so $v_i(\x',\y) = v_i(\x,\y)$, proving that $\Phi(\gamma) \Phi(\gamma_{\x',\y})$ is the strands element described above. 

The case when $x_a \geq y_a$ for all $a \in [1,k]$ and $x_a > y_a$ for some minimal index $a$ is analogous. By induction, $\Phi(\gamma_{\x,\y})$ is the strands element whose vector has $\vv{1}{0}{i}$ for $i$ with $v_i(\x,\y) = 1$, $\vv{0}{1}{i}$ for $i$ with $v_i(\x,\y) = -1$, and no other nontrivial factors, proving the lemma.
\end{proof}

\begin{theorem}\label{thm:Phi is a quasi-iso}
The map 
\[
\Phi: \B(n,k) \to \A(n,k)
\]
is a quasi-isomorphism of dg algebras over $\F_2[U_1,\ldots,U_n]^{V(n,k)}$.
\end{theorem}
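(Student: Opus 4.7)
The plan is to reduce the statement to a calculation on each idempotent summand and then combine the homology computation of Corollary~\ref{cor:general homology of JAJ} with the compatibility lemmas \ref{lem:StrandsHomologyBasisIdentPolyLinear} and \ref{lem:MainTheoremHelper}. Since $\Phi$ is $\Ib(n,k)$-linear, it splits as a direct sum
\[
\Phi = \bigoplus_{\x,\y \in V(n,k)} \Phi_{\x,\y} \colon \Ib_{\x}\B(n,k)\Ib_{\y} \to \Jb_{\x}\A(n,k)\Jb_{\y},
\]
and it suffices to show each $\Phi_{\x,\y}$ is a quasi-isomorphism. When $\x$ and $\y$ are far, both sides vanish by \cite[Proposition 3.7]{OSzNew} and Lemma~\ref{lem:FarStatesStrandAlgZero}, so the claim is trivial.

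For $\x,\y$ not far, Theorem~\ref{thm:ReviewOSzHomology} (with $\Sc=\varnothing$) provides an $\F_2$-basis of $\Ib_{\x}H_*(\B(n,k))\Ib_{\y}$ consisting of classes $[U_1^{r_1}\cdots U_n^{r_n}\cdot\gamma_{\x,\y}]$, where the monomial $U_1^{r_1}\cdots U_n^{r_n}$ is not divisible by any $p_G$ for $G$ a generating interval. On the other side, Corollary~\ref{cor:general homology of JAJ} identifies $\Jb_{\x}H_*(\A(n,k))\Jb_{\y}$ with the graded $\F_2$-vector space $\F_2[U_1,\ldots,U_n]/(p_G)$, whose standard basis is exactly the set of those same monomials.

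The key point is that $\Phi$ is a homomorphism of dg algebras over $\F_2[U_1,\ldots,U_n]^{V(n,k)}$, so multiplication by $U_i$ is preserved on both sides. Combined with Lemma~\ref{lem:StrandsHomologyBasisIdentPolyLinear}, which says that the identification $\Jb_{\x}H_*(\A(n,k))\Jb_{\y}\cong\F_2[U_1,\ldots,U_n]/(p_G)$ is itself $\F_2[U_1,\ldots,U_n]$-linear, the composite
\[
\Ib_{\x}H_*(\B(n,k))\Ib_{\y}\xrightarrow{(\Phi_{\x,\y})_*}\Jb_{\x}H_*(\A(n,k))\Jb_{\y}\xrightarrow{\cong}\F_2[U_1,\ldots,U_n]/(p_G)
\]
is $\F_2[U_1,\ldots,U_n]$-linear. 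By Lemma~\ref{lem:MainTheoremHelper}, $\Phi(\gamma_{\x,\y})$ is precisely the strands element that Corollary~\ref{cor:HomologyBasisAsStrandsGens} sends to $1$, so $[\gamma_{\x,\y}]$ maps to the class of $1$ in the quotient ring. Linearity over $\F_2[U_1,\ldots,U_n]$ then forces $[U_1^{r_1}\cdots U_n^{r_n}\cdot\gamma_{\x,\y}]$ to map to the class of $U_1^{r_1}\cdots U_n^{r_n}$, identifying the Ozsv\'ath--Szab\'o basis with the standard monomial basis of $\F_2[U_1,\ldots,U_n]/(p_G)$. Hence $(\Phi_{\x,\y})_*$ is an isomorphism.

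Since all the hard inputs have already been established, I do not expect any serious obstacle; the only mild subtlety is bookkeeping, namely checking that the ideals generated by the $p_G$ appearing in the two descriptions of the homology truly coincide (they do, since the generating intervals depend only on $\x,\y$) and that the Maslov and Alexander gradings line up under the identifications, which is immediate from Proposition~\ref{prop:effectsofde} and the fact that $\Phi$ was verified to preserve all gradings in Lemma~\ref{lem:Phi respects differential}.
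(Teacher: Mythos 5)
Your proof is correct and takes essentially the same approach as the paper: decompose over idempotent pairs $(\x,\y)$, use Theorem~\ref{thm:ReviewOSzHomology} and Corollary~\ref{cor:general homology of JAJ} to identify the two homologies with $\F_2[U_1,\ldots,U_n]/(p_G)$, apply Lemma~\ref{lem:MainTheoremHelper} to see $[\gamma_{\x,\y}]\mapsto 1$, and conclude by $\F_2[U_1,\ldots,U_n]$-linearity via Lemma~\ref{lem:StrandsHomologyBasisIdentPolyLinear}. Your explicit handling of the far case and the grading check are harmless additions that the paper leaves implicit.
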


\begin{proof}
The algebra $\B(n,k) = \B(n,k,\varnothing)$ has no differential, so it is equal to its homology.
Given $\x$ and $\y$ in $V(n,k)$ that are not far, let $U_1^{r_1} \cdots U_n^{r_n} \gamma_{\x,\y}$ be the basis element of $\Ib_{\x} \B(n,k) \Ib_{\y}$ given by Theorem \ref{thm:ReviewOSzHomology}. 

Since $\Phi$ is linear over $\F_2[U_1,\ldots,U_n]$, $\Phi$ sends this element to $U_1^{r_1} \cdots U_n^{r_n} \Phi(\gamma_{\x,\y})$. By Lemmas~\ref{lem:StrandsHomologyBasisIdentPolyLinear} and \ref{lem:MainTheoremHelper}, the basis element for $\Jb_\x \A(n,k) \Jb_\y$ corresponding to $U_1^{r_1} \cdots U_n^{r_n}$ is also $U_1^{r_1} \cdots U_n^{r_n} \Phi(\gamma_{\x,\y})$.

Since $\Phi$ sends a basis for the homology of $\Ib_{\x} \B(n,k) \Ib_{\y}$ to a basis for the homology of $\Jb_{\x} \sa nk \Jb_{\y}$, $\Phi$ induces an isomorphism on homology, so $\Phi$ is a quasi-isomorphism.
\end{proof}

\subsection{Homology of the strands algebra: general case}

Finally, we compute the homology of $\sac nk\Sc$ and show that $\Phi$ is a quasi-isomorphism for general $\Sc$.

\begin{theorem}\label{thm:FinalQIThm}
The map $\Phi: \B(n,k,\Sc) \to \sac nk\Sc$ is a quasi-isomorphism.
\end{theorem}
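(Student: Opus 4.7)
The strategy is induction on $|\Sc|$. The base case $|\Sc|=0$ is Theorem~\ref{thm:Phi is a quasi-iso}. For the inductive step, fix $i \in \Sc$, set $\Sc' = \Sc \setminus \{i\}$, and suppose that $\Phi_{\Sc'}: \B(n,k,\Sc') \to \sac nk{\Sc'}$ is a quasi-isomorphism; my goal is to conclude the same for $\Phi_\Sc$.

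The plan is to exhibit $\B(n,k,\Sc)$ and $\sac nk\Sc$ as compatible extensions of their $\Sc'$-level counterparts. Since $C_i$ is central and $C_i^2=0$, there is a graded vector space decomposition $\B(n,k,\Sc) \cong \B(n,k,\Sc') \oplus C_i \cdot \B(n,k,\Sc')$. The first summand is a sub-dg-algebra, while the quotient, identified with $\B(n,k,\Sc')$ via $C_i \cdot b \leftrightarrow b$, inherits the $\B(n,k,\Sc')$-differential: indeed $\partial(C_i b) = U_i b + C_i \partial b$, and the first term lies in the subcomplex and so is killed in the quotient. This gives a short exact sequence of chain complexes
\[
0 \to \B(n,k,\Sc') \to \B(n,k,\Sc) \to \B(n,k,\Sc') \to 0.
\]
Analogously, basis elements $E(s,\vec{c})$ of $\sac nk\Sc$ split according to $\vec{c}(i) \in \{0,1\}$. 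The subspace where $\vec{c}(i)=0$ is closed under multiplication and under every piece of $\partial$ (in particular $\partial^c_i$ vanishes there automatically), so it is a sub-dg-algebra naturally identified with $\sac nk{\Sc'}$. The quotient, identified with $\sac nk{\Sc'}$ by stripping the closed loop on backbone $i$, inherits only those pieces of $\partial$ that preserve $\{\vec{c}(i)=1\}$: the one piece that does not, $\partial^c_i$, lands in the subcomplex and therefore vanishes on the quotient. What remains is precisely the $\sac nk{\Sc'}$-differential, yielding
\[
0 \to \sac nk{\Sc'} \to \sac nk\Sc \to \sac nk{\Sc'} \to 0.
\]

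Next I would verify that $\Phi$ carries the first SES to the second, producing a commutative diagram of short exact sequences in which both outer vertical maps equal $\Phi_{\Sc'}$. The left square commutes because the formulas in Definition~\ref{def:Phi} for $\Phi$ on $R_j, L_j, U_j, C_j$ do not depend on whether $i$ belongs to $\Sc$. The right square commutes because $\Phi(C_i)$ is the closed loop on backbone $i$, so $\Phi_\Sc(a + C_i b) = \Phi_{\Sc'}(a) + \Phi(C_i)\cdot \Phi_{\Sc'}(b)$, with the first summand in $\{\vec{c}(i)=0\}$ and the second in $\{\vec{c}(i)=1\}$; stripping the loop in the quotient recovers $\Phi_{\Sc'}$.

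Passing to the induced long exact sequences in homology and applying the five-lemma then shows that the middle map $\Phi_\Sc$ is a quasi-isomorphism, completing the induction. The main point requiring care is the strands-side short exact sequence---verifying that $\{\vec{c}(i)=0\}$ is a subcomplex with the claimed sub- and quotient-differentials is a direct bookkeeping check against Definition~\ref{def:differential}, and is where the specific combinatorics of $\partial^c_i$ and $\partial^0_i$ enter. Everything else (compatibility of $\Phi$ with the two SESs, the five-lemma) is formal once this is in place.
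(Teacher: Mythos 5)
Your proof is correct and follows essentially the same strategy as the paper: induction on $|\Sc|$ using the two-step $C_i$-filtration (equivalently, the mapping-cone-on-$U_i$ structure) of both $\B(n,k,\Sc)$ and $\sac nk\Sc$, with $\Phi$ respecting the filtration. The only cosmetic difference is in the final step: you apply the five lemma to the ladder of long exact sequences, whereas the paper splits the resulting short exact sequence of homology groups over $\F_2$ into $\operatorname{coker}[U_{i}] \oplus \ker[U_{i}]$ and compares those summands across the two algebras; your five-lemma packaging is if anything a bit cleaner, since it avoids any implicit appeal to compatibility of splittings.
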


\begin{proof}
We will induct on $|\Sc|$. The base case $|\Sc| = 0$ follows from Theorem~\ref{thm:Phi is a quasi-iso}.

For the inductive step, write $\Sc = \{i_1,\ldots,i_l\}$, and let $\Sc' = \Sc \setminus \{i_l\}$. As a chain complex, $\A(n,k,\Sc)$ has a two-step filtration by powers of the variable $C_{i_l}$. Since we may write this variable $C_{i_l}$ as $\Phi(C_{i_l})$ and we have
\[
\de(\Phi(C_{i_l})) = \Phi(\de(C_{i_l})) = \Phi(U_{i_l}),
\]
$\A(n,k,\Sc)$ is isomorphic to the mapping cone on the endomorphism $\Phi(U_{i_l})$ of $\A(n,k,\Sc')$.  The mapping cone gives us a long exact sequence on homology from which we can extract a short exact sequence by taking kernels and cokernels (as in the proof of \cite[\lemOSzGenIntHomology]{MMW1}). The short exact sequence must split over $\F_2$, giving
\[H_*(\A(n,k,\Sc)) \cong \frac{H_*(\A(n,k,\Sc'))}{\im \Phi([U_{i_l}])} \oplus \ker \Phi([U_{i_l}]).\]

By induction, $\Phi$ identifies these summands with the cokernel and kernel of $[U_{i_l}]$ acting on $H_*(\B(n,k,\Sc'))$.
The chain complex $\B(n,k,\Sc)$ also has a two-step filtration by powers of the variable $C_{i_l}$. Since the corresponding short exact sequence for $\B(n,k,\Sc)$ splits in the same way to give
\[
H_*(\B(n,k,\Sc)) \cong \frac{H_*(\B(n,k,\Sc'))}{\im([U_{i_l}])} \oplus \ker ([U_{i_l}]),
\]
$\Phi$ is a quasi-isomorphism from $\B(n,k,\Sc)$ to $\A(n,k,\Sc)$.
\end{proof}

By \cite[\defDGCatQI]{MMW1}, we can restrict quasi-isomorphisms to full dg subcategories, so we get the following corollary.
\begin{corollary}\label{cor:TruncatedQIThm}
The map $\Phi$ restricts to quasi-isomorphisms from $\B_r(n,k,\Sc)$ to ${\mc A}_r(n,k,\Sc)$, from $\B_l(n,k,\Sc)$ to ${\mc A}_l(n,k,\Sc)$, and from $\B'(n,k,\Sc)$ to ${\mc A}'(n,k,\Sc)$.
\end{corollary}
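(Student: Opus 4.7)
The plan is to deduce Corollary~\ref{cor:TruncatedQIThm} directly from Theorem~\ref{thm:FinalQIThm} by exploiting the fact that $\Phi$ preserves the idempotent decompositions of the two algebras. Indeed, $\Phi$ is by construction a homomorphism of $\Ib(n,k)$-algebras that sends $\Ib_\x$ to $\Jb_\x$ for every $\x \in V(n,k)$, so it restricts to a chain map
\[
\Phi_{\x,\y} \colon \Ib_\x \B(n,k,\Sc) \Ib_\y \To \Jb_\x \A(n,k,\Sc) \Jb_\y
\]
for every pair $\x,\y \in V(n,k)$.

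First I would observe that Theorem~\ref{thm:FinalQIThm} in fact says more than stated: since the homology of both algebras splits as a direct sum over pairs $(\x,\y)$ (this uses \cite[\lemOrthogonalIdempotents]{MMW1} on both sides), and $\Phi$ respects this splitting, the global isomorphism on homology induced by $\Phi$ is the direct sum of isomorphisms
\[
H_*(\Phi_{\x,\y}) \colon H_*(\Ib_\x \B(n,k,\Sc) \Ib_\y) \xrightarrow{\cong} H_*(\Jb_\x \A(n,k,\Sc) \Jb_\y)
\]
for all $\x,\y$. Thus each $\Phi_{\x,\y}$ is itself a quasi-isomorphism.

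Next, I would write each truncated algebra as a direct sum of summands of this form. For example,
\[
\B_r(n,k,\Sc) = \bigoplus_{\substack{\x,\y \in V(n,k) \\ 0 \notin \x, \, 0 \notin \y}} \Ib_\x \B(n,k,\Sc) \Ib_\y,
\]
and the restriction of $\Phi$ to $\B_r(n,k,\Sc)$ is the direct sum of the corresponding maps $\Phi_{\x,\y}$ into $\Jb_\x \A(n,k,\Sc) \Jb_\y$, which is exactly the truncated target ${\mc A}_r(n,k,\Sc)$. Since the direct sum of quasi-isomorphisms is a quasi-isomorphism, this handles the $r$-truncation. The $l$-truncation and the double truncation ${}'$ are handled in exactly the same way, just restricting the index set over which the direct sum is taken.

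There is no real obstacle here: the argument is a formal consequence of Theorem~\ref{thm:FinalQIThm} together with the compatibility of $\Phi$ with the idempotents, packaged through \cite[\defDGCatQI]{MMW1}, which says that restricting a quasi-isomorphism of dg categories to a full dg subcategory yields a quasi-isomorphism. Under the dictionary between idempotent truncations and full dg subcategories recalled after Definition~\ref{def:TruncatedStrandsAlgs} (and in \cite[\defTruncatedOSzAlgs]{MMW1}), this immediately gives the three desired quasi-isomorphisms.
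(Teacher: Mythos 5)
Your proposal is correct and matches the paper's argument: the paper also invokes the fact that quasi-isomorphisms of dg categories restrict to quasi-isomorphisms of full dg subcategories (the reference to \cite[\defDGCatQI]{MMW1} you cite at the end), together with the identification of the idempotent truncations with full dg subcategories. You have simply unpacked the idempotent-decomposition bookkeeping that makes that statement apply; this is a reasonable elaboration of the same one-line proof.
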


Since formality is preserved by quasi-isomorphisms, we deduce the next corollary from the results of \cite{MMW1}.
\begin{corollary}[see Corollary~\ref{cor:IntroFormality}]\label{cor:StrandsFormality}
The dg algebra $\A(n,k,\Sc)$ is formal if and only if:
\begin{itemize}
\item $\Sc = \varnothing$, or
\item $k \in \{0,n,n+1\}$.
\end{itemize}
The dg algebra $\A_r(n,k,\Sc)$ is formal if and only if:
\begin{itemize}
\item $\Sc = \varnothing$ or $\{1\}$, or
\item $k \in \{0,n\}$, or
\item $k = n-1$ and $1 \in \Sc$.
\end{itemize}
The dg algebra $\A_l(n,k,\Sc)$ is formal if and only if:
\begin{itemize}
\item $\Sc = \varnothing$ or $\{n\}$, or
\item $k \in \{0,n\}$, or
\item $k = n-1$ and $n \in \Sc$.
\end{itemize}
The dg algebra $\A'(n,k,\Sc)$ is formal if and only if:
\begin{itemize}
\item $\Sc = \varnothing$, $\{1\}$, $\{n\}$, or $\{1,n\}$, or
\item $k \in \{0,n-1\}$, or
\item $k = n-2$ and $\{1,n\} \subset \Sc$.
\end{itemize}
\end{corollary}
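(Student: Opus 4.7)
The plan is to deduce this corollary directly from the already-established quasi-isomorphism $\Phi$ together with the formality classification for the Ozsv\'ath--Szab\'o algebras proved in \cite{MMW1}. The key observation is that formality of a dg algebra (equivalently, of its corresponding dg category) is invariant under quasi-isomorphism, so to establish formality of each $\A$-variant it suffices to establish formality of the corresponding $\B$-variant, and to rule out formality it suffices to rule it out on the $\B$-side.

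First I would invoke Theorem~\ref{thm:FinalQIThm} to know that $\Phi \colon \B(n,k,\Sc) \to \A(n,k,\Sc)$ is a quasi-isomorphism of dg algebras, and Corollary~\ref{cor:TruncatedQIThm} for the analogous statements for the three idempotent truncations. Since a quasi-isomorphism of dg algebras induces a quasi-isomorphism of their canonical $A_\infty$-structures on homology (via homotopy transfer), one of the two algebras is formal if and only if the other is; cf.\ the discussion of dg quasi-isomorphism in \cite[\defDGCatQI]{MMW1}. Thus for each of the four pairs $(\B(n,k,\Sc), \A(n,k,\Sc))$, $(\B_r(n,k,\Sc), \A_r(n,k,\Sc))$, $(\B_l(n,k,\Sc), \A_l(n,k,\Sc))$, $(\B'(n,k,\Sc), \A'(n,k,\Sc))$, formality of the two members is equivalent.

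Next I would simply quote the four formality theorems for the $\B$-algebras from \cite{MMW1}: Theorem~\thmUntruncatedFormality{} handles the untruncated case $\A(n,k,\Sc)$, Theorem~\thmRightTruncationFormality{} handles the right truncation $\A_r(n,k,\Sc)$, Theorem~\thmLeftTruncationFormality{} handles the left truncation $\A_l(n,k,\Sc)$, and Theorem~\thmDoubleTruncationFormality{} handles the double truncation $\A'(n,k,\Sc)$. The statements of those theorems list precisely the conditions on $n$, $k$, and $\Sc$ appearing in Corollary~\ref{cor:StrandsFormality}, so matching them up case-by-case completes the argument.

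There is no serious obstacle here, since all the work has been done: the main content sits in Theorem~\ref{thm:FinalQIThm} and Corollary~\ref{cor:TruncatedQIThm} on the one hand and in the formality results of \cite{MMW1} on the other. The only point requiring a brief comment is that formality transfers across quasi-isomorphism in both directions, which is standard but worth mentioning explicitly so the reader sees that the biconditional ``if and only if'' in Corollary~\ref{cor:StrandsFormality} follows from the biconditional statements in the four cited theorems.
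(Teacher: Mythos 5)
Your argument matches the paper's proof exactly: cite Theorem~\ref{thm:FinalQIThm} and Corollary~\ref{cor:TruncatedQIThm} for the quasi-isomorphisms, observe that formality is a quasi-isomorphism invariant, and then transfer the formality classification from the $\B$-side results of \cite{MMW1} (Theorems \thmUntruncatedFormality, \thmRightTruncationFormality, \thmLeftTruncationFormality, \thmDoubleTruncationFormality). Nothing is missing and nothing needs to change.
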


\begin{proof}
These results follow from Theorem~\ref{thm:FinalQIThm} and Corollary~\ref{cor:TruncatedQIThm}, as well as \cite[Theorems \thmUntruncatedFormality, \thmRightTruncationFormality, \thmLeftTruncationFormality{}, and \thmDoubleTruncationFormality]{MMW1}.
\end{proof}

\subsection{Symmetries}\label{sec:PhiSymmetries}
Here we show that the quasi-isomorphism $\Phi$ of Theorem~\ref{thm:FinalQIThm} intertwines the symmetries $\rho$ and $o$ from \cite[\secOSzSymmetries]{MMW1} with the symmetries of the same name from Section~\ref{sec:StrandSymmetries}. We start with the symmetry $\rho$.

\begin{proposition}\label{prop:PhiCompatibleWithRho}
The diagram
\[
\xymatrix{
\B(n,k,\Sc) \ar[r]^{\Phi} \ar[d]_{\rho} & \sac nk\Sc \ar[d]^{\rho} \\
\B(n,k,\rho(\Sc)) \ar[r]_{\Phi} & \sac nk{\rho(\Sc)}
}
\]
of morphisms of dg algebras commutes.
\end{proposition}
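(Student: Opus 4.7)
The plan is to reduce to a check on the multiplicative generators of $\B(n,k,\Sc)$. Both $\rho \circ \Phi$ and $\Phi \circ \rho$ are homomorphisms of (dg) $\Ib(n,k)$-algebras from $\B(n,k,\Sc)$ to $\sac nk{\rho(\Sc)}$ (with the $\Ib(n,k)$-action on the target twisted by $\Jb_{\x} \mapsto \Jb_{\rho(\x)}$ as in Proposition~\ref{prop:RhoMultiplicativeStrands}). Hence equality on a generating set of the path algebra $\Path(\Gamma(n,k,\Sc))$ implies equality as maps on all of $\B(n,k,\Sc)$. The generators are the idempotents $\Ib_{\x}$ for $\x \in V(n,k)$ and the edges with labels $R_i$, $L_i$, $U_i$, or $C_i$.

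For the idempotents both composites send $\Ib_{\x}$ to $\Jb_{\rho(\x)}$. For an edge $\gamma$ with label $R_i$ from $\x$ to $\y$, Definition~\ref{def:Phi} gives $\Phi(\gamma) = \vv{1}{0}{i} \in \Jb_{\x} \sac nk\Sc \Jb_{\y}$; applying $\rho$ as in Proposition~\ref{prop:ConstructingRhoOnStrandsAlg} swaps $p_i$ with $q_{n+1-i}$, producing $\vv{0}{1}{n+1-i}$, which is exactly $\Phi(L_{n+1-i}) = \Phi(\rho(\gamma))$. The cases of $L_i$ and $C_i$ are equally immediate: in each case the image under $\rho$ of the single vector (or $C$-variable) produced by $\Phi$ is precisely the image under $\Phi$ of the $\rho$-relabelled edge.

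The $U_i$-edge at a vertex $\x$ requires splitting into the four cases for $\x \cap \{i-1,i\}$. The key observation is that for $j = n+1-i$ we have $j-1 \in \rho(\x)$ iff $n+1-j = i \in \x$ and $j \in \rho(\x)$ iff $n+1-j+1 = i-1 \in \x$ (using that $\rho$ acts on elements of $[0,n]$ by $m\mapsto n-m$, while indices of lines shift by $n+1-i$); equivalently, the bijection $\x \cap \{i-1,i\} \leftrightarrow \rho(\x) \cap \{n-i,n+1-i\}$ swaps ``upper'' and ``lower''. In each of the four subcases one then reads off both $\rho(\Phi(U_i))$ and $\Phi(U_{n+1-i})$ directly from Definition~\ref{def:Phi} and the swap $p_i \leftrightarrow q_{n+1-i}$; they match in every case (for instance, $\x \cap \{i-1,i\}=\{i-1\}$ yields $\Phi(U_i)=\vv{2}{0}{i}$, whence $\rho(\Phi(U_i))=\vv{0}{2}{n+1-i}$, while $\rho(\x)\cap\{n-i,n+1-i\}=\{n+1-i\}$ gives $\Phi(U_{n+1-i})=\vv{0}{2}{n+1-i}$).

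The main bookkeeping obstacle is simply making sure the index-reindexing $i \mapsto n+1-i$ combined with the reversal-of-orientation inherent in $\rho$ on $V(n,k)$ is correctly matched with the $p\leftrightarrow q$ swap in the strands description; once the correspondence between $\x\cap\{i-1,i\}$ and $\rho(\x)\cap\{n-i,n+1-i\}$ is made explicit, all the subcases reduce to direct inspection and no further ideas are needed.
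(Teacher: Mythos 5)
Your proposal is correct and takes essentially the same approach as the paper: reduce to a check on quiver generators, with the paper handling the singleton cases of $U_i$ by factoring $U_i = R_iL_i$ or $L_iR_i$ while you compute those cases directly, which amounts to the same verification. One small slip: the intermediate expression ``$n+1-j+1$'' should be ``$n-j$'' (since $j\in\rho(\x)$ iff $n-j\in\x$), but your stated conclusion $i-1\in\x$ is correct, so the argument is unaffected.
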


\begin{proof}
It suffices to check commutativity on the quiver generators of $\B(n,k,\Sc)$. For an edge $\gamma$ of $\Gamma(n,k,\Sc)$ from $\x$ to $\y$ with label $R_i$, we have $\Phi(\gamma) = \vv{1}{0}{i}$, so $\rho(\Phi(\gamma)) = \vv{0}{1}{n+1-i}$. On the other hand, $\rho(\gamma)$ is the edge of $\Gamma(n,k,\rho(\Sc))$ from $\rho(\x)$ to $\rho(\y)$ (with label $L_{n+1-i}$), so we have $\Phi(\rho(\gamma)) = \vv{0}{1}{n+1-i}$ as well. The argument for edges labeled $L_i$ is similar. 

For an edge $\gamma$ of $\Gamma(n,k,\Sc)$ from $\x$ to $\x$ with label $U_i$, we may assume that $\x \cap \{i-1,i\} \neq \varnothing$, so that $\gamma$ represents a nonzero generator. If $\x \cap \{i-1,i\} = \{i-1\}$, then $\Phi(U_i) = \Phi(R_i) \Phi(L_i)$, so commutativity follows from the above paragraph. If $\x \cap \{i-1,i\} = \{i\}$, the argument is similar. If $\{i-1,i\} \subset \x$, then $\Phi(\gamma) = \vv{2}{0}{i}+\vv{0}{2}{i}$, so $\rho(\Phi(\gamma)) = \vv{0}{2}{n+1-i} + \vv{2}{0}{n+1-i}$. On the other hand, $\rho(\gamma)$ is the edge from $\rho(\x)$ to $\rho(\x)$ with label $U_{n+1-i}$, and we have $\{n-i,n+1-i\} \subset \rho(\x)$. Thus, $\Phi(\rho(\gamma)) = \vv{2}{0}{n+1-i} + \vv{0}{2}{n+1-i}$, so $\Phi(\rho(\gamma)) = \rho(\Phi(\gamma))$.

Finally, for an edge $\gamma$ of $\Gamma(n,k,\Sc)$ from $\x$ to $\x$ with label $C_i$, we have $\rho(\Phi(\gamma)) = C_{n+1-i} = \Phi(\rho(\gamma))$. Thus, the square commutes.
\end{proof}

Next we consider the symmetry $o$.
\begin{proposition}
\[
\xymatrix{
\B(n,k,\Sc) \ar[r]^{\Phi} \ar[d]_{o} & \sac nk\Sc \ar[d]^{o} \\
\B(n,k,\Sc)^{\op} \ar[r]_{\Phi} & {\sac nk\Sc}^{\op}
}
\]
of morphisms of dg algebras commutes.
\end{proposition}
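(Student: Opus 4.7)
The plan is to mirror the proof of Proposition~\ref{prop:PhiCompatibleWithRho}. Both $o\circ\Phi$ and $\Phi\circ o$ are algebra homomorphisms $\B(n,k,\Sc)\to\sac nk\Sc^{\op}$, so it suffices to verify the identity $o(\Phi(\gamma))=\Phi(o(\gamma))$ on the quiver edges $\gamma$ of $\Gamma(n,k,\Sc)$ and then extend using the anti-homomorphism property $o(xy)=o(y)\,o(x)$. For an edge labelled $R_i$ from $\x$ to $\y$, Proposition~\ref{prop:ConstructingOOnStrandsAlg} yields $o(\Phi(R_i))=o\!\left(\vv{1}{0}{i}\right)=\vv{0}{1}{i}=\Phi(L_i)=\Phi(o(R_i))$; the case $L_i$ is symmetric, and for an edge labelled $C_i$ both compositions produce $C_i$.

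The substantive verification is the case of an edge labelled $U_i$ at a vertex $\x$, which I handle by cases on $\x\cap\{i-1,i\}$. When the intersection is empty, both sides vanish by item~\eqref{eq:PhiUVanishing} of Definition~\ref{def:Phi}. When the intersection is the singleton $\{i-1\}$, the unique factorization $U_i=R_iL_i$ in $\B(n,k,\Sc)$, combined with the anti-homomorphism property and the compatibility already established for $R_i$ and $L_i$, gives
\[
o(\Phi(U_i))=o(\Phi(L_i))\,o(\Phi(R_i))=\Phi(R_i)\,\Phi(L_i)=\Phi(U_i)=\Phi(o(U_i)),
\]
using that $o$ fixes $U_i$ in $\B(n,k,\Sc)$; the $\{i\}$ case is analogous via $U_i=L_iR_i$. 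When $\{i-1,i\}\subset\x$, we have $\Phi(U_i)=\vv{2}{0}{i}+\vv{0}{2}{i}$, and Proposition~\ref{prop:ConstructingOOnStrandsAlg} shows that $o$ interchanges the two summands, so the sum is preserved and matches $\Phi(o(U_i))=\Phi(U_i)$.

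The main subtlety is tracking the anti-homomorphism property of $o$ together with the explicit idempotent blocks: when reducing a loop case via $U_i=R_iL_i$, one must confirm through the product rule of Lemma~\ref{lem:concatenable} that the reversed product $o(\Phi(L_i))\cdot o(\Phi(R_i))$ lies in the correct idempotent block and evaluates to $\Phi(U_i)$. Since $\Phi$ was already shown in Lemma~\ref{lem:PhiWellDef} to respect the $C$-central and $C$-vanishing relations, these edge-level identities then extend multiplicatively to all of $\B(n,k,\Sc)$, completing the commutativity of the square.
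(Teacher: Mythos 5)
Your proposal is correct and follows essentially the same approach as the paper: reduce to checking $o\circ\Phi=\Phi\circ o$ on quiver edges via the universal property of the quiver algebra, then verify $R_i$, $L_i$, $C_i$ directly, handle the singleton cases of $U_i$ via the factorizations $R_iL_i$ / $L_iR_i$ together with the anti-homomorphism property, and compute the $\{i-1,i\}\subset\x$ case explicitly. You spell out the anti-homomorphism bookkeeping (including the idempotent-block check $o(\Phi(L_i))=\Phi(R_i)$) more explicitly than the paper, which simply remarks that those cases "follow from the above paragraph," but the underlying argument is identical.
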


\begin{proof}
As in Proposition~\ref{prop:PhiCompatibleWithRho}, we check compatibility on the quiver generators of $\B(n,k,\Sc)$. For an edge $\gamma$ of $\Gamma(n,k,\Sc)$ from $\x$ to $\y$ with label $R_i$, we have $o(\Phi(\gamma)) = \vv{0}{1}{i}$. On the other hand, $o(\gamma)$ is the edge of $\Gamma(n,k,\Sc)^{\op}$ from $\x$ to $\y$ (with label $L_i$), so we have $\Phi(o(\gamma)) = \vv{0}{1}{i}$ as well. The argument for edges labeled $L_i$ is similar.

For an edge $\gamma$ of $\Gamma(n,k,\Sc)$ from $\x$ to $\x$ with label $U_i$, we may again assume that $\x \cap \{i-1,i\} \neq \varnothing$. If $\x \cap \{i-1,i\} = \{i-1\}$ or $\{i\}$, then commutativity follows from the above paragraph. If $\{i-1,i\} \subset \x$, then $o(\Phi(\gamma)) = \vv{0}{2}{i} + \vv{2}{0}{i}$. On the other hand, $o(\gamma) = \gamma$, so $\Phi(o(\gamma)) = \Phi(\gamma) = \vv{2}{0}{i} + \vv{0}{2}{i}$ and we have $\Phi(o(\gamma)) = o(\Phi(\gamma))$.

Finally, for an edge $\gamma$ of $\Gamma(n,k,\Sc)$ from $\x$ to $\x$ with label $C_i$, we have $o(\Phi(\gamma)) = C_i = \Phi(o(\gamma))$. Thus, the square commutes.
\end{proof}

\bibliographystyle{alpha}
\bibliography{biblio}

\end{document}